\documentclass[letterpaper]{article} 
\usepackage[preprint]{aaai2027}
\usepackage[hyphens]{url}  
\usepackage{graphicx} 
\def\UrlFont{\rm}  
\usepackage{natbib}  
\usepackage{caption} 
\usepackage{amsmath}
\usepackage{amssymb}
\usepackage{amsthm}
\usepackage{mathtools}
\usepackage{bm}
\usepackage{booktabs}
\usepackage{algorithm}
\usepackage{algorithmic}
\usepackage{subcaption}
\usepackage{multirow}
\usepackage{microtype}
\usepackage{nicefrac}
\usepackage{booktabs}

\newtheorem{theorem}{Theorem}
\newtheorem{proposition}{Proposition}
\newtheorem{lemma}{Lemma}
\newtheorem{corollary}{Corollary}
\newtheorem{assumption}{Assumption}
\newtheorem{definition}{Definition}
\newtheorem{remark}{Remark}

\newtheorem{openproblem}{Open Problem}

\newcommand{\R}{\mathbb{R}}
\newcommand{\E}{\mathbb{E}}
\newcommand{\Prob}{\mathbb{P}}
\newcommand{\norm}[1]{\left\lVert #1\right\rVert}
\newcommand{\inner}[2]{\left\langle #1,\, #2\right\rangle}
\newcommand{\grad}{\nabla}
\newcommand{\eps}{\varepsilon}
\newcommand{\Lz}{L_0}
\newcommand{\Lo}{L_1}
\newcommand{\Rb}{\bar{R}}
\newcommand{\kbar}{\bar{\kappa}}
\newcommand{\Deltabar}{\bar{\Delta}}
\newcommand{\Gbar}{\bar{G}}
\newcommand{\gest}{\hat{g}}
\newcommand{\Gest}{\hat{G}}
\newcommand{\xhat}{\hat{x}}
\newcommand{\dist}{\mathrm{dist}}
\newcommand{\proj}{\Pi}
\newcommand{\ball}[2]{B\!\left(#1,\,#2\right)}
\newcommand{\ARCSG}{\textsf{ARC-SG}}
\newcommand{\RSTM}{\textsf{R-STM}}
\newcommand{\STM}{\textsf{STM}}
\newcommand{\GradEst}{\textsf{GradEst}}
\newcommand{\PhaseI}{\textsf{Phase~I}}
\newcommand{\PhaseII}{\textsf{Phase~II}}
\newcommand{\logp}{\log_{+}}
\newcommand{\tO}{\widetilde{O}}
\newcommand{\Lball}{L_{\mathrm{ball}}}
\newcommand{\rsafe}{r_{\mathrm{safe}}}
\DeclareMathOperator*{\argmin}{arg\,min}
\DeclareMathOperator*{\argmax}{arg\,max}

\newcommand{\epsg}{\varepsilon_{\mathrm g}}

\newcommand{\ip}[2]{\left\langle #1,#2\right\rangle}

\newcommand{\B}{\mathbb{B}}

\newcommand{\Dinit}{\Delta_{\mathrm{init}}}
\newcommand{\Ddef}{\Delta_{\mathrm{def}}}
\newcommand{\Dcert}{\Delta_{\mathrm{cert}}}
\newcommand{\Duser}{\Delta_{\mathrm{user}}}

\usepackage{tabularx}
\title{Localize, Restart, Accelerate: Stochastic Optimization under Generalized Smoothness}

\author{
Darina Dvinskikh\textsuperscript{\rm 1},
Alexander Gasnikov\textsuperscript{\rm 2},
Aleksandr Lobanov\textsuperscript{\rm 3}, Ilgam Latypov\textsuperscript{\rm 3}
}

\affiliations{
\textsuperscript{\rm 1}HSE University, Moscow, Russia\\
\textsuperscript{\rm 2}Innopolis University, Innopolis, Republic of Tatarstan, Russia\\
\textsuperscript{\rm 3}MSU AI Center, Moscow, Russia\\
dmdvinskikh@hse.ru, gasnikov@yandex.ru, lobbsasha@mail.ru,
i.latypov@iai.msu.ru
}
\begin{document}

\maketitle

\begin{abstract}

We study stochastic convex optimization under asymmetric
\((L_0,L_1)\)-generalized smoothness, a model motivated by
machine-learning objectives whose local curvature may grow with
the gradient norm. We assume an unbiased first-order oracle with
additive norm-sub-Gaussian noise. Acceleration is difficult in
this setting because momentum may enter regions of much larger
curvature, while stochastic gradients cannot reliably certify
an unrestricted trajectory.
We propose \textsf{ARC-SG}, a two-phase accelerated method:
Phase~I reduces excessively large gradients using a
generalized-smoothness-aware stochastic step, then Phase~II solves
strongly convex proximal subproblems by a restarted accelerated
solver confined to certified smoothness balls. Exact proximal
points do not increase the gradient norm, allowing these
certificates to propagate through the outer loop. The contribution
is a query-by-query certified-localization construction with
explicit generalized-smoothness factors and a strongly convex
restart extension. \textsf{ARC-SG} achieves, with high probability,
an accelerated optimization contribution and smooth-subclass-optimal
statistical dependence on accuracy, up to logarithmic and
generalized-smoothness factors. Its convex accuracy exponents agree
with a contemporaneous public stochastic-acceleration result under a
broader smoothness and affine-variance model; our distinction is the
certified geometry, explicit parameter accounting, and strongly
convex guarantee. The results recover classical accelerated
stochastic rates when \(L_1=0\). Experiments on
objectives with unbounded gradients illustrate the two-phase
mechanism and its finite-budget advantage.


\end{abstract}

\section{Introduction}
\label{sec:intro}

We consider the stochastic convex optimization problem
\begin{equation}
\label{eq:problem}
    \min_{x\in\R^d}
    f(x)
    :=
    \E_{\xi\sim\mathcal D}\bigl[f_\xi(x)\bigr],
\end{equation}
where \(f:\R^d\to\R\) is convex and differentiable and is
accessed through an unbiased stochastic first-order oracle.
Classical first-order theory typically assumes that \(f\) is
globally \(L\)-smooth, that is, that \(\grad f\) is globally
Lipschitz continuous. A single global smoothness constant,
however, can be either invalid or highly uninformative when the
curvature varies substantially across the domain.

Such behaviour has been observed in modern machine-learning
objectives. In particular, \citet{zhang2020gradient} reported
that the local smoothness encountered during neural-network
training can be strongly correlated with the gradient norm.
This motivates the condition
   $ \norm{\grad^2 f(x)}
    \le
    L_0+L_1\norm{\grad f(x)},$
     for all  $x\in\R^d,$
known as \((L_0,L_1)\)- smoothness. The classical
smooth setting is recovered when \(L_1=0\), whereas \(L_1>0\)
allows the curvature to grow without a finite global Lipschitz
constant. The model also helps explain why gradient clipping and
normalization can be effective in regions where both gradients
and curvature are large. For twice-differentiable objectives,
the condition above implies, up to a universal rescaling of
\(L_0\) and \(L_1\), the asymmetric finite-difference condition
used in our analysis.

A substantial literature has developed around generalized
smoothness, first for nonconvex optimization
\citep{zhang2020improved,crawshaw2022robustness,
chen2023generalized,huebler2024parameter,
wang2023convergence} and subsequently for convex problems
\citep{koloskova2023revisiting,li2023convex,
gorbunov2024methods,vankov2024optimizing,
lobanov2024linear}. For convex optimization, the existing
literature now includes non-accelerated stochastic methods,
deterministic accelerated methods, and recent stochastic
acceleration.

On the stochastic side, \citet{sgd2025generalized} analyzed
non-accelerated SGD with constant, clipped, and
gradient-adaptive stepsizes, obtaining guarantees whose
deterministic optimization contribution decays as \(O(1/T)\).
\citet{gaash2025clipped} established high-probability guarantees
for clipped SGD that match classical smooth SGD up to
logarithmic and lower-order additive terms. More general
\(\ell\)-smooth objectives were studied by
\citet{li2023convex}, while related stochastic proximal-point
guarantees were obtained by \citet{tovmasyan2025proximal}.

On the deterministic side, \citet{gorbunov2024methods} and
\citet{vankov2024optimizing} developed accelerated methods whose
optimization error decreases as \(O(1/T^2)\) after an
\(L_1\)-dependent transient phase. 
\citet{tyurin2026near} obtained near-optimal deterministic rates
whose leading high-accuracy term has no nonconstant
multiplicative dependence on \(L_1R\), where
\(R\geq \norm{x_0-x^\star}\), $x_0$ is a starting point, $x_\star$ is a solution of  \eqref{eq:problem}.

On stochastic acceleration, \citet{yu2025stochastic} establish
high-probability guarantees for RSAG, SGD, and AdaGrad-Norm under
generalized smoothness and a relaxed affine-variance noise model.
Their general-noise convex function-gap rate is
\(\widetilde O(T^{-1/2})\), while an accelerated optimization
dependence is recovered under an additional low-noise condition.
The oracle models are related rather than disjoint: the additive
norm-sub-Gaussian assumption used here is the specialization
\(A=B=0\), \(C=\sigma^2\) of their affine-variance exponential-moment
condition (with fresh samples giving the corresponding conditional
form).

A still closer public result is the ICLR 2026 submission
\citep{anonymous2026nesterov}. Its stochastic Nesterov method has a
high-probability convex rate of the form
\(\widetilde O(T^{-2}+\sqrt{(A+B+C)/T})\) under a broader
\((L_0,L_1,L_2)\)-type generalized-smoothness condition and the same
affine-variance family. Setting their gradient exponent to \(p=1\),
\(L_2=0\), and \(A=B=0,C=\sigma^2\) specializes the displayed
assumptions to the gradient-dependent smoothness and additive-noise
regime considered here, and yields the same convex
\(\eps^{-1/2}\) optimization and \(\eps^{-2}\) statistical
exponents. As public prior art, its hidden parameter factors and
algorithmic certificates differ from ours. The present paper additionally
states a parameter-explicit strongly convex restart guarantee; the
comparison here is between the convex theorems.

Accordingly, we do not claim the first convex stochastic accelerated
accuracy exponent under generalized smoothness. Our more specific
contribution is a parameter-explicit construction in which every
stochastic query is confined to a certified ordinary-smoothness
region, together with a strongly convex restart extension. For
convex objectives, the relevant statistical benchmark is
   $ \Theta\left({\sigma^2R^2}/{\eps^2}\right),$
whereas for \(\mu\)-strongly convex objectives it is
   $ \Theta\left({\sigma^2}/{(\mu\eps)}\right)$
\citep{nemirovsky1983problem,nemirovski2009robust,
lan2012optimal,ghadimi2012optimal}.

This gap is not merely an artifact of existing analyses.
Accelerated methods are not descent methods: momentum may
temporarily move the iterates into regions with a substantially
larger gradient norm and hence a larger local smoothness scale.
Existing deterministic accelerated methods control this effect
through carefully coupled clipping rules
\citep{gorbunov2024methods} or monotone
\((L_0,L_1)\)-adaptive steps \citep{vankov2024optimizing}.
Both approaches use exact gradients to certify that the
trajectory remains in a region of controlled curvature. With
stochastic gradients, a single fluctuation may invalidate such
a trajectory-wide certificate, while repeatedly reconstructing
it would require prohibitively accurate gradient estimation.

We address this difficulty with \ARCSG{}
(\textbf{A}ccelerated, \textbf{R}estarted,
\textbf{C}ertified-ball method for
\textbf{S}tochastic optimization under
\textbf{G}eneralized smoothness). Rather than certifying an
unrestricted accelerated trajectory, ARC-SG builds the
certified region into the algorithm.

Phase~I is a stabilization stage that uses a
generalized-smoothness-aware stochastic step to reduce
excessively large gradients to the critical scale \(L_0/L_1\).
Phase~II approximately solves a sequence of strongly convex
proximal subproblems using a restarted accelerated stochastic
method confined to explicitly certified balls of radius
\(O(1/L_1)\). On each such ball, generalized smoothness reduces
to ordinary smoothness.
The proximal regularization is floored at the current certified
gradient scale. This floor keeps the exact proximal point inside
the certified ball and controls the condition number of the
subproblem. The analysis then exploits the fact that exact
ball-constrained proximal points do not increase the gradient
norm, allowing the local smoothness certificates to propagate
through the outer loop. A travel--halving argument controls the
number of productive proximal levels, while a capped schedule
and a certified finishing procedure ensure unconditional
termination.

The scope of our novelty is therefore the certified-query geometry,
the explicit \((L_0,L_1)\)-parameter accounting, and the strongly
convex restart extension, rather than priority for the convex
accuracy exponents. The guarantees cover both convex and strongly
convex objectives. In the strongly convex high-accuracy regime, the
statistical term has the optimal \(\sigma^2/(\mu\eps)\) scaling, and
when \(L_1=0\) our results recover classical accelerated stochastic
approximation.

The inner solver builds on accelerated stochastic approximation
and stochastic similar-triangles methods
\citep{nesterov1983method,nesterov2018lectures,
lan2012optimal,ghadimi2012optimal,ghadimi2013optimal,
gorbunov2020stochastic,sadiev2023high}. The proximal-level
architecture is related to recursive regularization for making
stochastic gradients small
\citep{nesterov2012make,allenzhu2018make,
foster2019complexity}. In ARC-SG, however, regularization serves
an additional geometric purpose: it certifies the local
smoothness region required by the accelerated stochastic inner
method.

Localized subproblems also have direct precedents. The ball
optimization oracle of \citet{carmon2020balloracle} is accelerated by
an outer method, and \citet{carmon2023resque} combine ball-oracle
acceleration with stochastic first-order estimation. The Broximal
method of \citet{gruntkowska2025broximal} studies minimization over
successive balls. A concurrent 2026 preprint---subsequent to those
earlier ball frameworks---by \citet{li2026trustregion} studies the
same formal ball-constrained
proximal objective
\(\min_{z\in B_t(x)}\{f(z)+\lambda\norm{z-x}^2/2\}\) in a
deterministic trust-region framework. Its parameter regimes are
designed to make the trust-region constraint active and obtain
linear decrease outside a target neighborhood. By contrast, our
gradient-scale regularization floor keeps the exact proximal point
strictly inside the certified ball, while the constraint prevents
the approximate noisy inner trajectory from leaving the region on
which ordinary smoothness has been certified. Thus the ball-proximal
form itself is not a novelty claim; the claimed contribution is its
use as a propagated stochastic smoothness certificate.

Table~\ref{tab:comparison} highlights the main contribution of this work.
Throughout the table, \(R\) denotes the initial-distance
quantity.  The target
\(\eps\) denotes function-gap accuracy, whereas
\(\varepsilon_{\mathrm g}\) in the recursive-regularization row
denotes gradient accuracy,
\(\norm{\grad f(x)}\le\varepsilon_{\mathrm g}\). For rows that depend on an initial function gap,
    $F_0:=f(x_0)-f^\star$
denotes the initial suboptimality in the corresponding result (\(F_0\) is interpreted separately in each row).
This generic notation is distinct from the ARC-SG inputs.
In the stochastic-NAG row, \(A,B,C\) are the affine-variance
parameters of the public ICLR 2026 submission
\citep{anonymous2026nesterov}, not parameters of ARC-SG.
For ARC-SG, \(R_0\) is a known distance certificate satisfying
    $R_0\ge\norm{x^0-x^\star},$
and we define
 $   \overline R:=3R_0,$
    and
    $\overline\kappa:=1+L_1\overline R.$
The ARC-SG initial-gap certificate is denoted by
\(\Delta_{\mathrm{cert}}\), not by \(F_0\).
 For AGMsDR,
\(\nu\) bounds the number of additional one-dimensional
line-search oracle calls per iteration. All entries suppress
absolute constants, polylogarithmic factors, and
accuracy-independent lower-order terms.
The two panels correspond to different oracle models and should
not be compared directly. Within the stochastic panel, the
guarantee type and parameterization differ across rows. The central
comparison is no longer a separation at the level of convex accuracy
exponents: the public ICLR 2026 submission
\citep{anonymous2026nesterov} attains the
same exponents in a broader model. ARC-SG instead contributes
query-by-query certified localization, explicit
generalized-smoothness factors, and a strongly convex restart result
under additive norm-sub-Gaussian noise.

\begin{table*}[t]
\centering
\footnotesize
\setlength{\tabcolsep}{3pt}
\renewcommand{\arraystretch}{1.12}
\resizebox{\textwidth}{!}{%
\begin{tabular}{@{}llllc@{}}
\toprule
\textbf{Method}
&
\textbf{Setting}
&
\textbf{Smoothness}
&
\textbf{Noise}
&
\textbf{Oracle complexity (leading terms)}
\\
\midrule

\multicolumn{5}{@{}l}{
    \emph{(A) Deterministic oracle (exact gradients)}
}
\\
\midrule

AGMsDR, line search \citep{vankov2024optimizing}
&
convex
&
\((\Lz,\Lo)\)
&
---
&
\(\displaystyle
(\nu+1)\left(
\sqrt{\frac{\Lz R^2}{\eps}}
+
(\Lo R)^{2/3}\log\frac{2F_0}{\eps}
\right)
\)
\\[2pt]

Clipped AGD \citep{gorbunov2024methods}
&
convex
&
\((\Lz,\Lo)\)
&
---
&
\(\displaystyle
\sqrt{
\frac{
\Lz\bigl(1+\Lo R e^{\Lo R}\bigr)R^2
}{\eps}
}
\)
\\[2pt]

Near-optimal AGM \citep{tyurin2026near}
&
convex
&
\((\Lz,\Lo)\)
&
---
&
\(\displaystyle
\sqrt{\frac{\Lz R^2}{\eps}}
+
\text{additive \(\Lo\)-terms}
\)
\\[2pt]

\textbf{\ARCSG{} specialization (\(\sigma=0\))}
&
convex
&
\((\Lz,\Lo)\)
&
---
&
\(\displaystyle
1+\Lo R_0
+
\bar{\kappa}^{1/2}
\sqrt{\frac{\Lz \bar R^2}{\eps}}
\)
\\[3pt]

\midrule
\multicolumn{5}{@{}l}{
    \emph{(B) Stochastic first-order oracle}
}
\\
\midrule

SGD \citep{nemirovski2009robust}
&
convex, in exp.
&
\(L\)
&
additive
&
\(\displaystyle
\frac{LR^2}{\eps}
+
\frac{\sigma^2R^2}{\eps^2}
\)
\\[2pt]

AC-SA \citep{lan2012optimal,ghadimi2012optimal}
&
convex, in exp.
&
\(L\)
&
additive
&
\(\displaystyle
\sqrt{\frac{LR^2}{\eps}}
+
\frac{\sigma^2R^2}{\eps^2}
\)
\\[2pt]

ClipSSTM \citep{gorbunov2020stochastic}
&
convex, high prob.
&
\(L\)
&
heavy-tailed
&
\(\displaystyle
\sqrt{\frac{LR^2}{\eps}}
+
\frac{\sigma^2R^2}{\eps^2}
\)
\\[2pt]

Recursive reg. \citep{foster2019complexity}
&
convex,
\(\norm{\grad f}\le\varepsilon_{\mathrm g}\)
&
\(L\)
&
additive
&
\(\displaystyle
\frac{\sqrt{L F_0}}{\varepsilon_{\mathrm g}}
+
\frac{\sigma^2}{\varepsilon_{\mathrm g}^2}
\)
\\[2pt]

SGD, \((\Lz,\Lo)\)-steps
\citep{sgd2025generalized}
&
convex, in exp.
&
\((\Lz,\Lo)\)
&
additive
&
\(\displaystyle
\frac{\Lz R^2}{\eps}
+
\frac{\sigma^2R^2}{\eps^2}
+
\text{burn-in}(\Lo)
\)
\\[2pt]

Clipped SGD \citep{gaash2025clipped}
&
convex, high prob.
&
\((\Lz,\Lo)\)
&
sub-Gaussian
&
\(\displaystyle
\frac{\Lz R^2}{\eps}
+
\frac{\sigma^2R^2}{\eps^2}
+
\text{additive terms}
\)
\\[2pt]

Acc.\ (RSAG) \citep{yu2025stochastic}
&
convex, high prob.
&
generalized
&
affine variance
&
\(\displaystyle
\eps^{-2}
\ \text{statistical (general noise)};
~
\eps^{-1/2}
\ \text{optimization (low noise)}
\)
\\[2pt]

Stoch.\ NAG \citep{anonymous2026nesterov}
&
convex, high prob.
&
\((L_0,L_1,L_2)\)-type
&
affine variance
&
\(\displaystyle
\eps^{-1/2}
+
\frac{A+B+C}{\eps^2}
\quad\text{(parameter factors suppressed)}
\)
\\[2pt]

\textbf{\ARCSG{} (Theorem~\ref{thm:convex})} & convex, high prob. & $(\Lz,\Lo)$ & norm-sub-Gauss. & $1+\Lo R_0 + \kbar^{\frac12}\sqrt{\frac{\Lz\Rb^2}{\eps}} + \kbar^{3}\frac{\sigma^2\Rb^2}{\eps^2}$ \\[3pt]
\textbf{\ARCSG{} (Theorem~\ref{thm:strongly})} & $\mu$-str.\ cvx, high prob. & $(\Lz,\Lo)$ & norm-sub-Gauss. & $(1+\Lo R_0)\left(1+\sqrt{\frac{\Lz}{\mu}}\right) + \sqrt{\frac{\Lz}{\mu}}\logp\!\left(\frac{\mu R_0^2}{\eps}\right) + \frac{\sigma^2}{\mu\eps}$\; $\left(\eps\le\frac{\mu}{36\Lo^2}\right)$ \\

\bottomrule
\end{tabular}%
}
\caption{Complexity comparison up to absolute constants and polylogarithmic factors. The stochastic-NAG row reports the public ICLR 2026 submission \citep{anonymous2026nesterov} for prior-art transparency. Its displayed assumptions specialize to the present convex model at $p=1$, $L_2=0$, and $A=B=0,C=\sigma^2$, so its accuracy exponents coincide with ours; the parameter factors and certification mechanisms differ. The \ARCSG{} rows omit the $\eps$-independent burn-ins shown in Theorems~\ref{thm:convex}--\ref{thm:strongly} and treat $\Dcert$ as an externally supplied certificate. Under the default certificate, with $q:=\Lo R_0$, the Phase-I deterministic entry is $\tO((1+q)^2)$ and its statistical burn-in is $\tO((1+q)^4\sigma^2\Lo^2/\Lz^2)$; see Remark~\ref{rem:convex-default} and the supplementary material.}
\label{tab:comparison}
\end{table*}

\paragraph{Contributions.} The main contributions  are the following:

\begin{itemize}
    \item We introduce \ARCSG{}, a two-phase algorithm combining
generalized-smoothness-aware gradient stabilization with restarted
accelerated solves on certified proximal balls, with every stochastic
query confined to a region carrying an explicit ordinary-smoothness
certificate.
\item We prove high-probability guarantees for convex and
strongly convex objectives that combine accelerated optimization
with smooth-subclass-optimal statistical accuracy dependence;
when \(L_1=0\), the bounds recover standard accelerated
stochastic rates.
\item We combine gradient--gap localization, monotonicity of the
gradient norm at ball-constrained proximal points, certified
smoothness on local balls, and a travel--halving argument to control
the number of proximal levels. The claim is about this certified
stochastic construction, not priority for each ingredient in
isolation.
\item Numerical experiments on objectives
with unbounded gradients isolate the roles of stabilization,
localization, acceleration, and late-stage averaging.
\end{itemize}

\section{Preliminaries}
\label{sec:setup}

Let 
$X^\star:=\arg\min\limits_{x\in\R^d}f(x)\neq\varnothing,$
    and
    $f^\star:=\min\limits_{x\in\R^d}f(x),$
and fix \(x^\star\in X^\star\). We assume that a distance
certificate
   $ R_0\ge\norm{x^0-x^\star}$
is available. For every \(x\in\R^d\), define
\[
    \Delta_x:=f(x)-f^\star,
    \qquad
    G_x:=\norm{\grad f(x)}.
\]
In particular,
    $\Dinit:=\Delta_{x^0}=f(x^0)-f^\star.$
We use
\(1/L_1:=+\infty\) when \(L_1=0\), and for every \(a\ge0\) set
\(\log_+ a:=\log(\max\{e,a\})\). Thus \(\log_+a=\max\{1,\log a\}\)
when \(a>0\), while the convention also covers zero-valued certificates.


\begin{assumption}[Convexity]
\label{ass:convex}
$f(x)$ is convex and differentiable.
\end{assumption}

\begin{assumption}[$(\Lz,\Lo)$-generalized smoothness]
\label{ass:gs}
There exist $\Lz>0$, $\Lo\ge 0$ such that for all $x,y\in\R^d$ satisfying $\norm{y-x}\le 1/\Lo$ we have
\begin{equation*}
    \norm{\grad f(y)-\grad f(x)} \le \big(\Lz+\Lo\norm{\grad f(x)}\big)\norm{y-x}.
\end{equation*}
\end{assumption}

Assumption~\ref{ass:gs} states that the gradient is locally
Lipschitz continuous.
The condition is asymmetric because the right-hand side depends
on \(\norm{\grad f(x)}\), rather than symmetrically on both
\(x\) and \(y\). This is the local formulation used by
\citet{vankov2024optimizing,gorbunov2024methods}.
When \(\Lo=0\), the restriction on \(\norm{y-x}\) disappears
and Assumption~\ref{ass:gs} reduces to ordinary global
\(\Lz\)-smoothness. Moreover, if \(f\) is twice differentiable
and satisfies
\[
    \norm{\grad^2 f(x)}
    \le
    \Lz+\Lo\norm{\grad f(x)}
    \qquad
    \text{for all }x\in\R^d,
\]
then Assumption~\ref{ass:gs} follows after a universal rescaling
of the constants \(\Lz\) and \(\Lo\).

\begin{assumption}[Sub-Gaussian stochastic oracle]
\label{ass:noise}
For every query point \(x\in\R^d\), the oracle returns a random
vector \(g(x,\xi)\) satisfying
   $ \E\!\left[g(x,\xi)\mid x\right]
    =
    \grad f(x).$
The oracle noise
\[
    \zeta(x,\xi)
    :=
    g(x,\xi)-\grad f(x)
\]
is conditionally norm-sub-Gaussian with parameter \(\sigma\ge0\).
For \(\sigma>0\), this means
\[
    \E\!\left[
        \exp\left(
            {\norm{\zeta(x,\xi)}^2}/{\sigma^2}
        \right)
        \,\middle|\, x
    \right]
    \le e.
\]
For \(\sigma=0\), we use the deterministic-oracle convention
\(\zeta(x,\xi)=0\) almost surely (conditionally on the past), so no
division by zero is intended.

More generally, let \(\mathcal F\) denote the sigma-algebra
generated by all randomness observed before an oracle call, and
let \(x\) be an \(\mathcal F\)-measurable query point. When
\(\sigma>0\), conditional on \(\mathcal F\), every newly drawn oracle
sample satisfies
\begin{align*}
 \E\!\left[g(x,\xi)\mid\mathcal F\right]
    =
    \grad f(x), ~ ~ \E\!\left[
        \exp\left(
           \frac{\norm{\zeta(x,\xi)}^2}{\sigma^2}
        \right)
        \,\middle|\,\mathcal F
    \right]
    \le e.
\end{align*}
Samples within each mini-batch are conditionally independent
given \(\mathcal F\). When \(\sigma=0\), the same statement is read
using the deterministic-oracle convention above.
\end{assumption}

Assumption~\ref{ass:noise} covers almost surely bounded noise and implies $\E\norm{\zeta}^2\le\sigma^2$.
Moreover, let
\begin{equation}
    \label{eq:grad_est}
      \gest
    :=
    \frac{1}{B}
    \sum_{i=1}^{B} g(x,\xi_i)
\end{equation}
be the average of \(B\)  oracle calls at the same
\(\mathcal F\)-measurable point \(x\). By the vector Hoeffding inequality for norm-sub-Gaussian random
vectors \citep{jin2019short}, there exists an absolute constant
\(c_{\mathrm g}>0\) such that, for every \(\rho\in(0,1)\),
   $ \norm{\gest-\grad f(x)}
    \le
    c_{\mathrm g}\sigma
    \sqrt{{\log(2/\rho)}/{B}}$
with conditional probability at least \(1-\rho\).

The next  statements (proved in the supplementary material) are the geometric core of the method. Lemmas~\ref{lem:descent}--\ref{lem:localization}
justify the stabilization mechanism of Phase~I, while
Lemma~\ref{lem:prox-grad} and Lemma~\ref{lem:ball} provide containment and uniform local smoothness for the
accelerated proximal solves of Phase~II. 

\paragraph{Local descent under generalized smoothness.}
 The next result  is
the generalized-smoothness analogue of the classical descent
lemma and is the main tool used to analyze Phase~I and to derive
the gradient--gap relation below.

\begin{lemma}[Generalized descent]
\label{lem:descent}
Under Assumption~\ref{ass:gs}, if $\norm{y-x}\le1/L_1$, then
\[
 f(y)\le f(x)+\ip{\grad f(x)}{y-x}
 +(L_0+L_1G_x)\norm{y-x}^2/2.
\]
\end{lemma}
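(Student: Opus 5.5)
The plan is the standard integral argument behind the classical descent lemma, with the global Lipschitz bound replaced by the local bound of Assumption~\ref{ass:gs} along the segment from \(x\) to \(y\). Set \(h:=y-x\) and \(\phi(t):=f(x+th)\) for \(t\in[0,1]\). Since \(f\) is differentiable, \(\phi\) is differentiable with \(\phi'(t)=\ip{\grad f(x+th)}{h}\). We then write
\[
 f(y)-f(x)-\ip{\grad f(x)}{h}=\int_0^1\ip{\grad f(x+th)-\grad f(x)}{h}\,dt .
\]

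To justify the fundamental theorem of calculus here, we need \(\phi'\) to be integrable. This follows from Assumption~\ref{ass:gs}: for every \(t\in[0,1]\) we have \(\norm{th}\le\norm{h}\le 1/L_1\). Hence
\[
 \norm{\grad f(x+th)-\grad f(x)}\le (L_0+L_1G_x)\,t\norm{h}.
\]
So \(\phi'\) is bounded on \([0,1]\). Moreover, the same assumption applied around each point of the segment shows that \(\grad f\) is locally Lipschitz, hence continuous, on that segment. Therefore \(\phi\) is \(C^1\) and the integral identity holds. This regularity check is the only step needing any care, since only differentiability of \(f\) is assumed globally. In the case \(L_1=0\), the convention \(1/L_1=+\infty\) removes the distance restriction, and the same argument applies verbatim.

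Finally, apply Cauchy--Schwarz inside the integral:
\[
 \int_0^1 (L_0+L_1G_x)\,t\norm{h}^2\,dt=\frac{(L_0+L_1G_x)\norm{h}^2}{2}.
\]
This yields exactly the claimed inequality. Note that the asymmetry of Assumption~\ref{ass:gs} works in our favour: the base point of every comparison is \(x\), so the constant involves only \(G_x\) and never the gradient at intermediate points. No obstacle beyond the regularity remark is expected.
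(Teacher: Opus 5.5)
Your proposal is correct and matches the paper's proof: the integral identity along \(x+t(y-x)\), Assumption~\ref{ass:gs} applied with base point \(x\) (valid since \(\|t(y-x)\|\le 1/L_1\)), Cauchy--Schwarz, and \(\int_0^1 t\,dt=\tfrac12\). Your regularity remark, that the assumption makes \(\nabla f\) continuous so the fundamental theorem of calculus applies, is a harmless extra detail that the paper leaves implicit.
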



\paragraph{Gradient localization on sublevel sets.}
The convergence guarantees of ARC-SG are stated in terms of the
function gap, whereas the algorithm controls the gradient norm
and the corresponding local smoothness scale
\(L_0+L_1G_x\). The generalized descent inequality allows us to
connect these quantities. The following lemma shows that a
bound on the function gap automatically yields bounds on both
the gradient norm and the local curvature. It therefore
identifies the critical regime in which the large-gradient
stabilization phase can terminate and accelerated local solves
become possible.

\begin{lemma}[Gradient--gap relation]
\label{lem:localization}
Under Assumptions~\ref{ass:convex}--\ref{ass:gs}, for every $x \in \R^d$,
\begin{equation*}
G_x^2\le 2(L_0+L_1G_x)\Delta_x,
\mbox{ hence }
G_x\le 2L_1\Delta_x+\sqrt{2L_0\Delta_x}.
\end{equation*}
Consequently, on the sublevel set $\{ x \in \R^d : \Delta_x\le\Delta\}$,
\[
L_0+L_1G_x\le 2L_0+3L_1^2\Delta.
\]
When \(L_1>0\), define the critical gradient and
gap scales by
   $ \overline G:={L_0}/{L_1},$
  and
   $ \overline\Delta:={L_0}/{L_1^2}.$
If \(\Delta_x\le\overline\Delta\), then
\[
    G_x
    \le
    (2+\sqrt 2)\,\overline G,
    \qquad
    L_0+L_1G_x
    \le
    (3+\sqrt 2)L_0
    <
    5L_0.
\]
\end{lemma}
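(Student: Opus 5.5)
The plan is to derive the first inequality from Lemma~\ref{lem:descent} with a gradient step, and then obtain everything else from it by elementary algebra.

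\textbf{Step 1: the basic inequality.} Fix \(x\) and assume \(G_x>0\); otherwise every claim is trivial. Set
\[
y:=x-\frac{1}{L_0+L_1G_x}\grad f(x).
\]
Then \(\norm{y-x}=G_x/(L_0+L_1G_x)\le 1/L_1\), with the convention \(1/L_1=+\infty\) when \(L_1=0\). So Lemma~\ref{lem:descent} applies and gives
\[
f(y)\le f(x)-\frac{G_x^2}{2(L_0+L_1G_x)}.
\]
Since \(f(y)\ge f^\star\), this rearranges to \(G_x^2\le 2(L_0+L_1G_x)\Delta_x\). Convexity is only used to make the setting consistent. The one real point is choosing the step length so that the locality restriction of Assumption~\ref{ass:gs} is respected automatically. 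The full step \(1/(L_0+L_1G_x)\) already does this, so no case split is needed.

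\textbf{Step 2: explicit gradient bound.} View the inequality as the quadratic condition \(G^2-2L_1\Delta G-2L_0\Delta\le0\) in \(G=G_x\), with \(\Delta=\Delta_x\). Hence \(G\) is at most the larger root:
\[
G_x\le L_1\Delta_x+\sqrt{L_1^2\Delta_x^2+2L_0\Delta_x}\le 2L_1\Delta_x+\sqrt{2L_0\Delta_x},
\]
where the last step uses \(\sqrt{a+b}\le\sqrt a+\sqrt b\).

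\textbf{Step 3: curvature bound on the sublevel set.} If \(\Delta_x\le\Delta\), Step 2 gives
\[
L_0+L_1G_x\le L_0+2L_1^2\Delta+L_1\sqrt{2L_0\Delta}.
\]
By AM--GM, \(L_1\sqrt{2L_0\Delta}\le L_0+\tfrac12 L_1^2\Delta\). This yields \(2L_0+\tfrac52 L_1^2\Delta\le 2L_0+3L_1^2\Delta\).

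\textbf{Step 4: the critical scale.} Substitute \(\Delta_x\le \overline\Delta=L_0/L_1^2\) into Step 2. This gives \(G_x\le 2L_0/L_1+\sqrt2\,L_0/L_1=(2+\sqrt2)\overline G\). Multiplying by \(L_1\) and adding \(L_0\) gives \(L_0+L_1G_x\le(3+\sqrt2)L_0<5L_0\).

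Only Step 1 needs any care; Steps 2--4 are routine algebra.
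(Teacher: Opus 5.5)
Your proposal is correct and matches the paper's proof essentially step for step. It uses the same gradient step of length \(1/(L_0+L_1G_x)\) fed into the generalized descent lemma, the same larger-root bound on the quadratic, the same Young/AM--GM estimate giving \(2L_0+\tfrac52L_1^2\Delta\), and direct substitution at \(\overline\Delta=L_0/L_1^2\).
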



\paragraph{Initial-gap certificates.}
The algorithm does not require the exact value of
the  actual initial function gap \( \Dinit:=\Delta_{x^0}=f(x^0)-f(x^\star)\). It only uses a certified upper bound
on it: $\Dcert$ such that $\Dcert\ge\Dinit.$
 A parameter-free choice for this upper bound is
\begin{equation}
\Ddef:=
\begin{cases}
\displaystyle {L_0}\bigl(e^{L_1R_0}-1-L_1R_0\bigr)/{L_1^2},&L_1>0,\\[1ex]
\displaystyle {L_0R_0^2}/{2},&L_1=0,
\end{cases}
\label{eq:default-gap}
\end{equation}
which always satisfies $\Dinit\le\Ddef$. If another certificate $\Delta_{\rm user}$ is available, we use $\Dcert:=\min\{\Delta_{\rm user},\Ddef\}$.
Both \(\Delta_{\rm user}\) and \(\Ddef\) are upper bounds on \(\Dinit\);
hence their minimum is also valid:
    $\Dinit\le\Dcert.$


\paragraph{Exact proximal points.}
For the analysis of Phase~II, we introduce the exact solution of
a ball-constrained proximal subproblem. Given a center
\(c\in\R^d\), a regularization parameter \(\lambda>0\), and a
radius \(r\in(0,+\infty]\), define
\begin{equation}
\label{eq:prox}
\widehat x(c;\lambda,r)
:=
\arg\min_{x\in\B(c,r)}
\left\{
f(x)+{\lambda}\norm{x-c}^2/2
\right\},
\end{equation}
where
   $ \B(c,r):=
    \{x\in\R^d:\norm{x-c}\le r\}.$
The algorithm does not compute this point exactly: R-STM
returns an approximate solution of the corresponding proximal
subproblem. The exact point \(\widehat x(c;\lambda,r)\) serves
as an analytical reference for establishing containment,
gradient monotonicity, and progress across proximal levels. Its
key monotonicity properties are summarized in the following
lemma.
\begin{lemma}[Proximal gradient and distance monotonicity]
\label{lem:prox-grad}
Let Assumption~\ref{ass:convex} hold and let
\(\widehat x:=\widehat x(c;\lambda,r)\). Then
   $ \norm{\grad f(\widehat x)}
    \le
    \norm{\grad f(c)},$
and
  $  \norm{\widehat x-c}
    \le
    {\norm{\grad f(c)}}/{\lambda}.$

Moreover, for every \(u\in X^\star\),
   $ \norm{\widehat x-u}^2
    \le
    \norm{c-u}^2-\norm{\widehat x-c}^2.$
If \(r=+\infty\), then
    $\norm{\widehat x-c}
    =
   {\norm{\grad f(\widehat x)}}/{\lambda}$.

\end{lemma}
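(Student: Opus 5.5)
The plan is to derive all four claims from a single first-order optimality characterization of \(\widehat x\), combined with monotonicity of \(\grad f\) and the convexity inequality at a minimizer.

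\textbf{Well-posedness and optimality condition.} Let \(\varphi(x):=f(x)+\lambda\norm{x-c}^2/2\). By Assumption~\ref{ass:convex}, \(\varphi\) is \(\lambda\)-strongly convex, hence coercive, and \(\B(c,r)\) is closed and convex. So \(\widehat x\) exists and is unique; for \(r=+\infty\) the set is all of \(\R^d\). The standard first-order condition is the variational inequality
\[
\ip{\grad f(\widehat x)+\lambda(\widehat x-c)}{y-\widehat x}\ge0
\qquad\text{for all } y\in\B(c,r).
\]
I will convert it into the normal-cone form
\[
\grad f(\widehat x)=-s(\widehat x-c)\quad\text{for some } s\ge\lambda,
\]
in two cases.
\begin{itemize}
\item If \(\norm{\widehat x-c}<r\), which includes every \(r=+\infty\) case, the point \(\widehat x\) is interior. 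Hence \(\grad\varphi(\widehat x)=0\), giving \(s=\lambda\).
\item If \(\norm{\widehat x-c}=r\), the normal cone of the ball at \(\widehat x\) is \(\{\nu(\widehat x-c):\nu\ge0\}\). Hence \(-\grad\varphi(\widehat x)=\nu(\widehat x-c)\), giving \(s=\lambda+\nu\).
\end{itemize}
This step is the only one that needs care. Its main subtlety is the boundary normal cone, which I will justify directly from the variational inequality by testing \(y\) on the sphere.

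\textbf{Gradient and distance bounds.} Write \(\delta:=\norm{\widehat x-c}\). The normal-cone identity gives \(G_{\widehat x}=s\delta\). Monotonicity of the gradient of a convex function, applied to \(c\) and \(\widehat x\), gives
\[
\ip{\grad f(c)-\grad f(\widehat x)}{c-\widehat x}\ge0 .
\]
Substituting \(\grad f(\widehat x)=s(c-\widehat x)\) turns this into \(\ip{\grad f(c)}{c-\widehat x}\ge s\delta^2\). Cauchy--Schwarz then yields \(G_c\,\delta\ge s\delta^2\), so \(G_c\ge s\delta=G_{\widehat x}\) whenever \(\delta>0\). If \(\delta=0\), then \(\widehat x=c\) and the claim is trivial. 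Both conclusions follow together: \(G_{\widehat x}\le G_c\), and \(\delta\le G_c/s\le G_c/\lambda\) because \(s\ge\lambda\). When \(r=+\infty\) we have \(s=\lambda\), so \(\delta=G_{\widehat x}/\lambda\) holds with equality.

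\textbf{Fejér-type inequality.} Fix \(u\in X^\star\); note \(u\) need not lie in the ball. Convexity gives
\[
\ip{\grad f(\widehat x)}{\widehat x-u}\ge f(\widehat x)-f^\star\ge0 .
\]
Since \(\grad f(\widehat x)=s(c-\widehat x)\) with \(s>0\), this means \(\ip{c-\widehat x}{\widehat x-u}\ge0\). Expanding \(c-u=(c-\widehat x)+(\widehat x-u)\) gives
\[
\norm{c-u}^2=\norm{c-\widehat x}^2+\norm{\widehat x-u}^2+2\ip{c-\widehat x}{\widehat x-u}.
\]
Dropping the nonnegative cross term gives \(\norm{\widehat x-u}^2\le\norm{c-u}^2-\norm{\widehat x-c}^2\). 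This completes the plan; everything beyond the optimality characterization in the first step is routine algebra.
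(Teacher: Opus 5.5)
Your proposal is correct and follows the paper's proof essentially step for step: the KKT/normal-cone identity $\grad f(\widehat x)=-(\lambda+\eta)(\widehat x-c)$ with $\eta\ge0$, then monotonicity of $\grad f$ between $c$ and $\widehat x$ plus Cauchy--Schwarz for the gradient-norm and distance bounds (with $\eta=0$ giving equality when $r=+\infty$), and finally the sign of $\ip{c-\widehat x}{\widehat x-u}$ expanded into the Fej\'er inequality. The only cosmetic difference is that in the last step you use the convexity inequality $\ip{\grad f(\widehat x)}{\widehat x-u}\ge f(\widehat x)-f^\star\ge0$, whereas the paper uses monotonicity of $\grad f$ against $\grad f(u)=0$; the two give the same fact.
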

\paragraph{From gradient caps to certified smoothness balls.}
The generalized-smoothness condition provides a local curvature
bound that depends on the gradient norm at the current point.
This is not sufficient by itself to apply R-STM, whose analysis
requires a single ordinary smoothness constant on the entire
region visited by the inner solver. The following lemma converts
a certified gradient cap at the center of a sufficiently small
ball into a uniform smoothness bound on that ball.

\begin{lemma}[Certified smoothness ball]
\label{lem:ball}
Assume $L_1>0$, let $\widehat G\ge\norm{\grad f(c)}$, and choose $0<r\le1/(2L_1)$. For all $x,y\in\B(c,2r)$,
\[
G_x\le e\left(\widehat G+\frac{L_0}{L_1}\right), ~
\norm{\grad f(x)-\grad f(y)}\le L_{\rm ball}\norm{x-y}.
\]
where $L_{\rm ball}:=4(L_0+L_1\widehat G)$. Hence $f$ is $L_{\rm ball}$-smooth along every segment in $\B(c,2r)$. For $L_1=0$, the conclusion holds globally with $L_{\rm ball}=L_0$.
\end{lemma}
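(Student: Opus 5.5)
The plan is to first bound the gradient norm uniformly on $\B(c,2r)$, and then convert that bound into the Lipschitz estimate by applying Assumption~\ref{ass:gs} along short sub-segments. The case $L_1=0$ is immediate: Assumption~\ref{ass:gs} then imposes no distance restriction and gives global $L_0$-smoothness. From now on assume $L_1>0$.

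\emph{Step 1 (gradient growth along rays).} Fix $x\in\B(c,2r)$ and set $\phi(t):=\norm{\grad f(c+t(x-c))}$ for $t\in[0,1]$. The segment has length $\norm{x-c}\le 2r\le 1/L_1$. For any two parameters $s<t$, the corresponding points are at distance at most $1/L_1$, so Assumption~\ref{ass:gs} based at the earlier point gives
\[
\phi(t)\le \phi(s)+\bigl(L_0+L_1\phi(s)\bigr)(t-s)\norm{x-c}.
\]
Let $\psi:=\phi+L_0/L_1$. Then $\psi(t)\le \psi(s)\bigl(1+L_1\norm{x-c}(t-s)\bigr)$. Partition $[0,1]$ into $n$ equal pieces and chain these inequalities to get
\[
\psi(1)\le \psi(0)\bigl(1+L_1\norm{x-c}/n\bigr)^n\le \psi(0)\,e^{L_1\norm{x-c}}\le e\,\psi(0).
\]
This is a discrete Grönwall argument that needs no differentiability of $\phi$. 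Since $\phi(0)\le\widehat G$, it yields $G_x\le e(\widehat G+L_0/L_1)$, which is the first claim.

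\emph{Step 2 (Lipschitz bound).} For $x,y\in\B(c,2r)$ we have $\norm{x-y}\le 4r\le 2/L_1$, so Assumption~\ref{ass:gs} cannot be applied to the pair $(x,y)$ directly. Instead, split the segment $[x,y]$, which lies in the ball by convexity, into consecutive points $z_0=x,\dots,z_m=y$ with each step of length at most $1/L_1$. Apply Assumption~\ref{ass:gs} based at each $z_i$ and use Step 1 to bound $L_0+L_1G_{z_i}\le L_0+eL_1\widehat G+eL_0=(1+e)L_0+eL_1\widehat G$. Summing over the pieces gives
\[
\norm{\grad f(x)-\grad f(y)}\le\bigl((1+e)L_0+eL_1\widehat G\bigr)\norm{x-y}\le 4(L_0+L_1\widehat G)\norm{x-y},
\]
because $1+e<4$ and $e<4$. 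Smoothness along every segment then follows directly.

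\emph{Main obstacle.} The only delicate step is Step 1. The assumption is asymmetric and only local, so the gradient at a point can be controlled only in terms of the gradient at a nearby base point. Controlling the gradient along the whole ray therefore requires the multiplicative chaining, and the constant $e$ comes precisely from $2rL_1\le1$. Everything else is bookkeeping of constants.
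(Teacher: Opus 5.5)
Your proof is correct and follows essentially the same route as the paper. The paper gets the gradient bound from its gradient-growth lemma, which is the same discrete Gr\"onwall chain, giving $G_x\le e^{2L_1 r}(\widehat G+2L_0 r)$; it then splits $[x,y]$ into at most two pieces of length at most $1/L_1$, applies Assumption~\ref{ass:gs} at each base point with that bound, and uses $1+e\le4$, $e\le4$. One small remark: since $\norm{x-c}\le 2r\le 1/L_1$, Step~1 needs no chaining at all, so the step you flag as the main obstacle is really a single application of the assumption based at $c$. The case $n=1$ of your own inequality already gives the sharper bound $\psi(1)\le(1+L_1\norm{x-c})\,\psi(0)\le 2\psi(0)$.
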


\section{The ARC-SG Method}
\label{sec:method}
We now describe the method for solving the stochastic convex optimization problem \eqref{eq:problem}. It
has two phases with different objectives: \PhaseI{} is a short stabilization stage: it reduces a potentially very large gradient to the critical scale $\overline G=L_0/L_1$, and \PhaseII{} then accelerates a sequence of localized proximal problems. We state the phases separately and then combine them in a wrapper, which we refer to as
\ARCSG{}  (\textbf{A}ccelerated, \textbf{R}estarted, \textbf{C}ertified-ball method for \textbf{S}tochastic optimization under \textbf{G}eneralized smoothness). The method is described in Algorithm~\ref{alg:arc}, see also  Figure~\ref{fig:schematic}  for greater clarity. 

\begin{figure}[ht]
\centering
\includegraphics[width=0.85\columnwidth]{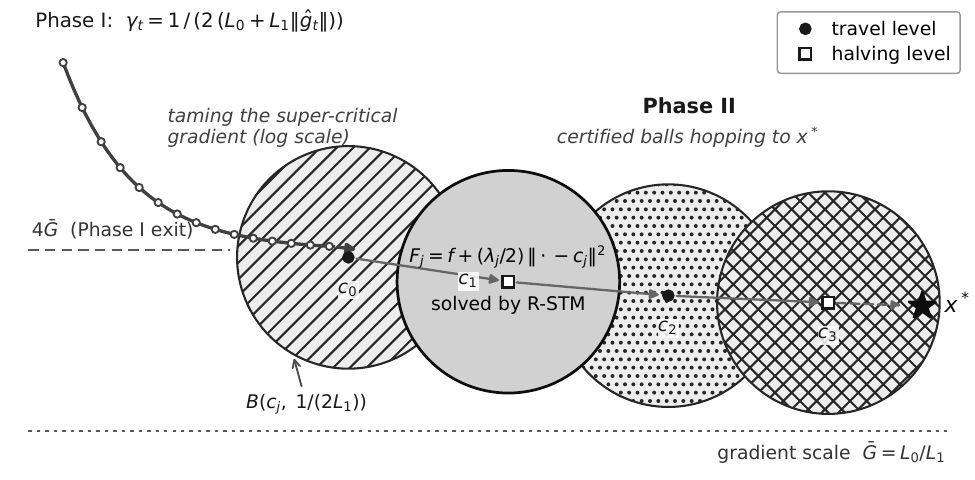}
\caption{Anatomy of \ARCSG{} on a generalized-smooth objective. \PhaseI{} (red) contracts the gap while the gradient is super-critical; \PhaseII{} (blue) alternates travel and halving proximal levels, each solved by \RSTM{} inside a ball of radius $O(1/\Lo)$ (shaded), until the gradient certificate, the regularization floor, or the certified finisher certifies $\eps$-optimality.}
\label{fig:schematic}
\end{figure}


\paragraph{The accelerated inner solver: \RSTM{}}
The inner routine  is an epoch-restarted constrained stochastic similar-triangles method (\RSTM{}). It is run only on a ball where Lemma~\ref{lem:ball} supplies an ordinary smoothness constant. Strong convexity shrinks the certified epoch radius together with the current gap, preventing the statistical term from scaling quadratically in the target subproblem accuracy.

\begin{proposition}[R-STM]
\label{prop:rstm}
Let $F$ be $\lambda$-strongly convex and $L$-smooth on $\B(c,2r)$, with minimizer $x_F^\star\in\B(c,r)$. Suppose $y^{(0)}\in\B(c,r)$ and $F(y^{(0)})-F(x_F^\star)\le H_0$. For subproblem accuracy $\delta>0$ and a failure
probability \(\beta\in(0,1)\), R-STM returns a point \(y\) such
that, with probability at least \(1-\beta\),
\[
F(y)-F(x_F^\star)\le\delta,\qquad
\norm{y-x_F^\star}\le\sqrt{{2\delta}/{\lambda}},
\]
using
$\tO\!\left(
\sqrt{\frac{L}{\lambda}}\logp\!\left(\frac{H_0}{\delta}\right)
+\frac{\sigma^2}{\lambda\delta}
\right)$
stochastic gradient calls. Every query generated by R-STM lies in \(\B(c,2r)\).
\end{proposition}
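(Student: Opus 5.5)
R-STM will run in epochs $k=0,1,\dots,K-1$, each a projected stochastic similar-triangles method (STM) on $F$, constrained to a ball $\B(y^{(k)},r_k)\cap\B(c,r)$ around the current epoch start. The whole argument rests on one epoch-level claim, and the epochs are then chained. The inductive invariant is: with probability at least $1-k\beta/K$,
\[
F(y^{(k)})-F(x_F^\star)\le H_k:=\max\{H_0 2^{-k},\delta\}
\qquad\text{and}\qquad
y^{(k)}\in\B(c,r).
\]
By $\lambda$-strong convexity, the invariant gives
\[
\norm{y^{(k)}-x_F^\star}\le R_k:=\sqrt{2H_k/\lambda}.
\]
We take the epoch radius $r_k:=R_k$. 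Then $x_F^\star$ is feasible for the epoch's constrained problem, and the constrained minimizer coincides with $x_F^\star$. Every STM query is a convex combination of feasible points, or a projection onto the intersection of balls, so it lies in $\B(c,r)\subset\B(c,2r)$. There $F$ is $L$-smooth, so the standard smooth-STM analysis applies on the feasible set. This settles the query-containment claim directly.

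Within epoch $k$, I would invoke the high-probability STM bound for $L$-smooth convex problems on a bounded set of radius $R_k$ with sub-Gaussian noise. This is the ClipSSTM/AC-SA type estimate; clipping is unnecessary here because the noise is light-tailed. With mini-batch size $B_k$ and $N_k$ steps, it reads
\[
F(\bar y)-F^\star\lesssim \frac{L R_k^2}{N_k^2}+\frac{\sigma R_k\sqrt{\log(K/\beta)}}{\sqrt{N_kB_k}}.
\]
Concretely, the martingale term $\sum\alpha_t\ip{\zeta_t}{x^\star-z_t}$ will be bounded by Freedman/Azuma, using the bound $\norm{x^\star-z_t}\le 2R_k$ that the constraint provides. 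I would choose
\[
N_k\asymp\sqrt{L/\lambda},
\qquad
N_kB_k\asymp\frac{\sigma^2R_k^2\log(K/\beta)}{H_k^2}\asymp\frac{\sigma^2\log(K/\beta)}{\lambda H_k}.
\]
This makes both terms at most $H_k/4$, which yields $H_{k+1}=H_k/2$. The invariant then propagates by a union bound over $K=\lceil\log_2(H_0/\delta)\rceil$ epochs.

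Summing the calls gives
\[
\sum_k N_kB_k\lesssim K\sqrt{L/\lambda}+\frac{\sigma^2}{\lambda}\sum_k\frac{1}{H_k}.
\]
The second sum is geometric and dominated by its last term, so it is $O\bigl(\sigma^2/(\lambda\delta)\bigr)$ up to logarithms. This recovers the stated $\tO$ complexity. The final output satisfies $F(y)-F^\star\le\delta$. The distance bound $\norm{y-x_F^\star}\le\sqrt{2\delta/\lambda}$ then follows from strong convexity together with $\nabla F(x_F^\star)=0$, since $x_F^\star$ is an interior point or an unconstrained minimizer relevant on the ball.

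The main obstacle is the high-probability epoch bound under the constraint. The analysis must use only smoothness on $\B(c,2r)$ rather than globally. It must also keep the noise term scaling as $R_k$ and not as $r$, since otherwise the statistical cost would be $\sigma^2 r^2/\delta^2$. The plan handles both through the shrinking intersection constraint. The delicate point is that $H_0$ may exceed $\lambda r^2/2$, in which case $\B(y^{(0)},R_0)$ is not smaller than $\B(c,r)$. I would then intersect with $\B(c,r)$, which contains $x_F^\star$ by hypothesis, and accept a possibly larger initial radius. This affects only the logarithmic factor.
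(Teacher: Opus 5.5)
Your proposal is correct and follows the paper's proof essentially step by step. Both arguments run epoch-restarted constrained STM on $\B(y^{(k)},r_k)\cap\B(c,r)$ with $r_k\asymp\sqrt{H_k/\lambda}$ and $\asymp\sqrt{L/\lambda}$ iterations per epoch. Both choose batches so that the noise terms are a constant fraction of the halving target, take a union bound over $K=\lceil\log_2(H_0/\delta)\rceil$ epochs, and bound the total cost by a geometric sum of $1/H_k$.

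The only differences are minor. With the paper's untuned weights $a_t=t/(4L)$, the epoch bound contains a third, squared-noise term $c_2\varsigma^2(N+1)\log(4N/\beta)/L$ in addition to your two terms; your batch choice controls it at the same order. The distance bounds need only the constrained optimality condition $\ip{\grad F(x_F^\star)}{y-x_F^\star}\ge0$ for $y\in\B(c,r)$, not $\grad F(x_F^\star)=0$. This matters in the finisher, where the minimizer may lie on the boundary of $\B(c,r)$.
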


\begin{algorithm}[ht]
\caption{\PhaseI{}: tame the gradient}
\label{alg:phase1}
\footnotesize
\begin{algorithmic}[1]
\REQUIRE $x^0$, $L_0,L_1,\sigma,R_0,\Dcert,\alpha$
\STATE Set $\overline G=L_0/L_1$ and choose the batch size and budget from the high-probability schedule.
\FOR{$t=0,1,\ldots,T_1^{\max}$}
    \STATE $\widehat g_t\gets \mathrm{GradEst}(x_t,B_1)$ from eq. \eqref{eq:grad_est}
    \IF{$\norm{\widehat g_t}\le 3\overline G$}
        \RETURN $x^{\mathrm I}\gets x_t$
    \ENDIF
    \STATE $\gamma_t \gets \frac{1}{2}(L_0+L_1\norm{\widehat g_t})^{-1}$
    \STATE $x_{t+1}\gets x_t- \gamma_t \widehat g_t$
\ENDFOR
\RETURN the last iterate
\end{algorithmic}
\end{algorithm}

\paragraph{Phase I: taming the gradient.}
Phase I uses the stochastic counterpart of the generalized-smoothness stepsize. The step length is at most $1/(2L_1)$, so Lemma~\ref{lem:descent} always applies; Lemma~\ref{lem:localization} then controls both descent and the distance to $x^\star$.
 \ARCSG{} uses \(\Dcert\) only to set the Phase~I budget and
the associated confidence allocation. It does not use the
unknown value \(\Dinit\).

\begin{proposition}[Phase I guarantee]
\label{prop:phase1}
Assume $L_1>0$. With probability at least $1-\alpha/4$, Algorithm~\ref{alg:phase1} returns $x^{\mathrm I}$ after at most
$1+7L_1R_0\log_+\!\left({2L_1^2\Dcert}/{L_0}\right)$
iterations such that
\[
\norm{\grad f(x^{\mathrm I})}\le {7}\overline G/2,
~ f(x^{\mathrm I})\le f(x^0),
~ \norm{x_t-x^\star}\le {3}R_0/2.
\]
Its oracle cost is
$\tO\!\left(1+L_1R_0+(1+L_1R_0){\sigma^2L_1^2}/{L_0^2}\right).$
The additive terms are required by the floors $T_1\ge1$ and
$B_1\ge1$, even when $L_1R_0$ is small.

\end{proposition}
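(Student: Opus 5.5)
The plan is to run the whole argument on a single good event on which every mini-batch estimate used by Phase~I is accurate, and then read off deterministic consequences. Set the batch size \(B_1\) so that, with per-call failure probability \(\rho:=\alpha/(4(T_1^{\max}+1))\), the vector Hoeffding bound gives \(\norm{\widehat g_t-\grad f(x_t)}\le \eta\) with \(\eta:=\overline G/2\). This needs \(B_1\asymp \max\{1,c_{\mathrm g}^2\sigma^2L_1^2\log(2/\rho)/L_0^2\}\). A union bound over the at most \(T_1^{\max}+1\) calls then gives the event with probability at least \(1-\alpha/4\). The query points \(x_t\) are \(\mathcal F\)-measurable, so the conditional statement of Assumption~\ref{ass:noise} applies call by call. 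On this event two facts are immediate. If the method stops, then \(\norm{\grad f(x^{\mathrm I})}\le 3\overline G+\eta=7\overline G/2\). If it does not stop at \(x_t\), then \(G_{x_t}\ge \norm{\widehat g_t}-\eta\ge 5\overline G/2\), and also \(\norm{\widehat g_t}\le \tfrac{7}{5}G_{x_t}\) and \(\norm{\widehat g_t}\ge \tfrac{5}{6}\norm{\widehat g_t}+\tfrac16\cdot 3\overline G\).

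\textbf{Per-step descent and distance control.} The step length is \(\gamma_t\norm{\widehat g_t}\le 1/(2L_1)\), so Lemma~\ref{lem:descent} applies. Writing \(\widehat g_t=\grad f(x_t)+\zeta_t\) with \(\norm{\zeta_t}\le\eta\le G_{x_t}/5\), we have \(\ip{\grad f(x_t)}{\widehat g_t}\ge G^2-G\eta\). We also have \(L_0+L_1G_{x_t}\le 2(L_0+L_1\norm{\widehat g_t})\) up to the constant coming from \(\eta\), so the quadratic term is at most \(\gamma_t\norm{\widehat g_t}^2/2\cdot(\text{const}<1)\). Together these give \(f(x_{t+1})\le f(x_t)-c\,\gamma_t G_{x_t}^2\) for an absolute \(c>0\), which is monotone descent and yields \(f(x^{\mathrm I})\le f(x^0)\). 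In the super-critical regime \(G\ge 5\overline G/2\) we have \(L_0+L_1\norm{\widehat g}\le c' L_1 G\), so the decrease is at least \(c''G_{x_t}/L_1\). For the distance, expand \(\norm{x_{t+1}-x^\star}^2=\norm{x_t-x^\star}^2-2\gamma_t\ip{\widehat g_t}{x_t-x^\star}+\gamma_t^2\norm{\widehat g_t}^2\). Convexity gives \(\ip{\grad f(x_t)}{x_t-x^\star}\ge\Delta_{x_t}\). The noise cross term is at most \(\gamma_t\eta\norm{x_t-x^\star}\), and the last term is at most \(\gamma_t\norm{\widehat g_t}/(2L_1)\). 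Lemma~\ref{lem:localization} in the regime \(G\ge 5\overline G/2\) gives \(\Delta_x\ge G^2/(2(L_0+L_1G))\gtrsim G/L_1\), which dominates the last term. Summing the per-step increments of the norm, each at most \(\gamma_t\eta\le 1/(4L_1)\cdot(\eta/\norm{\widehat g_t})\), over the iterations is the route to \(\norm{x_t-x^\star}\le R_0+O(\cdot)\). To keep this within \(3R_0/2\), I would instead use an inductive argument: while \(\norm{x_t-x^\star}\le 3R_0/2\), the term \(2\gamma_t\Delta_{x_t}\) dominates \(2\gamma_t\eta\norm{x_t-x^\star}+\gamma_t^2\norm{\widehat g}^2\), because \(\Delta_x\ge G\norm{x-x^\star}\)-type slack from convexity is unavailable, but \(\Delta_x\ge G^2/(2(L_0+L_1G))\ge c G/L_1\) against \(\eta\cdot 3R_0/2\). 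Here I expect to need a case split on \(L_1R_0\). This balance is the main obstacle: the noise cross term scales with \(R_0\) while the gap gain scales with \(1/L_1\). If the direct comparison fails, the fallback is to shrink \(\eta\) to \(\overline G/(c L_1R_0+c)\), which costs only the factor \((1+L_1R_0)\) in the batch; that factor is consistent with the stated oracle cost.

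\textbf{Iteration count.} I would do this by a two-regime potential argument. Monotone descent gives \(\Delta_{x_t}\le\Dcert\). While \(\Delta_{x_t}\ge \overline\Delta\)-scale, convexity gives \(\Delta\le G\norm{x-x^\star}\le \tfrac32 GR_0\). The decrease \(\gtrsim G/L_1\ge 2\Delta/(3L_1R_0)\) then implies \(\Delta_{t+1}\le(1-1/(7L_1R_0))\Delta_t\), with the absolute constants tuned to give 7. After \(7L_1R_0\log_+(2L_1^2\Dcert/L_0)\) steps, \(\Delta_{x_t}\le \overline\Delta/2\). At that point Lemma~\ref{lem:localization} gives \(G\le 2L_1\Delta+\sqrt{2L_0\Delta}\le 2\overline G\), hence \(\norm{\widehat g}\le 5\overline G/2<3\overline G\) and the test fires; the \(+1\) accounts for this final check. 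Setting \(T_1^{\max}\) to this value, the oracle cost is \((T_1^{\max}+1)B_1\). Substituting the batch size gives \(\tO(1+L_1R_0+(1+L_1R_0)\sigma^2L_1^2/L_0^2)\), with the floors \(T_1,B_1\ge1\) producing the additive constants.
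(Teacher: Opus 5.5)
Your descent step, the $7\overline G/2$ output bound and the contraction count are essentially right. The gap is the distance control, and with it the claimed oracle cost.

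With $\eta=\overline G/2$, the inductive domination of $2\gamma_t\eta\norm{x_t-x^\star}$ by the gap term cannot hold uniformly when $L_1R_0\gg1$. Near the threshold the gain is only guaranteed to be of order $\overline G/L_1$, while the noise cross term can be of order $\overline G R_0$.

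Your fallback $\eta\asymp\overline G/(1+L_1R_0)$ does restore domination, but it is not ``consistent with the stated oracle cost''. The batch scales as $\eta^{-2}$, so $B_1\asymp(1+L_1R_0)^2\sigma^2L_1^2/L_0^2$ up to logarithms. Multiplying by $T_1\asymp L_1R_0\log_+(2L_1^2\Dcert/L_0)$ gives $(1+L_1R_0)^3\sigma^2L_1^2/L_0^2$. The factor $(1+L_1R_0)$ in the claimed bound comes from the iteration count, not the batch. So no polynomial-in-$L_1R_0$ shrinkage of $\eta$ is affordable.

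The summation route you set aside is what closes, and it needs only a logarithmic shrinkage, because the step count $\propto L_1R_0$ cancels the $1/L_1$ in the step length. Here is the argument.

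\begin{itemize}
\item After localization absorbs $\gamma_t^2\norm{\widehat g_t}^2$ into $\gamma_t\Delta_{x_t}$, drop the gap term entirely and keep $r_{t+1}^2\le r_t^2+2\gamma_t\eta r_t$. Since $\norm{\widehat g_t}>3\overline G$ on executed steps, $2\gamma_t\eta\le\eta/(3L_1\overline G)$.
\item Let $\Lambda:=\lceil 2+2\log_+(2L_1^2\Dcert/L_0)\rceil$ and sum over the deterministic budget $T_1^{\max}\le 6\Lambda L_1R_0+2$, using $r_t\le\tfrac32R_0$ inductively. The total increase of $r^2$ is at most $(6\Lambda L_1R_0+2)\,\eta R_0/(2L_1\overline G)$.
\item The paper's choice $\eta=\overline G/(8\Lambda)$ makes this $\tfrac38R_0^2+R_0/(8\Lambda L_1)\le\tfrac54R_0^2$ once $L_1R_0>1/24$. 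The price is only the polylogarithmic factor $\Lambda^2$ in $B_1$.
\item Because the induction runs over a deterministic budget, the contraction $\Delta_{t+1}\le(1-\tfrac{4}{27L_1R_0})\Delta_t$ can then use $r_t\le\tfrac32R_0$ without circularity. Having $\eta\le\overline G/32$ is also what yields the explicit constant $7$.
\item For tiny $L_1R_0$ the additive part of $T_1^{\max}$ breaks the $O(R_0^2)$ bound, so the paper treats $L_1R_0\le1/24$ separately. There $\Dinit\le\Ddef\le0.00089\,L_0/L_1^2$, hence $G_{x^0}\le0.05\,\overline G$ and the test fires at $t=0$.
\end{itemize}

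Alternatively, your inductive idea survives with $\eta=\overline G/32$ and no logarithm, if you charge the steps where $\tfrac{10}{11}\Delta_{x_t}<2\eta r_t$ to the function decrease. After the first such step, monotonicity gives $\Delta\lesssim\eta R_0$ for the rest of the run. The per-step decrease $2G_t/(9L_1)$ then forces $\sum G_t\lesssim L_1\eta R_0$ over those steps. Using $1/G_t\le G_t/(23\overline G/8)^2$, their total drift $\sum\eta r_t/(L_1\norm{\widehat g_t})$ is $\lesssim(\eta/\overline G)^2R_0^2$.
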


\begin{algorithm}[ht]
\caption{Phase II: accelerated proximal levels}
\label{alg:phase2}
\footnotesize
\begin{algorithmic}[1]
\REQUIRE $c_1=x^{\mathrm I}$, $\widehat G_1=4L_0/L_1$, $\Lambda_1=32L_0$, target $\eps$
\FOR{$j=1,2,\ldots,J_{\rm sch}$}
    \IF{$\widehat G_j\le \eps/(2\overline R)$}
        \RETURN $c_j$
    \ENDIF
    \IF{the drift threshold or schedule cap is reached}
        \RETURN $\mathrm{CertifiedFinisher}(c_j,\widehat G_j)$
    \ENDIF
    \STATE $\lambda_j\gets\max\{4L_1\widehat G_j,\Lambda_j\}$
    \STATE $r_j\gets 2\widehat G_j/\lambda_j$; \quad $F_j(x)\gets f(x)+\frac{\lambda_j}{2}\norm{x-c_j}^2$
    \STATE $c_{j+1}\gets\mathrm{R\text{-}STM}(F_j,\B(c_j,r_j),\delta_j,\beta_j)$
    \STATE $L_j\gets4(L_0+L_1\widehat G_j)$, $s_j\gets\sqrt{2\delta_j/\lambda_j}$
    \STATE $\widehat G'_{j+1}\gets\widehat G_j+L_js_j$
    \STATE $\widehat g_j\gets\mathrm{GradEst}(c_{j+1},B_j^{\rm est})$
    \STATE $\widehat G_{j+1}\gets\min\{\widehat G'_{j+1},\norm{\widehat g_j}+\tau_j\}$
    \IF{$\lambda_j=\Lambda_j$}
        \IF{$\Lambda_j\le\eps/(2\overline R^2)$}
            \RETURN $c_{j+1}$
        \ENDIF
        \STATE $\Lambda_{j+1}\gets\Lambda_j/2$
    \ELSE
        \STATE $\Lambda_{j+1}\gets\Lambda_j$
    \ENDIF
\ENDFOR
\end{algorithmic}
\end{algorithm}
Here \(\overline R:=3R_0\). The full schedules for
\(\delta_j,\beta_j,\tau_j,B_j^{\rm est}\), as well as the
certified finisher, are specified in the supplementary
material.


\paragraph{\PhaseII{}: accelerated proximal levels.}
\PhaseII{}
 maintains a center $c_j$, a certified \emph{gradient cap} $\Gest_j\ge G_{c_j}$, and a geometrically decreasing \emph{regularization floor} $\Lambda_j$. Level $j$ approximately solves $F_j=f+\tfrac{\lambda_j}{2}\norm{\cdot-c_j}^2$ with the floored regularization \eqref{eq:lambda} below,
\begin{equation}
\label{eq:lambda}
\lambda_j=\max\{4\Lo\Gest_j,\ \Lambda_j\},
\end{equation}
by running \RSTM{} on the ball $\ball{c_j}{r_j}$, $r_j=2\Gest_j/\lambda_j\le\tfrac{1}{2\Lo}$. The design closes four loops at once. For each proximal level, let
 $   \widehat x_j
    :=
    \widehat x(c_j;\lambda_j,r_j)$
denote the exact ball-constrained proximal point introduced in
\eqref{eq:prox}.
\emph{Containment:} by Lemma~\ref{lem:prox-grad}, $\norm{\xhat_j-c_j}\le G_{c_j}/\lambda_j\le r_j/2$, so the exact prox lies strictly inside the ball while the \RSTM{} trajectory is confined to it.
\emph{Smoothness:} by Lemma~\ref{lem:ball}, $f$ is $L_j$-smooth on $\ball{c_j}{2r_j}$ with $L_j=4(\Lz+\Lo\Gest_j)$; hence $F_j$ is $\lambda_j$-strongly convex and $(L_j+\lambda_j)$-smooth there, with condition number $\kappa_j\le 2+\Lz/(\Lo\Gest_j)$ on gradient-driven levels.
\emph{Gradient bookkeeping:} $G_{c_{j+1}}\le\Gest_j+L_js_j$ by Lemma~\ref{lem:prox-grad} and the \RSTM{} accuracy $s_j=\sqrt{2\delta_j/\lambda_j}$, so the cap update is valid \emph{without any measurement}; the relative-accuracy measurement re-anchors the cap to the true gradient so that caps track gradients up to a constant factor.
\emph{Travel--halving progress.}
The analysis classifies each proximal level through the
displacement of its exact proximal point \(\widehat x_j\).
On a \emph{halving level},
   $ \norm{\widehat x_j-c_j}
    \le
   {1}/{(8L_1)},$
and the gradient scale decreases by a constant factor on
gradient-driven levels. On a \emph{travel level}, the proximal
point moves a macroscopic distance, and the proximal inequality
forces a definite decrease of the original objective.
Consequently, only \(O(L_1\Rb)\) travel levels can occur while
the gradient cap remains in a fixed dyadic band, whereas the
number of halving levels in that band is logarithmic. The
schedule cap and the certified finisher convert this
per-band progress argument into unconditional termination.
\emph{Termination certificates.}
Phase~II has three termination mechanisms.
First, if
   $ \Gest_j\le\frac{\eps}{2\Rb},$
then convexity and the distance invariant
\(\norm{c_j-x^\star}\le\Rb\) imply
  $  f(c_j)-f^\star
    \le
    G_{c_j}\norm{c_j-x^\star}
    \le
    \Gest_j\Rb
    \le
    \frac{\eps}{2}.$
Second, suppose the regularization floor binds,
\(\lambda_j=\Lambda_j\), and
    $\Lambda_j\le\frac{\eps}{2\Rb^2}.$
The level schedule also ensures
    $\delta_j\le\frac{\eps}{2}.$
By containment, the constrained proximal point lies in the
interior of the ball and therefore coincides with the
unconstrained proximal point. Hence
   $ f(c_{j+1})-f^\star
    \le
    \delta_j
    +
    \frac{\lambda_j}{2}\norm{c_j-x^\star}^2
    \le
    \frac{3\eps}{4}
    \le
    \eps.$
Third, the certified finisher handles the corner case in which
the gradient cap falls below the resolution of the cap
bookkeeping or the deterministic schedule limit is reached.
Its construction and complexity are given in the supplementary
material.
\emph{Containment}. A third, rarely triggered mechanism---the \emph{finisher}---handles the corner case in which the cap falls below the drift resolution of the bookkeeping, and completes the run by $\tO(\Lo\Rb)$ ball-constrained proximal steps with $\lambda=\Lo\eps/\Rb$ (supplementary material).
\begin{proposition}[\PhaseII{} guarantee]
\label{prop:phase2}
Suppose \PhaseII{} is initialized at a point satisfying
\[
\norm{\grad f(c_1)}\le 4\overline G,
\qquad
\norm{c_1-x^\star}\le 3R_0/2.
\]
With the schedules specified in the supplement, the joint good event of all inner solves and gradient measurements has probability at least $1-\alpha/2$ (one quarter for each family); the finisher adds no new random event. On this event, Algorithm~\ref{alg:phase2}, together with its certified finisher, terminates and returns $\widehat x$ with $f(\widehat x)-f^\star\le\eps$. Its oracle cost is
\[
\tO\!\left(
\begin{aligned}
  L_1R_0
  &+\overline\kappa^{1/2}\sqrt{{L_0\overline R^2}/{\eps}}
    +\overline\kappa^3{\sigma^2\overline R^2}/{\eps^2}\\
  &+\overline\kappa^3{\sigma^2L_1^2}/{L_0^2}
\end{aligned}
\right),
\]
where $\overline\kappa:=1+L_1\overline R$.
\end{proposition}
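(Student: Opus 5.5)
The plan is to argue entirely on the joint good event $\mathcal E$: every R-STM call at level $j$ satisfies the conclusion of Proposition~\ref{prop:rstm} with failure probability $\beta_j$, and every measurement $\mathrm{GradEst}(c_{j+1},B_j^{\rm est})$ lies within $\tau_j$ of $\grad f(c_{j+1})$. Choosing $\sum_j\beta_j\le\alpha/4$ and measurement failure probabilities summing to $\alpha/4$ over the deterministic cap $J_{\rm sch}$, a union bound gives $\Prob(\mathcal E)\ge1-\alpha/2$. On $\mathcal E$ I would prove by induction on $j$ three invariants: (i) $G_{c_j}\le\Gest_j$; (ii) $\norm{c_j-x^\star}\le\Rb$; (iii) $\Gest_j\le 4\overline G$ together with $\Gest_j\le C\,(G_{c_j}+\tau_{j-1})$, i.e.\ the cap tracks the true gradient up to a constant factor plus the measurement resolution. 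The base case is the hypothesis on $c_1$ with $\Gest_1=4L_0/L_1$.

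For the inductive step I would proceed as follows. Since $\lambda_j\ge4L_1\Gest_j$, we get $r_j=2\Gest_j/\lambda_j\le1/(2L_1)$. Lemma~\ref{lem:ball} then gives $L_j$-smoothness of $f$ on $\B(c_j,2r_j)$, and Lemma~\ref{lem:prox-grad} gives $\norm{\xhat_j-c_j}\le r_j/2$. Hence the hypotheses of Proposition~\ref{prop:rstm} hold with $L=L_j+\lambda_j$ and initial point $y^{(0)}=c_j$, where $H_0\le \Gest_j^2/\lambda_j$ by strong convexity. Invariant (i) follows from $\norm{c_{j+1}-\xhat_j}\le s_j$ together with $G_{\xhat_j}\le G_{c_j}$ (Lemma~\ref{lem:prox-grad}). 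This needs $s_j\le r_j$ so that both points lie in the smooth ball, which the schedule for $\delta_j$ must guarantee. The measurement branch of the cap update is valid by the choice of $\tau_j$.

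For invariant (ii), Lemma~\ref{lem:prox-grad} gives Fejér monotonicity, $\norm{\xhat_j-x^\star}\le\norm{c_j-x^\star}$, so the only drift is $\sum_j s_j$. I would pick $\delta_j$ so that $\sum_j s_j\le 3R_0/2$, which is the role of the drift threshold. Correctness at each exit then follows from the three certificates already described in the text, with the finisher handling the residual case.

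The counting argument is the main obstacle. I would partition levels by the dyadic band of $\Gest_j$ and by whether the floor $\Lambda_j$ binds.
\begin{itemize}
\item On a \emph{travel level}, $\norm{\xhat_j-c_j}>1/(8L_1)$. The prox inequality gives $f(c_j)-f(\xhat_j)\ge\lambda_j\norm{\xhat_j-c_j}^2/2$. Combined with the Fejér bound, this shows the squared distance to $x^\star$ drops by $\gtrsim1/L_1^2$, or that the travel is of order $1/L_1$ out of a total budget $\Rb$. Either way there are only $O(L_1\Rb)=O(\kbar)$ travel levels per band.
\item On a \emph{halving level}, the unconstrained identity $\norm{\xhat_j-c_j}=G_{\xhat_j}/\lambda_j$ with $\lambda_j\ge4L_1\Gest_j$, together with invariant (iii), forces $G_{\xhat_j}$, and hence the re-anchored cap, to drop by a constant factor. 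There are logarithmically many such levels.
\item Floor-binding levels halve $\Lambda_j$ from $32L_0$ down to $\eps/(2\Rb^2)$, giving $O(\log_+(L_0\Rb^2/\eps))$ of them.
\end{itemize}

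Finally I would sum the per-level costs from Proposition~\ref{prop:rstm}. On gradient-driven levels, $\sqrt{(L_j+\lambda_j)/\lambda_j}=O(1)$. On floor levels, $\lambda_j=\Lambda_j$, and the geometric sum $\sum_k\sqrt{L_j/\Lambda_k}$ is dominated by the last term, giving $\sqrt{L_0\Rb^2/\eps}$. The factor $\kbar^{1/2}$ arises because $L_j$ can be as large as $L_0\kbar$ when the cap is of size $\overline G$ but $\Lambda_j$ is small. For the statistical cost, the term $\sigma^2/(\lambda_j\delta_j)$ with $\delta_j\propto\eps$ (or smaller, as required by the drift budget $s_j\lesssim R_0/(\text{number of levels})$) and $\lambda_j\gtrsim\eps/\Rb^2$ yields $\sigma^2\Rb^2/\eps^2$ times a polynomial in $\kbar$. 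I expect the exponent $3$ to come from (number of levels $\sim\kbar$) $\times$ (drift-budget tightening of $\delta_j$, costing $\kbar^2$). Measurement batches $B_j^{\rm est}\sim\sigma^2/\tau_j^2$ with $\tau_j\gtrsim \max(\eps/\Rb,\,L_0/(L_1\kbar))$ give the remaining two terms, $\kbar^3\sigma^2\Rb^2/\eps^2$ and $\kbar^3\sigma^2L_1^2/L_0^2$. The finisher's $\tO(L_1\Rb)$ ball-constrained steps with $\lambda=L_1\eps/\Rb$ contribute within the same bounds.
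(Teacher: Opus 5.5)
Your proposal has two genuine gaps, both where the paper does its real work.

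\textbf{First gap: cap drift.} The upper half of your invariant (iii), $\Gest_j\le 4\Gbar$, has no mechanism behind it. The bookkeeping branch gives $\Gest_{j+1}\le\Gest_j+L_js_j$, and the true gradient at the approximate center can also rise by $L_js_j$ per level. The only thing that stops the caps from ratcheting upward is a cumulative budget $\sum_j L_js_j\lesssim\Lz/\Lo$. Without it, $L_j$, the hot-level costs and the finisher's entry gap are all uncontrolled.

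Your schedule only enforces the distance budget $\sum_j s_j\le 3R_0/2$. Since $L_j\ge4\Lz$, this still allows $\sum_jL_js_j$ of order $\Lz R_0=\Gbar\cdot\Lo R_0$, which is too large by a factor $\Lo R_0$. The paper instead takes $s_j\le\iota_{\mathrm{abs}}/L_j$ with $\iota_{\mathrm{abs}}=\Lz/(8\Lo J_{\mathrm{tot}})$. Then all cap rises total at most $\Lz/(6\Lo)$ and $\Gest_j\le5\Gbar$; this is the drift lemma. The ``drift threshold'' $\Gest_j<32\iota_{\mathrm{abs}}$ is this cap-resolution threshold, not a distance one.

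Your halving step silently needs the same control. A constant-factor drop of the re-anchored cap, $\Gest_{j+1}\le\Gest_j/2+L_js_j+2\tau_j$, requires $L_js_j\ll\Gest_j$ at every executed level. That fails for small caps unless control is handed off below $32\iota_{\mathrm{abs}}$. This budget is also where the $\kbar^3$ comes from. The bound $\lambda_js_j\ge\Gest_j/(48J_{\mathrm{tot}})$ makes each cold level cost about $\sigma^2J_{\mathrm{tot}}^2\Rb^2/\eps^2$, summed over $J_{\mathrm{tot}}=\tO(\kbar)$ levels. So your heuristic for the exponent $3$ is right only once the drift budget is the cap-drift one.

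\textbf{Second gap: termination and correctness.} Your band counting bounds levels per \emph{excursion} of the cap through a band. Caps can drift back into a band after halving, so summing over bands does not bound the total number of levels. Nor does it show that exit (a) or (b) fires before $J^{\mathrm{sch}}$.

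Fejér monotonicity gives only a squared-distance drop of $1/(64\Lo^2)$ per travel level, hence $O(\kbar^2)$ travel levels. ``Travel $1/\Lo$ out of a budget $\Rb$'' does not follow from it. A linear count per excursion needs the function-gap potential: $f$ drops by $\gtrsim\Gest_j/\Lo$ on non-halving gradient-driven levels, against $f(c)-f^\star\le2g\Rb$ at band entry, with the transfer errors of the other levels charged.

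The paper does not use counting for this proposition at all. Termination is by the hard cap $J^{\mathrm{sch}}=\tO(\kbar)$, and the cost is $J_{\mathrm{tot}}$ times the per-level cost. Correctness when the cap or drift threshold is hit rests on the finisher lemma, which you only gesture at. That lemma is a separate contraction argument. Start from any state with $G_c\le\Gest\le5\Gbar$ and $\norm{c-x^\star}\le\Rb$. Take a ball-constrained prox step with $\lambda=\Lo\eps/\Rb$ on $\ball{c}{1/(2\Lo)}$ and compare with the feasible point $u=c+\theta'(x^\star-c)$. This gives $\mathrm{gap}_{k+1}\le(1-\theta)\mathrm{gap}_k+\tfrac{5}{16}\theta\eps$ with $\theta=1/(2\Lo\Rb)$. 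Hence $K_{\mathrm{cert}}=\tO(\kbar)$ steps reach $\eps$ from an entry gap $\le5\Gbar\Rb$, which again needs the drift bound. The finisher's stochastic calls must also be in your union bound, which should run over $J_{\mathrm{tot}}=J^{\mathrm{sch}}+K_{\mathrm{cert}}$.

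\textbf{Cost attribution.} Gradient-driven levels are not $O(1)$-conditioned: $\kappa_j=2+\Gbar/\Gest_j$ reaches order $\Lz\Rb/(\Lo\eps)$ at cold gradient-driven levels. Having $\tO(\kbar)$ such levels is the actual source of $\kbar^{1/2}\sqrt{\Lz\Rb^2/\eps}$. On floor levels, $\Gest_j\le\Lambda_j/(4\Lo)$ forces $L_j\le36\Lz$, so no $\Lz\kbar$ curvature appears there.
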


\begin{algorithm}[ht]
\caption{ARC-SG}
\label{alg:arc}
\footnotesize
\begin{algorithmic}[1]
\REQUIRE $x^0,R_0,L_0,L_1,\sigma,\eps,\alpha$ and optionally $\Delta_{\rm user}$
\STATE Set $\Dcert$ by \eqref{eq:default-gap}; if supplied, replace it by $\min\{\Delta_{\rm user},\Dcert\}$.
\IF{$\Dcert\le\eps$}
    \RETURN $x^0$
\ENDIF
\STATE Use $L_{1,\rm eff}=\max\{L_1,1/(4\overline R)\}$ in the schedules.
\STATE $x^{\mathrm I}\gets\mathrm{Phase\ I}(x^0)$
\RETURN $\mathrm{Phase\ II}(x^{\mathrm I},\eps,\alpha)$
\end{algorithmic}
\end{algorithm}

\section{Main Results}
\label{sec:main}

\begin{theorem}[Convex case]
\label{thm:convex}
Let Assumptions~\ref{ass:convex}--\ref{ass:noise} hold.
Fix \(x^\star\in X^\star\), a distance certificate
    $R_0\ge\norm{x^0-x^\star},$
and a valid initial-gap certificate
   $ \Dcert\ge f(x^0)-f^\star.$
Let \(\eps>0\), \(\alpha\in(0,1)\), and define
    $\Rb:=3R_0,
    \kbar:=1+\Lo\Rb.$
Then for the output $\xhat$ of the \ARCSG{}, with probability at least $1-\alpha$, $\alpha\in(0,1)$, 
we have
$f(\xhat)-f^\star\le\eps$
after at most
\begin{align*}
N \;=\; \tO\Big(&\underbrace{1+\Lo R_0}_{\textnormal{\PhaseI{} + levels}}
\;+\;\underbrace{\kbar^{1/2}\sqrt{\tfrac{\Lz\Rb^2}{\eps}}}_{\textnormal{optimization}}
\;+\;\underbrace{\kbar^{3}\,\tfrac{\sigma^2\Rb^2}{\eps^2}}_{\textnormal{statistical}}\\[-2pt]
&\;+\;\underbrace{(1+\Lo R_0)\tfrac{\sigma^2\Lo^2}{\Lz^2}+\kbar^{3}\tfrac{\sigma^2\Lo^2}{\Lz^2}}_{\textnormal{burn-in}}\Big)
\end{align*}
stochastic gradient evaluations. 
\end{theorem}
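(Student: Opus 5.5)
The plan is to assemble Theorem~\ref{thm:convex} from the three ingredients already stated: the early exit of Algorithm~\ref{alg:arc}, Proposition~\ref{prop:phase1} for \PhaseI{}, and Proposition~\ref{prop:phase2} for \PhaseII{}. The argument is a union bound over the failure events combined with a check that the output of \PhaseI{} meets the initialization hypotheses of \PhaseII{}. Complexities are then added term by term.

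\emph{Trivial and degenerate cases.} First I treat the case $\Dcert\le\eps$. Here the wrapper returns $x^0$, and $f(x^0)-f^\star=\Dinit\le\Dcert\le\eps$ holds deterministically. No oracle calls are made, so the bound holds with the additive constant $1$.

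Next I treat the case $L_1=0$, or more generally $L_1<1/(4\Rb)$. In this case the wrapper runs the schedules with $L_{1,\rm eff}=\max\{L_1,1/(4\Rb)\}$. Assumption~\ref{ass:gs} stays valid when $L_1$ is replaced by any larger constant, because the admissible radius $1/L_1$ only shrinks and the right-hand side only grows. So I may assume $L_1>0$ throughout. The price is that $L_{1,\rm eff}\Rb\le \max\{L_1\Rb,1/4\}$, which changes $\kbar$ and $1+L_1R_0$ by at most constant factors. The substitution also affects the burn-in terms, since they involve $\sigma^2L_{1,\rm eff}^2/L_0^2$. When $L_1$ is small I must check that $\sigma^2/(L_0^2\Rb^2)$ is dominated by the statistical term $\sigma^2\Rb^2/\eps^2$. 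This holds whenever $\eps\lesssim L_0\Rb^2$. In the complementary regime, $\Dcert\le\Ddef\lesssim L_0R_0^2\le\eps$ up to constants, so the exit (possibly after adjusting constants in the exit test) covers it. I expect this bookkeeping to be a minor nuisance rather than a real obstacle.

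\emph{Chaining the phases.} By Proposition~\ref{prop:phase1}, with probability at least $1-\alpha/4$, \PhaseI{} returns $x^{\mathrm I}$ satisfying $\norm{\grad f(x^{\mathrm I})}\le \tfrac72\overline G\le 4\overline G$ and $\norm{x^{\mathrm I}-x^\star}\le \tfrac32R_0$. These are exactly the hypotheses of Proposition~\ref{prop:phase2} with $c_1=x^{\mathrm I}$.

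One subtlety must be handled carefully. Proposition~\ref{prop:phase2} is a statement about fresh oracle samples conditional on the starting point. I would therefore apply it conditionally on the sigma-algebra $\mathcal F_{\mathrm I}$ generated by \PhaseI{}. This is legitimate because Assumption~\ref{ass:noise} is formulated conditionally on $\mathcal F$, and the \PhaseII{} schedules depend on $x^{\mathrm I}$ only through quantities that are deterministic given the inputs. On the $\mathcal F_{\mathrm I}$-measurable event where \PhaseI{} succeeds, \PhaseII{} then succeeds with conditional probability at least $1-\alpha/2$. Integrating over this event, both phases succeed with probability at least $(1-\alpha/4)(1-\alpha/2)\ge 1-3\alpha/4\ge1-\alpha$. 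On the joint event the output satisfies $f(\xhat)-f^\star\le\eps$.

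\emph{Counting oracle calls.} The total count is the sum of the two phase costs. \PhaseI{} contributes $\tO\bigl(1+L_1R_0+(1+L_1R_0)\sigma^2L_1^2/L_0^2\bigr)$. The factor $\log_+(2L_1^2\Dcert/L_0)$ in its iteration count is absorbed into $\tO$. This is legitimate because $\Dcert\le\Ddef$ gives $\log(L_1^2\Ddef/L_0)\lesssim 1+L_1R_0$. That bound is polylogarithmic only if I treat it as a logarithm of the problem parameters, so I would state explicitly that $\tO$ hides $\log_+(L_1^2\Dcert/L_0)$ along with $\log(1/\alpha)$.

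\PhaseII{} contributes $\tO\bigl(L_1R_0+\kbar^{1/2}\sqrt{L_0\Rb^2/\eps}+\kbar^3\sigma^2\Rb^2/\eps^2+\kbar^3\sigma^2L_1^2/L_0^2\bigr)$. Summing the two contributions and grouping terms gives exactly the bound $N$ displayed in the theorem. The main obstacle is not in this step but is inherited from Proposition~\ref{prop:phase2}, namely the travel--halving level count. At the level of the theorem itself, the delicate points are the conditional composition of the high-probability events and the $L_{1,\rm eff}$ accounting.
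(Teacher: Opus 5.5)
Your proposal is correct and is essentially the paper's argument. It has the same ingredients: the trivial exit; the $L_{1,\mathrm{eff}}$ substitution, with the effective burn-ins $O(\sigma^2/(L_0^2\Rb^2))$ absorbed into $\sigma^2\Rb^2/\eps^2$ because $\eps<\Dcert\lesssim L_0\Rb^2$ in the nontrivial floor regime (so no change to the exit test is actually needed); the \PhaseI{}$\to$\PhaseII{} hand-off; and a term-by-term sum of the phase costs with $\logp(L_1^2\Dcert/L_0)$ hidden in $\tO$ as an external-certificate logarithm. Bounding that logarithm via $\Dcert\le\Ddef$ would only give a factor $1+L_1R_0$, i.e.\ the $(1+q)^2$ of the default-certificate remark. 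The only other difference is cosmetic: the paper controls the joint good event by one union bound over the \PhaseI{}, \RSTM{}, and cap-measurement families (failure at most $3\alpha/4$), whereas you use a conditional composition, which gives the same bound.
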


\begin{remark}[Default certificate]
\label{rem:convex-default}
The default certificate is always valid: $\Delta_{\mathrm{init}}\le\Delta_{\mathrm{def}}$ (one line, proved in the supplementary material). If \ARCSG{} runs on $\Dcert=\Delta_{\mathrm{def}}$, then with $q:=\Lo R_0$ the \PhaseI{} entry terms scale as $(1+q)^2$ and the statistical burn-in as $(1+q)^4\sigma^2\Lo^2/\Lz^2$:
$N=\tO\big((1+q)^2+\kbar^{1/2}\sqrt{\Lz\Rb^2/\eps}+\kbar^{3}\sigma^2\Rb^2/\eps^2+(1+q)^4\sigma^2\Lo^2/\Lz^2+\kbar^{3}\sigma^2\Lo^2/\Lz^2\big)$,
with $\tO$ hiding only absolute constants and polylogarithmic factors in $(q,\Lz\Rb^2/\eps,1/\alpha)$. With a user-supplied certificate the statement of Theorem~\ref{thm:convex} stands as printed.
\end{remark}

\begin{theorem}[Strongly convex case]
\label{thm:strongly}
Under the assumptions and initialization of
Theorem~\ref{thm:convex}, suppose additionally that \(f\) is
\(\mu\)-strongly convex. Then for the output $\xhat$ of  \ARCSG{} with distance-halving outer
restarts (described in the supplementary material), 
we have
$f(\xhat)-f^\star\le\eps$  with probability at least $1-\alpha$, $\alpha\in(0,1)$,  using
\begin{align*}
N=\tO\Big(&(1+\Lo R_0)\Big(1+\sqrt{\tfrac{\Lz}{\mu}}\Big)+\sqrt{\tfrac{\Lz}{\mu}}\,\logp\!\left(\tfrac{\mu R_0^2}{\eps}\right)\\
&\quad+\Big(1+\Lo\sqrt{\tfrac{\eps}{\mu}}\Big)^{3}\tfrac{\sigma^2}{\mu\eps}
+\tfrac{\sigma^2\Lo^3R_0}{\mu^2}+\kbar^{3}\tfrac{\sigma^2\Lo^2}{\Lz^2}\Big)
\end{align*}
oracle calls. In particular, for $\eps\le\mu/(36\Lo^2)$ the statistical term is $\tO(\sigma^2/(\mu\eps))$, whose $\eps$-exponent is optimal on the smooth subclass.
\end{theorem}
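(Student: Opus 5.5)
The plan is to wrap the convex \ARCSG{} in a distance-halving restart scheme and account for the epochs separately. Epoch $k=0,1,\dots$ starts from $x_k$ with a distance certificate $R_k:=2^{-k}R_0$ and runs \ARCSG{} with target accuracy $\eps_k:=\mu R_{k+1}^2/2=\mu R_k^2/8$ and confidence $\alpha_k:=\alpha/(2(k+1)^2)$. Strong convexity turns the gap guarantee into a distance guarantee: $\mu\norm{x_{k+1}-x^\star}^2/2\le f(x_{k+1})-f^\star\le\eps_k$. This gives $\norm{x_{k+1}-x^\star}\le R_{k+1}$, so the certificate needed by Theorem~\ref{thm:convex} propagates. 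Because the minimizer is unique, $x^\star$ is fixed. I stop at the first $K$ with $\mu R_K^2/8\le\eps$, so $K=O(\logp(\mu R_0^2/\eps))$, and a union bound over epochs gives total failure probability at most $\alpha$. For the gap certificate in later epochs I would use $\Dcert^{(k)}:=\min\{\eps_{k-1},\Ddef(R_k)\}$, since this is already certified by the previous epoch. As a result, Phase~I costs only $O(1+L_1R_k\log_+(L_1^2\eps_{k-1}/L_0))$ in later epochs and does not pay the $(1+q)^2$ default price again.

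Next I sum the per-epoch costs from Theorem~\ref{thm:convex}, with $\Rb_k=3R_k$ and $\kbar_k=1+3L_1R_k$.

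\textbf{Optimization term.} It equals $\kbar_k^{1/2}\sqrt{L_0\Rb_k^2/\eps_k}=O\bigl(\kbar_k^{1/2}\sqrt{L_0/\mu}\bigr)$. Since $\kbar_k^{1/2}\le1+\sqrt{3L_1R_k}$ and $\sum_k\sqrt{R_k}$ is geometric, the sum is $O\bigl(\sqrt{L_0/\mu}\,(K+\sqrt{L_1R_0})\bigr)$. This is dominated by $(1+L_1R_0)\sqrt{L_0/\mu}+\sqrt{L_0/\mu}\logp(\mu R_0^2/\eps)$.

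\textbf{Phase-I and level terms.} These sum to $\sum_k(1+L_1R_k)=O(K+L_1R_0)$, which is absorbed in the same two terms.

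\textbf{Statistical term.} It equals $\kbar_k^3\sigma^2\Rb_k^2/\eps_k^2=O\bigl(\kbar_k^3\sigma^2/(\mu\eps_k)\bigr)$, which grows geometrically in $k$ by a factor of $4$ per epoch. It is therefore dominated by the last epoch, where $\eps_K\asymp\eps$ and $R_K\asymp\sqrt{\eps/\mu}$. That gives $(1+L_1\sqrt{\eps/\mu})^3\sigma^2/(\mu\eps)$. I must check that the cross terms in $\kbar_k^3=O(1+(L_1R_k)^3)$ stay controlled for early epochs. The $(L_1R_k)^3$ part contributes $L_1^3R_k^3\sigma^2/(\mu\cdot\mu R_k^2)=\sigma^2L_1^3R_k/\mu^2$. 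This sums geometrically to $\sigma^2L_1^3R_0/\mu^2$, which is exactly the displayed extra term. The middle powers are handled by AM--GM between the endpoint cases.

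\textbf{Burn-in terms.} The $\kbar_k^3\sigma^2L_1^2/L_0^2$ terms sum to at most $O(K\kbar^3\sigma^2L_1^2/L_0^2)$, which is $\tO(\kbar^3\sigma^2L_1^2/L_0^2)$. The $(1+L_1R_k)\sigma^2L_1^2/L_0^2$ burn-ins are absorbed likewise.

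\textbf{Main obstacle.} The hard step is the bookkeeping of the statistical sum with the $\kbar_k^3$ factor and the geometrically varying $\eps_k$. One needs $\kbar_k$ to decay along with $R_k$, rather than being frozen at $\kbar$. Otherwise the bound would carry a spurious $\kbar^3$ in front of $\sigma^2/(\mu\eps)$. To get this I would expand $\kbar_k^3\le4(1+27L_1^3R_k^3)$ and split the sum into the constant part (a geometric series dominated by its last term) and the cubic part (a geometric series dominated by its first term).

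\textbf{High-accuracy corollary.} For $\eps\le\mu/(36L_1^2)$ we have $L_1\sqrt{\eps/\mu}\le1/6$, so the factor $(1+L_1\sqrt{\eps/\mu})^3$ is bounded by an absolute constant and the statistical term is $\tO(\sigma^2/(\mu\eps))$. The claim that this $\eps$-exponent is optimal follows from the $L_1=0$ lower bound $\Theta(\sigma^2/(\mu\eps))$ cited in the introduction.
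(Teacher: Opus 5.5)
Your proposal is essentially the paper's proof. It uses the same distance-halving wrapper with per-round targets $\mu R_k^2/8$ and pass-through certificates $\min\{\eps_{k-1},\Delta_{\mathrm{def},k}\}$, and a per-round application of Theorem~\ref{thm:convex} with $\kbar_k=1+3\Lo R_k$. The bookkeeping is also the same: the statistical sum splits into a constant part dominated by the last round and a cubic part dominated by the first round, which produces $\sigma^2\Lo^3R_0/\mu^2$. Your inequality $(1+a)^3\le4(1+a^3)$ is a tidier substitute for the paper's binomial expansion, and splitting $\alpha$ as $\alpha/(2(k+1)^2)$ instead of $\alpha/K$ makes no difference.

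One slip must be fixed: the final target has to be floored at $\eps$, i.e.\ $\eps_k=\max\{\mu R_k^2/8,\eps\}$ as in the paper.
\begin{itemize}
\item With your rule, if $\mu R_0^2/8\ll\eps$ then $K=0$ and the only epoch targets $\eps_0=\mu R_0^2/8$.
\item Its statistical cost is $\kbar^3\sigma^2\Rb^2/\eps_0^2=576\kbar^3\sigma^2/(\mu^2R_0^2)$. This exceeds $\sigma^2/(\mu\eps)$ by the unbounded factor $\eps/(\mu R_0^2)$, and no other term of the bound absorbs it when $\Lo$ is small.
\item So your assertion $\eps_K\asymp\eps$ holds only when $K\ge1$.
\item With the floor, the last round costs at most $9R_K^2\kbar_K^3\sigma^2/\eps^2\le72\kbar_K^3\sigma^2/(\mu\eps)$, since $R_K^2\le8\eps/\mu$.
\end{itemize}

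You should also add a sentence on the effective parameter, which the paper handles explicitly rather than summing the printed $\tO$-bounds blindly. Each round re-derives $L_{1,\mathrm{eff},k}=\max\{\Lo,1/(12R_k)\}$, so on late, floor-active rounds the burn-ins scale like $\sigma^2/(\Lz^2R_k^2)$. The paper absorbs these into that round's own statistical term using $\eps_k<\Delta_{\mathrm{def},k}<0.52\,\Lz R_k^2$ (Lemma~\ref{lem:eff-sums}). Your black-box use of the convex bound in the true $\Lo$ is legitimate only because that printed bound already builds in this absorption in the nontrivial regime.
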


For $\Lo=0$ (via the $L_{1,\mathrm{eff}}$ substitution) both theorems reduce to $\tO\big(1+\sqrt{\Lz R_0^2/\eps}+\sigma^2R_0^2/\eps^2\big)$ and $\tO\big(\sqrt{\Lz/\mu}\,\logp(\mu R_0^2/\eps)+\sigma^2/(\mu\eps)\big)$---the complexities of accelerated stochastic approximation \citep{lan2012optimal,ghadimi2013optimal} up to logarithms---and for every $\Lo\ge0$ the $\eps$-exponents of both the optimization and the statistical terms are unimprovable, since $(\Lz,\Lo)$-smooth functions include the $\Lz$-smooth ones and the classical lower bounds \citep{nemirovsky1983problem} apply on that subclass. Whether the multiplicative $\kbar$-dependence is necessary remains open.

\paragraph{Proof ideas.} The complete proofs are deferred to the supplementary material; the architecture of the argument is as follows. \emph{Certified balls:} the floor $\lambda_j\ge4\Lo\Gest_j$ forces $r_j\le\tfrac{1}{2\Lo}$, so Lemma~\ref{lem:ball} certifies $L_j$-smoothness of $f$ on $\ball{c_j}{2r_j}$ and Lemma~\ref{lem:prox-grad} keeps the exact prox strictly inside $\B(c_j, r_j)$: every stochastic query of the accelerated inner solver stays in a region of provably controlled smoothness, which is exactly what unconstrained accelerated trajectories cannot guarantee. \emph{Prox monotonicity and approximate transfer:} the same lemma makes the gradient norm at the exact proximal reference point non-increasing. The implemented center is only an approximate prox, so the certified cap at the next center may rise; the transfer bound limits the rise to $L_js_j$, and the drift and travel--halving potentials control the accumulated increase. \emph{Travel/halving potential:} function decrease is charged against the running gradient scale, bounding productive levels per dyadic band by $O(\Lo\Rb)$; the schedule cap $J^{\mathrm{sch}}=\tO(\Lo\Rb)$ and the certified finisher turn the per-excursion count into an unconditional bound. \emph{Confidence budget:} one quarter of $\alpha$ is allocated to each of \PhaseI{}, all \RSTM{} runs, and all gradient measurements, with each family split by a union bound over a deterministic maximum number of uses. The fourth quarter is reserved for finisher-only checks; these are deterministic on the shared good event, so the analysis actually spends at most $3\alpha/4$. The strongly convex theorem follows by restarting \ARCSG{} on distance halving, the per-round targets summing geometrically.

\section{Numerical Experiments}
\label{sec:experiments}
We evaluate practical \textsf{ARC-SG}, a computationally
streamlined implementation designed to illustrate the
two-phase mechanism of the method. The implementation follows
the architecture of \ARCSG{}, but uses less conservative
constants and stopping rules; all deviations from the analyzed
schedule are listed in the supplementary material.

\paragraph{Experimental setup.} We consider two controlled  problem families, $d=40$. In both
cases, the objective has unbounded gradients, its minimizer and
optimal value are available in closed form, and the
\((\Lz,\Lo)\)-generalized-smoothness condition of
Assumption~\ref{ass:gs} holds globally with the certified pair  $(\Lz,\Lo)=(2.1258,\,1/\ln 2\approx1.4427).$
The proof of this certificate is given in the supplementary
material. These instances make the distinction between methods
that exploit generalized smoothness and methods that rely on a
single global smoothness estimate particularly pronounced.
\begin{itemize}
    \item \emph{Convex instance.}
The first family contains \(\cosh\)-type coordinates together
with ten quartic coordinates. The Hessian vanishes along the
quartic directions at the minimizer, so the objective is not
strongly convex, even locally. The initial point is placed in a
quartic valley where the gradient norm is far above the critical
scale \(\overline G=\Lz/\Lo\).
\item \emph{Strongly convex instance.}
The second family consists entirely of \(\cosh\)-type
coordinates and is globally \(\mu\)-strongly convex with the
exact modulus
    $\mu=0.245.$
\end{itemize}
In both experiments, the stochastic oracle adds Gaussian noise
calibrated to satisfy Assumption~\ref{ass:noise} with a certified
norm-sub-Gaussian parameter. Each curve reports the geometric
mean of the objective gap over \(10\) evaluation seeds \(0,\ldots,9\); each
seed independently regenerates the orthogonal rotation, root, and oracle
noise, while all methods at a given seed use the same regenerated instance. The
shaded region shows one multiplicative standard deviation across
runs, rather than uncertainty in the estimated mean.

\paragraph{Baselines.}
We compare against the following methods:
\textsf{SGD-GS}, batched SGD with the
\((\Lz,\Lo)\)-adaptive stepsize of
\citet{sgd2025generalized};
\textsf{Clip-SGD}, clipped SGD with a tuned stepsize and clipping
threshold
\citep{koloskova2023revisiting,gaash2025clipped};
\textsf{Acc-blind}, a Nesterov-type stochastic accelerated
method using the fixed curvature estimate
\(\Lz+\Lo G_{x^0}\);
\textsf{ClipSSTM}, an accelerated clipped similar-triangles
method without restarts
\citep{gorbunov2020stochastic};
\textsf{PJ-SGD}, obtained by applying Polyak--Juditsky tail
averaging to \textsf{SGD-GS}; and
\textsf{DS-SGD}, SGD with a decreasing stepsize.
All free parameters of the baselines are selected on separate
validation runs using the full evaluation budget.

\begin{figure}[h]
\centering
\begin{subfigure}[t]{0.5\columnwidth}
    \includegraphics[width=\textwidth]{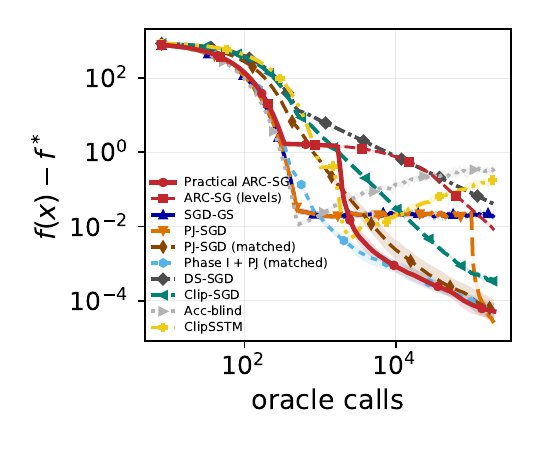}
    \caption{Convex, quartic flat directions}
\end{subfigure}\hfill
\begin{subfigure}[t]{0.49\columnwidth}
    \includegraphics[width=\textwidth]{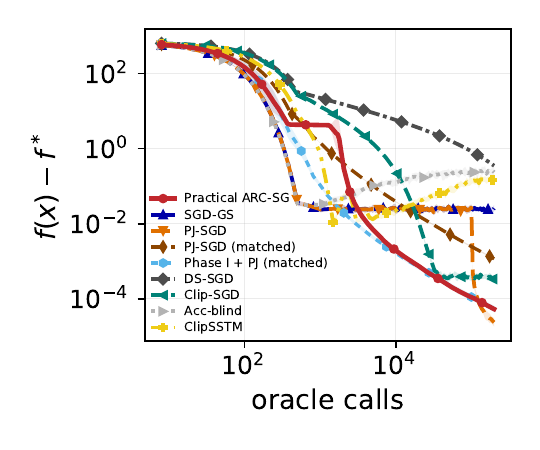}
    \caption{\(\mu\)-strongly convex problem }
\end{subfigure}
\caption{
Objective gap versus the number of stochastic oracle calls.
Both axes are logarithmic. Curves show the geometric mean over
\(10\) independently regenerated instance--noise runs; shaded bands indicate one
multiplicative standard deviation across runs.
}
\label{fig:experiments}
\end{figure}

\paragraph{Results.}
Figure~\ref{fig:experiments} is intended to illustrate the
mechanism of \ARCSG{}, rather than to validate the worst-case
constants in Theorems~\ref{thm:convex}--\ref{thm:strongly}.
Across both problem families, practical \textsf{ ARC-SG} attains
substantially smaller gaps over most of the computational budget
than the non-accelerated generalized-smoothness-aware methods
and the accelerated methods that ignore the changing local
curvature.

The behaviour of the baselines is qualitatively consistent
across the two instances. The conservative fixed curvature
estimate makes \textsf{Acc-blind} uniformly slow.
\textsf{SGD-GS} improves rapidly in the initial
large-gradient regime but subsequently appears to plateau.
\textsf{ClipSSTM} is competitive at moderate accuracy, but
becomes unstable at smaller gaps when used without restarts.
By contrast, practical \textsf{ ARC-SG} continues to decrease the
objective gap (final gaps $5.2\times10^{-5}$ and $5.3\times10^{-5}$ in (a) and (b)).
The matched ablations decompose this behaviour. On the strongly convex instance \PhaseI{} is important: the raw PJ-averaged driver run from $x^0$ stalls, while \PhaseI{} followed by the same driver recovers the full accuracy. On both instances the accelerated proximal levels account for the initial drop, and the remainder of the late-stage gain is identified by the ablations as coming from the interior Polyak--Juditsky averaging stage---which the theorems do not cover. Consistently, the \textsf{PJ-SGD} baseline, which isolates that stage, trails \textsf{Practical ARC-SG} by factors of about $100\times$ (at $25\%$ of the budget) and $300\times$ (at $50\%$) but attains a better endpoint: $2.6\times10^{-5}$ and $2.4\times10^{-5}$ versus $5.2\times10^{-5}$ and $5.3\times10^{-5}$. Against this baseline \textsf{Practical ARC-SG}'s advantage is speed, not endpoint accuracy.

\paragraph{Theory versus implementation.}
The analyzed schedule's worst-case constants are prohibitive: evaluated analytically from the paper's own schedule on the convex instance at the run's accuracy, the analyzed algorithms require $\approx3.2\times10^{18}$ oracle calls, so the printed constants are not what runs, and the curves above use the streamlined practical schedule. Two complementary checks close the gap between the theory and the implementation: a literal run of the analyzed algorithms on a small certified one-dimensional instance (reported in the supplementary material) satisfies every certified invariant---gradient caps, distances to the solution, prox containment---level by level, and targeted small instances exercise the finisher and the strong-growth hop stages that the main instances do not trigger.

\section{Conclusion}
\label{sec:conclusion}

We introduced \ARCSG{}, a high-probability accelerated stochastic method for convex and strongly convex optimization under asymmetric $(\Lz,\Lo)$-generalized smoothness with an additive sub-Gaussian oracle. Its optimization term is accelerated and its statistical terms have the smooth-subclass-optimal $\eps$-dependence (in the strongly convex case, unconditionally so for $\eps\le\mu/(36\Lo^2)$). A contemporaneous public ICLR 2026 submission \citep{anonymous2026nesterov} already obtains the same convex accuracy exponents in a broader generalized-smoothness/affine-variance model. We therefore claim novelty for the parameter-explicit, query-by-query certified-ball construction and the strongly convex restart extension, not for the convex exponents themselves. The method rests on a certified-ball mechanism---regularization floored at the gradient scale confines every accelerated stochastic run to a ball of provably controlled smoothness---and on the structural fact that exact proximal reference points do not increase the gradient norm; the drift of the approximate centers is controlled separately by the transfer estimates. Our bounds carry an explicit multiplicative $\kbar$-dependence, assume an exact projection oracle with projection calls counted separately, and require knowledge of $(\Lz,\Lo,\sigma,R_0)$ and, in the strongly convex case, $\mu$. 


\FloatBarrier

\def\ARCIncludedSupplement{1}
\ifdefined\ARCIncludedSupplement
\else
\documentclass[letterpaper]{article} 
\usepackage[submission]{aaai2027}  
\usepackage[hyphens]{url}  
\usepackage{graphicx} 
\urlstyle{rm} 
\def\UrlFont{\rm}  
\usepackage{natbib}  
\usepackage{caption} 
\frenchspacing  
\setlength{\pdfpagewidth}{8.5in} 
\setlength{\pdfpageheight}{11in} 

\usepackage{amsmath}
\usepackage{amssymb}
\usepackage{amsthm}
\usepackage{mathtools}
\usepackage{bm}
\usepackage{booktabs}
\usepackage{algorithm}
\usepackage{algorithmic}
\usepackage{subcaption}
\usepackage{multirow}
\usepackage{tabularx}
\usepackage{microtype}

\pdfinfo{
/TemplateVersion (2027.1)
}

\newtheorem{theorem}{Theorem}
\newtheorem{proposition}{Proposition}
\newtheorem{lemma}{Lemma}
\newtheorem{corollary}{Corollary}
\newtheorem{assumption}{Assumption}
\newtheorem{definition}{Definition}
\newtheorem{remark}{Remark}
\newtheorem{example}{Example}
\newtheorem{openproblem}{Open Problem}

\newcommand{\R}{\mathbb{R}}
\newcommand{\E}{\mathbb{E}}
\newcommand{\Prob}{\mathbb{P}}
\newcommand{\norm}[1]{\left\lVert #1\right\rVert}
\newcommand{\inner}[2]{\left\langle #1,\, #2\right\rangle}
\newcommand{\grad}{\nabla}
\newcommand{\eps}{\varepsilon}
\newcommand{\epsg}{\varepsilon_{\mathrm{g}}}
\newcommand{\Lz}{L_0}
\newcommand{\Lo}{L_1}
\newcommand{\Rb}{\bar{R}}
\newcommand{\kbar}{\bar{\kappa}}
\newcommand{\Deltabar}{\bar{\Delta}}
\newcommand{\Gbar}{\bar{G}}
\newcommand{\Dinit}{\Delta_{\mathrm{init}}}
\newcommand{\Ddef}{\Delta_{\mathrm{def}}}
\newcommand{\Dcert}{\Delta_{\mathrm{cert}}}
\newcommand{\Duser}{\Delta_{\mathrm{user}}}
\newcommand{\gest}{\hat{g}}
\newcommand{\Gest}{\hat{G}}
\newcommand{\xhat}{\hat{x}}
\newcommand{\dist}{\mathrm{dist}}
\newcommand{\proj}{\Pi}
\newcommand{\ball}[2]{B\!\left(#1,\,#2\right)}
\newcommand{\ARCSG}{\textsf{ARC-SG}}
\newcommand{\RSTM}{\textsf{R-STM}}
\newcommand{\STM}{\textsf{STM}}
\newcommand{\GradEst}{\textsf{GradEst}}
\newcommand{\PhaseI}{\textsf{Phase~I}}
\newcommand{\PhaseII}{\textsf{Phase~II}}
\newcommand{\logp}{\log_{+}}
\newcommand{\tO}{\widetilde{O}}
\newcommand{\Lball}{L_{\mathrm{ball}}}
\newcommand{\rsafe}{r_{\mathrm{safe}}}
\DeclareMathOperator*{\argmin}{arg\,min}
\DeclareMathOperator*{\argmax}{arg\,max}

\setcounter{secnumdepth}{2}

\title{Supplementary Material for: Localize, Restart, Accelerate: Stochastic Optimization under Generalized Smoothness}

\author{
    Anonymous submission
}
\affiliations{
    Anonymous submission
}

\begin{document}
\onecolumn

\begin{center}
{\LARGE\bfseries Supplementary Material\par}
\vspace{0.35em}
{\Large\bfseries Localize, Restart, Accelerate:\\
Stochastic Optimization under Generalized Smoothness\par}
\vspace{0.6em}
{\large Anonymous submission\par}
\end{center}
\vspace{0.5em}
\fi

\ifdefined\ARCIncludedSupplement
\clearpage
\onecolumn
\fi
\appendix

\section*{Supplement overview}
This document contains the complete proofs, parameter schedules,
auxiliary algorithms, and experimental details supporting the main
submission. It is self-contained, but its global notation is aligned
with the main paper. In particular, $R_0\ge\norm{x^0-x^\star}$ is the
initial-distance certificate, $\Dinit=f(x^0)-f^\star$ is the unknown
actual initial gap, and $\Dcert\ge\Dinit$ is the certificate consumed
by the algorithm. The target accuracy of the original problem is
$\eps$, whereas $\delta$ and $\delta_j$ denote accuracies of the
strongly convex proximal subproblems. The total failure probability is
$\alpha$; local stochastic events use symbols such as $\beta$.

The convex and strongly convex main theorems are restated and
proved below. The convex proof is
assembled from the complete Phase~I analysis, the certified-ball
invariants, the travel--halving counting argument, the hard schedule
cap, and the certified finisher. The strongly convex result follows by
applying the convex routine inside distance-halving restarts. Additional
sections record the strong-growth/interpolation extension and the full
experimental protocol; these sections are supplementary extensions and
are not needed for the two principal theorems of the main paper.

The confidence budget allocates $\alpha/4$ to Phase~I,
$\alpha/4$ to all \RSTM{} calls, and $\alpha/4$ to gradient-cap
measurements; the remaining quarter is reserved for finisher-only
checks. In the present construction those checks are deterministic on
the shared good event. Each stochastic budget is divided by a union
bound over a deterministically bounded number of calls.

\section*{Contents}
\small
\begin{tabularx}{\textwidth}{@{}cXr@{}}
\textbf{\ref{app:notation}} & Notation and abbreviations & \pageref{app:notation} \\
\textbf{\ref{sec:prelim}} & Preliminaries and structural lemmas & \pageref{sec:prelim} \\
\textbf{\ref{app:conc}} & Concentration toolbox & \pageref{app:conc} \\
\textbf{\ref{app:stm}} & Inner stochastic solver: constrained batched STM with epoch restarts & \pageref{app:stm} \\
\textbf{\ref{app:phase1}} & Phase I analysis & \pageref{app:phase1} \\
\textbf{\ref{app:phase2-section}} & Complete ARC-SG schedule and Phase II analysis & \pageref{app:phase2-section} \\
\textbf{\ref{app:strongly}} & Strongly convex restarts & \pageref{app:strongly} \\
\textbf{\ref{app:sec:main}} & Main-results remarks and comparisons & \pageref{app:sec:main} \\
\textbf{\ref{app:interp}} & Strong-growth and interpolation extensions & \pageref{app:interp} \\
\textbf{\ref{app:experiments}} & Complete experimental details & \pageref{app:experiments} \\
\textbf{\ref{app:additional}} & Additional experiments and limitations & \pageref{app:additional}
\end{tabularx}
\normalsize
\vspace{0.75em}

\section{Notation and abbreviations}
\label{app:notation}

Table~\ref{tab:abbreviations} collects every abbreviation used in this paper with its canonical expansion; entries marked \emph{(baseline)} are labels of the experimental baselines of Section~\ref{app:sec:experiments} and Appendix~\ref{app:experiments} rather than literature abbreviations. Table~\ref{tab:notation} fixes the main global symbols; the few symbols used with two meanings are listed with their scopes (see also the notation paragraph of Appendix~\ref{sec:prelim}).

\begin{table*}[t]
\centering
\footnotesize
\setlength{\tabcolsep}{4pt}
\begin{tabular}{@{}ll@{}}
\toprule
Abbreviation & Canonical expansion \\
\midrule
\ARCSG & \textbf{A}ccelerated, \textbf{R}estarted, \textbf{C}ertified-ball method for \textbf{S}tochastic optimization under \textbf{G}eneralized smoothness (this paper) \\
\ARCSG-sc & \ARCSG{} with distance-halving restarts, strongly convex case (Algorithm~\ref{alg:sc}; this paper) \\
\ARCSG-interp & \ARCSG, hop-and-solve variant for the interpolation/strong-growth regime (Algorithm~\ref{alg:interp}; this paper) \\
\RSTM & restarted stochastic similar-triangles method (this paper's inner solver, Proposition~\ref{app:prop:rstm}) \\
\STM & constrained stochastic similar-triangles method (single run, \eqref{eq:stm}, Proposition~\ref{prop:stm}) \\
SGD & stochastic gradient descent \\
GD & gradient descent \\
AGD & accelerated gradient descent \citep{gorbunov2024methods} (Table~\ref{app:tab:comparison}) \\
AGM & accelerated gradient method \citep{tyurin2026near} (Table~\ref{app:tab:comparison}) \\
SGC & strong growth condition \citep{schmidt2013fast,vaswani2019fast} \\
SGC-GS & strong growth condition under generalized smoothness (Assumption~\ref{ass:sgc}) \\
RSAG & Randomized Stochastic Accelerated Gradient \citep{yu2025stochastic} \\
AC-SA & Accelerated Stochastic Approximation \citep{lan2012optimal,ghadimi2012optimal} \\
AGMsDR & Accelerated Gradient Method with Small-Dimensional Relaxation \citep{vankov2024optimizing} \\
AdGD & Adaptive Gradient Descent without Descent \citep{malitsky2020adaptive} \\
SSTM & stochastic similar-triangles method (as in ClipSSTM \citep{gorbunov2020stochastic,sadiev2023high}) \\
ClipSSTM & clipped accelerated similar-triangles template \citep{gorbunov2020stochastic} \emph{(baseline)} \\
GRAAL & golden-ratio algorithm \citep{malitsky2020golden}; only the form ``GRAAL-type'' is used in the text \\
KKT & Karush--Kuhn--Tucker \\
MDS & martingale difference sequence \\
MoM & median-of-means \\
PJ & Polyak--Juditsky (averaging) \\
SGD-GS & generalized-smoothness-aware SGD \emph{(baseline)} \\
Clip-SGD & clipped SGD \emph{(baseline)} \\
PJ-SGD & Polyak--Juditsky-averaged SGD \emph{(baseline)} \\
DS-SGD & decreasing-stepsize SGD \emph{(baseline)} \\
Acc-blind & accelerated method blind to generalized smoothness, with the heuristic constant $L_{\mathrm{blind}}$ \emph{(baseline)} \\
M-SGD-SGC & momentum SGD under the strong growth condition \citep{vaswani2019fast} \emph{(baseline)} \\
a.s. & almost surely \\
\bottomrule
\end{tabular}
\caption{Abbreviation glossary; expansions are the canonical ones used in this paper. Entries marked \emph{(baseline)} are experiment labels of Section~\ref{app:sec:experiments} and Appendix~\ref{app:experiments}, not literature abbreviations.}
\label{tab:abbreviations}
\end{table*}

\begin{table*}[t]
\centering
\footnotesize
\setlength{\tabcolsep}{4pt}
\begin{tabularx}{\textwidth}{@{}lX@{}}
\toprule
Symbol & Meaning (and scope, where restricted) \\
\midrule
$R_0$ & certified initial distance, $R_0\ge\norm{x^0-x^\star}$ \\
$\Rb$ & $:=3R_0$, uniform outer-loop distance bound \\
$\kbar$ & $:=1+\Lo\Rb$ \\
$\Dinit$ & $:=f(x^0)-f^\star$, the unknown actual initial gap \\
$\Duser$ & optional user-supplied upper certificate, $\Duser\ge\Dinit$ \\
$\Ddef$ & canonical computable certificate from $R_0$ (Lemma~\ref{lem:gap0}) \\
$\Dcert$ & $:=\min\{\Duser,\Ddef\}$ if $\Duser$ is supplied, and $:=\Ddef$ otherwise \\
$F_0$ & generic initial function gap used only in comparison tables; interpreted separately for each cited result \\
$\Gbar$, $\Deltabar$ & $:=\Lz/\Lo$ and $:=\Lz/\Lo^2$: critical gradient and critical gap (Corollary~\ref{cor:sublevel}) \\
$G_x$ & $:=\norm{\grad f(x)}$; at level $j$, $G_{c_j}=\norm{\grad f(c_j)}$ \\
$\Gest_j$ & certified gradient cap at level $j$, $\Gest_j\ge G_{c_j}$ \\
$L_{1,\mathrm{eff}}$ & $:=\max\{\Lo,\,1/(4\Rb)\}$, the effective parameter the algorithm and the analysis run with (Algorithm~\ref{app:alg:arc}, \eqref{eq:wlog}) \\
$q$ & dimensionless: $q:=\Lo R_0$ in Corollary~\ref{cor:phase1-default}, Remarks~\ref{app:rem:convex-default}--\ref{rem:strongly-default} and Proposition~\ref{prop:phase1-interp}; $q:=\Lo\Rb$ in the discussion of Section~\ref{sec:discussion-kbar} \\
$\eps$, $\alpha$ & target accuracy of the original problem; total failure probability \\
$H_0$, $H_{0,j}$ & certified initial gap of one proximal subproblem; level-$j$ initial subproblem-gap certificate \\
$\delta$, $\delta_j$ & target accuracy of one proximal subproblem; level-$j$ subproblem accuracy \\
$\beta$, $\beta_j$ & local failure probability; level-$j$ inner-solver failure probability \\
$\sigma$ & sub-Gaussian noise scale (Assumption~\ref{app:ass:noise}) \\
$\mu$ & strong-convexity modulus (where stated) \\
$\rho$ & local failure probability in the batch-concentration display of Appendix~\ref{sec:prelim}; strong-growth constant in Appendix~\ref{app:interp} (the two uses are explicitly scoped) \\
$T_1$, $B_1$ & \PhaseI{} iteration count and batch size \eqref{eq:phase1-params} \\
$J^{\mathrm{sch}}$, $K_{\mathrm{cert}}$, $J_{\mathrm{tot}}$ & schedule cap on levels; finisher-level cap; their sum $J^{\mathrm{sch}}+K_{\mathrm{cert}}$ \eqref{eq:schedule-global} \\
$K_j$, $N_{\mathrm{ep}}$, $b_k$ & \RSTM{} epochs at level $j$; iterations per epoch; per-iteration batch in epoch $k$ (Appendix~\ref{app:rstm}) \\
$m$ & number of independent Stage-3 runs (Algorithm~\ref{alg:interp}) \\
$\Lambda_{\lg}$ & \PhaseI{} logarithmic factor \eqref{eq:phase1-params} \\
$\ell$, $\ell_\star$ & umbrella logarithms of Appendix~\ref{app:cost} and of \eqref{eq:nu-ellstar} (Appendix~\ref{app:interp}), respectively \\
$A_t$ & \STM{} accumulator, \eqref{eq:stm} \\
$A$ & $:=\Lz\Rb^2/\eps$ (Section~\ref{sec:discussion-kbar} only); the distinct $A:=1+\Lz\Rb^2/\eps$ (Theorem~\ref{thm:interp-convex} and its proof only) \\
$Q$ & \STM{} feasible set; in Appendix~\ref{app:experiments}, the orthogonal rotation of the experimental instances \\
$x^\star$, $x^{\mathrm{int}}$ & selected minimizer of $f$; common interpolating minimizer (Appendix~\ref{app:interp} only) \\
\bottomrule
\end{tabularx}
\caption{Main global symbols. Local proof notation is defined where it appears; symbols carrying two meanings are scoped in the table.}
\label{tab:notation}
\end{table*}

\section{Preliminaries and structural lemmas}
\label{sec:prelim}

For self-containedness we restate the problem, the assumptions, and the global notation of the main paper. We consider the stochastic convex optimization problem
\begin{equation}
\label{app:eq:problem}
    \min_{x\in\R^d}\; f(x) \;=\; \E_{\xi\sim\mathcal{D}}\left[f_\xi(x)\right],
\end{equation}
accessed through an unbiased stochastic first-order oracle that, at a query point $x$, returns $g(x,\xi)$ with $\E[g(x,\xi)]=\grad f(x)$.

\paragraph{Global notation and initial-gap certificates.}
The norm $\norm{\cdot}$ is Euclidean,
$\ball{c}{r}:=\{x:\norm{x-c}\le r\}$, and $\proj_Q$ denotes
Euclidean projection onto a closed convex set $Q$. For every $a\ge0$ we write
$\logp a:=\log(\max\{e,a\})$; for $a>0$ this is
$\max\{1,\log a\}$, and the convention remains defined at $a=0$. Let
$X^\star:=\argmin f\neq\varnothing$, fix $x^\star\in X^\star$, and
write $f^\star:=f(x^\star)$. The known distance certificate satisfies
$R_0\ge\norm{x^0-x^\star}$, and
\[
    \Dinit:=f(x^0)-f^\star
\]
is the unknown actual initial gap. The algorithm only consumes a valid
upper certificate $\Dcert\ge\Dinit$. The computable default is
$\Ddef$ from Lemma~\ref{lem:gap0}; if a user certificate
$\Duser\ge\Dinit$ is supplied, then
$\Dcert:=\min\{\Duser,\Ddef\}$, and otherwise $\Dcert:=\Ddef$.
All schedule quantities that depend on an initial-gap bound are monotone in $\Dcert$.

Absolute constants are denoted by $c,c_1,c_2,\dots$.
The notation $\tO(\cdot)$ hides absolute constants and polylogarithmic
factors in the displayed problem parameters, including
$\Lo R_0$, $\Lo^2\Dcert/\Lz$, $\Lz\Rb^2/\eps$, and $1/\alpha$.
When $\Lo<1/(4\Rb)$, the schedules use
$L_{1,\mathrm{eff}}:=\max\{\Lo,1/(4\Rb)\}$; this is legitimate because
Assumption~\ref{app:ass:gs} is monotone in $\Lo$. The symbols $A$ and $Q$
are reused only in explicitly stated local scopes. In
Appendix~\ref{app:interp}, $x^{\mathrm{int}}$ denotes the common
interpolating minimizer, whereas $x^\star$ remains the selected
minimizer of the population objective.

\begin{assumption}[Convexity]
\label{app:ass:convex}
The function $f:\R^d\to\R$ is convex and differentiable.
\end{assumption}

\begin{assumption}[$(\Lz,\Lo)$-generalized smoothness]
\label{app:ass:gs}
There exist $\Lz>0$, $\Lo\ge 0$ such that for all $x,y\in\R^d$ with $\norm{y-x}\le 1/\Lo$,
\begin{equation}
\label{eq:gs}
    \norm{\grad f(y)-\grad f(x)} \le \big(\Lz+\Lo\norm{\grad f(x)}\big)\norm{y-x}.
\end{equation}
\end{assumption}

Condition \eqref{eq:gs} is the asymmetric local form used by \citet{vankov2024optimizing} and \citet{gorbunov2024methods}; for twice-differentiable $f$ it is implied by $\norm{\grad^2 f(x)}\le \Lz+\Lo\norm{\grad f(x)}$ up to an adjustment of constants (Appendix~\ref{app:lemmas}), and for $\Lo=0$ it is ordinary $\Lz$-smoothness.

\begin{assumption}[Sub-Gaussian oracle]
\label{app:ass:noise}
The oracle returns $g(x,\xi)$ with $\E[g(x,\xi)\mid x]=\grad f(x)$ and has a parameter $\sigma\ge0$. For $\sigma>0$, the noise $\zeta=g(x,\xi)-\grad f(x)$ satisfies $\E[\exp(\norm{\zeta}^2/\sigma^2)\mid x]\le e$. Whenever an algorithm draws a batch at a (possibly random, history-measurable) query point $x$, the samples are fresh: conditionally on the filtration $\mathcal F$ generated by the history up to that call, the batch is independent of $\mathcal F$ given $x$, and the conditional unbiasedness $\E[g(x,\xi)\mid\mathcal F]=\grad f(x)$ and the conditional exponential bound hold. For $\sigma=0$, we instead require $\zeta=0$ almost surely conditionally on $\mathcal F$; this is the deterministic-oracle convention, and no quotient by $\sigma^2$ is formed.
\end{assumption}

Assumption~\ref{app:ass:noise} covers almost surely bounded noise and implies $\E\norm{\zeta}^2\le\sigma^2$. We use the vector Hoeffding inequality for norm-sub-Gaussian vectors \citep{jin2019short}: there is an absolute constant $c_g\ge 1$ such that the batch mean $\gest=\GradEst(x,B)$ of $B$ oracle calls satisfies, with probability at least $1-\beta$,
$\norm{\gest-\grad f(x)}\le c_g\sigma\sqrt{\log(2/\beta)/B}$, for every $\beta\in(0,1)$.

The following facts, proved in Appendix~\ref{app:lemmas} with explicit constants, are used throughout. Write $G_x:=\norm{\grad f(x)}$ and $\Delta_x:=f(x)-f^\star$.

\begin{lemma}[Gradient growth]
\label{lem:growth}
For all $x,y$ with $r=\norm{y-x}$,
$G_y \le e^{\Lo r}G_x + \tfrac{\Lz}{\Lo}(e^{\Lo r}-1)\le e^{\Lo r}(G_x+\Lz r)$.
At $\Lo=0$ the middle term is read by continuity as $\Lz r$ (since $(e^{\Lo r}-1)/\Lo\to r$).
\end{lemma}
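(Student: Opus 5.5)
The plan is a Grönwall-type argument for the scalar function $\phi(s):=G_{x+s u}$ along the segment from $x$ to $y$, where $u:=(y-x)/r$ and $s\in[0,r]$. The key difficulty is that Assumption~\ref{app:ass:gs} only applies to pairs at distance at most $1/\Lo$ and is asymmetric. Since $r$ may be arbitrary, I will not apply it once. Instead I will subdivide the segment into many small pieces and apply it from the left endpoint of each piece, so that only the gradient at an already-controlled point enters the right-hand side.

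Concretely, fix $n$ large and set $s_k:=kr/n$ and $x_k:=x+s_k u$. Then $\norm{x_{k+1}-x_k}=r/n\le1/\Lo$ once $n\ge \Lo r$. Assumption~\ref{app:ass:gs} with base point $x_k$, combined with the triangle inequality, gives
\[
G_{x_{k+1}}\le G_{x_k}+(\Lz+\Lo G_{x_k})\tfrac{r}{n}.
\]
Setting $a_k:=G_{x_k}+\Lz/\Lo$ when $\Lo>0$, this becomes $a_{k+1}\le(1+\Lo r/n)a_k$. Iterating,
\[
a_n\le(1+\Lo r/n)^n a_0\le e^{\Lo r}a_0.
\]
Since $x_n=y$, this is exactly $G_y\le e^{\Lo r}G_x+\tfrac{\Lz}{\Lo}(e^{\Lo r}-1)$. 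No limit in $n$ is even needed, because $(1+t/n)^n\le e^t$ holds for every $n$. For $\Lo=0$ the recursion reads $G_{x_{k+1}}\le G_{x_k}+\Lz r/n$, which telescopes to $G_y\le G_x+\Lz r$. This matches the continuity reading of the middle term.

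For the second inequality I would use the elementary bound $e^{t}-1\le t e^{t}$ for $t\ge0$. It follows from the mean value theorem, or from $\int_0^t e^s\,ds\le te^t$. With $t=\Lo r$ it gives $\tfrac{\Lz}{\Lo}(e^{\Lo r}-1)\le \Lz r e^{\Lo r}$, and hence the middle expression is at most $e^{\Lo r}(G_x+\Lz r)$. At $\Lo=0$ this is trivially $G_x+\Lz r$.

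I do not expect any step to be a real obstacle. The only care needed is to apply Assumption~\ref{app:ass:gs} with the base point at the earlier node, so that the asymmetric right-hand side involves the already-bounded $G_{x_k}$, and to choose the step $r/n\le 1/\Lo$. Everything else is the discrete Grönwall estimate above.
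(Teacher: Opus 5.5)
Your proposal is correct and follows essentially the same route as the paper: subdivide $[x,y]$ into $m\ge \Lo r$ equal pieces, apply Assumption~\ref{app:ass:gs} with the base at the left node of each piece, and unroll the resulting affine recursion using $(1+\Lo r/m)^m\le e^{\Lo r}$. The second inequality then follows from $e^t-1\le te^t$ in both your proof and the paper's. The only differences are cosmetic: you unroll via the shift $a_k=G_{x_k}+\Lz/\Lo$ rather than summing the geometric series explicitly, and the degenerate case $r=0$, where your $u$ is undefined, is trivial.
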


\begin{lemma}[Descent inequality]
\label{app:lem:descent}
For all $x,y$ with $\norm{y-x}\le 1/\Lo$,
$f(y)\le f(x)+\inner{\grad f(x)}{y-x}+\tfrac{\Lz+\Lo G_x}{2}\norm{y-x}^2$.
\end{lemma}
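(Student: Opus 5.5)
The descent inequality of Lemma~\ref{app:lem:descent} follows from integrating the gradient along the segment from $x$ to $y$ and bounding the deviation with Assumption~\ref{app:ass:gs}. Fix $x,y$ with $\norm{y-x}\le 1/\Lo$ and set $h:=y-x$. For $\Lo=0$ the argument below is the classical one, with no restriction on $\norm{h}$.

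First, apply the fundamental theorem of calculus to $\phi(t):=f(x+th)$ on $[0,1]$. Since $f$ is differentiable with a gradient that is locally Lipschitz by Assumption~\ref{app:ass:gs}, $\phi$ is $C^1$. This gives
\[
f(y)-f(x)-\inner{\grad f(x)}{h}=\int_0^1\inner{\grad f(x+th)-\grad f(x)}{h}\,dt .
\]

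Second, bound the integrand pointwise. For each $t\in[0,1]$ the point $x+th$ satisfies $\norm{(x+th)-x}=t\norm{h}\le 1/\Lo$. Assumption~\ref{app:ass:gs} therefore applies with base point $x$, and Cauchy--Schwarz yields
\[
\inner{\grad f(x+th)-\grad f(x)}{h}\le(\Lz+\Lo G_x)\,t\norm{h}^2 .
\]
Because the condition is asymmetric, the smoothness factor depends only on $G_x$ at the fixed base point. We therefore do not need any control of the gradient at intermediate points, and in particular we do not need Lemma~\ref{lem:growth}.

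Third, integrate: $\int_0^1 t\,dt=1/2$ gives the claimed bound $\tfrac{\Lz+\Lo G_x}{2}\norm{y-x}^2$.

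There is no genuine obstacle. The only point needing care is that every intermediate point stays within distance $1/\Lo$ of the base point $x$. This holds because the segment is star-shaped from $x$, so the hypothesis $\norm{y-x}\le1/\Lo$ suffices. A secondary point is the regularity needed for the fundamental theorem of calculus. Local Lipschitz continuity of $\grad f$ makes $\phi'$ continuous, and even without this, convexity of $\phi$ makes it absolutely continuous with $\phi'(t)=\inner{\grad f(x+th)}{h}$.
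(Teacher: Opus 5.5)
Your proof is correct and is essentially the paper's argument: you write the remainder as $\int_0^1\inner{\grad f(x+th)-\grad f(x)}{h}\,dt$, note that $\norm{(x+th)-x}\le\norm{y-x}\le1/\Lo$ so that \eqref{eq:gs} applies with the fixed base point $x$, and integrate $\int_0^1 t\,dt=\tfrac12$. Your added justification of the fundamental theorem of calculus step is sound; the gradient is continuous, indeed locally Lipschitz, by Assumption~\ref{app:ass:gs}, and the paper simply takes this step for granted.
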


\begin{lemma}[Localization]
\label{app:lem:localization}
Under Assumptions~\ref{app:ass:convex} and \ref{app:ass:gs}, for every $x$,
\begin{equation}
\label{eq:localization}
G_x^2 \le 2\big(\Lz+\Lo G_x\big)\Delta_x,
\ \ \text{hence}\ \
G_x \le 2\Lo\Delta_x + \sqrt{2\Lz\Delta_x}.
\end{equation}
\end{lemma}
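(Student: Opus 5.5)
The approach is the classical one: apply the descent inequality at a well-chosen gradient step from \(x\), and use \(f^\star\) as a lower bound. Fix \(x\) with \(G_x>0\); if \(G_x=0\) there is nothing to prove. Set \(L_x:=\Lz+\Lo G_x\) and consider
\[
y:=x-\frac{1}{L_x}\grad f(x).
\]
The step length is \(\norm{y-x}=G_x/L_x\le G_x/(\Lo G_x)=1/\Lo\). This holds because \(\Lz>0\), and when \(\Lo=0\) the constraint is vacuous. So Lemma~\ref{app:lem:descent} applies to the pair \((x,y)\) and gives
\[
f(y)\le f(x)-\frac{G_x^2}{L_x}+\frac{L_x}{2}\cdot\frac{G_x^2}{L_x^2}=f(x)-\frac{G_x^2}{2L_x}.
\]
Since \(f(y)\ge f^\star\), rearranging yields \(G_x^2\le 2L_x\Delta_x=2(\Lz+\Lo G_x)\Delta_x\), which is the first claim. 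Convexity is used only to guarantee that \(f^\star\) is a valid global lower bound; in fact it is not even needed beyond \(X^\star\neq\varnothing\).

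For the second claim, treat the first inequality as a quadratic inequality in \(G:=G_x\ge0\):
\[
G^2-2\Lo\Delta_x G-2\Lz\Delta_x\le0.
\]
So \(G\) is at most the positive root, \(\Lo\Delta_x+\sqrt{\Lo^2\Delta_x^2+2\Lz\Delta_x}\). Using \(\sqrt{a+b}\le\sqrt a+\sqrt b\), this root is bounded by \(2\Lo\Delta_x+\sqrt{2\Lz\Delta_x}\).

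The only point that needs care is the admissibility of the step size, namely verifying \(\norm{y-x}\le1/\Lo\). The natural step \(1/L_x\) satisfies this automatically because \(L_x\ge\Lo G_x\). So I expect no real obstacle; the edge cases \(G_x=0\) and \(\Lo=0\) should be handled explicitly as above.
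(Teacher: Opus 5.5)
Your proposal is correct and is essentially the paper's own proof. The paper uses the same gradient step with stepsize $1/(\Lz+\Lo G_x)$, checks admissibility the same way, applies the descent lemma together with $f(y)\ge f^\star$, and bounds the positive root of the same quadratic inequality to get the second claim.
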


\begin{corollary}[Effective smoothness on sublevel sets]
\label{cor:sublevel}
Let $G(\Delta):=2\Lo\Delta+\sqrt{2\Lz\Delta}$. On the sublevel set $\{x: \Delta_x\le\Delta\}$ one has $G_x\le G(\Delta)$ and $\Lz+\Lo G_x \le 2\Lz + 3\Lo^2\Delta$. In particular, for the \emph{critical gap} $\Deltabar:=\Lz/\Lo^2$ and the \emph{critical gradient} $\Gbar:=\Lz/\Lo$: on $\{\Delta_x\le\Deltabar\}$ the local smoothness modulus is at most $5\Lz$. At $\Lo=0$ one sets $\Deltabar=\Gbar=+\infty$; the last claim then asserts modulus at most $5\Lz$ on $\{\Delta_x<\infty\}$, where it is $\Lz$.
\end{corollary}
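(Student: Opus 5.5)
The corollary follows from Lemma~\ref{app:lem:localization} applied pointwise on the sublevel set. I will treat the gradient bound, the smoothness modulus, and the critical-gap specialization in that order.

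\emph{Gradient bound.} Fix $x$ with $\Delta_x\le\Delta$. Lemma~\ref{app:lem:localization} gives $G_x\le 2\Lo\Delta_x+\sqrt{2\Lz\Delta_x}$. The right-hand side is nondecreasing in $\Delta_x$, so $G_x\le G(\Delta)$ immediately.

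\emph{Smoothness modulus.} Multiplying the gradient bound by $\Lo$ gives
\[
\Lz+\Lo G_x\le \Lz+2\Lo^2\Delta+\Lo\sqrt{2\Lz\Delta}.
\]
The cross term is handled by AM--GM:
\[
\Lo\sqrt{2\Lz\Delta}=\sqrt{2\Lz\cdot\Lo^2\Delta}\le \tfrac{1}{2}\bigl(2\Lz\bigr)+\tfrac{1}{2}\Lo^2\Delta\cdot 2,
\]
that is, $\sqrt{ab}\le (a+b)/2$ with $a=2\Lz$ and $b=\Lo^2\Delta$ rescaled. In fact $\sqrt{2\Lz\Lo^2\Delta}\le \Lz+\tfrac12\Lo^2\Delta$. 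Hence
\[
\Lz+\Lo G_x\le 2\Lz+\tfrac52\Lo^2\Delta\le 2\Lz+3\Lo^2\Delta.
\]

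\emph{Critical gap.} Set $\Delta=\Deltabar=\Lz/\Lo^2$, so that $\Lo^2\Delta=\Lz$. The direct computation is
\[
\Lz+\Lo G_x\le \Lz+2\Lz+\sqrt2\,\Lz=(3+\sqrt2)\Lz<5\Lz,
\]
matching Lemma~\ref{lem:localization}. The cruder bound $2\Lz+3\Lz$ also gives $5\Lz$.

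\emph{Case $\Lo=0$.} The convention $\Deltabar=\Gbar=+\infty$ makes the last claim concern all $x$. The modulus is then exactly $\Lz\le5\Lz$, and no localization is needed.

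None of these steps is a real obstacle; the only care needed is choosing the AM--GM split so the constants come out as $2$ and $3$.
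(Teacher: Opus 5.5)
Your proof is correct and follows the paper's argument: monotonicity of $G(\cdot)$ gives the gradient bound, the Young/AM--GM estimate $\Lo\sqrt{2\Lz\Delta}\le\Lz+\tfrac12\Lo^2\Delta$ gives $2\Lz+\tfrac52\Lo^2\Delta$, and you then specialize at $\Deltabar$, where your direct $(3+\sqrt2)\Lz$ is slightly sharper than the paper's $\tfrac92\Lz$. Your first displayed AM--GM line actually gives the weaker bound $\Lz+\Lo^2\Delta$, but that still yields exactly $2\Lz+3\Lo^2\Delta$, so nothing breaks.
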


\begin{remark}[$\mu\le\Lz$ in the strongly convex case]
\label{rem:mu-le-lz}
If $f$ is moreover $\mu$-strongly convex, then $\mu\le\Lz$: strong convexity with $\grad f(x^\star)=0$ gives $f(y)\ge f^\star+\tfrac\mu2\norm{y-x^\star}^2$ for \emph{all} $y$, while Lemma~\ref{app:lem:descent} at the base $x^\star$ (where $G_{x^\star}=0$) gives $f(y)\le f^\star+\tfrac\Lz2\norm{y-x^\star}^2$ for $\norm{y-x^\star}\le1/\Lo$ (for every $y$ if $\Lo=0$); applying both at the single point $y=x^\star+tv$ with $\norm{v}=1$ and $t:=\min\{1,1/\Lo\}>0$ yields $\tfrac\mu2t^2\le\tfrac\Lz2t^2$.
\end{remark}

\begin{lemma}[Initial gap]
\label{lem:gap0}
$\Dinit \le \Ddef$, where $\Ddef:=\tfrac{\Lz}{\Lo^2}\,\psi(\Lo R_0)$ with $\psi(t)=e^t-1-t\le\tfrac{t^2}{2}e^t$ for $\Lo>0$, and $\Ddef:=\tfrac{\Lz R_0^2}{2}$ at $\Lo=0$ (the continuous extension, since $\psi(t)/t^2\to\tfrac12$ as $t\to0$).
\end{lemma}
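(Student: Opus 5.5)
The plan is to integrate the gradient bound of Lemma~\ref{lem:growth} along the segment from $x^\star$ to $x^0$. Set $r:=\norm{x^0-x^\star}\le R_0$ and $u(t):=x^\star+t(x^0-x^\star)/r$ for $t\in[0,r]$ (if $r=0$ there is nothing to prove). Since $\grad f(x^\star)=0$, so that $G_{x^\star}=0$, Lemma~\ref{lem:growth} applied with base point $x^\star$ and endpoint $u(t)$ gives
\[
G_{u(t)}\le \frac{\Lz}{\Lo}\bigl(e^{\Lo t}-1\bigr)\qquad(\Lo>0),
\]
and $G_{u(t)}\le \Lz t$ when $\Lo=0$.

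Next, by the fundamental theorem of calculus along the segment and Cauchy--Schwarz,
\[
\Dinit=f(x^0)-f(x^\star)=\int_0^r\Bigl\langle \grad f(u(t)),\,\frac{x^0-x^\star}{r}\Bigr\rangle\,dt\le\int_0^r G_{u(t)}\,dt .
\]
For $\Lo>0$ this gives
\[
\Dinit\le \frac{\Lz}{\Lo}\int_0^r\bigl(e^{\Lo t}-1\bigr)\,dt=\frac{\Lz}{\Lo^2}\,\psi(\Lo r).
\]
Because $\psi$ is increasing on $[0,\infty)$ (its derivative $e^t-1$ is nonnegative) and $r\le R_0$, we get $\Dinit\le\frac{\Lz}{\Lo^2}\psi(\Lo R_0)=\Ddef$. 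For $\Lo=0$ the same integral gives $\Dinit\le \Lz r^2/2\le \Lz R_0^2/2$.

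It remains to check the auxiliary inequality $\psi(t)\le \tfrac{t^2}{2}e^t$ for $t\ge0$. The Taylor series gives $\psi(t)=\sum_{k\ge2}t^k/k!$ and $\tfrac{t^2}{2}e^t=\sum_{k\ge2}t^k/(2(k-2)!)$. Comparing coefficients, it suffices that $k!\ge 2(k-2)!$, i.e.\ $k(k-1)\ge2$, which holds for $k\ge2$. The limit $\psi(t)/t^2\to\tfrac12$ as $t\to0$ confirms that the $\Lo=0$ value is the continuous extension.

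There is no real obstacle once Lemma~\ref{lem:growth} is available. The one point needing care is that Lemma~\ref{lem:growth} is valid for arbitrary distances, not just $r\le 1/\Lo$, which is what makes the argument global. If it had only been proved locally, I would instead subdivide the segment into pieces of length at most $1/\Lo$ and chain the local bound $G_y\le e^{\Lo s}G_x+\frac{\Lz}{\Lo}(e^{\Lo s}-1)$ across them; this chaining reproduces the same exponential bound.
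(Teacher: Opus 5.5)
Your proposal is correct and follows essentially the same route as the paper: the fundamental theorem of calculus along the segment from $x^\star$ to $x^0$, Lemma~\ref{lem:growth} with base $x^\star$ (where $G_{x^\star}=0$), explicit integration to $\tfrac{\Lz}{\Lo^2}\psi(\cdot)$, and a termwise series comparison for $\psi(t)\le\tfrac{t^2}{2}e^t$. The only cosmetic difference is that you integrate at the true distance $r$ and then use monotonicity of $\psi$, whereas the paper inserts $R_0$ directly into the integrand.
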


The next two facts about proximal steps are the structural backbone of \PhaseII. For $\lambda>0$, $r\in(0,\infty]$ and a center $c$, define the (ball-constrained) proximal point
\begin{equation}
\label{app:eq:prox}
\xhat(c;\lambda,r):=\argmin_{x\in\ball{c}{r}}\Big\{f(x)+\tfrac{\lambda}{2}\norm{x-c}^2\Big\}.
\end{equation}
(Throughout, $r$ and its decorated variants denote ball radii; the symbol $\rho$ is reserved for the strong growth constant of Appendix~\ref{app:interp}.)

\begin{lemma}[Proximal gradient and distance monotonicity]
\label{app:lem:prox-grad}
Under Assumption~\ref{app:ass:convex}, let $\lambda>0$,
$r\in(0,\infty]$, and $\xhat:=\xhat(c;\lambda,r)$. Then
\[
    \norm{\grad f(\xhat)}\le\norm{\grad f(c)},
    \qquad
    \norm{\xhat-c}\le\frac{\norm{\grad f(c)}}{\lambda}.
\]
Moreover, for every $u\in X^\star$,
\begin{equation}
\label{eq:fejer}
    \norm{\xhat-u}^2
    \le
    \norm{c-u}^2-\norm{\xhat-c}^2.
\end{equation}
If $r=\infty$, then
\[
    \norm{\xhat-c}=\frac{\norm{\grad f(\xhat)}}{\lambda}.
\]
\end{lemma}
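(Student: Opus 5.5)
We work throughout with the first-order optimality condition of the ball-constrained prox. Write $\xhat:=\xhat(c;\lambda,r)$ and $v:=\xhat-c$. Since $F(x)=f(x)+\tfrac\lambda2\norm{x-c}^2$ is convex and differentiable and $\ball{c}{r}$ is closed and convex, optimality reads
\[
\grad f(\xhat)+\lambda v+\nu v=0
\]
for some multiplier $\nu\ge0$. Here $\nu=0$ if $\norm{v}<r$ or $r=\infty$, because the normal cone to the ball at a boundary point $\xhat$ is the ray spanned by $\xhat-c$. Thus
\[
\grad f(\xhat)=-(\lambda+\nu)v,
\]
so $\grad f(\xhat)$ is antiparallel to $v$. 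Existence and uniqueness of $\xhat$ follow from strong convexity of $F$. When $r=\infty$ we have $\nu=0$, so $\norm{\grad f(\xhat)}=\lambda\norm{v}$, which is the last claim.

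\textbf{Gradient monotonicity.} We use monotonicity of $\grad f$ along the segment from $c$ to $\xhat$:
\[
\ip{\grad f(\xhat)-\grad f(c)}{v}\ge0.
\]
Substituting $\grad f(\xhat)=-(\lambda+\nu)v$ gives
\[
\ip{\grad f(c)}{v}\le-(\lambda+\nu)\norm{v}^2\le0 .
\]
Cauchy--Schwarz then yields $(\lambda+\nu)\norm{v}^2\le\norm{\grad f(c)}\norm{v}$. Dividing by $\norm{v}$ (the case $v=0$ is trivial) gives
\[
\norm{\grad f(\xhat)}=(\lambda+\nu)\norm{v}\le\norm{\grad f(c)}.
\]
Since $\lambda+\nu\ge\lambda$, the same inequality also gives $\norm{v}\le\norm{\grad f(c)}/\lambda$.

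\textbf{Fejér inequality.} Take $u\in X^\star$. Expanding the square,
\[
\norm{c-u}^2=\norm{\xhat-u}^2+\norm{v}^2+2\ip{\xhat-u}{-v}.
\]
It therefore suffices to show $\ip{\xhat-u}{-v}\ge0$. Since $(\lambda+\nu)>0$ when $v\neq0$, we have
\[
-v=\grad f(\xhat)/(\lambda+\nu),
\]
so the required sign is equivalent to $\ip{\grad f(\xhat)}{\xhat-u}\ge0$. This holds by convexity, because $f(u)\ge f(\xhat)+\ip{\grad f(\xhat)}{u-\xhat}$ and $f(u)=f^\star\le f(\xhat)$. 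Note that $u$ need not lie in the ball: the argument uses only the KKT relation, not the variational inequality over $\ball{c}{r}$.

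The main point requiring care is the KKT representation at a boundary solution. We must verify that the normal cone of $\ball{c}{r}$ at $\xhat$ is exactly $\{\nu(\xhat-c):\nu\ge0\}$, and that no constraint qualification issue arises. Slater's condition holds since $c$ is an interior point of the ball, so the representation is valid. Everything else is a short computation.
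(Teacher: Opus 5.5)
Your proof is correct and follows essentially the same route as the paper. Both arguments use the KKT representation $\grad f(\xhat)=-(\lambda+\nu)(\xhat-c)$, monotonicity of $\grad f$ between $\xhat$ and $c$ for the gradient-norm and displacement bounds, and the expansion of $\norm{c-u}^2$ for the Fej\'er inequality. The only cosmetic difference is that you obtain $\inner{\grad f(\xhat)}{\xhat-u}\ge0$ from the first-order convexity inequality together with $f(u)=f^\star$, whereas the paper obtains it from monotonicity of $\grad f$ with $\grad f(u)=0$.
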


\begin{corollary}[Fej\'er monotonicity of the prox]
\label{lem:fejer}
Under the assumptions of Lemma~\ref{app:lem:prox-grad}, inequality
\eqref{eq:fejer} holds for every $u\in X^\star$.
\end{corollary}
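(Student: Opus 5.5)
The statement to prove is Corollary~\ref{lem:fejer}: inequality \eqref{eq:fejer} holds for every $u\in X^\star$. It is a restatement of the third claim of Lemma~\ref{app:lem:prox-grad}. The plan is therefore to give the direct variational argument behind that claim, so that the corollary rests only on convexity and the optimality conditions of \eqref{app:eq:prox}.

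\textbf{Step 1: optimality condition.} Write $\xhat:=\xhat(c;\lambda,r)$ and $F(x):=f(x)+\tfrac{\lambda}{2}\norm{x-c}^2$. Then $F$ is $\lambda$-strongly convex and differentiable, and $\ball{c}{r}$ is closed and convex, so the minimizer $\xhat$ exists, is unique, and satisfies the first-order condition
\[
\inner{\grad f(\xhat)+\lambda(\xhat-c)}{z-\xhat}\ge 0
\qquad\text{for all } z\in\ball{c}{r}.
\]

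\textbf{Step 2: the case $u\in\ball{c}{r}$.} Take $z=u$ in the condition above. By convexity and $f(u)=f^\star\le f(\xhat)$,
\[
\inner{\grad f(\xhat)}{u-\xhat}\le f(u)-f(\xhat)\le 0.
\]
Combining the two inequalities gives $\lambda\inner{\xhat-c}{u-\xhat}\ge 0$. Expanding
\[
\norm{c-u}^2=\norm{c-\xhat}^2+\norm{\xhat-u}^2+2\inner{c-\xhat}{\xhat-u}
\]
then yields \eqref{eq:fejer}, because the cross term equals $2\inner{\xhat-c}{u-\xhat}\ge 0$.

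\textbf{Step 3: the case $u\notin\ball{c}{r}$.} This is the main obstacle, because $u$ is not a feasible test point. Use instead the point of the segment $[c,u]$ at distance $r$ from $c$, namely $z_\theta=c+\theta(u-c)$ with $\theta=r/\norm{u-c}\in(0,1)$. Then $z_\theta\in\ball{c}{r}$, and convexity gives $f(z_\theta)\le f^\star+(1-\theta)(f(c)-f^\star)$, which is not by itself strong enough. The cleaner route is to avoid $f$-values altogether and use the normal-cone form of the condition: there is $\nu\ge0$ with
\[
\grad f(\xhat)+\lambda(\xhat-c)+\nu(\xhat-c)=0,
\]
where $\nu>0$ only if $\norm{\xhat-c}=r$. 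Hence $\xhat-c=-\grad f(\xhat)/(\lambda+\nu)$, so $\xhat$ is the unconstrained proximal point of $f$ with parameter $\lambda':=\lambda+\nu$. Now rerun Step 2 with $r=\infty$ and parameter $\lambda'$, where every $u$ is feasible. Concretely,
\[
\inner{c-\xhat}{\xhat-u}=\tfrac{1}{\lambda'}\inner{\grad f(\xhat)}{\xhat-u}\ge \tfrac{1}{\lambda'}\bigl(f(\xhat)-f^\star\bigr)\ge 0.
\]
The same expansion as in Step 2 then gives \eqref{eq:fejer} for all $u\in X^\star$, with no case split. I would present only this KKT/effective-parameter argument, which also covers Step 2 (the case $\nu=0$).
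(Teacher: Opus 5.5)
Your final KKT/effective-parameter argument is correct and is essentially the paper's proof. The paper also writes $\grad f(\xhat)=-(\lambda+\eta)(\xhat-c)$ with $\eta\ge0$ and deduces $\inner{c-\xhat}{\xhat-u}\ge0$; the only difference is that it uses monotonicity of $\grad f$ with $\grad f(u)=0$, where you use the first-order convexity inequality with $f(u)=f^\star\le f(\xhat)$, and as you note, this single multiplier argument covers the interior ($\nu=0$) and boundary cases at once, so Steps~2--3 are not needed.
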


Lemma~\ref{app:lem:prox-grad} follows from monotonicity of $\grad f$ combined with the Karush--Kuhn--Tucker (KKT) stationarity condition $\grad f(\xhat)=-(\lambda+\eta)(\xhat-c)$, $\eta\ge0$: the gradient at the (possibly ball-constrained) exact prox is \emph{anti-parallel} to the displacement, which forces its norm below $\norm{\grad f(c)}$. Thus the gradient norm at the exact proximal reference point cannot increase. The algorithmic center is only an approximate prox, however, so its certified cap need not be monotone: the transfer estimate gives $\Gest^{+}\le\Gest+L_cs$, and Lemma~\ref{lem:drift} together with the travel--halving potential controls the accumulated rise.

Finally, we record the smoothness certificate for balls anchored at points with controlled gradients; it follows from Lemma~\ref{lem:growth} and \eqref{eq:gs}.

\begin{lemma}[Ball certificate]
\label{app:lem:ball}
Let $\Gest\ge G_c$ and $0<r\le\tfrac{1}{2\Lo}$. Then for all $x,y\in\ball{c}{2r}$:
$G_x\le e\,(\Gest+\Lz/\Lo)$ and
$\norm{\grad f(x)-\grad f(y)}\le \Lball\norm{x-y}$ with
$\Lball:=4(\Lz+\Lo\Gest)$.
Consequently $f$ is $\Lball$-smooth on $\ball{c}{2r}$ in the two-point sense, and the descent inequality with modulus $\Lball$ holds for all segments inside $\ball{c}{2r}$. At $\Lo=0$ the radius restriction is vacuous and the claims read $G_x\le\Gest+2\Lz r$ (via the continuous reading of Lemma~\ref{lem:growth}) and the global smoothness constant is $\Lball=\Lz$.
\end{lemma}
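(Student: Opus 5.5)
The plan is to reduce both claims to Lemma~\ref{lem:growth} and Assumption~\ref{app:ass:gs}, using the base point $x$ itself (which lies within distance $4r\le 2/\Lo$ of $c$) as the anchor.

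\emph{Step 1: the gradient bound.} For $x\in\ball{c}{2r}$ we have $\norm{x-c}\le 2r\le 1/\Lo$. Lemma~\ref{lem:growth} with base $c$ gives
\[
G_x\le e^{\Lo\norm{x-c}}G_c+\tfrac{\Lz}{\Lo}\bigl(e^{\Lo\norm{x-c}}-1\bigr).
\]
Since $e^{\Lo\norm{x-c}}\le e$, this is at most $e\,\Gest+(e-1)\Lz/\Lo\le e(\Gest+\Lz/\Lo)$.

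\emph{Step 2: the Lipschitz bound.} Take $x,y\in\ball{c}{2r}$. If $\norm{x-y}\le 1/\Lo$, apply \eqref{eq:gs} directly with base $x$:
\[
\norm{\grad f(x)-\grad f(y)}\le(\Lz+\Lo G_x)\norm{x-y}\le\bigl(\Lz+e\Lo\Gest+e\Lz\bigr)\norm{x-y}.
\]
The coefficient $(1+e)\Lz+e\Lo\Gest$ is at most $4(\Lz+\Lo\Gest)$ because $1+e<4$ and $e<4$.

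If instead $1/\Lo<\norm{x-y}\le 4r\le 2/\Lo$, split the segment at its midpoint $m$. Each half has length at most $1/\Lo$, and $m\in\ball{c}{2r}$ by convexity. Applying \eqref{eq:gs} with base $x$ on $[x,m]$ and with base $m$ on $[m,y]$ gives the constant $\Lz+\Lo\max(G_x,G_m)\le(1+e)\Lz+e\Lo\Gest$ on each piece. The triangle inequality then yields the same bound with $\norm{x-m}+\norm{m-y}=\norm{x-y}$. This split is cleaner than trying to enlarge the constant.

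\emph{Step 3: consequences.} The two-point Lipschitz bound on the convex set $\ball{c}{2r}$ gives the descent inequality along any segment by the standard integral argument: $f(y)-f(x)-\inner{\grad f(x)}{y-x}=\int_0^1\inner{\grad f(x+t(y-x))-\grad f(x)}{y-x}\,dt$.

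\emph{Step 4: the case $\Lo=0$.} Lemma~\ref{lem:growth}, read by continuity, gives $G_x\le G_c+\Lz\norm{x-c}\le\Gest+2\Lz r$. Assumption~\ref{app:ass:gs} is then global with constant $\Lz$, so $\Lball=\Lz$.

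The main obstacle is the case where $\norm{x-y}$ exceeds $1/\Lo$, since the asymmetric assumption does not apply across the whole segment. The midpoint split handles it, given that the gradient bound holds uniformly on the ball. Beyond that, the only care needed is the constant bookkeeping $(1+e)\le 4$.
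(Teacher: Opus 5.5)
Your proposal is correct and follows the paper's proof essentially step for step. The steps are: Lemma~\ref{lem:growth} from $c$ for the uniform gradient bound; a split of $[x,y]$ (length at most $4r\le 2/\Lo$) into at most two pieces of length at most $1/\Lo$, with \eqref{eq:gs} applied at each piece's base; the bookkeeping $1+e\le4$, $e\le4$; and the integral argument for the descent inequality. The only differences are cosmetic. You use the sharp form $e^{\Lo s}G_c+\tfrac{\Lz}{\Lo}(e^{\Lo s}-1)$ of Lemma~\ref{lem:growth}, whereas the paper uses the looser $e^{2\Lo r}(G_c+2\Lz r)$; both give $e(\Gest+\Lz/\Lo)$. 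Also, in your opening remark, $x$ lies within $2r$ (not $4r$) of $c$; the $4r$ is the segment length.
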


\subsection{Proofs of the structural lemmas}
\label{app:lemmas}

Throughout this section Assumptions~\ref{app:ass:convex} and \ref{app:ass:gs} are in force. Recall $G_x=\norm{\grad f(x)}$, $\Delta_x=f(x)-f^\star$.

\subsubsection{Proof of Lemma~\ref{lem:growth}}
If $\Lo=0$ the claim is the definition of $\Lz$-smoothness, so let $\Lo>0$. Let $r=\norm{y-x}$, $m:=\max\{1,\lceil \Lo r\rceil\}$ and $u_i:=x+\tfrac{i}{m}(y-x)$, $i=0,\dots,m$, so that $\norm{u_{i+1}-u_i}=r/m\le 1/\Lo$. Applying \eqref{eq:gs} on each segment with base point $u_i$,
\begin{equation*}
G_{u_{i+1}}\le G_{u_i}+\big(\Lz+\Lo G_{u_i}\big)\tfrac{r}{m}=\Big(1+\tfrac{\Lo r}{m}\Big)G_{u_i}+\tfrac{\Lz r}{m}.
\end{equation*}
Unrolling with $q:=1+\Lo r/m$,
\begin{equation*}
G_y\le q^mG_x+\tfrac{\Lz r}{m}\cdot\tfrac{q^m-1}{q-1}=q^mG_x+\tfrac{\Lz}{\Lo}(q^m-1)\le e^{\Lo r}G_x+\tfrac{\Lz}{\Lo}\big(e^{\Lo r}-1\big),
\end{equation*}
using $q^m=(1+\Lo r/m)^m\le e^{\Lo r}$. The second form follows from $e^t-1\le te^t$. 

\hfill$\square$

\subsubsection{Proof of Lemma~\ref{app:lem:descent}}
For $t\in[0,1]$ the point $x_t:=x+t(y-x)$ satisfies $\norm{x_t-x}\le\norm{y-x}\le1/\Lo$, so \eqref{eq:gs} with base $x$ gives $\norm{\grad f(x_t)-\grad f(x)}\le(\Lz+\Lo G_x)t\norm{y-x}$. Hence
\begin{equation*}
f(y)-f(x)-\inner{\grad f(x)}{y-x}=\int_0^1\inner{\grad f(x_t)-\grad f(x)}{y-x}\,dt\le\big(\Lz+\Lo G_x\big)\norm{y-x}^2\int_0^1 t\,dt,
\end{equation*}
which is the claim. 

\hfill$\square$

\subsubsection{Proof of Lemma~\ref{app:lem:localization} and Corollary~\ref{cor:sublevel}}
Let $s:=\tfrac{1}{\Lz+\Lo G_x}$ and $y:=x-s\grad f(x)$; then $\norm{y-x}=sG_x=\tfrac{G_x}{\Lz+\Lo G_x}\le\tfrac1\Lo$, so Lemma~\ref{app:lem:descent} applies:
\begin{equation*}
f^\star\le f(y)\le f(x)-sG_x^2+\tfrac{\Lz+\Lo G_x}{2}s^2G_x^2=f(x)-\tfrac{s}{2}G_x^2 ,
\end{equation*}
i.e., $G_x^2\le2(\Lz+\Lo G_x)\Delta_x$, the first inequality of \eqref{eq:localization}. Viewing it as a quadratic inequality in $G_x$,
$G_x\le \Lo\Delta_x+\sqrt{\Lo^2\Delta_x^2+2\Lz\Delta_x}\le2\Lo\Delta_x+\sqrt{2\Lz\Delta_x}$.
For the corollary, monotonicity of $G(\cdot)$ gives $G_x\le G(\Delta)$ on the sublevel set, and by Young's inequality $\Lo\sqrt{2\Lz\Delta}=\sqrt{2\Lz\cdot\Lo^2\Delta}\le\Lz+\tfrac{\Lo^2\Delta}{2}$, so
$\Lz+\Lo G(\Delta)=\Lz+2\Lo^2\Delta+\Lo\sqrt{2\Lz\Delta}\le2\Lz+\tfrac52\Lo^2\Delta$.
At $\Delta=\Deltabar=\Lz/\Lo^2$ this is $\tfrac92\Lz\le5\Lz$, and $G(\Deltabar)=2\Gbar+\sqrt2\,\Gbar\le\tfrac72\Gbar$. 

\hfill$\square$

\subsubsection{Proof of Lemma~\ref{lem:gap0}}
The claim is convexity-free: differentiability, Assumption~\ref{app:ass:gs} and $X^\star\neq\emptyset$ suffice. Since $\grad f(x^\star)=0$ by Fermat's condition, the fundamental theorem of calculus along $x_t^\star:=x^\star+t(x^0-x^\star)$ gives the identity $\Dinit=\int_0^1\inner{\grad f(x_t^\star)}{x^0-x^\star}\,dt$ (valid for the actual gap, not for an arbitrary certificate), and Lemma~\ref{lem:growth} with base $x^\star$ (where $G_{x^\star}=0$) yields $G_{x_t^\star}\le\tfrac{\Lz}{\Lo}(e^{\Lo tR_0}-1)$, hence
\begin{equation*}
\Dinit=\int_0^1\inner{\grad f(x_t^\star)}{x^0-x^\star}\,dt\le R_0\int_0^1\tfrac{\Lz}{\Lo}\big(e^{\Lo tR_0}-1\big)dt=\tfrac{\Lz}{\Lo^2}\big(e^{\Lo R_0}-1-\Lo R_0\big)=\tfrac{\Lz}{\Lo^2}\,\psi(\Lo R_0).
\end{equation*}
Finally $\psi(t)=\sum_{k\ge2}\tfrac{t^k}{k!}=\tfrac{t^2}{2}\sum_{k\ge0}\tfrac{2t^k}{(k+2)!}\le\tfrac{t^2}{2}e^t$ since $2/(k+2)!\le1/k!$. Moreover $\psi(t)/t^2=\sum_{k\ge0}\tfrac{t^k}{(k+2)!}$ is increasing in $t\ge0$ with limit $\tfrac12$ at $t=0$; hence $\Ddef$ extends continuously to $\Ddef=\tfrac{\Lz R_0^2}{2}$ at $\Lo=0$ and is monotone in $\Lo$, so the $L_{1,\mathrm{eff}}$-substitution of Algorithm~\ref{app:alg:arc} only enlarges it.

\hfill$\square$

\subsubsection{Proof of Lemma~\ref{app:lem:prox-grad}}
The problem \eqref{app:eq:prox} is convex with strongly feasible constraint ($c$ is interior when $r<\infty$; when $r=\infty$ there is no constraint), so KKT conditions hold at $\xhat$: there is $\eta\ge0$ (with $\eta=0$ if $\norm{\xhat-c}<r$) such that
\begin{equation}
\label{eq:kkt}
\grad f(\xhat)=-(\lambda+\eta)(\xhat-c).
\end{equation}
If $\xhat=c$ then $\grad f(c)=0$ by \eqref{eq:kkt} and the claim is trivial. Otherwise, monotonicity of $\grad f$ gives
$0\le\inner{\grad f(\xhat)-\grad f(c)}{\xhat-c}=-(\lambda+\eta)\norm{\xhat-c}^2-\inner{\grad f(c)}{\xhat-c}$,
hence $(\lambda+\eta)\norm{\xhat-c}^2\le\norm{\grad f(c)}\norm{\xhat-c}$ and therefore
$\norm{\grad f(\xhat)}=(\lambda+\eta)\norm{\xhat-c}\le\norm{\grad f(c)}$.
The same inequality gives
$\norm{\xhat-c}\le\norm{\grad f(c)}/(\lambda+\eta)\le\norm{\grad f(c)}/\lambda$.
For $r=\infty$ one has $\eta=0$, and \eqref{eq:kkt} yields
$\norm{\xhat-c}=\norm{\grad f(\xhat)}/\lambda$.
The Fej\'er inequality is proved in the next paragraph. 

\hfill$\square$

\subsubsection{Proof of the Fej\'er part of Lemma~\ref{app:lem:prox-grad}}
Let $\lambda>0$, let $\xhat$ be the minimizer of $f+\tfrac\lambda2\norm{\cdot-c}^2$ over $\ball{c}{r}$ with $r\in(0,\infty]$, and let $u\in X^\star$, so that $\grad f(u)=0$. By the optimality conditions \eqref{eq:kkt} there is $\eta\ge0$ with $\grad f(\xhat)=-(\lambda+\eta)(\xhat-c)$, and monotonicity of $\grad f$ gives
\begin{equation}
\label{eq:fejer-mono}
0\;\le\;\inner{\grad f(\xhat)-\grad f(u)}{\xhat-u}\;=\;-(\lambda+\eta)\inner{\xhat-c}{\xhat-u} .
\end{equation}
Since $\lambda+\eta\ge\lambda>0$, \eqref{eq:fejer-mono} yields $\inner{c-\xhat}{\xhat-u}\ge0$, whence
\begin{equation}
\label{eq:fejer-expand}
\norm{c-u}^2=\norm{c-\xhat}^2+2\inner{c-\xhat}{\xhat-u}+\norm{\xhat-u}^2\;\ge\;\norm{c-\xhat}^2+\norm{\xhat-u}^2 .
\end{equation}
This proves the lemma for $r=\infty$ and, with the same argument, the ball-constrained version used for the finisher and the strong-growth hops.

The restriction $\lambda>0$ is necessary and is satisfied at every call made by our algorithms, since $\lambda_j\ge\Lambda_j>0$ at every level (Appendix~\ref{app:params}) and $\lambda_{\mathrm{cert}}>0$ at the finisher. It cannot be dropped from the distance and Fej\'er conclusions: if $\lambda=0$ and the ball constraint is inactive at some minimizer (e.g.\ $f$ constant on a neighbourhood of $c$), then $\lambda+\eta=0$, the direction of $\xhat-c$ is unconstrained by \eqref{eq:fejer-mono}, the constrained minimizer need not be unique, and an arbitrary selection $\xhat\in\ball{c}{r}$ can violate \eqref{eq:fejer-expand}. Only the gradient-norm conclusion $\norm{\grad f(\xhat)}\le\norm{\grad f(c)}$ extends to $\lambda=0$, by the same KKT and monotonicity argument; the displacement bound (which divides by $\lambda$), the Fej\'er inequality, and the unconstrained distance identity do not.

\hfill$\square$

\subsubsection{Proof of Lemma~\ref{app:lem:ball}}
Let $x,y\in\ball{c}{2r}$ with $r\le\tfrac{1}{2\Lo}$. By Lemma~\ref{lem:growth}, $G_x\le e^{2\Lo r}(G_c+2\Lz r)\le e\,(\Gest+\Lz/\Lo)$, the first claim. For the two-point bound, the segment $[x,y]$ lies in $\ball{c}{2r}$ and has length $\norm{x-y}\le4r\le\tfrac2\Lo$; split it into $m\le2$ pieces of length at most $\tfrac1\Lo$ with endpoints $u_0=x,\dots,u_m=y$. On each piece, \eqref{eq:gs} with base $u_i$ and the first claim give
$\norm{\grad f(u_{i+1})-\grad f(u_i)}\le\big(\Lz+\Lo\, e(\Gest+\Lz/\Lo)\big)\norm{u_{i+1}-u_i}=\big((1+e)\Lz+e\Lo\Gest\big)\norm{u_{i+1}-u_i}$.
Summing over the pieces and using $1+e\le4$, $e\le4$ yields
$\norm{\grad f(x)-\grad f(y)}\le4(\Lz+\Lo\Gest)\norm{x-y}=\Lball\norm{x-y}$.
The descent inequality along any segment $[x,y]\subset\ball{c}{2r}$ follows by the integration argument of Lemma~\ref{app:lem:descent}, using the two-point bound at the pairs $(x,x+t(y-x))$. 

\hfill$\square$

\subsubsection{The Hessian form of generalized smoothness}
\begin{remark}
\label{rem:hessian}
If $f$ is twice differentiable with $\norm{\grad^2f(z)}\le\Lz+\Lo\norm{\grad f(z)}$ for all $z$, then Assumption~\ref{app:ass:gs} holds with the rescaled constants $(\Lz',\Lo'):=(e\Lz,e\Lo)$. We must verify \eqref{eq:gs} for the pair $(\Lz',\Lo')$, i.e.\ on \emph{its} admissible range $\norm{y-x}\le1/\Lo'=1/(e\Lo)$. We do so by proving the required bound on the \emph{larger} range $\norm{y-x}\le1/\Lo$, which contains $[0,1/(e\Lo)]$. Fix $x,y$ with $r:=\norm{y-x}\le1/\Lo$ and set $\varphi(t):=\norm{\grad f(x+t(y-x))}$; then $\varphi'(t)\le r\,(\Lz+\Lo\varphi(t))$ for a.e.\ $t\in[0,1]$, so Gr\"onwall's inequality gives $\Lz+\Lo\varphi(t)\le(\Lz+\Lo G_x)\,e^{\Lo rt}$, whence
$\norm{\grad f(y)-\grad f(x)}\le\int_0^1\norm{\grad^2f(x+t(y-x))}\,r\,dt\le r(\Lz+\Lo G_x)\tfrac{e^{\Lo r}-1}{\Lo r}\le e\,(\Lz+\Lo G_x)\,r=(\Lz'+\Lo' G_x)\,r$,
using $\tfrac{e^{s}-1}{s}\le e$ for $s=\Lo r\le1$. (For $\Lo=0$ the Gr\"onwall step divides by $\Lo r$; the conclusion then follows directly, $\varphi'\le\Lz r$ giving $\norm{\grad f(y)-\grad f(x)}\le\Lz r$, or by continuity in $\Lo$.) Since this bound holds for every $r\le1/\Lo$, it holds in particular for every $r\le1/(e\Lo)$, which is exactly the admissible range of the pair $(\Lz',\Lo')$; hence Assumption~\ref{app:ass:gs} holds with $(\Lz',\Lo')$. (The two ranges must be kept separate: the bound is \emph{proved} on $r\le1/\Lo$ but only \emph{needed}, and used, on the smaller rescaled range $r\le1/(e\Lo)$.) The factor $e$ in the rescaling can be improved to $1/\ln2$: what the argument needs is only $\tfrac{e^{s}-1}{s}\le c$ for all $s\le\tfrac1c$, and since $\tfrac{e^{s}-1}{s}$ is increasing the constraint binds at $s=\tfrac1c$, giving $e^{1/c}-1\le1$; the optimal fixed point is $e^{1/c}=2$, i.e.\ $c=\tfrac1{\ln2}$. Hence the Hessian form $\norm{\grad^2f(z)}\le\Lz+\Lo\norm{\grad f(z)}$ implies Assumption~\ref{app:ass:gs} already with the constants $(\Lz/\ln2,\Lo/\ln2)$, the optimal factor for this argument.
\end{remark}

\section{Concentration toolbox}
\label{app:conc}

We record the probabilistic facts used throughout; all constants are absolute. A random vector $\zeta\in\R^d$ is \emph{$\sigma$-norm-sub-Gaussian} if $\sigma>0$ and $\E\exp(\norm{\zeta}^2/\sigma^2)\le e$; at $\sigma=0$ this terminology means $\zeta=0$ almost surely, as stipulated in Assumption~\ref{app:ass:noise}. For $\sigma>0$, Jensen's inequality implies $\E\norm{\zeta}^2\le\sigma^2$, and Markov's inequality gives, for any $s>0$,
\begin{equation}
\label{eq:subexp-tail}
\Prob\big[\norm{\zeta}^2\ge\sigma^2(1+s)\big]\le e\cdot e^{-(1+s)}=e^{-s}.
\end{equation}

\begin{lemma}[Batch estimates; \citealp{jin2019short}]
\label{lem:batch}
There are absolute constants $c_g\ge1$, $c_b\ge1$ such that if $\zeta_1,\dots,\zeta_B$ are independent, zero-mean and $\sigma$-norm-sub-Gaussian conditionally on the past, then (i) for any $\beta\in(0,1)$, with probability at least $1-\beta$, $\norm{\tfrac1B\sum_i\zeta_i}\le c_g\sigma\sqrt{\log(2/\beta)/B}$; and (ii) $\tfrac1B\sum_i\zeta_i$ is $(c_b\sigma/\sqrt{B})$-norm-sub-Gaussian.
\end{lemma}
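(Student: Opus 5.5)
The plan is to treat the two parts of Lemma~\ref{lem:batch} separately. Part (i) will follow from part (ii) plus a norm-sub-Gaussian tail bound. Part (ii) is the core. Write $S:=\sum_{i=1}^B\zeta_i$.

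For (ii), I would follow the route of \citet{jin2019short}. First convert the norm condition $\E\exp(\norm{\zeta_i}^2/\sigma^2)\le e$ into a directional moment-generating-function bound. The \emph{vector} mgf bound in the form needed is
\[
\E\exp(\langle v,\zeta_i\rangle)\le\exp(c\,\norm{v}^2\sigma^2),
\]
which for mean-zero $\zeta_i$ should follow from expanding the exponential. The linear term vanishes, and the higher moments satisfy $\E\norm{\zeta_i}^k\le C^k\sigma^k k^{k/2}$ by the tail bound \eqref{eq:subexp-tail}. Conditioning iteratively on the past and using the conditional independence and zero mean, these bounds multiply to
\[
\E\exp(\langle v,S\rangle)\le\exp(cB\norm{v}^2\sigma^2).
\]

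The main obstacle is passing from this directional control back to the norm, i.e.\ bounding $\E\exp(\norm{S}^2/(c_b^2B\sigma^2))\le e$ with no dependence on the dimension $d$. Plain $\epsilon$-net arguments would give a factor depending on $d$, so I would use the Gaussian-averaging trick. Take $g\sim N(0,I_d)$ independent of everything, with $\lambda^2\le 1/(2cB\sigma^2)$. Then
\[
\E\exp\bigl(\lambda^2\norm{S}^2/2\bigr)=\E_S\E_g\exp\bigl(\lambda\langle g,S\rangle\bigr)=\E_g\E_S\exp\bigl(\lambda\langle g,S\rangle\bigr)\le\E_g\exp\bigl(c\lambda^2B\sigma^2\norm{g}^2\bigr).
\]
This still involves $\norm{g}^2$, whose mgf depends on $d$, so a naive version fails.

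The fix in \citet{jin2019short} is to exploit the \emph{norm} structure of each $\zeta_i$. Each norm-sub-Gaussian vector in dimension $d$ can be embedded into a Hoeffding-type inequality via the dilation $\zeta\mapsto\begin{pmatrix}0&\zeta^\top\\ \zeta&0\end{pmatrix}$, a symmetric $(d+1)\times(d+1)$ matrix of spectral norm $\norm{\zeta}$. The mgf of that matrix, in trace form, involves only a rank-two structure. Applying the trace-exponential (Lieb) argument, the dimension enters only through the factor $2$ from the two nonzero eigenvalues. I would cite this as the content of \citet{jin2019short}, and obtain
\[
\Prob\bigl[\norm{S}\ge t\bigr]\le 2e\exp\bigl(-t^2/(cB\sigma^2)\bigr).
\]

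From this tail bound, integrating gives $\E\exp(\norm{S}^2/(c'B\sigma^2))\le e$ for a suitable absolute $c'$. This is (ii) after dividing by $B$, with $c_b:=\sqrt{c'}$.

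Part (i) then follows by inverting the tail. Set $2e\exp(-t^2/(cB\sigma^2))=\beta$, which gives
\[
\norm{S/B}\le\sigma\sqrt{c\log(2e/\beta)/B}\le c_g\sigma\sqrt{\log(2/\beta)/B},
\]
using $\log(2e/\beta)\le2\log(2/\beta)$ for $\beta<1$. The degenerate case $\sigma=0$ is trivial, since then $S=0$ almost surely.
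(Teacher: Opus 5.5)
\noindent The gap is at the step you flag as the crux: getting from directional control to a norm bound with no dimension factor. The dilation/Lieb argument does not give a factor $2$.

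Write $D(x)$ for the dilation. The quantity that Lieb's theorem controls is $\E\,\mathrm{tr}\exp(\theta D(S))=2\E\cosh(\theta\norm{S})+(d-1)$. The $d-1$ zero eigenvalues of the dilation each contribute $e^0=1$, so any bound on it, and hence the Markov step $\Prob[\norm{S}\ge t]\le e^{-\theta t}\E\,\mathrm{tr}\,e^{\theta D(S)}$, carries an additive $d-1$. Nor does the Lieb bound $\mathrm{tr}\exp\bigl(\sum_i\log\E e^{\theta D(\zeta_i)}\bigr)$ keep any rank-two structure. Each realization $e^{\theta D(\zeta)}-I$ has rank two, but its average over the directions of $\zeta$ is generally of full rank.

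The standard argument, which bounds $\log\E e^{\theta D(\zeta_i)}\preceq c\theta^2\sigma^2 I$, therefore gives only $\Prob[\norm{S}\ge t]\le(d+1)\exp(-\theta t+cB\theta^2\sigma^2)$. This is why the Hoeffding inequality of \citet{jin2019short} that you propose to cite holds with failure probability $2d\,e^{-\iota}$, not with an absolute prefactor. Pushed through your integration and inversion steps, it yields (i) with $\log(2(d+1)/\beta)$ and (ii) with $c_b$ of order $\sqrt{\log d}$, not the absolute constants the lemma asserts. The paper itself gives nothing beyond this citation, so the same caveat applies there.

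The dimension-free statement is true, but it needs one more idea. One option is Pinelis's (1994) Hilbert-space martingale inequality, $\E\cosh(\theta\norm{S})\le\prod_i\bigl(1+\E[e^{\theta\norm{\zeta_i}}-1-\theta\norm{\zeta_i}]\bigr)$.

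Alternatively, keep your dilation but subtract the identity before taking traces. Since $\E D(\zeta_i)=0$ and $D(\zeta)^k\preceq\norm{\zeta}^{k-2}D(\zeta)^2$ for $k\ge2$, one has $\E e^{\theta D(\zeta_i)}\preceq I+M_i$, where $M_i:=\E\bigl[\phi_\theta(\norm{\zeta_i})\,D(\zeta_i)^2\bigr]$ and $\phi_\theta(r):=(e^{\theta r}-1-\theta r)/r^2$. The $(1,1)$ entry of $D(\zeta)^2$ is $\norm{\zeta}^2$, which is both its largest eigenvalue and half its trace. Hence $W:=\sum_iM_i$ satisfies $\mathrm{tr}\,W=2W_{11}=2\lambda_{\max}(W)$. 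Lieb's theorem, $\log(I+M)\preceq M$, and convexity of $x\mapsto e^x-1$ then give
\[
2\,\E\bigl[\cosh(\theta\norm{S})-1\bigr]=\E\,\mathrm{tr}\bigl(e^{\theta D(S)}-I\bigr)\le\mathrm{tr}\bigl(e^{W}-I\bigr)\le2\bigl(e^{\lambda_{\max}(W)}-1\bigr),
\]
and this is where the rank-two structure genuinely enters.

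For $\theta\sigma\le1$ one checks $\lambda_{\max}(W)\le\sqrt{e}\,B\theta^2\sigma^2$, which gives $\Prob[\norm{S}\ge t]\le2\exp\bigl(-t^2/(cB\sigma^2)\bigr)$ for $t\lesssim B\sigma$. The range $t\gtrsim B\sigma$ follows from $\norm{S}\le\sum_i\norm{\zeta_i}$ and a scalar Chernoff bound. After that, your integration step for (ii) is fine.

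One small slip in (i): $\log(2e/\beta)\le2\log(2/\beta)$ fails for $\beta\in(2/e,1)$. Use $\log(2e/\beta)\le(1+1/\log 2)\log(2/\beta)$ and enlarge $c_g$ accordingly.
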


\begin{lemma}[Azuma for norm-sub-Gaussian martingale difference sequence (MDS); \citealp{jin2019short}]
\label{lem:azuma}
There is an absolute constant $c_{\mathrm{Az}}\ge1$ such that if $(\zeta_t)_{t\le N}$ are martingale differences with $\zeta_t$ $\varsigma$-norm-sub-Gaussian conditionally on $\mathcal F_{t-1}$, and $(w_t)_{t\le N}$ are $\mathcal F_{t-1}$-measurable vectors, then with probability at least $1-\beta$,
$\big|\sum_{t\le N}\inner{\zeta_t}{w_t}\big|\le c_{\mathrm{Az}}\,\varsigma\sqrt{\sum_{t\le N}\norm{w_t}^2\,\log(2/\beta)}$
whenever $\norm{w_t}\le W$ deterministically for all $t$ (in which case $\sum\norm{w_t}^2\le NW^2$ may be used).
\end{lemma}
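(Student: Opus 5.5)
The statement to prove is Lemma~\ref{lem:azuma}: an Azuma-type bound for weighted sums of norm-sub-Gaussian martingale differences. The plan is to reduce the vector martingale to a scalar martingale with sub-Gaussian increments, and then apply the standard exponential-supermartingale argument. Set $X_t:=\inner{\zeta_t}{w_t}$. Since $w_t$ is $\mathcal F_{t-1}$-measurable and $\E[\zeta_t\mid\mathcal F_{t-1}]=0$, the sequence $(X_t)$ is a scalar martingale difference sequence. By Cauchy--Schwarz, $|X_t|\le\norm{w_t}\norm{\zeta_t}$, so the conditional bound $\E[\exp(\norm{\zeta_t}^2/\varsigma^2)\mid\mathcal F_{t-1}]\le e$ gives
\[
\E\bigl[\exp\bigl(X_t^2/(\varsigma^2\norm{w_t}^2)\bigr)\mid\mathcal F_{t-1}\bigr]\le e .
\]
Thus $X_t$ is conditionally sub-Gaussian with scale $\varsigma\norm{w_t}$, and the case $w_t=0$ is trivial.

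\textbf{Step 1 (mgf bound).} I will use the standard conversion from a $\psi_2$-bound plus zero conditional mean to an mgf bound: there is an absolute constant $c_0$ such that, for all $\theta\in\R$,
\[
\E[e^{\theta X_t}\mid\mathcal F_{t-1}]\le\exp\bigl(c_0\theta^2\varsigma^2\norm{w_t}^2\bigr).
\]
To prove it, expand $e^{\theta X}\le 1+\theta X+\theta^2X^2e^{|\theta X|}$, or split into the cases $|\theta|\varsigma\norm{w_t}\le1$ and $|\theta|\varsigma\norm{w_t}>1$. In the second case apply Young's inequality, $\theta X\le \theta^2 a^2/2+X^2/(2a^2)$ with $a=\varsigma\norm{w_t}$. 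This is the only genuinely technical step, and I expect it to be the main obstacle, mainly to track the constant honestly. I can also simply cite Lemma~\ref{lem:batch} and \citet{jin2019short}, where exactly this fact is established.

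\textbf{Step 2 (supermartingale).} Iterating the tower property shows that
\[
M_k:=\exp\Bigl(\theta\sum_{t\le k}X_t-c_0\theta^2\varsigma^2\sum_{t\le k}\norm{w_t}^2\Bigr)
\]
is a supermartingale with $\E M_N\le1$. Markov's inequality then gives
\[
\Prob\Bigl[\sum X_t\ge s \text{ and } \sum\norm{w_t}^2\le V\Bigr]\le\exp\bigl(-\theta s+c_0\theta^2\varsigma^2V\bigr)
\]
for any deterministic $V$. Optimizing over $\theta$ and adding the symmetric lower tail yields
\[
\Bigl|\sum_t X_t\Bigr|\le 2\sqrt{c_0}\,\varsigma\sqrt{V\log(2/\beta)}
\]
with probability at least $1-\beta$.

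\textbf{Step 3 (random variance proxy).} The statement uses the random quantity $\sum\norm{w_t}^2$, but our a priori control is the deterministic bound $\norm{w_t}\le W$. First take $V=NW^2$, which yields the displayed bound with $NW^2$, as the parenthetical permits. To get the version with the random sum $\sum\norm{w_t}^2$ itself, peel over dyadic shells $V_k=2^{-k}NW^2$. There are $O(\log N)$ shells because $\sum\norm{w_t}^2\le NW^2$, and each shell receives failure probability $\beta/\#\text{shells}$. The resulting extra logarithm is absorbed into the absolute constant only up to $\log\log$ factors. If necessary, I will state the lemma with the deterministic variance proxy, which is the form actually used downstream. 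Setting $c_{\mathrm{Az}}:=\max\{1,2\sqrt{c_0}\}$ completes the argument.
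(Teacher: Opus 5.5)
Your Steps~1--2 give a complete and correct proof of the lemma in its deterministic-proxy form. For every deterministic $V$ with $\sum_t\norm{w_t}^2\le V$ almost surely, $|\sum_t\inner{\zeta_t}{w_t}|\le 2\sqrt{c_0}\,\varsigma\sqrt{V\log(2/\beta)}$ with probability at least $1-\beta$.

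The paper has no argument of its own and imports the bound from \citet{jin2019short}. That result is stated for norms $\norm{\sum_tX_t}$ of vector martingales, with a dimension factor in the failure probability. It reaches a $\sqrt{\sum_t\sigma_t^2}$-type bound with random predictable scales only through a peeling corollary. Your route is more elementary and better matched to this use. Because the quantity is a scalar inner product with a predictable weight, Cauchy--Schwarz reduces it to a scalar martingale with conditionally sub-Gaussian increments of scale $\varsigma\norm{w_t}$. The exponential supermartingale then finishes with no dimension factor.

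Step~1 is easier than you expect. With the normalization $\E[e^{X^2/K^2}\mid\mathcal F]\le e$:
the inequality $e^x\le x+e^{x^2}$ plus conditional Jensen ($\E[Y^p]\le(\E Y)^p$ for $p=\theta^2K^2\le1$) handles $|\theta|K\le1$;
your Young's-inequality case handles $|\theta|K>1$, and the case split is $\mathcal F_{t-1}$-measurable.
So $c_0=1$ and $c_{\mathrm{Az}}=2$ suffice. Step~2 also already covers how the lemma is invoked in Proposition~\ref{prop:stm}: time-varying deterministic bounds $\norm{w_t}\le a_tD$ with $V=D^2\sum_ta_t^2$, not only $V=NW^2$.

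Step~3 should be dropped rather than repaired. As written it is wrong. The upper bound $\sum_t\norm{w_t}^2\le NW^2$ does not make the number of dyadic shells $O(\log N)$, since $\sum_t\norm{w_t}^2$ may be arbitrarily small. Peeling needs a floor $b$, gives a bound in terms of $\max\{\sum_t\norm{w_t}^2,b\}$, and adds $\log\log(NW^2/b)$ inside the logarithm, which no absolute constant absorbs.

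More fundamentally, the random-proxy reading of the display is false for any absolute $c_{\mathrm{Az}}$. Take $\zeta_t=\varepsilon_te_1$ with i.i.d.\ Rademacher signs, so $\varsigma=1$ is admissible, and set $S_t:=\sum_{s\le t}\varepsilon_s$. Let $\tau$ be the first $t\le N$ with $|S_t|>c_{\mathrm{Az}}\sqrt{t\log(2/\beta)}$, and $\tau:=N$ if there is none. Put $w_t:=\mathbf 1\{t\le\tau\}e_1$, which is $\mathcal F_{t-1}$-measurable with $\norm{w_t}\le1$. Then $\sum_t\inner{\zeta_t}{w_t}=S_\tau$ and $\sum_t\norm{w_t}^2=\tau$. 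The bound therefore fails exactly when the boundary is crossed at some $t\le N$. By the law of the iterated logarithm, that event has probability tending to $1$ as $N\to\infty$.

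So the lemma must be read with a deterministic $V\ge\sum_t\norm{w_t}^2$ in place of the random sum. That is exactly what you prove, and it is all the paper uses.
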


\begin{lemma}[Uniform bound on squared noise]
\label{lem:maxnoise}
If $(\zeta_t)_{t\le N}$ are each $\varsigma$-norm-sub-Gaussian conditionally on the past, then with probability at least $1-\beta$, $\max_{t\le N}\norm{\zeta_t}^2\le\varsigma^2\big(1+\log(N/\beta)\big)$, and consequently for any deterministic weights $a_t$, $\sum_t a_t^2\norm{\zeta_t}^2\le\varsigma^2(1+\log(N/\beta))\sum_t a_t^2$.
\end{lemma}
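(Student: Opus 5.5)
The plan is to prove the statement in two layers: first a tail bound for a single squared noise norm, then a union bound over the $N$ indices. The deterministic-weight corollary then follows by pulling the maximum out of the sum. If $\varsigma=0$, every $\zeta_t=0$ almost surely by the convention of Assumption~\ref{app:ass:noise}, and both claims are trivial; so I assume $\varsigma>0$ throughout.

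\emph{Single-index tail.} Fix $t\le N$. By hypothesis, $\E[\exp(\norm{\zeta_t}^2/\varsigma^2)\mid\mathcal F_{t-1}]\le e$. I apply the conditional Markov inequality exactly as in \eqref{eq:subexp-tail}, namely
\[
\Prob\bigl[\norm{\zeta_t}^2\ge\varsigma^2(1+s)\mid\mathcal F_{t-1}\bigr]\le e\cdot e^{-(1+s)}=e^{-s}
\]
for every $s>0$. Taking expectations over $\mathcal F_{t-1}$ removes the conditioning and gives the unconditional tail $\Prob[\norm{\zeta_t}^2\ge\varsigma^2(1+s)]\le e^{-s}$. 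This step needs no independence across $t$; only the per-index conditional moment bound is used.

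\emph{Union bound.} I choose $s:=\log(N/\beta)$, which is positive whenever $N\ge1$ and $\beta\in(0,1)$. Then each index fails with probability at most $\beta/N$. A union bound over the $N$ indices shows that, with probability at least $1-\beta$,
\[
\norm{\zeta_t}^2<\varsigma^2\bigl(1+\log(N/\beta)\bigr)\quad\text{for all }t\le N,
\]
which is the first claim. The union bound tolerates arbitrary dependence between the events, so the martingale structure plays no further role.

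\emph{Weighted sum.} On this same event, for any deterministic weights $a_t$,
\[
\sum_t a_t^2\norm{\zeta_t}^2\le\Bigl(\max_t\norm{\zeta_t}^2\Bigr)\sum_t a_t^2\le\varsigma^2\bigl(1+\log(N/\beta)\bigr)\sum_t a_t^2.
\]
I expect no real obstacle here. The only care needed is to pass from the conditional tail to the unconditional one by the tower property, and to handle the degenerate case $\varsigma=0$ separately, as done at the outset.
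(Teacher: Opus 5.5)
Your proposal is correct and matches the paper's proof: the paper applies the Markov tail bound \eqref{eq:subexp-tail} with $s=\log(N/\beta)$ to each index, takes a union bound, and gets the weighted-sum claim by bounding each $\norm{\zeta_t}^2$ by the maximum. Your tower-property step and your separate treatment of $\varsigma=0$ just spell out details the paper leaves implicit.
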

\begin{proof}
Apply \eqref{eq:subexp-tail} with $s=\log(N/\beta)$ to each $t$ and take a union bound.
\end{proof}
We write $\GradEst(x,B)$ for the average of $B$ oracle answers at $x$; under Assumption~\ref{app:ass:noise} and Lemma~\ref{lem:batch}, $\GradEst(x,B)$ deviates from $\grad f(x)$ by at most $c_g\sigma\sqrt{\log(2/\beta)/B}$ with probability $1-\beta$, and is $(c_b\sigma/\sqrt B)$-norm-sub-Gaussian around it.

\begin{algorithm}[t]
\caption{$\GradEst(x,B)$: batched gradient estimate}
\label{alg:gradest}
\begin{algorithmic}[1]
\REQUIRE query point $x$ (possibly random, history-measurable); batch size $B\ge1$
\STATE draw $B$ fresh oracle samples $g(x,\xi_1),\dots,g(x,\xi_B)$ at $x$, conditionally independent given the history (Assumption~\ref{app:ass:noise})
\RETURN the batch mean $\gest\leftarrow\tfrac1B\sum_{i=1}^{B}g(x,\xi_i)$
\STATE \hfill (accuracy guarantee: Lemma~\ref{lem:batch}; the caller fixes $B$ and the confidence)
\end{algorithmic}
\end{algorithm}

\section{The inner stochastic solver: constrained batched \STM{} with epoch restarts}
\label{app:stm}

Let $Q\subseteq\R^d$ be a closed convex set, $F$ convex and $L$-smooth on $Q$ with minimizer $x_Q\in Q$, and let the oracle for $F$ have variance at most $\varsigma^2$ per call (in \ARCSG{}, $F=F_j$ is a proximal subproblem, $Q$ a ball, and batching reduces $\varsigma^2=\sigma^2/b$). The \emph{constrained stochastic similar-triangles method} (\STM) iterates, from $z_0=y_0=x_0\in Q$, with $A_t=\sum_{s\le t}a_s$,
\begin{align}
x_{t+1}&=\tfrac{A_ty_t+a_{t+1}z_t}{A_{t+1}}, \notag\\
z_{t+1}&=\proj_Q\big[z_t-a_{t+1}\gest(x_{t+1})\big],\notag\\
y_{t+1}&=\tfrac{A_ty_t+a_{t+1}z_{t+1}}{A_{t+1}},
\label{eq:stm}
\end{align}
with $a_t=\tfrac{t}{4L}$. All queries lie in $Q$, so only smoothness \emph{on} $Q$ is ever used. Our first building block is an explicit high-probability guarantee whose martingale terms are controlled by the diameter of $Q$; no clipping and no amplification are needed.

\begin{proposition}[Constrained batched \STM, high probability]
\label{prop:stm}
Let $D\ge\max_{x\in Q}\norm{x-x_Q}$ and let the per-query noise be $(\varsigma)$-norm-sub-Gaussian. Then for any $\beta\in(0,1)$, with probability at least $1-\beta$,
\begin{equation*}
\scalebox{0.83}{$\displaystyle
F(y_N)-F(x_Q)\le \frac{8LD^2}{N^2} + \frac{c_1\varsigma D\sqrt{\log(4/\beta)}}{\sqrt{N}} + \frac{c_2\varsigma^2(N{+}1)\log\tfrac{4N}{\beta}}{L}
$}
\end{equation*}
with absolute constants $c_1,c_2$ specified in Appendix~\ref{app:stm}.
\end{proposition}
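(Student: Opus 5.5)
The plan is to run the standard estimate-sequence analysis of the similar-triangles method on $Q$, keeping every noise term explicit. I then bound the noise terms with the concentration toolbox of Appendix~\ref{app:conc}, using the diameter bound $D$ for all martingale weights.

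\textbf{Step 1: one-step inequality.} Write $\zeta_{t+1}:=\gest(x_{t+1})-\grad F(x_{t+1})$. Since $x_{t+1},y_{t+1}\in Q$ (convex combinations of points of $Q$) and $y_{t+1}-x_{t+1}=\tfrac{a_{t+1}}{A_{t+1}}(z_{t+1}-z_t)$, $L$-smoothness on $Q$ gives
\[
A_{t+1}F(y_{t+1})\le A_tF(y_t)+a_{t+1}\bigl[F(x_{t+1})+\inner{\grad F(x_{t+1})}{z_{t+1}-x_{t+1}}\bigr]+\tfrac{L a_{t+1}^2}{2A_{t+1}}\norm{z_{t+1}-z_t}^2 .
\]
Here I use convexity $F(y_t)\ge F(x_{t+1})+\inner{\grad F(x_{t+1})}{y_t-x_{t+1}}$ to pass from $A_{t+1}F(x_{t+1})$ to the displayed form.

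Next I apply the three-point property of the projection step with comparator $u=x_Q$. For this I replace $\grad F$ by $\gest$ and pay $a_{t+1}\inner{\zeta_{t+1}}{\cdot}$, splitting that term as $\inner{\zeta_{t+1}}{x_Q-z_t}+\inner{\zeta_{t+1}}{z_t-z_{t+1}}$. The choice $a_t=t/(4L)$ gives $A_{t+1}\ge 2La_{t+1}^2$, so the quadratic term is at most $\tfrac14\norm{z_{t+1}-z_t}^2$. This leaves a slack $-\tfrac14\norm{z_{t+1}-z_t}^2$ that absorbs $a_{t+1}\inner{\zeta_{t+1}}{z_t-z_{t+1}}\le a_{t+1}^2\norm{\zeta_{t+1}}^2+\tfrac14\norm{z_t-z_{t+1}}^2$.

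\textbf{Step 2: telescoping.} Summing over $t<N$ and using convexity at $x_Q$ yields
\[
A_N\bigl(F(y_N)-F(x_Q)\bigr)\le \tfrac12\norm{z_0-x_Q}^2+\sum_{t<N}a_{t+1}\inner{\zeta_{t+1}}{x_Q-z_t}+\sum_{t<N}a_{t+1}^2\norm{\zeta_{t+1}}^2 .
\]
Since $A_N=N(N+1)/(8L)\ge N^2/(8L)$, the first term gives $8LD^2/N^2$ up to the constant in front of $D^2$. (If that constant comes out as $4LD^2/N^2$, this is only better.)

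\textbf{Step 3: concentration.} The weights $w_t=a_{t+1}(x_Q-z_t)$ are $\mathcal F_t$-measurable, because $z_t$ is determined before the fresh batch at $x_{t+1}$ is drawn. Moreover $\norm{w_t}\le a_{t+1}D$ deterministically, since $z_t\in Q$. Lemma~\ref{lem:azuma} with confidence $\beta/2$ therefore bounds the martingale sum by $c_{\mathrm{Az}}\varsigma D\sqrt{\sum a_{t+1}^2\log(4/\beta)}\lesssim \varsigma D N^{3/2}\sqrt{\log(4/\beta)}/L$. Dividing by $A_N$ gives the $\varsigma D\sqrt{\log(4/\beta)}/\sqrt N$ term.

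For the quadratic noise I use Lemma~\ref{lem:maxnoise} with confidence $\beta/2$: $\sum a_{t+1}^2\norm{\zeta_{t+1}}^2\le \varsigma^2(1+\log(2N/\beta))\sum a_{t+1}^2\lesssim \varsigma^2\log(4N/\beta)N^3/L^2$. Dividing by $A_N$ gives $c_2\varsigma^2(N+1)\log(4N/\beta)/L$. A union bound over the two events finishes the argument.

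\textbf{Main obstacle.} The delicate step is the bookkeeping in Step 1. I have to make sure the cross term $\inner{\zeta_{t+1}}{z_t-z_{t+1}}$ is handled by Young's inequality against the projection slack, not by Azuma, because $z_{t+1}$ depends on $\zeta_{t+1}$. At the same time, the term with weight $x_Q-z_t$ must remain predictable. I also need to check that the $a_t=t/(4L)$ normalization leaves enough slack both for the smoothness term and for Young's inequality; if not, I will shrink $a_t$ by a constant, which only changes the absolute constants.
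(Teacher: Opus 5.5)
Your proposal is correct and follows the paper's proof essentially step for step. The shared steps are: the descent inequality along $[x_{t+1},y_{t+1}]\subset Q$; the three-point projection inequality with $u=x_Q$; Young's inequality, which pushes the non-predictable cross term $a_{t+1}\inner{\zeta_{t+1}}{z_t-z_{t+1}}$ into the $-\tfrac14\norm{z_{t+1}-z_t}^2$ slack left after $La_{t+1}^2/(2A_{t+1})\le\tfrac14$; telescoping with $A_0=0$; and finally Lemma~\ref{lem:azuma} (predictable weights bounded by $a_tD$) together with Lemma~\ref{lem:maxnoise}, each at level $\beta/2$. The normalization $a_t=t/(4L)$ already leaves exactly enough slack, so no shrinking of $a_t$ is needed, and the constants come out as $4LD^2/(N(N+1))$, $c_1=2c_{\mathrm{Az}}$ and $c_2=\tfrac13$.
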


\emph{Projection model.} Throughout the paper, the theory assumes an \emph{exact} Euclidean projection oracle: \STM{} performs one projection onto $Q$ per iteration, and \RSTM{} projects onto the two-ball intersections $Q_k=\ball{y^{(k)}}{r_k}\cap Q$. Projection calls are counted separately from the stochastic-gradient oracle calls that our complexity bounds report---one projection per \STM{} iteration, hence $\lceil24\sqrt{L/\lambda}\,\rceil$ per \RSTM{} epoch---and no projection-error analysis is provided. The two-ball projection does not reduce to a single-multiplier problem in general but is obtained from a two-dimensional dual system, solvable to machine precision (Remark~\ref{rem:proj}); the experiments of Section~\ref{app:sec:experiments} use single-ball projections only (Appendix~\ref{app:experiments}).

For $\lambda$-strongly convex $F$ (as every $F_j$ is), \RSTM{} wraps \STM{} into epochs: epoch $k$ runs \STM{} for $N_{\mathrm{ep}}=\lceil 24\sqrt{(L+\lambda)/\lambda}\,\rceil$ iterations on the ball $Q_k=\ball{y^{(k)}}{r_k}\cap Q$ with the \emph{shrinking certified radius} $r_k=2\sqrt{2\bar\Delta_k/\lambda}$, where $\bar\Delta_k$ is the running high-probability bound on the $F$-gap, halved each epoch by choosing the per-iteration batch $b_k$ so that the noise terms of Proposition~\ref{prop:stm} are at most $\bar\Delta_k/4$. Strong convexity certifies $x_Q\in Q_k$ from $\bar\Delta_k$; shrinking $r_k$ is what makes the statistical cost of epoch $k$ scale as $\tO(\sigma^2/(\lambda\bar\Delta_k))$---the rate of the smooth strongly convex case---rather than as $\sigma^2 D^2/\bar\Delta_k^2$.

\begin{proposition}[\RSTM]
\label{app:prop:rstm}
Let $F$ be $\lambda$-strongly convex and $L$-smooth on
$\ball{c}{2r}$, and define
\[
    x_F^\star:=\argmin_{x\in\ball{c}{r}}F(x).
\]
Suppose $y^{(0)}\in\ball{c}{r}$ and
$F(y^{(0)})-F(x_F^\star)\le H_0$. For a target subproblem
accuracy $\delta>0$ and a failure probability $\beta\in(0,1)$,
\RSTM{} returns a point $y$ such that, with probability at least
$1-\beta$,
\[
    F(y)-F(x_F^\star)\le\delta,
    \qquad
    \norm{y-x_F^\star}\le\sqrt{\frac{2\delta}{\lambda}}.
\]
The number of stochastic-gradient evaluations is
\[
\tO\left(
    \sqrt{\frac{L}{\lambda}}\logp\!\left(\frac{H_0}{\delta}\right)
    +\frac{\sigma^2}{\lambda\delta}
\right),
\]
and every query generated by \RSTM{} lies in $\ball{c}{2r}$.
\end{proposition}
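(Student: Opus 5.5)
The plan is to run \RSTM{} as epochs of \STM{} (Proposition~\ref{prop:stm}) and prove three invariants by induction over epochs $k=0,1,\dots,K$ on a single good event. Set $Q:=\ball{c}{r}$, so that $x_F^\star$ is the minimizer of $F$ over $Q$. Let $\bar\Delta_0:=H_0$ and $\bar\Delta_{k+1}:=\bar\Delta_k/2$, and stop at $K:=\lceil\log_2(H_0/\delta)\rceil_+$. The invariants are:
\begin{itemize}
\item[(a)] $F(y^{(k)})-F(x_F^\star)\le\bar\Delta_k$;
\item[(b)] $x_F^\star\in Q_k:=\ball{y^{(k)}}{r_k}\cap Q$ with $r_k=2\sqrt{2\bar\Delta_k/\lambda}$;
\item[(c)] every query of epoch $k$ lies in $Q_k\subseteq Q\subseteq\ball{c}{2r}$.
\end{itemize}
Invariant (c) is immediate, because \STM{} queries convex combinations of points of $Q_k$, which is a closed convex set. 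Hence only $L$-smoothness on $\ball{c}{2r}$ is ever used, and the last claim of the statement follows.

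For (b), I would use strong convexity on $Q$ together with the first-order optimality of $x_F^\star$ over $Q$. This gives $F(y)-F(x_F^\star)\ge\frac{\lambda}{2}\norm{y-x_F^\star}^2$ for every $y\in Q$. Then (a) implies $\norm{y^{(k)}-x_F^\star}\le\sqrt{2\bar\Delta_k/\lambda}\le r_k$. The same inequality at the output $y=y^{(K)}$ with $\bar\Delta_K\le\delta$ yields both conclusions $F(y)-F(x_F^\star)\le\delta$ and $\norm{y-x_F^\star}\le\sqrt{2\delta/\lambda}$. Note that $x_F^\star$ is also the minimizer of $F$ over $Q_k$, since $x_F^\star\in Q_k\subseteq Q$.

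For the inductive step (a), apply Proposition~\ref{prop:stm} on $Q_k$ with diameter bound $D=2r_k$, with $N=N_{\mathrm{ep}}=\lceil 24\sqrt{(L+\lambda)/\lambda}\rceil$ iterations, per-query noise $\varsigma=c_b\sigma/\sqrt{b_k}$ by Lemma~\ref{lem:batch}, and confidence $\beta/(K+1)$. The deterministic term is $8LD^2/N^2=64L\cdot 8\bar\Delta_k/(\lambda N^2)\le\bar\Delta_k/4$ by the choice of $N$ (adjusting the absolute constant if needed). Choose $b_k$ so that each of the two noise terms is at most $\bar\Delta_k/8$:
\begin{itemize}
\item the term $c_1\varsigma D\sqrt{\log}/\sqrt N$ requires $b_k\gtrsim\sigma^2(\bar\Delta_k/\lambda)\log/(N\bar\Delta_k^2)=\sigma^2\log/(N\lambda\bar\Delta_k)$;
\item the term $c_2\varsigma^2 N\log/L$ requires $b_k\gtrsim\sigma^2N\log/(L\bar\Delta_k)\lesssim\sigma^2\log/(\sqrt{\lambda L}\,\bar\Delta_k)$, which is at most the first requirement up to constants, using $N^2\simeq L/\lambda$.
\end{itemize}
The three terms then total at most $\bar\Delta_k/2=\bar\Delta_{k+1}$. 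A union bound over the $K+1$ epochs gives failure probability at most $\beta$. The \STM{} noise model needs conditional sub-Gaussianity of each batch given the history, which Assumption~\ref{app:ass:noise} supplies for history-measurable queries.

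The cost is $\sum_k N b_k\lesssim\sum_k\bigl(N+\sigma^2\log/(\lambda\bar\Delta_k)\bigr)$. The first part gives $\sqrt{L/\lambda}\,\logp(H_0/\delta)$. The second is a geometric series dominated by its last term, $\bar\Delta_K\ge\delta/2$, so it contributes $\tO(\sigma^2/(\lambda\delta))$. This is the claimed bound; the $\sqrt{(L+\lambda)/\lambda}$ versus $\sqrt{L/\lambda}$ distinction is absorbed up to an additive $O(1)$ per epoch.

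The main obstacle I expect is getting the diameter scaling right. Proposition~\ref{prop:stm} measures $D$ relative to the minimizer over $Q_k$, so I must verify that this minimizer coincides with $x_F^\star$; this is exactly what (b) provides, giving $D\le 2r_k$. Without the shrinking radius, the noise term would force $b_k\propto\sigma^2D^2/\bar\Delta_k^2$ and the rate would be lost. A secondary subtlety is that $y^{(0)}$ must lie in $Q$ and the two-ball projection must be exact, which I take from the stated projection model.
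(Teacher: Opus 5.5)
Your proposal is correct and follows the paper's proof essentially step for step: the halving invariant $F(y^{(k)})-F(x_F^\star)\le\bar\Delta_k$, strong convexity plus optimality over the ball to certify $x_F^\star\in Q_k$, Proposition~\ref{prop:stm} on each epoch with batches $b_k\propto\sigma^2\log/(\sqrt{\lambda L}\,\bar\Delta_k)$, a union bound over the epochs, and the geometric sum $\sum_k\bar\Delta_k^{-1}=O(1/\delta)$ for the statistical cost. The only difference is in the radius bound: the paper uses $D_k\le r_k+\tfrac{r_k}{2}=\tfrac32 r_k$, which already follows from your own bound $\norm{y^{(k)}-x_F^\star}\le r_k/2$, in place of the diameter $2r_k$. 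With $D_k\le\tfrac32 r_k$ the deterministic term is $144L\bar\Delta_k/(\lambda N_{\mathrm{ep}}^2)\le\bar\Delta_k/4$ for exactly the stated $N_{\mathrm{ep}}=\lceil 24\sqrt{L/\lambda}\,\rceil$, so no constant adjustment is needed. With $D=2r_k$ the term is $256L\bar\Delta_k/(\lambda N^2)$, not $512L\bar\Delta_k/(\lambda N^2)$ as you wrote, and it does exceed $\bar\Delta_k/4$ at that $N$.
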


\subsection{Constrained batched \STM: proof of Proposition~\ref{prop:stm}}

\begin{algorithm}[t]
\caption{\STM$(F,Q,L,N)$: constrained batched similar-triangles method}
\label{alg:stm}
\begin{algorithmic}[1]
\REQUIRE closed convex $Q$; convex $F$, $L$-smooth along segments of $Q$; iteration count $N$; per-iteration batch $b$ of fresh samples (Assumption~\ref{app:ass:noise}), giving per-query noise $\varsigma=c_b\sigma/\sqrt b$
\STATE $x_0\leftarrow y_0\leftarrow z_0\leftarrow$ an arbitrary point of $Q$;\quad $A_0\leftarrow0$
\FOR{$t=0,1,\dots,N-1$}
    \STATE $a_{t+1}\leftarrow\tfrac{t+1}{4L}$;\quad $A_{t+1}\leftarrow A_t+a_{t+1}$
    \STATE $x_{t+1}\leftarrow\big(A_ty_t+a_{t+1}z_t\big)/A_{t+1}$
    \STATE $z_{t+1}\leftarrow\proj_Q\big[z_t-a_{t+1}\gest(x_{t+1})\big]$ \hfill (one exact projection and one batched query per iteration)
    \STATE $y_{t+1}\leftarrow\big(A_ty_t+a_{t+1}z_{t+1}\big)/A_{t+1}$
\ENDFOR
\RETURN $y_N$ \hfill (high-probability guarantee: Proposition~\ref{prop:stm})
\end{algorithmic}
\end{algorithm}

Recall the setting: $Q$ closed convex, $F$ convex on $Q$ and $L$-smooth along segments of $Q$ (two-point Lipschitz gradients plus the descent inequality, as provided by Lemma~\ref{app:lem:ball}), $x_Q\in\argmin_QF$, $D\ge\max_{x\in Q}\norm{x-x_Q}$, and the oracle returns $\gest(x)=\grad F(x)+\zeta$ with $\zeta$ conditionally $\varsigma$-norm-sub-Gaussian. \STM{} runs \eqref{eq:stm} with $a_t=\tfrac{t}{4L}$, $A_t=\sum_{s\le t}a_s=\tfrac{t(t+1)}{8L}$, which satisfy
\begin{equation}
\label{eq:stm-step}
A_{t+1}=A_t+a_{t+1},\qquad La_{t+1}^2=\tfrac{(t+1)^2}{16L}\le\tfrac{(t+1)(t+2)}{16L}=\tfrac{A_{t+1}}{2}.
\end{equation}
All iterates lie in $Q$ (convex combinations and projections of points of $Q$). Write $\gest_{t+1}:=\gest(x_{t+1})=\grad F(x_{t+1})+\zeta_{t+1}$.

\emph{One-step inequality.} Since $y_{t+1}-x_{t+1}=\tfrac{a_{t+1}}{A_{t+1}}(z_{t+1}-z_t)$ and $[x_{t+1},y_{t+1}]\subset Q$, the descent inequality gives
\begin{equation}
\label{eq:stm1}
A_{t+1}F(y_{t+1})\le A_{t+1}F(x_{t+1})+a_{t+1}\inner{\grad F(x_{t+1})}{z_{t+1}-z_t}+\tfrac{La_{t+1}^2}{2A_{t+1}}\norm{z_{t+1}-z_t}^2 .
\end{equation}
The projection step means $z_{t+1}=\argmin_{z\in Q}\{a_{t+1}\inner{\gest_{t+1}}{z}+\tfrac12\norm{z-z_t}^2\}$, whose optimality condition against any $u\in Q$ reads
\begin{equation}
\label{eq:stm2}
a_{t+1}\inner{\gest_{t+1}}{z_{t+1}-u}\le\tfrac12\norm{u-z_t}^2-\tfrac12\norm{u-z_{t+1}}^2-\tfrac12\norm{z_{t+1}-z_t}^2 .
\end{equation}
From the definition of $x_{t+1}$, $a_{t+1}(x_{t+1}-z_t)=A_t(y_t-x_{t+1})$, so by convexity
\begin{equation}
\label{eq:stm3}
a_{t+1}\inner{\grad F(x_{t+1})}{x_{t+1}-z_t}=A_t\inner{\grad F(x_{t+1})}{y_t-x_{t+1}}\le A_t\big(F(y_t)-F(x_{t+1})\big),
\end{equation}
and $a_{t+1}\inner{\grad F(x_{t+1})}{u-x_{t+1}}\le a_{t+1}(F(u)-F(x_{t+1}))$. Decompose
$\inner{\grad F(x_{t+1})}{z_{t+1}-z_t}=\inner{\grad F(x_{t+1})}{z_{t+1}-u}+\inner{\grad F(x_{t+1})}{u-x_{t+1}}+\inner{\grad F(x_{t+1})}{x_{t+1}-z_t}$
and $\inner{\grad F(x_{t+1})}{z_{t+1}-u}=\inner{\gest_{t+1}}{z_{t+1}-u}-\inner{\zeta_{t+1}}{z_{t+1}-u}$; for the noise term use Young's inequality:
\begin{align}
\label{eq:stm4}
-a_{t+1}\inner{\zeta_{t+1}}{z_{t+1}-u}&=-a_{t+1}\inner{\zeta_{t+1}}{z_t-u}-a_{t+1}\inner{\zeta_{t+1}}{z_{t+1}-z_t} \notag\\
&\le-a_{t+1}\inner{\zeta_{t+1}}{z_t-u}+a_{t+1}^2\norm{\zeta_{t+1}}^2+\tfrac14\norm{z_{t+1}-z_t}^2 .
\end{align}
Combining \eqref{eq:stm1}--\eqref{eq:stm4} and $\tfrac{La_{t+1}^2}{2A_{t+1}}\le\tfrac14$ from \eqref{eq:stm-step}, the $\norm{z_{t+1}-z_t}^2$ terms cancel and
\begin{equation*}
A_{t+1}F(y_{t+1})\le A_tF(y_t)+a_{t+1}F(u)+\tfrac12\norm{u-z_t}^2-\tfrac12\norm{u-z_{t+1}}^2+a_{t+1}^2\norm{\zeta_{t+1}}^2-a_{t+1}\inner{\zeta_{t+1}}{z_t-u}.
\end{equation*}
Telescoping over $t=0,\dots,N-1$ with $u=x_Q$ and $A_0=0$:
\begin{equation}
\label{eq:stm-master}
A_N\big(F(y_N)-F(x_Q)\big)\le\tfrac12\norm{x_Q-z_0}^2+\sum_{t=1}^{N}a_t^2\norm{\zeta_t}^2+\Big|\sum_{t=1}^{N}a_t\inner{\zeta_t}{x_Q-z_{t-1}}\Big| .
\end{equation}
\emph{Concentration.} The weights $w_t:=a_t(x_Q-z_{t-1})$ are predictable with $\norm{w_t}\le a_tD$, and $\sum_ta_t^2=\tfrac{1}{16L^2}\sum_{t\le N}t^2\le\tfrac{N(N+1)^2}{48L^2}$. By Lemma~\ref{lem:azuma} (with failure probability $\beta/2$) and Lemma~\ref{lem:maxnoise} (with $\beta/2$),
\begin{equation*}
\Big|\sum_ta_t\inner{\zeta_t}{x_Q-z_{t-1}}\Big|\le c_{\mathrm{Az}}\varsigma D\tfrac{(N+1)\sqrt N}{\sqrt{48}\,L}\sqrt{\log\tfrac4{\beta}},\qquad
\sum_ta_t^2\norm{\zeta_t}^2\le\varsigma^2\big(1+\log\tfrac{2N}{\beta}\big)\tfrac{N(N+1)^2}{48L^2},
\end{equation*}
simultaneously with probability at least $1-\beta$. Dividing \eqref{eq:stm-master} by $A_N=\tfrac{N(N+1)}{8L}$ and using $\norm{x_Q-z_0}\le D$, $1+\log(2N/\beta)\le2\log(4N/\beta)$:
\begin{equation*}
F(y_N)-F(x_Q)\le\frac{4LD^2}{N(N+1)}+\frac{8c_{\mathrm{Az}}}{\sqrt{48}}\cdot\frac{\varsigma D\sqrt{\log(4/\beta)}}{\sqrt N}+\frac{(N+1)\,\varsigma^2\log(4N/\beta)}{3L}.
\end{equation*}
Proposition~\ref{prop:stm} follows with $c_1:=2c_{\mathrm{Az}}\ge8c_{\mathrm{Az}}/\sqrt{48}$ and $c_2:=\tfrac13$.

\hfill$\square$

\subsection{\RSTM: proof of Proposition~\ref{app:prop:rstm}}
\label{app:rstm}

\begin{algorithm}[t]
\caption{\RSTM$(F,\ball{c}{r},\lambda,L_F,H_0,\delta,\beta)$: restarted \STM{} on shrinking certified balls}
\label{alg:rstm}
\begin{algorithmic}[1]
\REQUIRE $F$ $\lambda$-strongly convex and $L_F$-smooth on $\ball{c}{2r}$; $x_F^\star:=\argmin_{x\in\ball{c}{r}}F(x)$; entry point $y^{(0)}\in\ball{c}{r}$ with certified gap $F(y^{(0)})-F(x_F^\star)\le H_0$; level target $\delta$; confidence $\beta$
\STATE $K\leftarrow\big\lceil\log_2\big(H_0/\delta\big)\big\rceil_+$ epochs;\quad $N_{\mathrm{ep}}\leftarrow\big\lceil24\sqrt{L_F/\lambda}\,\big\rceil$ iterations per epoch
\FOR{$k=0,1,\dots,K-1$}
    \STATE $\bar\Delta_k\leftarrow H_0\,2^{-k}$;\quad $r_k\leftarrow2\sqrt{2\bar\Delta_k/\lambda}$;\quad $Q_k\leftarrow\ball{y^{(k)}}{r_k}\cap\ball{c}{r}$ \hfill (projection set; Remark~\ref{rem:proj})
    \STATE $b_k\leftarrow\max\Big\{1,\ \Big\lceil C_b\,\dfrac{\sigma^2\log\big(8KN_{\mathrm{ep}}/\beta\big)}{\sqrt{\lambda L_F}\;\bar\Delta_k}\Big\rceil\Big\}$ \hfill (per-iteration batch, \eqref{eq:bk})
    \STATE $y^{(k+1)}\leftarrow\STM\big(F,Q_k,L_F,N_{\mathrm{ep}}\big)$ with batch $b_k$ of fresh samples per iteration and per-epoch confidence $\beta/K$
\ENDFOR
\RETURN $y^{(K)}$ \hfill ($F(y^{(K)})-F(x_F^\star)\le\delta$ and $\norm{y^{(K)}-x_F^\star}\le\sqrt{2\delta/\lambda}$ with probability at least $1-\beta$; Proposition~\ref{app:prop:rstm})
\end{algorithmic}
\end{algorithm}

Let $F$ be $\lambda$-strongly convex on $\ball{c}{2r}$ and $L_F$-smooth there, and define $x_F^\star:=\argmin_{x\in\ball{c}{r}}F(x)$ (in \PhaseII{} this is the unconstrained prox by containment; in the finisher it may lie on the boundary --- both are covered). Given $y^{(0)}\in\ball{c}{r}$ with $F(y^{(0)})-F(x_F^\star)\le H_0$, set $K:=\lceil\log_2(H_0/\delta)\rceil_+$, $\bar\Delta_k:=H_0\,2^{-k}$, and for $k=0,\dots,K-1$ run \STM{} for
\begin{equation*}
N_{\mathrm{ep}}:=\big\lceil24\sqrt{L_F/\lambda}\big\rceil\quad\text{iterations on}\quad Q_k:=\ball{y^{(k)}}{r_k}\cap\ball{c}{r},\qquad r_k:=2\sqrt{2\bar\Delta_k/\lambda},
\end{equation*}
with per-iteration batch size
\begin{equation}
\label{eq:bk}
b_k:=\max\Big\{1,\ \Big\lceil C_b\,\frac{\sigma^2\log\big(8KN_{\mathrm{ep}}/\beta\big)}{\sqrt{\lambda L_F}\;\bar\Delta_k}\Big\rceil\Big\},\qquad C_b:=48c_1^2c_b^2+70c_b^2+1,
\end{equation}
and let $y^{(k+1)}$ be the \STM{} output. We verify the invariant $F(y^{(k)})-F(x_F^\star)\le\bar\Delta_k$ by induction; it holds at $k=0$. Given the invariant, strong convexity yields $\norm{y^{(k)}-x_F^\star}\le\sqrt{2\bar\Delta_k/\lambda}=r_k/2$, so $x_F^\star\in Q_k$; also $Q_k\subseteq\ball{c}{r}$, so $F$ is $L_F$-smooth along segments of $Q_k$ and $D_k:=\max_{x\in Q_k}\norm{x-x_F^\star}\le r_k+r_k/2=\tfrac32r_k=3\sqrt{2\bar\Delta_k/\lambda}$. Applying Proposition~\ref{prop:stm} on $Q_k$ with confidence $\beta/K$ and per-query sub-Gaussian parameter $\varsigma_k=c_b\sigma/\sqrt{b_k}$ (Lemma~\ref{lem:batch}(ii)):
\begin{equation*}
F(y^{(k+1)})-F(x_F^\star)\le\underbrace{\frac{8L_F\cdot18\bar\Delta_k/\lambda}{N_{\mathrm{ep}}^2}}_{\le\bar\Delta_k/4\ \text{by }N_{\mathrm{ep}}^2\ge576L_F/\lambda}+\underbrace{\frac{c_1\varsigma_kD_k\sqrt{\log(4K/\beta)}}{\sqrt{N_{\mathrm{ep}}}}}_{\le\bar\Delta_k/8\ \text{by \eqref{eq:bk}}}+\underbrace{\frac{c_2\varsigma_k^2(N_{\mathrm{ep}}+1)\log(4KN_{\mathrm{ep}}/\beta)}{L_F}}_{\le\bar\Delta_k/8\ \text{by \eqref{eq:bk}}}\le\frac{\bar\Delta_k}{2},
\end{equation*}
where the two noise bounds follow by substituting $\varsigma_k^2=c_b^2\sigma^2/b_k$, $D_k^2=18\bar\Delta_k/\lambda$, $N_{\mathrm{ep}}\ge24\sqrt{L_F/\lambda}$ and the two summands of $C_b$ respectively. A union bound over the $K$ epochs completes the induction with probability $1-\beta$, giving $F(y^{(K)})-F(x_F^\star)\le\delta$ and, by strong convexity, $\norm{y^{(K)}-x_F^\star}\le\sqrt{2\delta/\lambda}$. The total number of oracle calls is
\begin{equation*}
\sum_{k=0}^{K-1}N_{\mathrm{ep}}\,b_k\le K\,N_{\mathrm{ep}}+25\sqrt{\tfrac{L_F}{\lambda}}\cdot C_b\,\frac{\sigma^2\log(8KN_{\mathrm{ep}}/\beta)}{\sqrt{\lambda L_F}}\sum_{k}\frac{1}{\bar\Delta_k}
\le25K\sqrt{\tfrac{L_F}{\lambda}}+\frac{100\,C_b\,\sigma^2\log(8KN_{\mathrm{ep}}/\beta)}{\lambda\,\delta},
\end{equation*}
using $\lceil24\sqrt{L_F/\lambda}\rceil\le25\sqrt{L_F/\lambda}$ (since $L_F\ge\lambda$) and $\sum_k\bar\Delta_k^{-1}\le2^{K+1}/H_0\le4/\delta$. All queries lie in $Q_k\subseteq\ball{c}{r}\subseteq\ball{c}{2r}$. \hfill$\square$

\begin{remark}[Projections and initial gaps]
\label{rem:proj}
Each $Q_k$ is the intersection of two Euclidean balls with nonempty interior. Its exact Euclidean projection does \emph{not} reduce to a single-multiplier problem in general: both spherical boundaries can be active simultaneously, and the KKT system then carries two multipliers $(\eta_1,\eta_2)\ge0$,
\begin{equation}
\label{eq:two-ball-kkt}
z-\Pi_{Q_k}(z)=\eta_1\big(\Pi_{Q_k}(z)-y^{(k)}\big)+\eta_2\big(\Pi_{Q_k}(z)-c\big),
\end{equation}
so that the projection is obtained from a two-dimensional dual problem (which is smooth, concave and solvable to machine precision by any standard method) or, alternatively, by Dykstra's alternating algorithm, which converges for this pair of convex sets and does so linearly under standard regularity conditions (satisfied here, since the interiors intersect). Only when one of the two constraints is known to be inactive does the one-dimensional reduction apply. This distinction is purely computational: throughout the paper, ``complexity'' means the number of \emph{stochastic gradient evaluations}, and the arithmetic cost of projections (as well as of any line searches in the methods we compare against) is not counted in it; we make this convention explicit here because the two costs are conflated easily.

In \PhaseII{} the initial gap of level $j$ is bounded by convexity as
\begin{equation}
\label{eq:init-gap}
F_j(c_j)-F_j(\xhat_j)\le\inner{\grad F_j(c_j)}{c_j-\xhat_j}=\inner{\grad f(c_j)}{c_j-\xhat_j}\le\Gest_j\cdot\frac{\Gest_j}{\lambda_j},
\end{equation}
so $H_{0,j}:=\Gest_j^2/\lambda_j$ is a valid certified input; in the finisher, $H_0:=\Gest\cdot\rsafe=\Gest/(2\Lo)$ works by the same argument.
\end{remark}

\section{\PhaseI{} analysis}
\label{app:phase1}

\begin{algorithm}[t]
\caption{\PhaseI{} (stochastic $(\Lz,\Lo)$-stepsize descent)}
\label{app:alg:phase1}
\begin{algorithmic}[1]
\REQUIRE $x_0=x^0$; threshold $G_{\mathrm{stop}}=\tfrac{4\Lz}{\Lo}$; budget $T_1^{\max}$, batch $B_1\ge1$, accuracy $\eps_1$
\REQUIRE parameters computed from $(\Lz,\Lo,R_0,\sigma,\alpha)$ and the gap certificate $\Dcert$ (the default $\Ddef$ of Lemma~\ref{lem:gap0}, or $\min\{\Duser,\Ddef\}$ with a user-provided $\Duser\ge f(x^0)-f^\star$; Appendix~\ref{app:params})
\FOR{$t=0,1,\dots,T_1^{\max}$}
    \STATE $\gest_t\leftarrow\GradEst(x_t,B_1)$
    \IF{$\norm{\gest_t}\le\tfrac34 G_{\mathrm{stop}}$} \RETURN $x^{\mathrm{I}}\leftarrow x_t$ \ENDIF
    \STATE $x_{t+1}\leftarrow x_t-\dfrac{\gest_t}{2\big(\Lz+\Lo\norm{\gest_t}\big)}$
\ENDFOR
\STATE \textbf{return} $x^{\mathrm{I}}\leftarrow x_{T_1^{\max}+1}$ \hfill (budget exhausted; reached only off the good event $\mathcal E_1$)
\end{algorithmic}
\end{algorithm}
\PhaseI{} (Algorithm~\ref{app:alg:phase1}) is batched SGD with the smoothed-clipping stepsize $\gamma_t=\tfrac12(\Lz+\Lo\norm{\gest_t})^{-1}$, the stochastic counterpart of the $(\Lz,\Lo)$-stepsizes of \citet{sgd2025generalized,vankov2024optimizing}. Its purpose is not to solve \eqref{app:eq:problem} but to reach a point with a \emph{certified} gradient bound of critical order $\Gbar=\Lz/\Lo$ while provably staying near $x^\star$. Three properties conspire (Appendix~\ref{app:phase1}): the step length never exceeds $\tfrac{1}{2\Lo}$, so Lemma~\ref{app:lem:descent} applies with the local modulus; the localization Lemma~\ref{app:lem:localization} makes the term $\gamma_t^2\norm{\gest_t}^2$ in the distance recursion comparable to $\gamma_t\Delta_t$, so the distance to $x^\star$ is non-increasing up to controlled statistical slack; and while the gradient exceeds $\Gbar$ the objective gap contracts \emph{linearly} at rate $1-\Theta(1/(\Lo R_0))$, the mechanism behind the ``linear convergence in the convex setup'' phenomenon of \citet{lobanov2024linear}.

\begin{proposition}[\PhaseI]
\label{app:prop:phase1}
Let Assumptions~\ref{app:ass:convex}--\ref{app:ass:noise} hold and $\Lo>0$, and let $\Dcert$ be the gap certificate (the default $\Ddef$ of Lemma~\ref{lem:gap0}, or $\min\{\Duser,\Ddef\}$ with a user-supplied $\Duser\ge f(x^0)-f^\star$). With the parameters of Appendix~\ref{app:params}, with probability at least $1-\alpha/4$ \PhaseI{} stops after
$T_1 \le 1+7\,\Lo R_0\,\logp\!\big(\tfrac{2\Lo^2\Dcert}{\Lz}\big)$
iterations and returns $x^{\mathrm{I}}$ with
(i) $\norm{\grad f(x^{\mathrm{I}})}\le\tfrac{7}{8}G_{\mathrm{stop}}$;
(ii) $f(x^{\mathrm{I}})\le f(x^0)$;
(iii) $\norm{x_t-x^\star}\le \tfrac{3}{2}R_0$ for all iterates.
The total number of oracle calls is
$T_1B_1=\tO\big(1+\Lo R_0+(1+\Lo R_0)\sigma^2\Lo^2/\Lz^2\big)$,
with $\tO$ hiding absolute constants and factors polylogarithmic in
$(\Lo R_0,\Lo^2\Dcert/\Lz,1/\alpha)$, i.e.\ with the certificate treated
as an independent input (the default certificate is expanded in
Corollary~\ref{cor:phase1-default}). The additive $1$ and
\(\sigma^2\Lo^2/\Lz^2\) contributions record the floors $T_1\ge1$
and $B_1\ge1$; they cannot in general be suppressed when
\(\Lo R_0\) is small.
\end{proposition}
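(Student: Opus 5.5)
We work on a good event $\mathcal E_1$ on which every batch estimate is accurate. Set $\beta_1:=\alpha/(4(T_1^{\max}+1))$ and take the batch $B_1$ large enough that Lemma~\ref{lem:batch}(i) gives $\norm{\gest_t-\grad f(x_t)}\le\eps_1:=G_{\mathrm{stop}}/8=\Lz/(2\Lo)$ for all $t\le T_1^{\max}$. A union bound gives $\Prob(\mathcal E_1)\ge1-\alpha/4$. This choice is $B_1=\lceil c_g^2\sigma^2\log(2/\beta_1)\cdot 4\Lo^2/\Lz^2\rceil\vee1$, which yields the $\sigma^2\Lo^2/\Lz^2$ factor in the cost.

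\emph{Output certificate (i).} On $\mathcal E_1$, stopping means $\norm{\gest_t}\le\tfrac34G_{\mathrm{stop}}$, so $G_{x_t}\le(\tfrac34+\tfrac18)G_{\mathrm{stop}}=\tfrac78G_{\mathrm{stop}}$. Conversely, while the method runs we have $G_{x_t}\ge\tfrac58G_{\mathrm{stop}}=\tfrac52\Gbar$. In this regime $\norm{\gest_t}$ and $G_{x_t}$ agree up to a factor $1\pm\tfrac15$, and $\Lo G_{x_t}\ge\tfrac52\Lz$.

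\emph{Descent (ii).} The step has length $\gamma_t\norm{\gest_t}\le 1/(2\Lo)$, so Lemma~\ref{app:lem:descent} applies with modulus $M_t:=\Lz+\Lo G_{x_t}$. Writing $\gest_t=\grad f(x_t)+e_t$ with $\norm{e_t}\le G_{x_t}/5$, we get $\inner{\grad f(x_t)}{\gest_t}\ge\tfrac45G_{x_t}^2$. We also have $\gamma_t\approx 1/(2M_t)$ up to constant factors, since $\Lo\norm{\gest_t}$ dominates $\Lz$ comparably to $\Lo G_{x_t}$. Hence
\[
f(x_{t+1})\le f(x_t)-c\,G_{x_t}^2/M_t
\]
with an absolute constant $c>0$. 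Because $\Lo G_{x_t}\ge\tfrac52\Lz$, we have $M_t\le\tfrac75\Lo G_{x_t}$, so the per-step decrease is at least $c'G_{x_t}/\Lo$. Monotonicity (ii) follows.

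\emph{Distance (iii).} Expand $\norm{x_{t+1}-x^\star}^2=\norm{x_t-x^\star}^2-2\gamma_t\inner{\gest_t}{x_t-x^\star}+\gamma_t^2\norm{\gest_t}^2$. By convexity, $\inner{\grad f(x_t)}{x_t-x^\star}\ge\Delta_t$. By Lemma~\ref{app:lem:localization}, $\gamma_t^2\norm{\gest_t}^2\lesssim G_{x_t}^2/M_t^2\le 2\Delta_t/M_t$, and this term is absorbed by $\gamma_t\Delta_t$ once the constants (the factor $\tfrac12$ in $\gamma_t$) are checked. The remaining error term is $2\gamma_t\norm{e_t}\norm{x_t-x^\star}\le 2\gamma_t\eps_1 D_t$. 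We compare it with $\gamma_t\Delta_t$. Since $\Delta_t\ge G_{x_t}^2/(2M_t)\gtrsim G_{x_t}/\Lo\ge\Gbar/\Lo$, we get $\eps_1\lesssim\Lo\Delta_t\cdot(\Lz/\Lo^2)/\Delta_t$. This comparison is where the bookkeeping is delicate, and I expect it to be the main obstacle.

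To close it, I would first try to use $\Delta_t\le G_{x_t}D_t$ together with the lower bound $G_{x_t}\ge 4\eps_1\cdot\tfrac54$. This shows $\norm{e_t}D_t\le \tfrac{1}{5}G_{x_t}D_t$, but that bound is not by itself dominated by $\Delta_t$. If this fails, I would bound the error via the accumulated drift $\sum_t 2\gamma_t\eps_1 D_t\le T_1\cdot\eps_1 D_{\max}/\Lz$, with $\eps_1$ chosen smaller so that this sum is at most $\tfrac54R_0^2$. The sum of $\gamma_t$ is at most $T_1/(5\Lz)\cdot$const, and with $T_1\approx\Lo R_0\log$ this gives $\eps_1\approx\Lz/(\Lo\log)$. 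That only costs logarithmic factors in $B_1$. An induction then gives $D_t\le\tfrac32R_0$.

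\emph{Iteration count.} The gap recursion is $\Delta_{t+1}\le\Delta_t-c'G_{x_t}/\Lo$. Convexity gives $G_{x_t}\ge\Delta_t/D_t\ge\Delta_t/(\tfrac32R_0)$, so
\[
\Delta_{t+1}\le(1-c''/(\Lo R_0))\Delta_t,
\]
a linear contraction. Meanwhile the method cannot run once $\Delta_t\lesssim\Deltabar$, because Lemma~\ref{app:lem:localization} then forces $G_{x_t}\le\tfrac72\Gbar<\tfrac52\Gbar\cdot$(adjusted constant). So the method stops once the gap falls below a constant multiple of $\Deltabar$. Starting from $\Delta_0\le\Dcert$, this needs at most $1+7\Lo R_0\logp(2\Lo^2\Dcert/\Lz)$ steps after tuning constants, which is the value we set as $T_1^{\max}$.

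Multiplying $T_1$ by $B_1$ gives the stated cost, with the floors $T_1\ge1$ and $B_1\ge1$ producing the additive terms.
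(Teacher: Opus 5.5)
Once you drop the first choice $\eps_1=G_{\mathrm{stop}}/8$ (which, as you suspect, cannot close the distance recursion), your fallback is exactly the paper's proof. That means: take $\eps_1\asymp\Gbar/\log$, absorb $\gamma_t^2\norm{\gest_t}^2$ via the localization lemma, sum the drift $2\gamma_t\eps_1D_t$ over the deterministic budget $T_1^{\max}$, and contract $\Delta_t$ linearly via $G_t\ge\Delta_t/D_t$ until $\Delta_t\le\Deltabar/2$; the paper sets $\eps_1=\Gbar/(8\Lambda_{\lg})$ from the outset, and this sharper accuracy is also what makes the constant $7$ come out. The one piece you omit is the regime $\Lo R_0\le\tfrac1{24}$, where even a single step's drift (of order $R_0/(\Lo\log)$) can exceed $\tfrac54R_0^2$, so your induction breaks there; the paper handles it separately by using $\Dinit\le\Dcert\le\Ddef\le0.00089\,\Lz/\Lo^2$ and the localization lemma to get $G_0\le0.05\,\Gbar$, so the method returns at $t=0$ without taking any step.
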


\begin{corollary}[Default certificate]
\label{cor:phase1-default}
Let $q:=\Lo R_0$. This corollary is a statement about the default certificate: when $\Dcert=\Ddef=\tfrac{\Lz}{\Lo^2}\psi(q)$ of Lemma~\ref{lem:gap0} one has $\logp\tfrac{2\Lo^2\Ddef}{\Lz}=\logp\!\big(2\psi(q)\big)\le 1+q+\ln 2$ (since $\psi(q)\le e^q$), hence the logarithmic factor $\Lambda_{\lg}$ of \eqref{eq:phase1-params} is $\Theta(1+q)$, and the bounds of Proposition~\ref{app:prop:phase1} become
\begin{align*}
T_1&\le 1+7\,q\,(1+q+\ln2),\\
T_1B_1&=\tO\Big((1+q)^2+(1+q)^4\,\tfrac{\sigma^2\Lo^2}{\Lz^2}\Big),
\end{align*}
i.e., in the regime $q\gtrsim1$, $T_1$ is bounded by a $\Theta(q^2)$ quantity and $T_1B_1=\tO\big(q^2+q^4\sigma^2\Lo^2/\Lz^2\big)$, with $\tO$ hiding only absolute constants and polylogarithmic factors in $(q,1/\alpha)$. Thus, under the default gap certificate, the \PhaseI{} entry price is quadratic rather than linear in $q$; with an externally supplied certificate the corresponding form is $\tO\big(1+q+(1+q)\sigma^2\Lo^2/\Lz^2\big)$. Whether the quadratic dependence is sharp is open: on some example classes the actual iteration count is $\Theta(q)$, and sharpening the analysis (or proving a matching lower bound) is an open problem we do not address.
\end{corollary}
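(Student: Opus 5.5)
The corollary is a direct substitution into Proposition~\ref{app:prop:phase1}. I would first bound the logarithmic factor, then plug it into the iteration count and the oracle count, tracking where $\Lambda_{\lg}$ enters.

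\emph{Step 1 (logarithm).} With $\Dcert=\Ddef=\tfrac{\Lz}{\Lo^2}\psi(q)$ we get $\tfrac{2\Lo^2\Ddef}{\Lz}=2\psi(q)$. Since $\psi(q)=e^q-1-q\le e^q$, we have $2\psi(q)\le 2e^q$, hence
\[
\logp(2\psi(q))=\max\{1,\log(2\psi(q))\}\le \max\{1,q+\ln 2\}\le 1+q+\ln2 .
\]
Because $\logp\ge1$, this gives $\Lambda_{\lg}=\Theta(1+q)$, provided $\Lambda_{\lg}$ in \eqref{eq:phase1-params} is, up to absolute constants and $\log(1/\alpha)$ terms, equal to $\logp(2\Lo^2\Dcert/\Lz)$. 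That is what I would check first in the parameter appendix.

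\emph{Step 2 (iteration count).} Substituting into $T_1\le 1+7\Lo R_0\logp(\cdot)$ gives $T_1\le 1+7q(1+q+\ln2)=O((1+q)^2)$, which is $\Theta(q^2)$ for $q\gtrsim1$.

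\emph{Step 3 (oracle count).} The printed $\tO$ in Proposition~\ref{app:prop:phase1} hides $\logp(\Lo^2\Dcert/\Lz)$, which is now a genuine factor $1+q$ and must be exposed. I would write $T_1B_1\le T_1^{\max}B_1$ explicitly from the schedule, with $T_1^{\max}\asymp 1+q\Lambda_{\lg}$.

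The batch $B_1$ should scale as $\max\{1,\ \sigma^2\Lo^2/\Lz^2\cdot(\text{const})\cdot\log(T_1^{\max}/\alpha)\cdot\Theta(\text{factor})\}$. To reach the claimed $(1+q)^4$, the batch must itself carry a factor $(1+q)\Lambda_{\lg}^{2}$, or equivalently $T_1^{\max}\Lambda_{\lg}$. This would come from the statistical slack in the distance recursion needing to be small, summed over $T_1$ steps: accumulated noise of order $T_1\gamma\cdot(\text{error})$ must stay below $R_0/2$.

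Then $T_1B_1\lesssim (1+q)^2+(1+q)^2\cdot(1+q)^2\,\sigma^2\Lo^2/\Lz^2$, up to the remaining logs in $(q,1/\alpha)$. Those logs include $\log T_1^{\max}=O(\log(1+q))$, so they stay polylogarithmic in $q$, as claimed.

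\emph{Main obstacle.} The hard step is pinning down exactly how $\Lambda_{\lg}$ enters $B_1$ in \eqref{eq:phase1-params}. The $(1+q)^4$ requires the per-step accuracy $\eps_1$ to scale like $\Lz/(\Lo\,T_1^{\max})$, so that the $T_1$-fold accumulated error stays below the slack. I would verify this from the Phase~I proof's distance recursion. If instead the batch only carries $(1+q)$, the stated bound still holds as an upper bound, since $(1+q)^3\le(1+q)^4$, so the corollary is safe either way.

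The remaining claims (the external-certificate form and the remark on sharpness) are restatements of the proposition or open questions, and need no proof.
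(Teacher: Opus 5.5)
Steps~1 and~2 are correct and coincide with the paper's substitution of $\logp(2\psi(q))\le1+q+\ln2$ into $T_1\le1+7q\logp(\cdot)$. The gap is in Step~3, which is where the exponent $4$ is actually decided. You never determine how $\Lambda_{\lg}$ enters $B_1$, and both mechanisms you propose are wrong and would not give the statement.

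If the batch carried $(1+q)\Lambda_{\lg}^2\asymp(1+q)^3$, then with $T_1\asymp(1+q)^2$ you would get $T_1B_1\asymp(1+q)^5$, not $(1+q)^4$. If, as in your ``main obstacle'' paragraph, $\eps_1\asymp\Lz/(\Lo T_1^{\max})$, then $B_1\asymp\sigma^2/\eps_1^2\asymp(T_1^{\max})^2\sigma^2\Lo^2/\Lz^2\asymp(1+q)^4\sigma^2\Lo^2/\Lz^2$, and $T_1B_1\asymp(1+q)^6$. Your final display silently uses a batch factor $(1+q)^2$, which contradicts both. The fallback ``safe either way'' only covers batch factors \emph{smaller} than $(1+q)^2$. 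It gives no protection against the overestimates you yourself put forward, so as written the $(1+q)^4$ term is asserted, not derived.

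The missing input is the schedule \eqref{eq:phase1-params}, which sets $\eps_1=\Gbar/(8\Lambda_{\lg})$. Hence $B_1\le1+64c_g^2\Lambda_{\lg}^2\,\tfrac{\sigma^2\Lo^2}{\Lz^2}\log\tfrac{8(T_1^{\max}+1)}{\alpha}$. Combined with at most $2+7q\logp(2\psi(q))$ gradient estimates, this is exactly \eqref{eq:phase1-cost}. Since $\Lambda_{\lg}\le3+2\logp(2\psi(q))\le5+2q+2\ln2$, the batch contributes $(1+q)^2$ and the iteration count another $(1+q)^2$. The factor $\log(8(T_1^{\max}+1)/\alpha)$ stays polylogarithmic in $(q,1/\alpha)$ because $T_1^{\max}=O((1+q)^2)$.

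Your heuristic for how small the accuracy must be is off by a factor $q$. In \eqref{eq:phase1-dist}, every executed step has $\norm{\gest_t}>3\Gbar$, so $2\gamma_t\eps_1r_t\le\tfrac{\eps_1}{3\Lz}\cdot\tfrac32R_0$. Summing over $T_1^{\max}\asymp q\Lambda_{\lg}$ steps against a budget of order $R_0^2$ only requires $\eps_1\lesssim\Lz R_0/T_1^{\max}\asymp\Gbar/\Lambda_{\lg}$. The accumulated drift is measured against $R_0$, not against $1/\Lo$.

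A minor point: $\logp\ge1$ only gives $\Lambda_{\lg}=\Omega(1)$. The lower half of $\Theta(1+q)$ needs, for example, $\psi(q)\ge e^q/2$ for $q\ge2$, which gives $\logp(2\psi(q))\ge q$ there. Only the upper bound is used in the displayed bounds.
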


\subsection{Proof of Proposition~\ref{app:prop:phase1}}

Throughout $\Lo>0$ and the parameters are
\begin{align}
\label{eq:phase1-params}
&\Lambda_{\lg}:=\big\lceil2+2\logp\tfrac{2\Lo^2\Dcert}{\Lz}\big\rceil,\qquad
\eps_1:=\frac{\Gbar}{8\Lambda_{\lg}},\qquad
B_1:=\max\Big\{1,\ \Big\lceil\frac{c_g^2\sigma^2\log\tfrac{8(T_1^{\max}+1)}{\alpha}}{\eps_1^2}\Big\rceil\Big\},\notag\\
&T_1^{\max}:=\max\Big\{1,\;\big\lceil6\Lambda_{\lg}\Lo R_0\big\rceil,\;1+\big\lceil7\Lo R_0\logp\tfrac{2\Lo^2\Dcert}{\Lz}\big\rceil\Big\}.
\end{align}
By default $\Dcert:=\tfrac{\Lz}{\Lo^2}\psi(\Lo R_0)$, the certified bound $\Ddef$ of Lemma~\ref{lem:gap0}, which makes the algorithm executable from $(x^0,R_0,\Lz,\Lo,\sigma,\alpha,\eps)$ alone; any user-provided upper bound $\Duser\ge f(x^0)-f^\star$ may be substituted, with the algorithm consuming $\min\{\Duser,\Ddef\}$, which can only tighten the logarithms.
Define the good event $\mathcal E_1:=\{\norm{\gest_t-\grad f(x_t)}\le\eps_1\ \forall t\le T_1^{\max}\}$; by Lemma~\ref{lem:batch}(i) and a union bound, $\Prob[\mathcal E_1]\ge1-\alpha/4$. We work on $\mathcal E_1$. Note $\Lambda_{\lg}\ge4$, so $\eps_1\le\Gbar/32$.

\emph{Step executed $\Rightarrow$ super-critical gradient.} If iteration $t$ executes a step then $\norm{\gest_t}>\tfrac34G_{\mathrm{stop}}=3\Gbar$, hence
\begin{equation}
\label{eq:supercrit}
G_t\ge\norm{\gest_t}-\eps_1\ge3\Gbar-\tfrac{\Gbar}{32}\ge\tfrac{23}{8}\Gbar,\qquad
\frac{\eps_1}{\norm{\gest_t}}\le\frac{1}{24\Lambda_{\lg}}\le\frac1{96},\qquad
\frac{\eps_1}{G_t}\le\frac{1}{23\Lambda_{\lg}}\le\frac1{92},
\end{equation}
and consequently $\tfrac{22}{23}G_t\le\norm{\gest_t}\le\tfrac{24}{23}G_t$. The step length is $\gamma_t\norm{\gest_t}=\tfrac{\norm{\gest_t}}{2(\Lz+\Lo\norm{\gest_t})}\le\tfrac{1}{2\Lo}$, so Lemma~\ref{app:lem:descent} applies at base $x_t$.

\emph{(i) Per-step decrease.} By Lemma~\ref{app:lem:descent} with $y=x_{t+1}=x_t-\gamma_t\gest_t$,
\begin{equation*}
f(x_{t+1})\le f(x_t)-\gamma_t\inner{\grad f(x_t)}{\gest_t}+\tfrac{\Lz+\Lo G_t}{2}\gamma_t^2\norm{\gest_t}^2 .
\end{equation*}
Now $\inner{\grad f(x_t)}{\gest_t}\ge\norm{\gest_t}^2-\eps_1\norm{\gest_t}\ge\tfrac{23}{24}\norm{\gest_t}^2$ by \eqref{eq:supercrit}, and
$\tfrac{(\Lz+\Lo G_t)\gamma_t}{2}=\tfrac{\Lz+\Lo G_t}{4(\Lz+\Lo\norm{\gest_t})}\le\tfrac{23}{88}$ since $\norm{\gest_t}\ge\tfrac{22}{23}G_t$. Hence the bracket is at least $\tfrac{23}{24}-\tfrac{23}{88}=\tfrac{23}{33}$, and using $\norm{\gest_t}^2\ge(\tfrac{22}{23})^2G_t^2$ and $\gamma_t\ge\tfrac{23}{24}\cdot\tfrac{1}{2(\Lz+\Lo G_t)}$,
\begin{equation}
\label{eq:phase1-decrease}
f(x_t)-f(x_{t+1})\;\ge\;\frac{23}{33}\Big(\frac{22}{23}\Big)^2\frac{23}{24}\cdot\frac{G_t^2}{2(\Lz+\Lo G_t)}\;\ge\;\frac{3}{10}\cdot\frac{G_t^2}{\Lz+\Lo G_t}\;>\;0 .
\end{equation}
In particular $f$ is non-increasing along the trajectory. In the super-critical regime \eqref{eq:supercrit} we have $\Lz\le\tfrac{8}{23}\Lo G_t$, so $\Lz+\Lo G_t\le\tfrac{31}{23}\Lo G_t$ and
\begin{equation}
\label{eq:phase1-lin}
f(x_t)-f(x_{t+1})\ge\frac{3}{10}\cdot\frac{23}{31}\cdot\frac{G_t}{\Lo}\ge\frac{2G_t}{9\Lo}.
\end{equation}

\emph{(iii) Distance control.} Let $r_t:=\norm{x_t-x^\star}$. Expanding and using convexity ($\inner{\grad f(x_t)}{x_t-x^\star}\ge\Delta_t$),
\begin{equation*}
r_{t+1}^2=r_t^2-2\gamma_t\inner{\gest_t}{x_t-x^\star}+\gamma_t^2\norm{\gest_t}^2\le r_t^2-2\gamma_t\Delta_t+2\gamma_t\eps_1r_t+\gamma_t\cdot\frac{\norm{\gest_t}^2}{2(\Lz+\Lo\norm{\gest_t})}.
\end{equation*}
The map $u\mapsto\tfrac{u^2}{2(\Lz+\Lo u)}$ is increasing, so with $\norm{\gest_t}\le\tfrac{24}{23}G_t$ and Lemma~\ref{app:lem:localization},
$\tfrac{\norm{\gest_t}^2}{2(\Lz+\Lo\norm{\gest_t})}\le(\tfrac{24}{23})^2\tfrac{G_t^2}{2(\Lz+\Lo G_t)}\le\tfrac{12}{11}\Delta_t$, whence
\begin{equation}
\label{eq:phase1-dist}
\begin{split}
r_{t+1}^2&\le r_t^2-\tfrac{10}{11}\gamma_t\Delta_t+2\gamma_t\eps_1r_t\le r_t^2+2\gamma_t\eps_1r_t,\\
2\gamma_t\eps_1&=\frac{\eps_1}{\Lz+\Lo\norm{\gest_t}}\le\frac{\eps_1}{3\Lo\Gbar}=\frac{1}{24\Lambda_{\lg}\Lo}.
\end{split}
\end{equation}
\emph{Case $\Lo R_0\le\tfrac1{24}$ (degenerate).} One has $\Dcert\le\Ddef=\tfrac{\Lz}{\Lo^2}\psi(\Lo R_0)\le\tfrac{\Lz}{\Lo^2}\psi(\tfrac1{24})\le0.00089\,\tfrac{\Lz}{\Lo^2}$, since $\Lo R_0\le\tfrac1{24}$ in this branch and $\psi$ is increasing; hence, by Lemma~\ref{app:lem:localization} applied with $\Dinit\le\Dcert$, $G_0\le2\Lo\Dcert+\sqrt{2\Lz\Dcert}\le(0.0018+0.0422)\Gbar\le0.05\,\Gbar$, so $\norm{\gest_0}\le G_0+\eps_1\le0.09\,\Gbar<3\Gbar$: the algorithm returns $x^{\mathrm I}=x^0$ at $t=0$, and all claims hold trivially (with $\norm{\grad f(x^{\mathrm I})}\le0.05\Gbar$).

\emph{Case $\Lo R_0>\tfrac1{24}$.} We prove $r_t\le\tfrac32R_0$ for all $t\le T_1^{\max}$ by induction; the budget-exhausted iterate $x_{T_1^{\max}+1}$ is returned only off the good event $\mathcal E_1$ (Algorithm~\ref{app:alg:phase1}) and therefore needs no distance guarantee. Assume it up to $t$. Summing \eqref{eq:phase1-dist},
$r_{t+1}^2\le R_0^2+T_1^{\max}\cdot\tfrac{1}{24\Lambda_{\lg}\Lo}\cdot\tfrac32R_0=R_0^2+\tfrac{T_1^{\max}R_0}{16\Lambda_{\lg}\Lo}$.
We bound the last term by $\tfrac{5}{4}R_0^2$ in each branch of the maximum in \eqref{eq:phase1-params}, using $\Lambda_{\lg}\Lo R_0\ge\tfrac{\Lambda_{\lg}}{24}\ge\tfrac16$: if $T_1^{\max}=\lceil6\Lambda_{\lg}\Lo R_0\rceil\le6\Lambda_{\lg}\Lo R_0+1$ then
$\tfrac{T_1^{\max}R_0}{16\Lambda_{\lg}\Lo}\le\tfrac38R_0^2+\tfrac{R_0}{16\Lambda_{\lg}\Lo}\le\tfrac38R_0^2+\tfrac{6}{16}R_0^2\le\tfrac34R_0^2$;
if $T_1^{\max}=1+\lceil7\Lo R_0\logp(\cdot)\rceil\le2+7\Lo R_0\logp(\cdot)$ then, since $7\logp(\cdot)\le\tfrac72\Lambda_{\lg}$,
$\tfrac{T_1^{\max}R_0}{16\Lambda_{\lg}\Lo}\le\tfrac{7\logp(\cdot)}{16\Lambda_{\lg}}R_0^2+\tfrac{2R_0}{16\Lambda_{\lg}\Lo}\le\tfrac{7}{32}R_0^2+\tfrac34R_0^2\le\tfrac{31}{32}R_0^2$;
and $T_1^{\max}=1$ is trivial. In all cases $r_{t+1}^2\le R_0^2+\tfrac54R_0^2=\tfrac94R_0^2$, closing the induction: $r_t\le\tfrac32R_0$.

\emph{(iv) Iteration count and output.} Let $\bar\Delta_{\mathrm{stop}}:=\tfrac{\Lz}{2\Lo^2}$. If $\Delta_t\le\bar\Delta_{\mathrm{stop}}$ then by Lemma~\ref{app:lem:localization} $G_t\le2\Lo\bar\Delta_{\mathrm{stop}}+\sqrt{2\Lz\bar\Delta_{\mathrm{stop}}}=2\Gbar<\tfrac{23}8\Gbar$, contradicting \eqref{eq:supercrit}; hence every executed step has $\Delta_t>\bar\Delta_{\mathrm{stop}}$. Combining \eqref{eq:phase1-lin} with $G_t\ge\Delta_t/r_t\ge\tfrac{2\Delta_t}{3R_0}$ (convexity),
$\Delta_{t+1}\le\big(1-\tfrac{4}{27\Lo R_0}\big)\Delta_t$
(if $\tfrac{4}{27\Lo R_0}\ge1$ a single step already gives $\Delta_{t+1}\le0$). Therefore after at most $\tfrac{27}{4}\Lo R_0\ln\tfrac{\Dcert}{\bar\Delta_{\mathrm{stop}}}\le7\Lo R_0\logp\tfrac{2\Lo^2\Dcert}{\Lz}$ executed steps the gap is below $\bar\Delta_{\mathrm{stop}}$, and the next check sees $\norm{\gest_t}\le G_t+\eps_1\le2\Gbar+\tfrac{\Gbar}{32}<3\Gbar$ and returns. Thus $T_1\le1+7\Lo R_0\logp\tfrac{2\Lo^2\Dcert}{\Lz}\le T_1^{\max}$, and the returned point satisfies $\norm{\grad f(x^{\mathrm I})}\le\norm{\gest_{T_1}}+\eps_1\le3\Gbar+\tfrac{\Gbar}{32}\le\tfrac{25}{8}\Gbar\le\tfrac78G_{\mathrm{stop}}$, with $f(x^{\mathrm I})\le f(x^0)$ by \eqref{eq:phase1-decrease} and $\norm{x^{\mathrm I}-x^\star}\le\tfrac32R_0$. The oracle-call count is
\begin{equation}
\label{eq:phase1-cost}
\begin{split}
T_1B_1&\le\Big(2+7\Lo R_0\logp\tfrac{2\Lo^2\Dcert}{\Lz}\Big)\Big(1+64c_g^2\Lambda_{\lg}^2\tfrac{\sigma^2\Lo^2}{\Lz^2}\log\tfrac{8(T_1^{\max}+1)}{\alpha}\Big)
=\tO\Big(1+\Lo R_0+(1+\Lo R_0)\tfrac{\sigma^2\Lo^2}{\Lz^2}\Big);
\end{split}
\end{equation}
the additive floors remain because the deterministic iteration count is
at least one and $B_1\ge1$.

\hfill$\square$

\section{The complete \ARCSG{} schedule and \PhaseII{} analysis}
\label{app:phase2-section}

\begin{algorithm}[t]
\caption{\PhaseII{} of \ARCSG{} (proximal levels)}
\label{app:alg:phase2}
\begin{algorithmic}[1]
\REQUIRE $c_1=x^{\mathrm{I}}$, cap $\Gest_1=\tfrac{4\Lz}{\Lo}$, floor $\Lambda_1=32\Lz$, distance bound $\Rb=3R_0$
\REQUIRE target $\eps$, schedule $J^{\mathrm{sch}}$, drift resolution $\iota_{\mathrm{abs}}$, accuracies $\{\delta_j,\tau_j,s_j\}$ and inner failure budgets $\{\beta_j\}$ (Appendix~\ref{app:params})
\FOR{$j=1,2,\dots,J^{\mathrm{sch}}$}
    \IF{$\Gest_j\le\tfrac{\eps}{2\Rb}$} \RETURN $c_j$ \ENDIF
    \IF{$\Gest_j< 32\,\iota_{\mathrm{abs}}$ \textbf{or} $j=J^{\mathrm{sch}}$} \RETURN \textsf{Finisher}$(c_j,\Gest_j)$ \hfill (Alg.~\ref{alg:finisher}) \ENDIF
    \STATE $\lambda_j\leftarrow\max\{4\Lo\Gest_j,\;\Lambda_j\}$;\quad $r_j\leftarrow \tfrac{2\Gest_j}{\lambda_j}$
    \STATE $F_j(\cdot)\leftarrow f(\cdot)+\tfrac{\lambda_j}{2}\norm{\cdot-c_j}^2$
    \STATE $c_{j+1}\leftarrow\RSTM\big(F_j,\ \ball{c_j}{r_j},\ \text{target }\delta_j,\ \text{failure }\beta_j\big)$
    \STATE $\Gest_{j+1}'\leftarrow \Gest_j + L_j s_j$ \hfill (cap bookkeeping)
    \STATE $\gest_j\leftarrow\GradEst(c_{j+1},B_j^{\mathrm{est}})$;\; $\Gest_{j+1}\leftarrow\min\{\Gest_{j+1}',\,\norm{\gest_j}+\tau_j\}$
    \IF{$\lambda_j=\Lambda_j$}
        \IF{$\Lambda_j\le\tfrac{\eps}{2\Rb^2}$} \RETURN $c_{j+1}$ \ENDIF
        \STATE $\Lambda_{j+1}\leftarrow\Lambda_j/2$
    \ELSE
        \STATE $\Lambda_{j+1}\leftarrow\Lambda_j$
    \ENDIF
\ENDFOR
\end{algorithmic}
\end{algorithm}

\begin{algorithm}[t]
\caption{\ARCSG{}}
\label{app:alg:arc}
\begin{algorithmic}[1]
\REQUIRE $x^0$, $R_0$, $(\Lz,\Lo)$, $\sigma$, target $\eps$, confidence $\alpha$
\REQUIRE optionally a user certificate $\Duser\ge f(x^0)-f^\star$; set $\Dcert:=\min\{\Duser,\Ddef\}$ if supplied, else $\Dcert:=\Ddef$ of Lemma~\ref{lem:gap0}
\STATE \textbf{if} $\Dcert\le\eps$ \textbf{then return} $x^0$ \hfill (trivial regime)
\STATE set the effective parameter $L_{1,\mathrm{eff}}:=\max\big\{\Lo,\tfrac{1}{4\Rb}\big\}$, used in place of $\Lo$ throughout \hfill (Appendix~\ref{app:params}; covers $\Lo=0$)
\STATE $x^{\mathrm{I}}\leftarrow$ \PhaseI{}$(x^0)$
\STATE \textbf{return} \PhaseII{}$(x^{\mathrm{I}},\eps,\alpha)$
\end{algorithmic}
\end{algorithm}
\PhaseII{} (Algorithm~\ref{app:alg:phase2}) maintains a center $c_j$, a certified \emph{gradient cap} $\Gest_j\ge\norm{\grad f(c_j)}$, and a geometrically decreasing \emph{regularization floor} $\Lambda_j$. Level $j$ approximately solves the proximal subproblem $F_j=f+\tfrac{\lambda_j}{2}\norm{\cdot-c_j}^2$ with
\begin{equation}
\label{app:eq:lambda}
\lambda_j=\max\{4\Lo\Gest_j,\ \Lambda_j\},
\end{equation}
by running \RSTM{} on the ball $\ball{c_j}{r_j}$, $r_j=2\Gest_j/\lambda_j\le\tfrac{1}{2\Lo}$. The design closes four loops at once.
\emph{Containment:} by Lemma~\ref{app:lem:prox-grad}, $\norm{\xhat_j-c_j}\le\norm{\grad f(c_j)}/\lambda_j\le r_j/2$, so the exact prox lies in $\ball{c_j}{r_j/2}$ and the ball constraint is inactive at the minimizer, while every \RSTM{} query remains in the certified region $\ball{c_j}{2r_j}$.
\emph{Smoothness:} by Lemma~\ref{app:lem:ball}, on $\ball{c_j}{2r_j}$ the function $f$ is $L_j$-smooth with $L_j=4(\Lz+\Lo\Gest_j)$; hence $F_j$ is $\lambda_j$-strongly convex and $(L_j+\lambda_j)$-smooth there, with condition number $\kappa_j=1+L_j/\lambda_j\le 2+\Lz/(\Lo\Gest_j)$ on gradient-driven levels and $\kappa_j\le 2+4\Lz/\Lambda_j+ \Lo\Gest_j/\Lambda_j$ in general.
\emph{Gradient bookkeeping:} $\norm{\grad f(c_{j+1})}\le\norm{\grad f(\xhat_j)}+L_j\norm{c_{j+1}-\xhat_j}\le \Gest_j+L_js_j$ by Lemma~\ref{app:lem:prox-grad} and Proposition~\ref{app:prop:rstm} ($s_j=\sqrt{2\delta_j/\lambda_j}$), so the cap update of Algorithm~\ref{app:alg:phase2} is valid \emph{without any measurement}; the relative-accuracy measurement made there (precision $\tau_j=\Gest'_{j+1}/16$, batch $B_j^{\mathrm{est}}=\tO(\sigma^2/\tau_j^2)$) re-anchors the cap to the true gradient so that caps track gradients up to a constant factor.
\emph{Termination:} if $\Gest_j\le\eps/(2\Rb)$ then $f(c_j)-f^\star\le G_{c_j}\norm{c_j-x^\star}\le\eps$ by convexity; alternatively, once the floor reaches $\Lambda_j\le\eps/(2\Rb^2)$ and binds, the classical regularization argument $f(c_{j+1})-f^\star\le\delta_j+\tfrac{\lambda_j}{2}\norm{c_j-x^\star}^2\le\eps$ applies because the prox is unconstrained by \emph{Containment}. A third, rarely triggered mechanism (the \emph{finisher}, Algorithm~\ref{alg:finisher} in Appendix~\ref{app:phase2}) handles the corner case in which the gradient cap falls below the drift resolution $32\iota_{\mathrm{abs}}=\Theta(\Lz/(\Lo J^{\mathrm{sch}}))$ of the cap bookkeeping: there the gap is already $O(\Lz\Rb/(\Lo J^{\mathrm{sch}}))$, and $\tO(\Lo\Rb)$ ball-constrained proximal steps with $\lambda=\Lo\eps/\Rb$ finish by a geometric contraction argument at the same $\kbar^{1/2}$-price. Distances $\norm{c_j-x^\star}\le\Rb$ are maintained by Lemma~\ref{lem:fejer} plus the accuracy schedule.

\paragraph{Why it is fast: travel and halving.}
The analysis classifies each level by the behaviour of the exact prox. If $\norm{\xhat_j-c_j}\le\tfrac{1}{8\Lo}$ (a \emph{halving} level), then $\norm{\grad f(\xhat_j)}=\lambda_j\norm{\xhat_j-c_j}\le\Gest_j/2$ on gradient-driven levels: the gradient scale halves. Otherwise (a \emph{travel} level) the prox moved a macroscopic distance $\ge\tfrac{1}{8\Lo}$, and the proximal inequality forces $f(c_{j+1})\le f(c_j)-\Gest_j/(48\Lo)$: the method pays for distance with function decrease. Because $f(c_j)-f^\star\le G_{c_j}\Rb\le\Gest_j\Rb$, the total decrease available while the gradient scale is of order $g$ is at most $2g\Rb$, so at most $O(\Lo\Rb)$ travel levels can occur \emph{per excursion of the gradient scale through a dyadic band}, and halving levels are logarithmically few per band (Lemma~\ref{lem:level-count}). The potential argument does not by itself bound the number of such excursions; the unconditional guarantee is obtained by a hard schedule cap of $J^{\mathrm{sch}}=\tO(\Lo\Rb)$ levels, after which control passes to a certified finishing routine (the \emph{finisher}, Algorithm~\ref{alg:finisher}) whose $\tO(\Lo\Rb)$ ball-constrained proximal steps complete the run at the same $\kbar^{1/2}$-price (Lemma~\ref{lem:cert}); the cap-drift Lemma~\ref{lem:drift} limits upward movements of the certified gradient scale to a total of $\Lz/(6\Lo)$, which is why revisits of a band from below are heavily constrained in practice, although we do not use this to sharpen the worst-case count. Each level costs $\tO(\sqrt{\kappa_j})$ deterministic-equivalent iterations plus its statistical batches, and summing yields Theorem~\ref{app:thm:convex}. Figure~\ref{app:fig:schematic} visualizes the two phases.

\begin{figure}[t]
\centering
\includegraphics[width=0.9\columnwidth]{figures/schematic.pdf}
\caption{Anatomy of \ARCSG{} on a generalized-smooth objective. \PhaseI{} (red) contracts the gap linearly while the gradient is super-critical; \PhaseII{} (blue) alternates travel and halving proximal levels, each solved by the constrained accelerated solver \RSTM{} inside a ball of radius $O(1/\Lo)$ (shaded), until the gradient certificate, the regularization floor, or the certified finisher certifies $\eps$-optimality.}
\label{app:fig:schematic}
\end{figure}

\subsection{The full parameter schedule of \PhaseII{} and the finisher}
\label{app:params}

\paragraph{Standing convention.} Throughout the \PhaseII{} analysis, $\Lo$ stands for the effective parameter $L_{1,\mathrm{eff}}:=\max\big\{\Lo,\,1/(4\Rb)\big\}$ introduced in Algorithm~\ref{app:alg:arc}; equivalently, we assume without loss of generality that
\begin{equation}
\label{eq:wlog}
\Lo\ \ge\ \tfrac{1}{4\Rb}\qquad\text{(equivalently }\Lo\Rb\ge\tfrac14,\ \kbar\ge\tfrac54\text{)}.
\end{equation}
This is indeed no loss: Assumption~\ref{app:ass:gs} is monotone in $\Lo$ (if it holds with $(\Lz,\Lo)$ it holds with $(\Lz,\Lo')$ for any $\Lo'\ge\Lo$, on the smaller range $\norm{y-x}\le1/\Lo'$), so when the problem's $\Lo$ is smaller than $1/(4\Rb)$ the algorithm and the analysis may be run with the enlarged effective parameter $L_{1,\mathrm{eff}}$ in place of $\Lo$---a change of the algorithm's input, not a redefinition of the problem; every $\kbar$-factor changes by a factor at most $\tfrac54$, which is absorbed in $\tO$. The convention also covers $\Lo=0$ (the classical smooth case); under the substitution the effective threshold of \PhaseI{} is $3\Gbar=12\Lz\Rb$, and since $G_{x^0}\le2\Lo\Dcert+\sqrt{2\Lz\Dcert}$ (Lemma~\ref{app:lem:localization}, applicable since $\Dinit\le\Dcert$), the condition $\Dcert\le\Lz\Rb^2$ gives $G_{x^0}\le2\cdot\tfrac{1}{4\Rb}\cdot\Lz\Rb^2+\sqrt{2\Lz^2\Rb^2}=(\tfrac12+\sqrt2)\Lz\Rb<2\Lz\Rb=\tfrac{\Gbar}{2}$, hence $\norm{\gest_0}\le G_{x^0}+\eps_1<\tfrac{\Gbar}{2}+\tfrac{\Gbar}{32}<3\Gbar$: \PhaseI{} exits at $t=0$ whenever $\Dcert\le\Lz\Rb^2$---in the floor regime $\Lo\le1/(4\Rb)$ unconditionally, since then $\Dcert\le\Ddef(L_{1,\mathrm{eff}})=16\Lz\Rb^2\psi(\tfrac1{12})\approx0.057\Lz\Rb^2<\Lz\Rb^2$ by the monotonicity of $\Ddef$ in the smoothness parameter (Lemma~\ref{lem:gap0})---so Theorem~\ref{app:thm:convex} degenerates to the smooth-case complexity as claimed in Remark~\ref{rem:smooth}; when $\Dcert>\Lz\Rb^2$ the (cheap, $\sigma$-scaled) \PhaseI{} iterations simply run as usual and their count is dominated by the stated bounds. From Algorithm~\ref{app:alg:arc} on, every $\Lo$ appearing in parameters, algorithms and analysis denotes the effective parameter $L_{1,\mathrm{eff}}$ (so in particular the finisher's $\lambda_{\mathrm{cert}}=\Lo\eps/\Rb$ of \eqref{eq:cert-params} is positive even at $\Lo=0$, as Lemma~\ref{lem:fejer} requires).

\paragraph{Trivial regime.} If the certified initial gap already meets the target, i.e.\ $\Dcert\le\eps$, the algorithm returns $x^0$ and there is nothing to prove; \emph{all statements below therefore assume $\eps<\Dcert$}, and this test is the first line of Algorithm~\ref{app:alg:arc}. Since $\Dcert$ is computed explicitly---as the default $\Ddef$ of Lemma~\ref{lem:gap0}, or as $\min\{\Duser,\Ddef\}$ when a user certificate is supplied---this is a finite, checkable condition. The counters below are nevertheless defined with positive parts, so that every schedule quantity is well defined and positive for \emph{all} $\eps>0$.

\paragraph{Global quantities.} With $\Rb=3R_0$, $\kbar=1+\Lo\Rb$, $\Gbar=\Lz/\Lo$, and $\logp a=\max\{1,\log a\}$:
\begin{align}
&S_G:=\max\Big\{1,\Big\lceil\log_2^{+}\frac{10\Lz\Rb}{\Lo\eps}\Big\rceil\Big\},\qquad
S_\Lambda:=\max\Big\{1,\Big\lceil\log_2^{+}\frac{128\Lz\Rb^2}{\eps}\Big\rceil\Big\},\notag\\
&J^{\mathrm{sch}}:=60\lceil\kbar\rceil S_G(S_\Lambda+4)+16,\notag\\
&K_{\mathrm{cert}}:=\big\lceil2(1+\Lo\Rb)(S_G+4)\big\rceil+1,\notag\\
&J_{\mathrm{tot}}:=J^{\mathrm{sch}}+K_{\mathrm{cert}},\notag\\
&\iota_{\mathrm{abs}}:=\frac{\Lz}{8\Lo J_{\mathrm{tot}}},\qquad
\iota_{\mathrm{flr}}:=\frac{\Lz}{32\Lo S_\Lambda},
\label{eq:schedule-global}
\end{align}
where $\log_2^{+}a:=\max\{0,\log_2a\}$. Both counters are thus at least $1$, so $J^{\mathrm{sch}},K_{\mathrm{cert}},J_{\mathrm{tot}}\ge1$ and the denominators in $\iota_{\mathrm{abs}},\iota_{\mathrm{flr}}$ never vanish. In the nontrivial regime $\eps<\Dcert$ the positive part of $S_\Lambda$ is inactive whenever $\eps\le\Lz\Rb^2$, and that of $S_G$ whenever $\eps\le10\Gbar\Rb$; since $10\Gbar\Rb\ge\Lz\Rb^2$ iff $\Lo\Rb\le10$, the single condition $\eps\le\Lz\Rb^2$ (a fortiori whenever $\Dcert\le\Lz\Rb^2$, e.g.\ in the floor regime $\Lo\le1/(4\Rb)$) makes both inactive in the moderate regime $\Lo\Rb\le10$, and then the bounds proved below coincide with the ones obtained from the unclipped definitions. When a positive part \emph{is} active the corresponding counter equals $1$ and every bound in this appendix only improves; for $S_G$ this case means $\eps>10\Gbar\Rb$, hence $\Gest_1=4\Gbar<\tfrac{\eps}{2\Rb}$ and termination (a) fires at level $1$, so the clipped schedule is never actually executed.

\paragraph{Level quantities.} At level $j$ with state $(c_j,\Gest_j,\Lambda_j)$ (initialized $\Gest_1=4\Gbar$, $\Lambda_1=32\Lz$):
\begin{align}
&L_j:=4(\Lz+\Lo\Gest_j),\qquad
\lambda_j:=\max\{4\Lo\Gest_j,\ \Lambda_j\},\qquad
r_j:=\frac{2\Gest_j}{\lambda_j}\ \Big(\le\frac{1}{2\Lo}\Big),\qquad
L_{F,j}:=L_j+\lambda_j;\notag\\
&\text{\emph{gradient-driven level} if }4\Lo\Gest_j>\Lambda_j,\quad\text{\emph{floor level} if }4\Lo\Gest_j\le\Lambda_j;\notag\\
&s_j:=\begin{cases}\min\big\{\tfrac{R_0}{4J_{\mathrm{tot}}},\ \tfrac{\iota_{\mathrm{abs}}}{L_j}\big\}&\text{(gradient-driven and finisher levels)}\\[2pt]
\min\big\{\tfrac{R_0}{8S_\Lambda},\ \tfrac{\iota_{\mathrm{flr}}}{L_j}\big\}&\text{(floor levels)}\end{cases}\notag\\
&\delta_j:=\min\Big\{\frac{\lambda_js_j^2}{2},\ \frac{\eps}{8}\Big\};\notag\\
&\Gest'_{j+1}:=\Gest_j+L_js_j,\qquad \tau_j:=\frac{\Gest'_{j+1}}{16},\qquad
B^{\mathrm{est}}_j:=\max\Big\{1,\ \Big\lceil\frac{c_g^2\sigma^2\log(16J_{\mathrm{tot}}/\alpha)}{\tau_j^2}\Big\rceil\Big\}.
\label{eq:schedule-level}
\end{align}
For every normal or finisher level, set
$\beta_j:=\alpha/(4J_{\mathrm{tot}})$. Each level runs
\RSTM{} on $(F_j,\ball{c_j}{r_j})$ with strong convexity
$\lambda_j$, smoothness $L_{F,j}$, certified initial gap
$H_{0,j}:=\Gest_j^2/\lambda_j$ (Remark~\ref{rem:proj}), target
$\delta_j$, and failure probability $\beta_j$; the estimate
$\gest_j:=\GradEst(c_{j+1},B^{\mathrm{est}}_j)$ has accuracy
$\tau_j$ with failure probability $\alpha/(4J_{\mathrm{tot}})$
(Lemma~\ref{lem:batch}); the new cap is $\Gest_{j+1}:=\min\{\Gest'_{j+1},\,\norm{\gest_j}+\tau_j\}$, and the level is a \emph{halving} level if $\Gest_{j+1}\le\tfrac34\Gest_j$ (equivalently $\norm{\gest_j}\le\tfrac34\Gest_j-\tau_j$). At floor levels, if $\Lambda_j\le\eps/(2\Rb^2)$ the algorithm returns $c_{j+1}$ (termination (b)); otherwise $\Lambda_{j+1}:=\Lambda_j/2$. The gradient termination (a) returns $c_j$ when $\Gest_j\le\eps/(2\Rb)$, checked at the head of every level (including finisher levels).

\paragraph{The finisher.} Entered when $\Gest_j<32\iota_{\mathrm{abs}}$ or when $j=J^{\mathrm{sch}}$ (Algorithm~\ref{alg:finisher}); runs at most $K_{\mathrm{cert}}$ \emph{finisher levels}, each a ball-constrained proximal step with
\begin{equation}
\label{eq:cert-params}
\begin{split}
\lambda_{\mathrm{cert}}&:=\frac{\Lo\eps}{\Rb},\qquad
\rsafe:=\frac{1}{2\Lo},\qquad
\theta:=\frac{1}{2\Lo\Rb},\\
\delta_j^{\mathrm{cert}}&:=\min\Big\{\frac{\eps}{64\Lo\Rb},\ \frac{\lambda_{\mathrm{cert}}s_j^2}{2}\Big\},\qquad
H_{0,\mathrm{cert}}:=\frac{\Gest_j}{2\Lo},
\end{split}
\end{equation}
solved by \RSTM{} for the ball-constrained minimizer (Appendix~\ref{app:rstm} covers this case verbatim), with the same cap bookkeeping \eqref{eq:schedule-level} and the same $s_j$-formula as gradient-driven levels.

\paragraph{Confidence budget (single accounting).} To avoid any double reading, Table~\ref{tab:events} lists every stochastic event family of the analysis, its budget, and the number of instances it is split over, together with the unused reserved quarter. Normal and finisher levels draw from the \emph{same} counters: the finisher's \RSTM{} runs and measurements are among the $J_{\mathrm{tot}}=J^{\mathrm{sch}}+K_{\mathrm{cert}}$ instances already counted. Its remaining certification checks are deterministic on the three stochastic good events and therefore spend none of the reserved fourth quarter. Thus the \emph{good event} $\mathcal E$ has failure probability at most $3\alpha/4$ and, in particular, $\Prob[\mathcal E]\ge1-\alpha$.

\begin{table*}[t]
\centering
\footnotesize
\setlength{\tabcolsep}{4pt}
\begin{tabular}{@{}llll@{}}
\toprule
Event family & Budget & Instances & Per instance \\
\midrule
\PhaseI{} gradient estimates & $\alpha/4$ & $T_1^{\max}+1$ & $\alpha/(4(T_1^{\max}{+}1))$ \\
\RSTM{} runs (all levels, incl.\ finisher) & $\alpha/4$ & $J_{\mathrm{tot}}$ & $\alpha/(4J_{\mathrm{tot}})$ \\
Cap measurements (all levels, incl.\ finisher) & $\alpha/4$ & $J_{\mathrm{tot}}$ & $\alpha/(4J_{\mathrm{tot}})$ \\
Finisher-only certification events & $\alpha/4$ reserved & deterministic on the shared good event & not used \\
\bottomrule
\end{tabular}
\caption{Confidence budget. Each stochastic row is a separately budgeted event family; no independence or disjointness between rows is needed, since the budgets are combined by a union bound. Within each stochastic row the budget is split uniformly over a deterministically bounded number of instances. The fourth quarter is reserved for finisher-only checks. In the present construction those checks are deterministic on the shared good event, so the random events actually consume at most $3\alpha/4$, leaving slack.}
\label{tab:events}
\end{table*}

\subsection{The convex main theorem (restated)}
For convenience we restate the convex main theorem of the main paper; it is proved in full below (Theorem~\ref{thm:convex-full} and the proof following it).

\begin{theorem}[Convex case]
\label{app:thm:convex}
Let Assumptions~\ref{app:ass:convex}--\ref{app:ass:noise} hold. Fix
$x^\star\in X^\star$, a distance certificate
$R_0\ge\norm{x^0-x^\star}$, and a valid initial-gap certificate
$\Dcert\ge f(x^0)-f^\star$. Let $\eps>0$, $\alpha\in(0,1)$,
and define $\Rb:=3R_0$ and $\kbar:=1+\Lo\Rb$. With the parameter
schedule of Appendix~\ref{app:params}, \ARCSG{} returns $\xhat$
such that
\[
    \Prob\bigl(f(\xhat)-f^\star\le\eps\bigr)\ge1-\alpha
\]
using at most
\begin{align*}
N \;=\; \tO\Big(&\underbrace{1+\Lo R_0}_{\textnormal{\PhaseI{} + levels}}
\;+\;\underbrace{\kbar^{1/2}\sqrt{\tfrac{\Lz\Rb^2}{\eps}}}_{\textnormal{optimization}}
\;+\;\underbrace{\kbar^{3}\,\tfrac{\sigma^2\Rb^2}{\eps^2}}_{\textnormal{statistical}}+\;\underbrace{(1+\Lo R_0)\tfrac{\sigma^2\Lo^2}{\Lz^2}+\kbar^{3}\tfrac{\sigma^2\Lo^2}{\Lz^2}}_{\textnormal{burn-in}}\Big)
\end{align*}
stochastic-gradient evaluations. Here $\Dcert$ is treated as an
externally supplied certificate, and $\tO$ hides absolute constants
and factors polylogarithmic in
$\bigl(\Lo R_0,\Lo^2\Dcert/\Lz,\Lz\Rb^2/\eps,1/\alpha\bigr)$.
Under the default certificate of Lemma~\ref{lem:gap0}, the expanded
form of Remark~\ref{app:rem:convex-default} applies. All logarithmic
factors are explicit in Theorem~\ref{thm:convex-full}.
\end{theorem}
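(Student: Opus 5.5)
The plan is to assemble the theorem from Proposition~\ref{app:prop:phase1}, the certified-ball invariants of \PhaseII{}, and the finisher. All of it is done on one good event $\mathcal E$, defined as the intersection of the three stochastic families of Table~\ref{tab:events}. The trivial regime $\Dcert\le\eps$ is immediate, and the $L_{1,\mathrm{eff}}$ substitution \eqref{eq:wlog} lets me assume $\Lo\Rb\ge\tfrac14$. A union bound then gives $\Prob[\mathcal E]\ge1-3\alpha/4\ge1-\alpha$. This works because every \RSTM{} run and cap measurement is counted among the deterministically bounded $J_{\mathrm{tot}}=J^{\mathrm{sch}}+K_{\mathrm{cert}}$ instances, and \PhaseI{} uses at most $T_1^{\max}+1$ estimates. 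On $\mathcal E$, Proposition~\ref{app:prop:phase1} delivers $c_1=x^{\mathrm I}$ with $G_{c_1}\le\tfrac72\Gbar\le\Gest_1$ and $\norm{c_1-x^\star}\le\tfrac32R_0$. Its cost, $\tO(1+\Lo R_0+(1+\Lo R_0)\sigma^2\Lo^2/\Lz^2)$, gives the first term and the first burn-in term.

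For \PhaseII{} I would prove by induction on $j$ the invariants
\[
\Gest_j\ge G_{c_j},\qquad \norm{c_j-x^\star}\le\tfrac32R_0+\textstyle\sum_{i<j}s_i\le\Rb .
\]
The cap invariant follows from Lemma~\ref{app:lem:prox-grad} and Lemma~\ref{app:lem:ball}, via $G_{c_{j+1}}\le G_{\xhat_j}+L_js_j$, together with the measurement bound $\norm{\gest_j}+\tau_j\ge G_{c_{j+1}}$ on $\mathcal E$. The distance invariant follows from Fejér monotonicity (Corollary~\ref{lem:fejer}) and $\norm{c_{j+1}-\xhat_j}\le s_j$, using $\sum s_j\le R_0/4\cdot 2$ by the schedule. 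Containment, $\norm{\xhat_j-c_j}\le r_j/2$, makes Proposition~\ref{app:prop:rstm} applicable with $H_{0,j}=\Gest_j^2/\lambda_j$. Correctness at each exit is then checked separately: termination (a) via convexity and the distance invariant; termination (b) via the unconstrained prox inequality with $\delta_j\le\eps/8$; and the finisher via Lemma~\ref{lem:cert}, taken as given from the supplement.

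The main obstacle is counting levels, and this is where the cost is decided. I would rely on the travel--halving dichotomy (Lemma~\ref{lem:level-count}). On a halving level the cap drops by $\tfrac34$. On a travel level, $f$ decreases by at least $\Gest_j/(48\Lo)$, while $f(c_j)-f^\star\le\Gest_j\Rb$ bounds the available decrease by $O(\Lo\Rb)$ travel levels per dyadic band. Lemma~\ref{lem:drift} limits upward cap drift to $\Lz/(6\Lo)$. Bands run over $S_G$ dyadic values down to $\eps/(2\Rb)$, and floor levels number at most $S_\Lambda$ halvings of $\Lambda$. Rather than bound excursions exactly, I would simply invoke the hard cap $J^{\mathrm{sch}}=\tO(\kbar)$ plus $K_{\mathrm{cert}}=\tO(\kbar)$ finisher levels, which gives termination unconditionally.

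For cost, each level's \RSTM{} call costs $\tO(\sqrt{\kappa_j}+\sigma^2/(\lambda_j\delta_j))$. I would split this into two kinds of terms.
\begin{itemize}
\item \emph{Optimization terms.} On gradient-driven levels $\kappa_j=O(1+\Lz/(\Lo\Gest_j))$, and summing $\sqrt{\kappa_j}$ over the $O(\kbar)$ levels per band with $\Gest_j\gtrsim\eps/\Rb$ gives $\tO(\kbar+\kbar^{1/2}\sqrt{\Lz\Rb^2/\eps})$. Floor levels have $\kappa_j\lesssim\Lz/\Lambda_j$ with $\Lambda_j\gtrsim\eps/\Rb^2$, a geometric sum dominated by its last term.
\item \emph{Statistical terms.} Here $\lambda_j\delta_j\gtrsim\min\{\lambda_j^2s_j^2,\lambda_j\eps\}$. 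With $s_j\gtrsim\iota/L_j\sim\Lz/(\Lo J_{\mathrm{tot}}L_j)$ and $J_{\mathrm{tot}}=\tO(\kbar)$, each level costs $\tO(\kbar^2\sigma^2\Rb^2/\eps^2+\kbar^2\sigma^2\Lo^2/\Lz^2)$. Multiplying by $\tO(\kbar)$ levels yields the $\kbar^3$ factors.
\item \emph{Measurement batches.} The $B_j^{\mathrm{est}}=\tO(\sigma^2/\Gest_j^2)$ contribute the same order.
\end{itemize}
Summing the two phases gives $N$.
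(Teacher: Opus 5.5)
Your proposal is correct and follows the paper's proof essentially step for step. The paper's route, like yours, uses the same good event with the $3\alpha/4$ union bound and \PhaseI{} as the entry point, then the cap and distance invariants (Lemma~\ref{lem:inv}), the three exits with unconditional termination from the hard cap $J^{\mathrm{sch}}$ plus the finisher (not from the travel--halving count), and a per-level cost $\tO(\sqrt{\kappa_j}+\sigma^2/(\lambda_j\delta_j)+B^{\mathrm{est}}_j)$ summed over $J_{\mathrm{tot}}=\tO(\kbar)$ levels; the two points you leave implicit, the conversion $\kbar\sqrt{\Lz\Rb/(\Lo\eps)}\lesssim\kbar^{1/2}\sqrt{\Lz\Rb^2/\eps}$ via $\Lo\Rb\ge\tfrac14$ and the same-order cost of the finisher levels, are handled exactly as in \eqref{eq:det-total}--\eqref{eq:stat-total}.
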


\subsection{Analysis of \PhaseII: invariants, counting, the finisher, and Theorem~\ref{app:thm:convex}}
\label{app:phase2}

Throughout this section the good event $\mathcal E$ of Appendix~\ref{app:params} holds, the convention \eqref{eq:wlog} is in force, and ``level'' means a normal (\PhaseII) or finisher level; there are at most $J_{\mathrm{tot}}$ of them by construction. We abbreviate $\iota:=\iota_{\mathrm{abs}}$.

\paragraph{Logical structure of the proof.} The argument has one entry point and two possible exits, and each arrow below is a separate statement proved in this appendix:
\begin{equation}
\label{eq:proof-map}
\begin{split}
&\underbrace{\PhaseI}_{\text{Prop.~\ref{app:prop:phase1}}}
\ \longrightarrow\
\underbrace{\text{invariants (I1)--(I5)}}_{\text{Lem.~\ref{lem:inv}, \ref{lem:drift}}}
\ \longrightarrow
\begin{cases}
\ \underbrace{\text{normal levels}}_{\text{Lem.~\ref{lem:level-count}: \emph{per excursion}}}\ \longrightarrow\ \underbrace{\text{gradient or floor exit}}_{\text{Lem.~\ref{lem:exit}}}\\[2mm]
\ \underbrace{\text{finisher}}_{\text{Lem.~\ref{lem:cert}: \emph{unconditional}}}\ \longrightarrow\ \underbrace{\eps\text{-solution}}_{\text{Thm.~\ref{thm:convex-full}}}
\end{cases}
\end{split}
\end{equation}
The distinction matters and we flag it wherever it arises: Lemma~\ref{lem:level-count} bounds the number of \emph{productive} levels \emph{within one gradient-scale excursion} and does not by itself bound the number of excursions; unconditional termination comes from the hard schedule cap $J^{\mathrm{sch}}$ together with the finisher (Lemma~\ref{lem:cert}), which is the branch that makes Theorem~\ref{app:thm:convex} an unconditional statement.

To make this guarantee checkable in one place---rather than distributed across Lemmas~\ref{lem:inv}--\ref{lem:cert}---we record it as a single statement; the constituent lemmas below supply its ingredients, and Theorem~\ref{thm:convex-full} then only adds the cost accounting.

\begin{proposition}[Termination and exit correctness of \PhaseII]
\label{prop:phase2-terminates}
On $\mathcal E$, Algorithm~\ref{app:alg:phase2}---together with the finisher (Algorithm~\ref{alg:finisher})---halts after executing at most $J_{\mathrm{tot}}=J^{\mathrm{sch}}+K_{\mathrm{cert}}$ levels, and, whichever of its three exits fires, the returned point $\xhat$ satisfies $f(\xhat)-f^\star\le\eps$.
\end{proposition}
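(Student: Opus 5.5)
The proof splits into two parts: a purely combinatorial halting bound, and a case-by-case verification of the three exits on $\mathcal E$.

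\emph{Halting.} Algorithm~\ref{app:alg:phase2} has a hard cap: its loop index runs only to $J^{\mathrm{sch}}$, and at $j=J^{\mathrm{sch}}$ it hands control to the finisher. The finisher executes at most $K_{\mathrm{cert}}$ levels by construction. So at most $J^{\mathrm{sch}}+K_{\mathrm{cert}}=J_{\mathrm{tot}}$ levels are executed on every sample path, not only on $\mathcal E$. The only thing left for halting is that each level is well defined, i.e.\ that every \RSTM{} call meets the hypotheses of Proposition~\ref{app:prop:rstm}. I will get this from the invariants (I1)--(I5) (Lemma~\ref{lem:inv}, with the cap drift controlled by Lemma~\ref{lem:drift}), proved by induction over levels on $\mathcal E$:
\begin{itemize}
\item the cap is valid, $\Gest_j\ge G_{c_j}$;
\item the distance is controlled, $\norm{c_j-x^\star}\le\Rb$;
\item the certified ball satisfies $r_j\le 1/(2\Lo)$.
\end{itemize}
Given these, the hypotheses follow from earlier results. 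Lemma~\ref{app:lem:ball} gives $L_j$-smoothness on $\ball{c_j}{2r_j}$. Lemma~\ref{app:lem:prox-grad} gives containment, $\norm{\xhat_j-c_j}\le r_j/2$. The bound \eqref{eq:init-gap} certifies $H_{0,j}$.

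\emph{The induction step.}
\begin{itemize}
\item \emph{Cap.} On the \RSTM{} good event, $\norm{c_{j+1}-\xhat_j}\le s_j$. Combining this with $L_j$-smoothness and Lemma~\ref{app:lem:prox-grad} gives $G_{c_{j+1}}\le\Gest'_{j+1}$. On the measurement good event, $G_{c_{j+1}}\le\norm{\gest_j}+\tau_j$. Together these keep the cap valid.
\item \emph{Distance.} Corollary~\ref{lem:fejer} gives $\norm{\xhat_j-x^\star}\le\norm{c_j-x^\star}$. Hence the distance can grow by at most $s_j$ per level. Since $s_j\le R_0/(4J_{\mathrm{tot}})$ on gradient-driven and finisher levels, and $s_j\le R_0/(8S_\Lambda)$ on the at most $S_\Lambda$ floor levels, the total growth is at most $R_0/2$. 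Starting from $\tfrac32R_0$, the distance stays below $2R_0\le\Rb$. The finisher uses the same $s_j$ formula, so it inherits the same bound.
\end{itemize}

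\emph{Exit (a).} If $\Gest_j\le\eps/(2\Rb)$, convexity and the invariants give $f(c_j)-f^\star\le G_{c_j}\norm{c_j-x^\star}\le\eps/2$.

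\emph{Exit (b).} This fires at a floor level with $\Lambda_j\le\eps/(2\Rb^2)$. Containment makes the ball constraint inactive, so $\xhat_j$ is the unconstrained prox. Therefore
\[
F_j(\xhat_j)\le F_j(x^\star)=f^\star+\tfrac{\lambda_j}{2}\norm{c_j-x^\star}^2\le f^\star+\eps/4 .
\]
Adding the subproblem accuracy $\delta_j\le\eps/8$ and using $f\le F_j$ gives $f(c_{j+1})-f^\star\le\eps$.

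\emph{Exit (c): the finisher.} This is the main obstacle, and I will isolate it as Lemma~\ref{lem:cert}. The finisher is entered either because $\Gest_j<32\iota$ or because $j=J^{\mathrm{sch}}$. In both cases the entry gap is at most $\Gest_j\Rb$.

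The claim to prove is a geometric contraction over the finisher levels. On each level, the ball-constrained prox with $\lambda_{\mathrm{cert}}=\Lo\eps/\Rb$ either stays inside the ball or lands on its boundary.
\begin{itemize}
\item If it stays inside, the regularization argument of exit (b) gives gap $O(\eps)$.
\item If the constraint is active, the prox moves a distance $\rsafe=1/(2\Lo)$ toward decrease. Comparing with the point on the segment toward $x^\star$ at distance $\rsafe$, i.e.\ at fraction $\theta$ of the way, convexity gives $\Delta_{+}\le(1-\theta)\Delta+\tfrac{\lambda_{\mathrm{cert}}}{2}\rsafe^2+\delta^{\mathrm{cert}}$.
\end{itemize}
Iterating this for $K_{\mathrm{cert}}\approx2(1+\Lo\Rb)(S_G+4)$ steps drives $\Delta$ below $\eps$. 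This uses $\Delta_{\mathrm{entry}}\le 4\Gbar\Rb$ and $\log(\Gbar\Rb/\eps)\lesssim S_G$. The additive slack is $O(\eps/(\Lo\Rb))$ per step and is absorbed because it is divided by $\theta$.

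The delicate points are twofold. First, the caps and distances must stay certified through the finisher, again by the $s_j$ schedule. Second, the final returned point must be checked to satisfy the bound deterministically on $\mathcal E$. If the finisher stops early through exit (a) or (b), those cases are already covered.
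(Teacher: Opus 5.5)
Your overall structure matches the paper's proof, and your treatment of the invariants and of exits (a) and (b) is sound. Halting follows from the hard cap $J^{\mathrm{sch}}$ plus at most $K_{\mathrm{cert}}$ finisher levels, the invariants of Lemma~\ref{lem:inv} come by induction, and the three exits are checked separately. For (b), your direct bound $F_j(\xhat_j)\le F_j(x^\star)$ even gives $3\eps/8$.

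\textbf{The gap is in the finisher.} Your first bullet says that when the ball-constrained prox $\tilde x$ is interior, ``the regularization argument of exit (b) gives gap $O(\eps)$.'' It does not. That argument gives
\[
F(\tilde x)-f^\star\le\tfrac{\lambda_{\mathrm{cert}}}{2}\norm{c-x^\star}^2\le\tfrac{\Lo\Rb}{2}\,\eps .
\]
The reason is that $\lambda_{\mathrm{cert}}=\Lo\eps/\Rb$ is $2\Lo\Rb$ times the threshold $\eps/(2\Rb^2)$ that makes exit (b) work. Interiority of $\tilde x$ also gives no bound of the form $\norm{c-x^\star}=O(\rsafe)$. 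For $\Lo\Rb\gg1$ the bound is far above $\eps$, so this branch fails as written. A two-branch argument of this kind also does not, by itself, give one recursion you can iterate over the $K_{\mathrm{cert}}$ levels.

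\textbf{How to repair it.} Drop the case split; your comparison in the second bullet never uses that the constraint is active. Take $\theta=\rsafe/\Rb\le1$ and $u:=c+\theta(x^\star-c)$. The distance invariant gives $\norm{u-c}\le\theta\Rb=\rsafe$, so $u$ is feasible wherever $\tilde x$ lies. Hence at every finisher level
\[
f(c^+)-f^\star\le F(c^+)-f^\star\le F(u)-f^\star+\delta^{\mathrm{cert}}\le(1-\theta)\bigl(f(c)-f^\star\bigr)+\tfrac{\theta\eps}{4}+\tfrac{\theta\eps}{32},
\]
which unrolls to $(1-\theta)^k$ times the entry gap plus $\tfrac{9}{32}\eps$. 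The paper instead splits on whether $x^\star\in\ball{c}{\rsafe}$ and uses the transfer inequality; your use of $f\le F$ is slightly simpler.

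Two further points are missing:
\begin{itemize}
\item The regime $\theta>1$, i.e.\ $\Lo\Rb<\tfrac12$, is allowed by \eqref{eq:wlog} and makes $1-\theta$ negative. There take $u=x^\star$, which is feasible because $\rsafe>\Rb$. This gives gap at most $\tfrac{\Lo\Rb}{2}\eps+\delta^{\mathrm{cert}}<\tfrac{\eps}{2}$ directly.
\item The entry gap is bounded by $\Gest\Rb\le5\Gbar\Rb$, not $4\Gbar\Rb$. The cap can drift above $\Gest_1=4\Gbar$, and Lemma~\ref{lem:drift} is what limits it to $5\Gbar$. The slack in $K_{\mathrm{cert}}$ absorbs this constant.
\end{itemize}
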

\begin{proof}
\emph{Termination} is by construction: the outer loop runs $j=1,\dots,J^{\mathrm{sch}}$ and, at the latest when $j=J^{\mathrm{sch}}$, hands control to the finisher (line~3 of Algorithm~\ref{app:alg:phase2}), which itself executes at most $K_{\mathrm{cert}}$ ball-constrained levels and then returns (Algorithm~\ref{alg:finisher}); hence at most $J_{\mathrm{tot}}$ levels execute in all. \emph{Exit correctness:} the invariants that make the three exits applicable---the certified caps $\norm{\grad f(c_j)}\le\Gest_j$ and the distances $\norm{c_j-x^\star}\le\Rb$---hold at every executed level by Lemma~\ref{lem:inv}. Given them, the gradient exit~(a) certifies $f(\xhat)-f^\star\le\tfrac\eps2$ and the floor exit~(b) certifies $f(\xhat)-f^\star\le\tfrac58\eps$ (Lemma~\ref{lem:exit}), while the finisher returns $\xhat$ with $f(\xhat)-f^\star\le\tfrac{7}{16}\eps$ (Lemma~\ref{lem:cert}); in every case $f(\xhat)-f^\star\le\eps$.

\end{proof}

\subsubsection{Invariants}
\begin{lemma}[Floor-level count]
\label{lem:floor-count}
On $\mathcal E$, $\Lambda_j$ is non-increasing, halves at every non-terminal floor level, and never drops below $\tfrac{\eps}{4\Rb^2}$; hence at most $S_\Lambda$ floor levels execute in the entire run. Moreover, after a floor level $i$ one has $\Lambda\ge2\Lo\Gest_i$, so the cap cannot halve between consecutive floor levels without the floor binding again.
\end{lemma}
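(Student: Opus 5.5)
Since the statement concerns only the deterministic bookkeeping of $\Lambda_j$ in Algorithm~\ref{app:alg:phase2}, I plan to argue directly from the update rules, using $\mathcal E$ only so that the algorithm runs as specified. \emph{Monotonicity and halving} are read off the code: $\Lambda_{j+1}=\Lambda_j/2$ at a floor level ($\lambda_j=\Lambda_j$) that does not return, and $\Lambda_{j+1}=\Lambda_j$ otherwise. Finisher levels do not touch $\Lambda$ at all. Hence $\Lambda_j$ is non-increasing and halves exactly at non-terminal floor levels.

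\emph{Lower bound.} A halving happens at floor level $j$ only if the test $\Lambda_j\le\eps/(2\Rb^2)$ failed, i.e.\ $\Lambda_j>\eps/(2\Rb^2)$. Then $\Lambda_{j+1}=\Lambda_j/2>\eps/(4\Rb^2)$. Since $\Lambda_1=32\Lz$ and $\eps<\Dcert$, I will check separately that $\Lambda_1\ge\eps/(4\Rb^2)$ using $\Dcert\le\Ddef$ and the definition of $S_\Lambda$; if it fails, the positive part makes $S_\Lambda=1$ and the first floor level terminates. Induction then gives $\Lambda_j\ge\eps/(4\Rb^2)$ for all $j$.

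\emph{Counting.} After $k$ halvings, $\Lambda=32\Lz 2^{-k}$. Every non-terminal floor level had $32\Lz2^{-k}>\eps/(2\Rb^2)$, i.e.\ $2^{k}<64\Lz\Rb^2/\eps$. So the number of non-terminal floor levels is at most $\lceil\log_2^+(64\Lz\Rb^2/\eps)\rceil$, and adding one possibly terminal floor level gives at most $\lceil\log_2^+(128\Lz\Rb^2/\eps)\rceil\le S_\Lambda$. The only care needed is the ceiling/off-by-one arithmetic and the clipped case $S_\Lambda=1$ handled above.

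\emph{Second claim.} At floor level $i$ we have $4\Lo\Gest_i\le\Lambda_i$. Afterwards $\Lambda_{i+1}=\Lambda_i/2\ge2\Lo\Gest_i$, and $\Lambda$ stays constant until the next floor level. Suppose the cap drops to $\Gest_j\le\Gest_i/2$ at some later level $j$ before another floor level. Then $4\Lo\Gest_j\le2\Lo\Gest_i\le\Lambda_j$, so level $j$ is itself a floor level, which is a contradiction. The main subtlety is making sure cap increases (drift, $\Gest'_{j+1}>\Gest_j$) do not break this. They cannot, because the argument only uses the current $\Gest_j$ against the frozen $\Lambda$, so no drift control is needed.
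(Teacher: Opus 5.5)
Your proposal is correct and follows the paper's proof essentially step for step. The shared steps are: halving read off the update rule, the lower bound from the failed test $\Lambda_j>\eps/(2\Rb^2)$ at every halving, a count of halvings against $S_\Lambda$, and the comparison $4\Lo\Gest_j\le2\Lo\Gest_i\le\Lambda_j$ against the frozen floor for the second claim.

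Two small remarks. First, your planned check of $\Lambda_1\ge\eps/(4\Rb^2)$ via $\Dcert\le\Ddef$ cannot succeed in general, since $\Ddef$ grows like $e^{\Lo R_0}$; your fallback ($S_\Lambda=1$ and the first floor level terminates) is what handles that case. Second, your strict-inequality count (at most $S_\Lambda-1$ halvings plus one terminal floor level) is slightly tighter than the paper's. The paper bounds the number of halvings by $S_\Lambda$ and reads that directly as the number of floor levels.
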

\begin{proof}
$\Lambda$ changes only at floor levels, where it halves unless the level terminates; a floor level with $\Lambda_j\le\eps/(2\Rb^2)$ returns, so all halvings start from $\Lambda_j>\eps/(2\Rb^2)$ and $\Lambda\ge\eps/(4\Rb^2)$ throughout. The number of halvings from $\Lambda_1=32\Lz$ down to $\eps/(4\Rb^2)$ is at most $\lceil\log_2\tfrac{128\Lz\Rb^2}{\eps}\rceil=S_\Lambda$. At a binding floor level $i$, $\Lambda_i\ge4\Lo\Gest_i$, so after halving $\Lambda\ge2\Lo\Gest_i$; if later $\Gest_j\le\Gest_i/2$ then $4\Lo\Gest_j\le2\Lo\Gest_i\le\Lambda_j$ and the floor binds again.

\end{proof}

\begin{lemma}[Invariants]
\label{lem:inv}
On $\mathcal E$, for every executed level $j$:
\begin{enumerate}
\item[\textup{(I1)}] \emph{Certified caps:} $\norm{\grad f(c_j)}\le\Gest_j$.
\item[\textup{(I2)}] \emph{Distances:} $\norm{c_j-x^\star}\le\tfrac{15}{8}R_0\le\Rb$.
\item[\textup{(I3)}] \emph{Local regularity:} $F_j$ is $\lambda_j$-strongly convex and $L_{F,j}$-smooth along segments of $\ball{c_j}{2r_j}$, where $r_j\le\tfrac{1}{2\Lo}$ at all levels, and $L_j\ge\Lz+\Lo G_x$ pointwise on the ball (Lemma~\ref{app:lem:ball} with \textup{(I1)}).
\item[\textup{(I4)}] \emph{Containment (normal levels):} $\norm{\xhat_j-c_j}\le G_{c_j}/\lambda_j\le r_j/2$, so $\xhat_j$ is the unconstrained minimizer of $F_j$ and $H_{0,j}=\Gest_j^2/\lambda_j$ certifies the initial gap (Remark~\ref{rem:proj}). \RSTM{} succeeds: $F_j(c_{j+1})-F_j(\xhat_j)\le\delta_j$, $\norm{c_{j+1}-\xhat_j}\le\sqrt{2\delta_j/\lambda_j}\le s_j$, and $c_{j+1}\in\ball{c_j}{r_j}$. At finisher levels the same holds with $\xhat_j$ replaced by the ball-constrained minimizer $\tilde x_j$.
\item[\textup{(I5)}] \emph{Cap-update validity:} $G_{c_{j+1}}\le\min\{\Gest_j+L_js_j,\ \norm{\gest_j}+\tau_j\}=\Gest_{j+1}$, and each level raises the cap by at most $L_js_j$ ($\le\iota$ at gradient-driven and finisher levels, $\le\iota_{\mathrm{flr}}$ at floor levels).
\end{enumerate}
\end{lemma}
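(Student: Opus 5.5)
The plan is a single induction on the level index $j$, carried out on the good event $\mathcal E$, proving (I1)--(I5) together. At each step, (I3) and (I4) for level $j$ come from (I1) and (I2) at level $j$. Then (I5), together with the distance update, gives (I1) and (I2) at level $j+1$.

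For the base case $j=1$, (I1) follows from Proposition~\ref{app:prop:phase1}(i): $\norm{\grad f(x^{\mathrm I})}\le\tfrac78 G_{\mathrm{stop}}\le 4\Gbar=\Gest_1$. Part (iii) of the same proposition gives $\norm{c_1-x^\star}\le\tfrac32R_0$. The margin $\tfrac{15}{8}R_0-\tfrac32R_0=\tfrac38R_0$ is the slack that will absorb the inner-solver errors later.

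\textbf{Local regularity and containment (I3)--(I4).} Since $\lambda_j\ge4\Lo\Gest_j$, the radius satisfies $r_j=2\Gest_j/\lambda_j\le\tfrac1{2\Lo}$. Lemma~\ref{app:lem:ball}, applied with the cap $\Gest_j\ge G_{c_j}$ from (I1), then gives $L_j$-smoothness of $f$ on $\ball{c_j}{2r_j}$. Adding the quadratic shows that $F_j$ is $\lambda_j$-strongly convex and $(L_j+\lambda_j)$-smooth there. The pointwise bound $L_j\ge\Lz+\Lo G_x$ on the ball uses $G_x\le e(\Gest_j+\Gbar)$ and $4\ge 1+e$.

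For containment, Lemma~\ref{app:lem:prox-grad} gives $\norm{\xhat_j-c_j}\le G_{c_j}/\lambda_j\le r_j/2$. So the constraint is inactive, and $\xhat_j$ is the unconstrained minimizer of $F_j$. The bound $H_{0,j}=\Gest_j^2/\lambda_j$ is \eqref{eq:init-gap}, with start point $y^{(0)}=c_j\in\ball{c_j}{r_j}$.

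On the event that this \RSTM{} call succeeds (budget $\beta_j$), Proposition~\ref{app:prop:rstm} gives $F_j(c_{j+1})-F_j(\xhat_j)\le\delta_j$ and $\norm{c_{j+1}-\xhat_j}\le\sqrt{2\delta_j/\lambda_j}\le s_j$, using $\delta_j\le\lambda_js_j^2/2$. All queries, and hence the output $c_{j+1}$, lie in the feasible ball $\ball{c_j}{r_j}$. Finisher levels are handled identically, with the ball-constrained minimizer $\tilde x_j$ in place of $\xhat_j$ and radius $\rsafe=\tfrac1{2\Lo}$, which Appendix~\ref{app:rstm} covers.

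\textbf{Cap update (I5).} By Lemma~\ref{app:lem:prox-grad} (valid also in the ball-constrained case), $G_{\xhat_j}\le G_{c_j}\le\Gest_j$. Both $\xhat_j$ and $c_{j+1}$ lie in $\ball{c_j}{2r_j}$, so (I3) gives $G_{c_{j+1}}\le\Gest_j+L_js_j=\Gest'_{j+1}$.

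On the measurement event, $\norm{\gest_j-\grad f(c_{j+1})}\le\tau_j$, since $B^{\mathrm{est}}_j$ is chosen via Lemma~\ref{lem:batch}. Hence $G_{c_{j+1}}\le\norm{\gest_j}+\tau_j$, and (I1) at $j+1$ follows. The rise of the cap is at most $L_js_j$, which the two branches of the $s_j$ formula bound by $\iota$ or $\iota_{\mathrm{flr}}$ respectively.

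\textbf{Distances (I2), the main obstacle.} Exact Fej\'er monotonicity (Corollary~\ref{lem:fejer}) gives $\norm{\xhat_j-x^\star}\le\norm{c_j-x^\star}$. The triangle inequality then gives $\norm{c_{j+1}-x^\star}\le\norm{c_j-x^\star}+s_j$, so the drift accumulates additively.

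Summing over levels splits into two counts:
\begin{itemize}
\item at most $J_{\mathrm{tot}}$ gradient-driven and finisher levels, each with $s_j\le R_0/(4J_{\mathrm{tot}})$, contributing at most $R_0/4$;
\item at most $S_\Lambda$ floor levels by Lemma~\ref{lem:floor-count}, each with $s_j\le R_0/(8S_\Lambda)$, contributing at most $R_0/8$.
\end{itemize}
Thus $\norm{c_{j+1}-x^\star}\le\tfrac32R_0+\tfrac14R_0+\tfrac18R_0=\tfrac{15}8R_0$.

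The delicate point is that Lemma~\ref{lem:floor-count} is itself stated on $\mathcal E$ and relies on the invariants. I would therefore prove the floor count inside the same induction; it only uses the deterministic update rule for $\Lambda$. I would also check that the floor/gradient-driven classification used for $s_j$ is the one fixed at level $j$, so the two sums are well defined. A final union bound over the event families of Table~\ref{tab:events} keeps all of this within $\mathcal E$.
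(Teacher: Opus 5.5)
Your proposal is correct and follows the paper's proof essentially step for step. Both run an induction on $j$ from the Phase~I base case, obtain (I3)--(I4) from Lemma~\ref{app:lem:ball}, Lemma~\ref{app:lem:prox-grad} and Proposition~\ref{app:prop:rstm}, get (I5) from gradient monotonicity at the prox together with the two-point Lipschitz bound and the measurement event, and prove (I2) by Fej\'er monotonicity with the same $s$-budget $\tfrac14R_0+\tfrac18R_0$. Your added observation that Lemma~\ref{lem:floor-count} depends only on the deterministic $\Lambda$-update, so citing it inside the induction is not circular, is a correct refinement that the paper leaves implicit.
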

\begin{proof}
Induction on $j$. \emph{Base:} $\norm{\grad f(c_1)}\le\tfrac{25}{8}\Gbar\le4\Gbar=\Gest_1$ and $\norm{c_1-x^\star}\le\tfrac32R_0$ by Proposition~\ref{app:prop:phase1}. \emph{Step:} given (I1), Lemma~\ref{app:lem:ball} with center $c_j$, cap $\Gest_j$ and radius $r_j$ yields (I3); here $r_j=\tfrac{1}{2\Lo}$ exactly at gradient-driven and finisher levels, and $r_j=2\Gest_j/\Lambda_j\le\tfrac{1}{2\Lo}$ at floor levels since $\Lambda_j\ge4\Lo\Gest_j$. For (I4), Lemma~\ref{app:lem:prox-grad} with $r=\infty$ gives $\norm{\xhat_j-c_j}\le G_{c_j}/\lambda_j\le\Gest_j/\lambda_j=r_j/2$, so the ball constraint is inactive and Proposition~\ref{app:prop:rstm} applies with $x_Q=\xhat_j$ (assumptions hold by (I3)--(I4)); on $\mathcal E$ its conclusions hold, and $\sqrt{2\delta_j/\lambda_j}\le s_j$ because $\delta_j\le\lambda_js_j^2/2$. For (I5): $\norm{\grad f(\xhat_j)}\le G_{c_j}\le\Gest_j$ by Lemma~\ref{app:lem:prox-grad} (which covers the ball-constrained finisher prox), and the segment $[\xhat_j,c_{j+1}]$ lies in $\ball{c_j}{r_j}$, so by (I3),
$G_{c_{j+1}}\le\norm{\grad f(\xhat_j)}+L_j\norm{c_{j+1}-\xhat_j}\le\Gest_j+L_js_j=\Gest'_{j+1}$;
the anchor branch satisfies $\norm{\gest_j}+\tau_j\ge G_{c_{j+1}}$ on $\mathcal E$. Hence $\Gest_{j+1}\ge G_{c_{j+1}}$, proving (I1) at $j{+}1$, and $\Gest_{j+1}\le\Gest'_{j+1}$ bounds the rise. Finally (I2): by Lemma~\ref{lem:fejer} (constrained version at finisher levels), $\norm{\xhat_j-x^\star}\le\norm{c_j-x^\star}$, so $\norm{c_{j+1}-x^\star}\le\norm{c_j-x^\star}+s_j$, and the $s$-budget of \eqref{eq:schedule-level} gives
$\sum_j s_j\le J_{\mathrm{tot}}\cdot\tfrac{R_0}{4J_{\mathrm{tot}}}+S_\Lambda\cdot\tfrac{R_0}{8S_\Lambda}=\tfrac{3R_0}{8}$
(at most $S_\Lambda$ floor levels execute, by Lemma~\ref{lem:floor-count}), whence $\norm{c_j-x^\star}\le\tfrac32R_0+\tfrac38R_0=\tfrac{15}8R_0$.

\end{proof}

\begin{lemma}[Drift: cumulative rises and the cap ceiling]
\label{lem:drift}
On $\mathcal E$: 
\begin{equation}
\label{eq:drift-sum}
\sum_j(\Gest_{j+1}-\Gest_j)_+\le J_{\mathrm{tot}}\,\iota+S_\Lambda\,\iota_{\mathrm{flr}}\le\tfrac{\Lz}{8\Lo}+\tfrac{\Lz}{32\Lo}\le\tfrac{\Lz}{6\Lo};
\end{equation}
hence
$\Gest_j\le\Gest_1+\tfrac{\Lz}{6\Lo}\le5\Gbar=:\Gest_{\max}$ and $L_j\le24\Lz$ whenever $\Gest_j\le5\Gbar$; at floor levels additionally $\Gest_j\le\Lambda_j/(4\Lo)\le8\Gbar$ and $L_j\le36\Lz$.
\end{lemma}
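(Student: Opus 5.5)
The plan is to read the drift bound directly off invariant (I5) of Lemma~\ref{lem:inv}, sum it over the executed levels, and then convert it into a ceiling on the caps. By (I5), on $\mathcal E$ every level $j$ satisfies $\Gest_{j+1}\le\Gest'_{j+1}=\Gest_j+L_js_j$, so $(\Gest_{j+1}-\Gest_j)_+\le L_js_j$. By the schedule \eqref{eq:schedule-level}, $L_js_j\le\iota_{\mathrm{abs}}=\iota$ at gradient-driven and finisher levels, and $L_js_j\le\iota_{\mathrm{flr}}$ at floor levels. At most $J_{\mathrm{tot}}$ levels execute in total (Proposition~\ref{prop:phase2-terminates}), and at most $S_\Lambda$ of them are floor levels (Lemma~\ref{lem:floor-count}). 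Summing the two bounds therefore gives $\sum_j(\Gest_{j+1}-\Gest_j)_+\le J_{\mathrm{tot}}\iota+S_\Lambda\iota_{\mathrm{flr}}$. Substituting $\iota=\Lz/(8\Lo J_{\mathrm{tot}})$ and $\iota_{\mathrm{flr}}=\Lz/(32\Lo S_\Lambda)$ from \eqref{eq:schedule-global}, this equals $\tfrac{\Lz}{8\Lo}+\tfrac{\Lz}{32\Lo}=\tfrac{5\Lz}{32\Lo}\le\tfrac{\Lz}{6\Lo}$.

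For the ceiling I will telescope. For every executed $j$, $\Gest_j-\Gest_1\le\sum_{i<j}(\Gest_{i+1}-\Gest_i)_+$. With $\Gest_1=4\Gbar$ this gives $\Gest_j\le4\Gbar+\Gbar/6\le5\Gbar$. The smoothness bound is then arithmetic: whenever $\Gest_j\le5\Gbar$,
\[
L_j=4(\Lz+\Lo\Gest_j)\le4(\Lz+5\Lz)=24\Lz .
\]

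At floor levels the definition of a floor level gives $4\Lo\Gest_j\le\Lambda_j$, so $\Gest_j\le\Lambda_j/(4\Lo)$. Lemma~\ref{lem:floor-count} says $\Lambda_j$ is non-increasing from $\Lambda_1=32\Lz$, so $\Lambda_j/(4\Lo)\le8\Gbar$. Hence at floor levels
\[
L_j\le4(\Lz+8\Lz)=36\Lz .
\]

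The only delicate point is making sure the counts really do apply. First, the floor-level count $S_\Lambda$ must bound exactly the levels that use the $\iota_{\mathrm{flr}}$ branch of $s_j$. Second, finisher levels must use the $\iota$ branch and be counted among the $J_{\mathrm{tot}}$ levels. Both facts are fixed by the schedule \eqref{eq:schedule-level} and the finisher specification \eqref{eq:cert-params}. Beyond that, the argument is pure bookkeeping on the good event $\mathcal E$.
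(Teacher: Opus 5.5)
Your argument is correct and is the paper's proof with the bookkeeping written out: (I5) gives the per-level rise bound $(\Gest_{j+1}-\Gest_j)_+\le L_js_j$, the two branches of $s_j$ bound this by $\iota$ or $\iota_{\mathrm{flr}}$, the counts $J_{\mathrm{tot}}$ and $S_\Lambda$ bound the sum, and you then telescope from $\Gest_1=4\Gbar$ and use $\Lambda_j\le\Lambda_1=32\Lz$ at floor levels. One small citation nit: take the bound of $J_{\mathrm{tot}}$ executed levels directly from the construction (loop cap $J^{\mathrm{sch}}$ plus at most $K_{\mathrm{cert}}$ finisher levels), not from Proposition~\ref{prop:phase2-terminates}, whose exit-correctness part itself relies on this lemma through Lemma~\ref{lem:cert}.
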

\begin{proof}
Immediate from Lemma~\ref{lem:inv}(I5) and the level-type counts (at most $J_{\mathrm{tot}}$ levels total, at most $S_\Lambda$ of them floor levels, by Lemma~\ref{lem:floor-count}).
\end{proof}

\subsubsection{Level counting}
\begin{lemma}[Level count]
\label{lem:level-count}
On $\mathcal E$, during the normal phase (every executed gradient-driven level has $\Gest_j\ge32\iota$ and $\Gest_j>\tfrac{\eps}{2\Rb}$, else the finisher or termination (a) fires first):
\begin{enumerate}
\item[\textup{(i)}] \emph{Dichotomy and productive decrease:} at every non-halving gradient-driven level,
$\norm{\grad f(c_{j+1})}\ge\tfrac{159}{256}\Gest_j$, hence $\norm{\grad f(\xhat_j)}\ge\tfrac{151}{256}\Gest_j$ and
\begin{equation}
\label{eq:productive}
\begin{split}
f(c_j)-f(c_{j+1})
&\ \ge\ \frac{\norm{\grad f(\xhat_j)}^2}{2\lambda_j}-s_j\norm{\grad f(\xhat_j)}-\delta_j \ge\ \frac{\Gest_j}{29\Lo}.
\end{split}
\end{equation}
\item[\textup{(ii)}] \emph{Per-excursion budget:} partition the reachable cap range $(\tfrac{\eps}{2\Rb},\Gest_{\max}]$ into the anchored dyadic bands $\big[2^{t-1}\tfrac{\eps}{2\Rb},\,2^{t}\tfrac{\eps}{2\Rb}\big)$, $t=1,\dots,S_G$ (these cover it by Lemma~\ref{lem:drift} and the definition of $S_G$). Within any maximal run of consecutive levels whose caps lie in one band $[g,2g)$, the number $n$ of non-halving gradient-driven levels obeys $n\le 57\Lo\Rb\,(2+S_\Lambda/2)+h\le114\Lo\Rb+29\Lo\Rb S_\Lambda+h$, where $h$ is the number of halving levels in the same run.
\item[\textup{(iii)}] \emph{Halvings:} among any $J$ executed normal levels, the number $H$ of halving levels satisfies $H\le3S_G+\tfrac{J}{9}+3S_\Lambda(S_G+1)$.
\end{enumerate}
Consequently, \emph{if} every dyadic band hosts at most $V$ excursions, the normal phase terminates within $60\lceil\kbar\rceil S_G(S_\Lambda+4)V+16$ levels; the potential argument does not by itself bound $V$, and the \emph{unconditional} guarantee of the algorithm is supplied by the hard schedule cap $J^{\mathrm{sch}}$ (the $V=1$ value of the above bound) together with the finisher, which preserves all guarantees whenever it is entered (Lemma~\ref{lem:cert}).
\end{lemma}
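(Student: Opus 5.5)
The plan is to prove the three parts in order, working on $\mathcal E$ and using (I1)--(I5) from Lemma~\ref{lem:inv} together with the drift bound of Lemma~\ref{lem:drift}. Throughout I use $\Gest_j\ge32\iota$, which holds in the normal phase.

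\emph{Part (i).} At a gradient-driven level $\lambda_j=4\Lo\Gest_j$, and the step size satisfies $L_js_j\le\iota\le\Gest_j/32$.
\begin{itemize}
\item First I lower-bound the true gradient at the new center. Non-halving means $\Gest_{j+1}>\tfrac34\Gest_j$, so the anchor branch gives $\norm{\gest_j}+\tau_j>\tfrac34\Gest_j$. On $\mathcal E$ we have $\norm{\gest_j}\le G_{c_{j+1}}+\tau_j$, hence $G_{c_{j+1}}\ge\tfrac34\Gest_j-2\tau_j$.
\item I then substitute $\tau_j=\Gest'_{j+1}/16\le(\Gest_j+\Gest_j/32)/16$, which should produce the constant $159/256$.
\item Next I transfer this to the exact prox. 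The segment $[\xhat_j,c_{j+1}]$ stays in the certified ball, so by (I3) $\norm{\grad f(\xhat_j)}\ge G_{c_{j+1}}-L_js_j\ge(\tfrac{159}{256}-\tfrac{8}{256})\Gest_j$.
\item For the decrease, I use that the prox is unconstrained (containment (I4)). Then $\norm{\xhat_j-c_j}=\norm{\grad f(\xhat_j)}/\lambda_j$, and $\lambda_j$-strong convexity of $F_j$ gives $f(c_j)\ge f(\xhat_j)+\norm{\grad f(\xhat_j)}^2/\lambda_j$.
\item On the other side, $f(c_{j+1})$ is bounded above using $F_j(c_{j+1})\le F_j(\xhat_j)+\delta_j$. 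Combining this with the gradient at $\xhat_j$ and $\norm{c_{j+1}-\xhat_j}\le s_j$ yields the first inequality in \eqref{eq:productive}.
\item Finally I plug in $\lambda_j=4\Lo\Gest_j$, $s_j\le\iota/L_j\le\Gest_j/(128\Lo)$ and $\delta_j\le\lambda_js_j^2/2$. The leading term is $(151/256)^2\Gest_j/(8\Lo)\approx0.043\,\Gest_j/\Lo$, and after subtracting the error terms it should still exceed $\Gest_j/(29\Lo)$.
\end{itemize}

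\emph{Part (ii)} is a potential argument on $f(c_j)-f^\star$ within one band $[g,2g)$.
\begin{itemize}
\item At entry to the run, convexity and (I2) give $f(c_j)-f^\star\le\Gest_j\Rb<2g\Rb$, and this gap is always nonnegative.
\item Each non-halving gradient-driven level in the run removes at least $g/(29\Lo)$ of objective, by part (i).
\item Halving levels and floor levels are not productive, but they must not increase $f$ much. I will show from the same prox inequality, without the gradient lower bound, that $f(c_{j+1})\le f(c_j)+s_jG_{\xhat_j}+\delta_j+\tfrac{L_j}{2}s_j^2$.
\item At gradient-driven halving levels this increase is at most $O(g/\Lo)$, which is what produces the additive $+h$.
\item At floor levels $\lambda_j=\Lambda_j$ may be smaller, so I instead charge the increase against the regularization slack $\tfrac{\lambda_j}{2}\norm{c_j-x^\star}^2$-type terms. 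This should give at most about $g\Rb$ per floor level, so $S_\Lambda$ floor levels contribute the $S_\Lambda/2$ factor.
\item Dividing the total available budget by $g/(29\Lo)$ gives the stated bound on $n$.
\end{itemize}
I expect the floor-level bookkeeping to be the main obstacle, because there the regularization is not tied to $\Gest_j$. If the direct bound fails, I would try to use $s_j\le\iota_{\mathrm{flr}}/L_j$ and $\Gest_j\le8\Gbar$ from Lemma~\ref{lem:drift}.

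\emph{Part (iii).} I use the potential $\log\Gest_j$.
\begin{itemize}
\item Each halving lowers it by at least $\log(4/3)$.
\item A gradient-driven or finisher level raises it by at most $\log(1+\iota/\Gest_j)\le\log(33/32)$. Since $\log(33/32)/\log(4/3)<1/9$, these rises account for the $J/9$ term.
\item The total net descent of $\log\Gest_j$ across the band range is at most $S_G\log2$, which gives the $3S_G$ term.
\item Floor levels can raise the cap by up to $\iota_{\mathrm{flr}}$, which is not small relative to $\Gest_j$. By Lemma~\ref{lem:floor-count} there are at most $S_\Lambda$ such levels, and each can re-open at most one band's worth of halvings per band, giving the $3S_\Lambda(S_G+1)$ term.
\end{itemize}

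\emph{Consequence.} I sum (ii) over the $S_G$ bands, with $V$ excursions per band, and then insert the bound on $h$ from (iii). Absorbing constants gives $60\lceil\kbar\rceil S_G(S_\Lambda+4)V+16$. The unconditional statement then follows by the remark that $J^{\mathrm{sch}}$ is exactly this bound at $V=1$, with the finisher taking over if the cap is reached.
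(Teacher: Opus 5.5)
Your parts (i) and (iii) and the final count follow the paper's argument. A few small corrections:
\begin{itemize}
\item In (i), your bound $f(c_j)\ge f(\xhat_j)+\norm{\grad f(\xhat_j)}^2/\lambda_j$ is valid and is a factor $2$ stronger than needed.
\item Also in (i), fix the slip $s_j\le\iota/L_j\le 1/(128\Lo)$: it is $s_j\Gest_j$, not $s_j$, that is $\le\Gest_j/(128\Lo)$.
\item In (iii), the floor term is cleanly justified by observing that one floor raise of $\log_2\Gest$ cannot exceed the whole range $\log_2(\Gest_{\max}\cdot 2\Rb/\eps)\le S_G$.
\item The consequence requires solving $J\le VS_G(114\Lo\Rb+29\Lo\Rb S_\Lambda)+2H+S_\Lambda$ for $J$, because (iii) bounds $H$ in terms of $J$.
\end{itemize}

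The genuine gap is the floor-level step of (ii). You flag it as the obstacle, but neither of your suggested routes closes it.
\begin{itemize}
\item The ``regularization slack'' route uses $F_j(c_{j+1})\le F_j(x^\star)+\delta_j$. This only gives $f(c_{j+1})-f^\star\le\tfrac{\Lambda_j}{2}\Rb^2+\delta_j$. Since $\Lambda_j$ can be as large as $32\Lz$ while the cap is tiny, this is not $O(g\Rb)$.
\item Your fallback via $\Gest_j\le 8\Gbar$ gives $s_j\Gest_j\lesssim\Deltabar/S_\Lambda$. This quantity does not depend on $g$, so it swamps $g\Rb$ in low bands. There $g$ can be as small as $\eps/(2\Rb)$, and floor levels do occur, because the floor binds whenever $\Lambda_j\ge 4\Lo\Gest_j$.
\item Your increase bound also carries the term $\tfrac{L_j}{2}s_j^2$. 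At floor levels this is only $\le\iota_{\mathrm{flr}}R_0/(16S_\Lambda)$, which again is not proportional to $g$.
\end{itemize}

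What is needed, and what the paper does, is the following.
\begin{itemize}
\item Use the $F_j$-based transfer $f(c_{j+1})\le f(\xhat_j)+\delta_j+s_j\norm{\grad f(\xhat_j)}$. This is the argument you sketched for (i), and it has no $L_j$-term. Combine it with $f(\xhat_j)\le F_j(\xhat_j)\le F_j(c_j)=f(c_j)$.
\item Bound $s_j\norm{\grad f(\xhat_j)}\le(\iota_{\mathrm{flr}}/L_j)\Gest_j$ using $L_j\ge 4\Lz$, so that $\iota_{\mathrm{flr}}/L_j\le 1/(128\Lo S_\Lambda)$. Pair this with the \emph{in-band} cap $\Gest_j<2g$ to get $\le g/(64\Lo)\le g\Rb/16$, using $\Lo\Rb\ge\tfrac14$.
\item Bound $\delta_j\le\eps/8\le g\Rb/4$. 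This uses the $\min\{\cdot,\eps/8\}$ in the schedule of $\delta_j$ together with the band anchoring $g\ge\eps/(2\Rb)$; this is exactly why the bands are anchored at $\eps/(2\Rb)$.
\end{itemize}
Each floor level then raises $f$ by at most $g\Rb/2$. You need $g\Rb/2$, not ``about $g\Rb$'', to obtain the factor $S_\Lambda/2$. Summing against the entry gap $2g\Rb$ gives $n(\tfrac{1}{29}-\tfrac{1}{60})\le\Lo\Rb(2+\tfrac{S_\Lambda}{2})+\tfrac{h}{60}$, which is the stated bound.

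An alternative is to re-anchor the potential at every in-band center using $f(c)-f^\star\le\Gest\Rb<2g\Rb$. This avoids bounding floor increases altogether. However, it only yields $n\le 114\Lo\Rb(S_\Lambda+1)+h$, whose $S_\Lambda$-coefficient is worse than the one stated and than what the constant $60$ in the final count allows.
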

\begin{proof}
\emph{(i)} Non-halving means $\norm{\gest_j}>\tfrac34\Gest_j-\tau_j$; on $\mathcal E$, $\norm{\grad f(c_{j+1})}\ge\norm{\gest_j}-\tau_j>\tfrac34\Gest_j-2\tau_j$. Since $\Gest_j\ge32\iota\ge32L_js_j$, $\tau_j=\tfrac{\Gest_j+L_js_j}{16}\le\tfrac{33}{512}\Gest_j$, so $\norm{\grad f(c_{j+1})}\ge(\tfrac34-\tfrac{33}{256})\Gest_j=\tfrac{159}{256}\Gest_j$, and by (I3)--(I4),

\begin{equation}
\label{eq:grad-lower}
\begin{split}
\norm{\grad f(\xhat_j)}
&\ge\norm{\grad f(c_{j+1})}-L_j\norm{c_{j+1}-\xhat_j}\ge\tfrac{159}{256}\Gest_j-L_js_j
\ge(\tfrac{159}{256}-\tfrac1{32})\Gest_j=\tfrac{151}{256}\Gest_j.
\end{split}
\end{equation}

We isolate the \emph{transfer inequality}, which will be reused for the finisher: at every level (gradient-driven, floor, or finisher),
\begin{equation}
\label{eq:transfer}
f(c_{j+1})\ \le\ f(\xhat_j)\ +\ \delta_j\ +\ s_j\,\norm{\grad f(\xhat_j)} .
\end{equation}
\emph{Proof of \eqref{eq:transfer}.} Write $a:=\norm{\xhat_j-c_j}$ and $d:=\norm{c_{j+1}-\xhat_j}$, so $d\le s_j$ by (I4) and $\norm{c_{j+1}-c_j}\ge(a-d)_+$ by the triangle inequality. From $F_j(c_{j+1})\le F_j(\xhat_j)+\delta_j$,
\begin{equation*}
\begin{split}
f(c_{j+1})
&\ =\ F_j(c_{j+1})-\tfrac{\lambda_j}{2}\norm{c_{j+1}-c_j}^2 \le\ f(\xhat_j)+\delta_j+\tfrac{\lambda_j}{2}\big[a^2-(a-d)_+^2\big].
\end{split}
\end{equation*}
If $d\ge a$ then $a^2-(a-d)_+^2=a^2\le ad$; if $d<a$ then $a^2-(a-d)^2=2ad-d^2\le2ad$. In both cases $\tfrac{\lambda_j}{2}[a^2-(a-d)_+^2]\le\lambda_j a\,d$. At normal levels containment makes the prox unconstrained, so first-order optimality gives $\lambda_j a=\norm{\grad f(\xhat_j)}$ exactly; at finisher levels the KKT conditions \eqref{eq:kkt} give $(\lambda_j+\eta)a=\norm{\grad f(\xhat_j)}$ with $\eta\ge0$, so $\lambda_j a\le\norm{\grad f(\xhat_j)}$ in all cases. Hence $\lambda_j a\,d\le d\,\norm{\grad f(\xhat_j)}\le s_j\norm{\grad f(\xhat_j)}$, proving \eqref{eq:transfer}.\hfill$\lozenge$

For the decrease in \eqref{eq:productive}: $F_j(\xhat_j)\le F_j(c_j)$ gives $f(\xhat_j)\le f(c_j)-\tfrac{\lambda_j}{2}\norm{\xhat_j-c_j}^2=f(c_j)-\tfrac{\norm{\grad f(\xhat_j)}^2}{2\lambda_j}$ (unconstrained prox at gradient-driven levels), and combining with \eqref{eq:transfer} yields the first inequality of \eqref{eq:productive}. Numerically, at gradient-driven levels $\lambda_j=4\Lo\Gest_j$, so
$\tfrac{\norm{\grad f(\xhat_j)}^2}{2\lambda_j}\ge\big(\tfrac{151}{256}\big)^2\tfrac{\Gest_j}{8\Lo}=\tfrac{22801}{524288}\cdot\tfrac{\Gest_j}{\Lo}$,
while 
\begin{equation}
\label{eq:transfer-cost}
\begin{split}
s_j\norm{\grad f(\xhat_j)}\le s_j\Gest_j\le\tfrac{\iota}{L_j}\Gest_j
&\le\tfrac{\iota}{4\Lo}\le\tfrac{\Gest_j}{128\Lo}=\tfrac{4096}{524288}\cdot\tfrac{\Gest_j}{\Lo}
\end{split}
\end{equation}
 (using $L_j\ge4\Lo\Gest_j$ and $\iota\le\Gest_j/32$) and $\delta_j\le\tfrac{\lambda_js_j^2}{2}\le\tfrac{\iota^2}{8\Lo\Gest_j}\le\tfrac{\Gest_j}{8192\Lo}=\tfrac{64}{524288}\cdot\tfrac{\Gest_j}{\Lo}$; hence the net decrease is at least $\tfrac{22801-4096-64}{524288}\cdot\tfrac{\Gest_j}{\Lo}=\tfrac{18641}{524288}\cdot\tfrac{\Gest_j}{\Lo}\ge\tfrac{\Gest_j}{29\Lo}$.

\emph{(ii)} Fix an excursion through $[g,2g)$ with $n$ non-halving gradient-driven levels, $h$ halving levels, and $\phi\le S_\Lambda$ floor levels (Lemma~\ref{lem:floor-count}). Every in-band level can increase $f$ by at most $\delta_j+s_j\norm{\grad f(\xhat_j)}$ (by \eqref{eq:transfer} and $f(\xhat_j)\le f(c_j)$): at gradient-driven levels this is $\le\tfrac{\Gest_j}{120\Lo}\le\tfrac{g}{60\Lo}$ by the bounds above; at floor levels, $s_j\norm{\grad f(\xhat_j)}\le\tfrac{\iota_{\mathrm{flr}}}{L_j}\Gest_j\le\tfrac{\Gest_j}{128\Lo S_\Lambda}\le\tfrac{g}{64\Lo}$ (the middle step uses $L_j=4(\Lz+\Lo\Gest_j)\ge4\Lz$, so $\iota_{\mathrm{flr}}/L_j\le1/(128\Lo S_\Lambda)$) and $\delta_j\le\tfrac\eps8\le\tfrac{g\Rb}{4}$ (as $g\ge\tfrac{\eps}{2\Rb}$ by the band anchoring of (ii)), so the increase is at most $\tfrac{g\Rb}2$ (using $\Lo\Rb\ge\tfrac14$ from \eqref{eq:wlog}). The certified gap at the first level of the excursion is $f(c)-f^\star\le G_c\norm{c-x^\star}\le\Gest\Rb\le2g\Rb$. Summing \eqref{eq:productive} over the $n$ productive levels against this budget,
$n\cdot\tfrac{g}{29\Lo}\ \le\ 2g\Rb+(n+h)\tfrac{g}{60\Lo}+S_\Lambda\tfrac{g\Rb}{2}$,
so $n(\tfrac1{29}-\tfrac1{60})\le\Lo\Rb(2+\tfrac{S_\Lambda}2)+\tfrac{h}{60}$ and, since $\tfrac1{29}-\tfrac1{60}=\tfrac{31}{1740}\ge\tfrac1{57}$, $n\le57\Lo\Rb(2+\tfrac{S_\Lambda}2)+h$.

\emph{(iii)} Let $\Phi_j:=\log_2\big(\Gest_j\cdot\tfrac{2\Rb}\eps\big)\in(0,\,\log_2\tfrac{10\Lz\Rb}{\Lo\eps}]\subseteq(0,S_G]$ while the run continues (positivity because termination (a) has not fired, the upper bound by Lemma~\ref{lem:drift}). A halving level decreases $\Phi$ by at least $\log_2\tfrac43\ge0.41$; a non-halving gradient-driven or finisher level increases it by at most $\log_2(1+\tfrac{\iota}{\Gest_j})\le\log_2\tfrac{33}{32}\le0.0444\le\tfrac19\cdot0.41$; a floor level increases it by at most $\log_2(1+\tfrac{\iota_{\mathrm{flr}}}{\eps/(2\Rb)})\le\log_2\big(1+\tfrac{2^{S_G}}{160S_\Lambda}\big)\le S_G+1$. Telescoping $\Phi$ over $J$ levels, using $\Phi\ge0$ throughout and $\Phi_1\le S_G$:
$0.41\,H\ \le\ S_G+J\cdot0.0444+S_\Lambda(S_G+1)$, i.e. $H\le3S_G+\tfrac J9+3S_\Lambda(S_G+1)$.

For the final claim: with at most $V$ excursions per band and $S_G$ bands, the total is $J\le VS_G\big(114\Lo\Rb+29\Lo\Rb S_\Lambda\big)+2H+S_\Lambda$, and substituting (iii) and solving for $J$ gives $J\le\tfrac97\big(VS_G\kbar(114+29S_\Lambda)+6S_G+6S_\Lambda(S_G+1)+S_\Lambda\big)\le60\lceil\kbar\rceil S_G(S_\Lambda+4)V+16$, which equals $J^{\mathrm{sch}}$ for $V=1$; the unconditional statement is the hard cap plus Lemma~\ref{lem:cert}.

\end{proof}

\subsubsection{Correctness of the terminations}
\begin{lemma}[Exits (a) and (b)]
\label{lem:exit}
On $\mathcal E$: if termination (a) fires, $f(c_j)-f^\star\le\Gest_j\Rb\le\tfrac\eps2$. If termination (b) fires at a floor level, then
$f(c_{j+1})-f^\star\le\tfrac{\Lambda_j}2\Rb^2+\tfrac{\Gest_j^2}{2\Lambda_j}+\delta_j\le\tfrac\eps4+\tfrac\eps4+\tfrac\eps8\le\tfrac58\eps$.
\end{lemma}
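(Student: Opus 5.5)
For exit (a) I would use convexity together with the invariants of Lemma~\ref{lem:inv}. At any point $c_j$ we have $f(c_j)-f^\star\le\inner{\grad f(c_j)}{c_j-x^\star}\le G_{c_j}\norm{c_j-x^\star}$. By (I1), $G_{c_j}\le\Gest_j$, and by (I2), $\norm{c_j-x^\star}\le\Rb$. Hence $f(c_j)-f^\star\le\Gest_j\Rb$, and the firing condition $\Gest_j\le\eps/(2\Rb)$ gives $f(c_j)-f^\star\le\eps/2$.

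For exit (b) the level $j$ is a floor level, so $\lambda_j=\Lambda_j$ and $\Lambda_j\le\eps/(2\Rb^2)$. I would write $f(c_{j+1})\le F_j(c_{j+1})\le F_j(\xhat_j)+\delta_j$, using (I4) for the \RSTM{} accuracy. By containment (I4), $\xhat_j$ is the unconstrained minimizer of $F_j$, so $F_j(\xhat_j)\le F_j(x^\star)=f^\star+\tfrac{\Lambda_j}{2}\norm{c_j-x^\star}^2\le f^\star+\tfrac{\Lambda_j}{2}\Rb^2$. This already gives $f(c_{j+1})-f^\star\le\tfrac{\Lambda_j}{2}\Rb^2+\delta_j$. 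The stated bound contains an extra, harmless term $\tfrac{\Gest_j^2}{2\Lambda_j}$, so I would prove the stronger inequality and add that term. If one prefers a route that avoids the unconstrained-minimizer identification, the term arises from bounding $F_j$ at the ball-constrained minimizer versus at $x^\star$, using the initial-gap certificate $H_{0,j}=\Gest_j^2/\lambda_j$ of Remark~\ref{rem:proj}.

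The numerical step is then routine. First, $\tfrac{\Lambda_j}{2}\Rb^2\le\eps/4$ follows from the firing condition. Second, $\delta_j\le\eps/8$ holds by the schedule \eqref{eq:schedule-level}. Third, to get $\tfrac{\Gest_j^2}{2\Lambda_j}\le\eps/4$, I would use the floor condition $4\Lo\Gest_j\le\Lambda_j$, which gives $\Gest_j\le\Lambda_j/(4\Lo)$ and hence $\tfrac{\Gest_j^2}{2\Lambda_j}\le\tfrac{\Lambda_j}{32\Lo^2}$. Then $\Lo\Rb\ge\tfrac14$ from \eqref{eq:wlog} gives $\tfrac{1}{\Lo^2}\le16\Rb^2$, so the term is at most $\tfrac{\Lambda_j\Rb^2}{2}\le\eps/4$. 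Summing the three pieces gives $\tfrac58\eps$.

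The only delicate point is that exit (b) requires the unconstrained comparison with $x^\star$. I expect this to be settled exactly by (I4), which holds on $\mathcal E$, so no real obstacle remains.
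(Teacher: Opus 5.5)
Your argument is correct and uses the same ingredients as the paper. For exit (a) it uses convexity together with (I1)--(I2). For exit (b) it uses the \RSTM{} accuracy from (I4), containment (so $\xhat_j$ is the unconstrained minimizer of $F_j$), and the comparison $F_j(\xhat_j)\le F_j(x^\star)$.

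The one difference is how that comparison is used. The paper routes it through $f(\xhat_j)$:
\begin{itemize}
\item it first extracts $f(\xhat_j)-f^\star\le\tfrac{\Lambda_j}{2}\Rb^2$, which discards the nonnegative term $\tfrac{\Lambda_j}{2}\norm{\xhat_j-c_j}^2$;
\item it then re-adds that term through $F_j(\xhat_j)=f(\xhat_j)+\tfrac{\Lambda_j}{2}\norm{\xhat_j-c_j}^2$;
\item finally it bounds the term by $\tfrac{\Gest_j^2}{2\Lambda_j}$, using the displacement bound $\norm{\xhat_j-c_j}\le\Gest_j/\Lambda_j$ of Lemma~\ref{app:lem:prox-grad}.
\end{itemize}
Your direct chain $F_j(c_{j+1})\le F_j(\xhat_j)+\delta_j\le F_j(x^\star)+\delta_j$ avoids this detour. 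It shows the middle term is an artifact of that detour and gives the sharper bound $f(c_{j+1})-f^\star\le\tfrac{\Lambda_j}{2}\Rb^2+\delta_j\le\tfrac38\eps$. This is the form of the argument sketched in the main text's description of this termination. The floor condition $\Gest_j\le\Lambda_j/(4\Lo)$ and \eqref{eq:wlog} are then needed only to reproduce the printed chain, and your verification of that step is correct.

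You should drop the parenthetical alternative route. The certificate $H_{0,j}=\Gest_j^2/\lambda_j$ bounds $F_j(c_j)-F_j(\xhat_j)$ from above, so it gives a \emph{lower} bound on $F_j(\xhat_j)$. It therefore cannot supply the upper bound against $F_j(x^\star)$. The minimality inequality for a ball-constrained minimizer holds only against feasible points. Comparing with $x^\star$ therefore needs either $x^\star\in\ball{c_j}{r_j}$ or the inactivity of the constraint, and containment is what provides the latter. In the paper, the $\tfrac{\Gest_j^2}{2\Lambda_j}$ term comes from the displacement bound, not from $H_{0,j}$.
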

\begin{proof}
(a) is Lemma~\ref{lem:inv}(I1)--(I2) and convexity: $f(c_j)-f^\star\le\inner{\grad f(c_j)}{c_j-x^\star}\le\Gest_j\Rb$. For (b): by containment $\xhat_j$ is the unconstrained minimizer of $F_j$, so $F_j(\xhat_j)\le F_j(x^\star)$ gives $f(\xhat_j)-f^\star\le\tfrac{\Lambda_j}{2}\norm{c_j-x^\star}^2\le\tfrac{\Lambda_j}2\Rb^2\le\tfrac\eps4$; and

\begin{equation}
\label{eq:floor-transfer}
\begin{split}
f(c_{j+1})&\le F_j(c_{j+1})\le F_j(\xhat_j)+\delta_j=f(\xhat_j)+\tfrac{\Lambda_j}2\norm{\xhat_j-c_j}^2+\delta_j
\le f(\xhat_j)+\tfrac{\Gest_j^2}{2\Lambda_j}+\delta_j,
\end{split}
\end{equation}
where $\tfrac{\Gest_j^2}{2\Lambda_j}\le\tfrac{\Lambda_j}{32\Lo^2}\le\tfrac{\eps}{64\Lo^2\Rb^2}\le\tfrac\eps4$ using $\Gest_j\le\Lambda_j/(4\Lo)$, $\Lambda_j\le\tfrac{\eps}{2\Rb^2}$ and \eqref{eq:wlog}.

\end{proof}

\subsubsection{The finisher}
\begin{algorithm}[t]
\caption{\textsf{Finisher}$(c,\Gest)$: ball-constrained certification levels}
\label{alg:finisher}
\begin{algorithmic}[1]
\REQUIRE center $c$, certified cap $\Gest$
\REQUIRE parameters \eqref{eq:cert-params}, \eqref{eq:schedule-level} (Appendix~\ref{app:params})
\FOR{$k=1,\dots,K_{\mathrm{cert}}$}
    \IF{$\Gest\le\tfrac{\eps}{2\Rb}$} \RETURN $c$ \ENDIF
    \STATE $c^+\leftarrow\RSTM\big(f+\tfrac{\lambda_{\mathrm{cert}}}2\norm{\cdot-c}^2$, $\ball{c}{\rsafe}$, $\text{target }\delta^{\mathrm{cert}}$, $\text{confidence }\tfrac{\alpha}{4J_{\mathrm{tot}}}\big)$
    \STATE $\gest\leftarrow\GradEst(c^+,B^{\mathrm{est}})$;\STATE $\Gest\leftarrow\min\{\Gest+L(\Gest)s,\ \norm{\gest}+\tau\}$;\STATE $c\leftarrow c^+$
\ENDFOR
\RETURN $c$
\end{algorithmic}
\end{algorithm}

\begin{lemma}[Finisher]
\label{lem:cert}
On $\mathcal E$, from any entry point with $\norm{\grad f(c)}\le\Gest\le5\Gbar$ and $\norm{c-x^\star}\le\Rb$, Algorithm~\ref{alg:finisher} returns $\hat x$ with $f(\hat x)-f^\star\le\tfrac{7}{16}\eps\le\tfrac\eps2$ (or exits earlier via (a), with gap $\le\tfrac\eps2$).
\end{lemma}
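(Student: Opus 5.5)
The proof tracks the gap $\Delta_k:=f(c_k)-f^\star$ along the finisher levels and establishes a one-step contraction with a small additive error. The recursion then gives the bound after $K_{\mathrm{cert}}$ levels. Before that, the certified invariants must stay valid inside the finisher, so that \RSTM{} can be invoked at every level. On $\mathcal E$, Lemma~\ref{lem:inv} already covers finisher levels: (I1) and (I5) keep $\norm{\grad f(c_k)}\le\Gest_k$. The bound $\Gest_k\le5\Gbar$ persists by Lemma~\ref{lem:drift}. The distance bound $\norm{c_k-x^\star}\le\Rb$ persists by the constrained Fej\'er inequality (Corollary~\ref{lem:fejer}) together with the $s$-budget. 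Lemma~\ref{app:lem:ball} with $\rsafe=1/(2\Lo)$ certifies $L_k$-smoothness on $\ball{c_k}{2\rsafe}$. Finally, $H_{0,\mathrm{cert}}=\Gest_k/(2\Lo)$ certifies the initial subproblem gap, as in Remark~\ref{rem:proj}. Hence Proposition~\ref{app:prop:rstm} applies with the ball-constrained minimizer $\tilde x_k$, giving $F_k(c_{k+1})\le F_k(\tilde x_k)+\delta^{\mathrm{cert}}_k$ and $\norm{c_{k+1}-\tilde x_k}\le s_k$. If the test (a) fires at some level, Lemma~\ref{lem:exit}(a) already gives gap $\le\eps/2$, so we assume it does not.

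\emph{One-step contraction.} I compare $\tilde x_k$ with the feasible point $u:=c_k+t(x^\star-c_k)$, where $t:=\min\{1,\rsafe/\norm{c_k-x^\star}\}\ge\rsafe/\Rb=\theta$; this point lies in $\ball{c_k}{\rsafe}$. Convexity gives $f(u)\le(1-t)f(c_k)+tf^\star$, and $\tfrac{\lambda_{\mathrm{cert}}}2\norm{u-c_k}^2\le\tfrac{\lambda_{\mathrm{cert}}}2\rsafe^2=\tfrac{\eps}{8\Lo\Rb}$. Hence
\[
f(\tilde x_k)-f^\star\le(1-\theta)\Delta_k+\tfrac{\eps}{8\Lo\Rb}.
\]
To pass from $\tilde x_k$ to $c_{k+1}$ I use the first half of the transfer argument \eqref{eq:transfer}: $f(c_{k+1})\le f(\tilde x_k)+\delta_k^{\mathrm{cert}}+\lambda_{\mathrm{cert}}\,a\,d$, where $a=\norm{\tilde x_k-c_k}\le\rsafe$ and $d\le s_k$. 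I bound $\lambda_{\mathrm{cert}}ad$ directly by $\lambda_{\mathrm{cert}}\rsafe s_k\le\tfrac{\eps}{2\Rb}\cdot\tfrac{R_0}{4J_{\mathrm{tot}}}=\tfrac{\eps}{24J_{\mathrm{tot}}}$, rather than through the gradient norm. The resulting recursion is
\[
\Delta_{k+1}\le(1-\theta)\Delta_k+\tfrac{\eps}{8\Lo\Rb}+\tfrac{\eps}{64\Lo\Rb}+\tfrac{\eps}{24J_{\mathrm{tot}}}.
\]

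\emph{Unrolling.} Dividing the additive error by $\theta=1/(2\Lo\Rb)$ and using $J_{\mathrm{tot}}\ge K_{\mathrm{cert}}\ge2\Lo\Rb$, the stationary part is at most
\[
\tfrac{\eps}{4}+\tfrac{\eps}{32}+\tfrac{\eps\,\Lo\Rb}{12J_{\mathrm{tot}}}\le\tfrac{\eps}{4}+\tfrac{\eps}{32}+\tfrac{\eps}{24}.
\]
For the transient, the entry gap satisfies $\Delta_0\le\Gest\Rb\le5\Gbar\Rb=5\Lz\Rb/\Lo$ by convexity. The contraction factor over $K:=K_{\mathrm{cert}}$ levels is $(1-\theta)^{K}\le e^{-K\theta}$, and $K\theta\ge(1+\Lo\Rb)(S_G+4)/(\Lo\Rb)\ge S_G+4$. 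Since $2^{S_G}\ge10\Lz\Rb/(\Lo\eps)$, we get $e^{-K\theta}\Delta_0\le e^{-4}2^{-S_G}\cdot5\Lz\Rb/\Lo\le e^{-4}\eps/2$, which is below $\eps/100$. Summing the two parts gives a bound below $\tfrac{7}{16}\eps$. The constants must be rechecked against the exact $\delta^{\mathrm{cert}}$ and the $s$-formula.

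The main obstacle is the transfer error. The naive bound $s_k\norm{\grad f(\tilde x_k)}\le\iota\Gest/L_k$ does not scale with $\eps$, and after division by $\theta$ it could swamp the target. The fix is to use the constrained geometry: $a\le\rsafe$ and the $R_0/(4J_{\mathrm{tot}})$ branch of $s_k$ together make the error $O(\eps/J_{\mathrm{tot}})$. A secondary point is that $\norm{c_k-x^\star}\le\Rb$ must hold at every finisher level, because $t\ge\theta$ relies on it. This is exactly (I2), which carries over because the $s$-budget counts finisher levels inside $J_{\mathrm{tot}}$.
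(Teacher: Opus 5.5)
\paragraph{Review.} For $\theta\le1$ your argument is essentially the paper's. You compare against a point on the segment from $c_k$ to $x^\star$; your choice $t=\min\{1,\rsafe/\norm{c_k-x^\star}\}$ simply merges the paper's two subcases. You use the same transfer bound $\lambda_{\mathrm{cert}}\rsafe s_k\le\eps/(24J_{\mathrm{tot}})$, and you unroll against the same entry gap $5\Gbar\Rb$. Your constants check out: $\tfrac{31}{96}\eps+\tfrac{e^{-4}}{2}\eps<\tfrac{7}{16}\eps$.

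\paragraph{The gap: the regime $\theta>1$.} The missing piece is the case $\theta>1$, i.e.\ $2\Lo\Rb<1$. The standing convention \eqref{eq:wlog} only gives $\Lo\Rb\ge\tfrac14$, so $\theta\in(1,2]$ is admissible. It is not a vacuous corner: the smooth case run with $L_{1,\mathrm{eff}}=1/(4\Rb)$ has $\theta=2$ exactly. In this regime several of your steps fail.
\begin{itemize}
\item Your claim $t\ge\rsafe/\Rb=\theta$ is false, because $t\le1<\theta$.
\item The recursion $\Delta_{k+1}\le(1-\theta)\Delta_k+\dots$ would have a negative coefficient and does not follow from $f(u)\le(1-t)f(c_k)+tf^\star$.
\item The unrolling steps $(1-\theta)^K\le e^{-K\theta}$ and ``stationary part $=$ error$/\theta$'' both presuppose $0\le1-\theta\le1$.
\end{itemize}
Setting $t=1$ and keeping your estimate does not rescue this either. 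The bound $\tfrac{\lambda_{\mathrm{cert}}}{2}\norm{u-c_k}^2\le\tfrac{\lambda_{\mathrm{cert}}}{2}\rsafe^2=\tfrac{\theta\eps}{4}$ equals $\eps/2$ at $\theta=2$. That already exceeds $\tfrac{7}{16}\eps$, before $\delta^{\mathrm{cert}}$ and the transfer term are added.

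\paragraph{The fix.} Handle $\theta\ge1$ separately, as the paper does. There $\rsafe\ge\Rb\ge\norm{c_k-x^\star}$, so $x^\star$ itself is feasible at every finisher level. Bound the regularization term by the distance rather than by the radius:
\[
\tfrac{\lambda_{\mathrm{cert}}}{2}\norm{c_k-x^\star}^2\le\tfrac{\lambda_{\mathrm{cert}}}{2}\Rb^2=\tfrac{\Lo\Rb\,\eps}{2}\le\tfrac{\eps}{4}.
\]
Add $\delta^{\mathrm{cert}}\le\theta\eps/32\le\eps/16$ and the transfer term $\eps/(24J_{\mathrm{tot}})\le\eps/16$. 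This gives $\Delta_{k+1}\le\tfrac38\eps$ at every level independently, with no recursion or unrolling needed. With this case added, your proof is complete.
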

\begin{proof}
Let $\mathrm{gap}_k:=f(c^{(k)})-f^\star$ along the finisher iterates and write $\lambda:=\lambda_{\mathrm{cert}}$, $\tilde x_k$ for the exact constrained minimizer of $F_k:=f+\tfrac\lambda2\norm{\cdot-c^{(k)}}^2$ over $\ball{c^{(k)}}{\rsafe}$. Note $\theta=\tfrac1{2\Lo\Rb}\le2$ by \eqref{eq:wlog}.

\emph{Per-level transfer.} By the two-case argument in the proof of \eqref{eq:transfer}, with $a:=\norm{\tilde x_k-c^{(k)}}\le\rsafe$ and $d:=\norm{c^{(k+1)}-\tilde x_k}\le s$,
\begin{equation*}
\begin{split}
f(c^{(k+1)})
&\ \le\ f(\tilde x_k)+\delta^{\mathrm{cert}}+\lambda\,a\,d \le\ f(\tilde x_k)+\delta^{\mathrm{cert}}+\lambda\,\rsafe\,s \le\ f(\tilde x_k)+\tfrac{\theta\eps}{32}+\tfrac{\eps s}{2\Rb}
\ \le\ f(\tilde x_k)+\tfrac{\theta\eps}{16},
\end{split}
\end{equation*}
using $\delta^{\mathrm{cert}}\le\tfrac{\eps}{64\Lo\Rb}=\tfrac{\theta\eps}{32}$, $\lambda\rsafe=\tfrac{\eps}{2\Rb}$, and $\tfrac{\eps s}{2\Rb}\le\tfrac{\eps R_0}{8J_{\mathrm{tot}}\Rb}\le\tfrac{\theta\eps}{32}$ since $s\le\tfrac{R_0}{4J_{\mathrm{tot}}}$ and $J_{\mathrm{tot}}\ge\tfrac{8\Lo\Rb}{3}$.

\emph{Case $\theta\ge1$ (i.e.\ $2\Lo\Rb\le1$).} Then $\rsafe=\tfrac1{2\Lo}\ge\Rb\ge\norm{c^{(k)}-x^\star}$, so $x^\star$ is feasible at every level and $F_k(\tilde x_k)\le F_k(x^\star)$ gives $\mathrm{gap}(\tilde x_k)\le\tfrac{\lambda}{2}\norm{c^{(k)}-x^\star}^2\le\tfrac{\Lo\eps\Rb}{2}\le\tfrac\eps4$; with the transfer ($\theta\le2$), every level independently returns $\mathrm{gap}_{k+1}\le\tfrac\eps4+\tfrac{2\eps}{16}=\tfrac{3\eps}{8}\le\tfrac{7\eps}{16}$.

\emph{Case $\theta<1$.} If $\norm{c^{(k)}-x^\star}\le\rsafe$ then, as above, $\mathrm{gap}(\tilde x_k)\le\tfrac\lambda2\rsafe^2=\tfrac{\eps}{8\Lo\Rb}=\tfrac{\theta\eps}4$. Otherwise put $u:=c^{(k)}+\theta'(x^\star-c^{(k)})$ with $\theta':=\rsafe/\norm{c^{(k)}-x^\star}\in[\theta,1)$ (Lemma~\ref{lem:inv}(I2) gives $\norm{c^{(k)}-x^\star}\le\Rb$, maintained through finisher levels by the constrained Fej\'er property, Lemma~\ref{lem:fejer}, and the $s$-budget); then $u\in\ball{c^{(k)}}{\rsafe}$ and by convexity
$\mathrm{gap}(\tilde x_k)\le F_k(u)-f^\star\le(1-\theta')\,\mathrm{gap}_k+\tfrac\lambda2\rsafe^2\le(1-\theta)\,\mathrm{gap}_k+\tfrac{\theta\eps}4 .$
Combining with the transfer, in both branches
$\mathrm{gap}_{k+1}\le(1-\theta)\,\mathrm{gap}_k+\tfrac{\theta\eps}4+\tfrac{\theta\eps}{16}=(1-\theta)\,\mathrm{gap}_k+\tfrac{5\theta\eps}{16}$, and unrolling,
$\mathrm{gap}_k\le(1-\theta)^k\,\mathrm{gap}_0+\tfrac{5\eps}{16}$. The entry gap obeys $\mathrm{gap}_0\le\Gest\Rb\le5\Gbar\Rb$ (Lemmas~\ref{lem:inv}, \ref{lem:drift}), so after $k^*:=\lceil\tfrac1\theta\ln\tfrac{40\Lz\Rb}{\Lo\eps}\rceil\le2\Lo\Rb\cdot0.7(S_G+2)+1\le K_{\mathrm{cert}}$ levels, $(1-\theta)^{k^*}\mathrm{gap}_0\le\tfrac\eps{8}$ and $\mathrm{gap}_{K_{\mathrm{cert}}}\le\tfrac\eps8+\tfrac{5}{16}\eps=\tfrac{7}{16}\eps$. Cap bookkeeping remains valid at finisher levels by Lemma~\ref{lem:inv} (whose proof covers them), so an early exit via (a) is also correct.
\end{proof}

\subsubsection{Cost accounting and the full theorem}
\label{app:cost}

\paragraph{Umbrella logarithms.} At level $j$, \RSTM{} runs $K_j=\lceil\log_2(H_{0,j}/\delta_j)\rceil_+$ epochs of $N_{\mathrm{ep},j}=\lceil24\sqrt{L_{F,j}/\lambda_j}\rceil$ iterations. Using the lower bounds on $\lambda_js_j$ established below and $H_{0,j}\le\Gest_j^2/\lambda_j$ (resp.\ $\Gest_j/(2\Lo)$ at finisher levels), a routine check gives
$K_j\le\bar K:=20+2\lceil\log_2J_{\mathrm{tot}}\rceil+\big\lceil\log_2\big(1+\tfrac{96\Lz\Rb^2}{\eps}\big)\big\rceil$
at every level, and every logarithm appearing in Proposition~\ref{app:prop:rstm} at any level is bounded by
$\ell:=\log\big(2^{20}J_{\mathrm{tot}}^3\big(1+\tfrac{96\Lz\Rb^2}{\eps}\big)^2/\alpha\big)$.
By Proposition~\ref{app:prop:rstm} (confidence $\alpha/(4J_{\mathrm{tot}})$), level $j$ costs at most
$25\bar K\sqrt{1+L_j/\lambda_j}\;+\;96C_b\sigma^2\ell/(\lambda_j\delta_j)\;+\;B^{\mathrm{est}}_j$ oracle calls.

\paragraph{Deterministic-equivalent part.} By Lemma~\ref{lem:drift} and $\Gest_j>\tfrac{\eps}{2\Rb}$:
at gradient-driven levels $\sqrt{1+L_j/\lambda_j}=\sqrt{2+\Gbar/\Gest_j}\le2+\sqrt{2\Lz\Rb/(\Lo\eps)}$;
at floor levels $\sqrt{1+36\Lz\cdot4\Rb^2/\eps}\le1+12\sqrt{\Lz\Rb^2/\eps}$;
at finisher levels $\sqrt{1+24\Lz\Rb/(\Lo\eps)}\le1+5\sqrt{\Lz\Rb/(\Lo\eps)}$.
Multiplying by the class counts ($J^{\mathrm{sch}}$, $S_\Lambda$, $K_{\mathrm{cert}}$) and using \eqref{eq:wlog} ($\kbar\le5\Lo\Rb$, hence $\kbar/\sqrt{\Lo\Rb}\le\sqrt5\,\sqrt{\kbar}$),
\begin{equation}
\label{eq:det-total}
\begin{split}
N_{\mathrm{det}}
&\le25\bar K\Big[J^{\mathrm{sch}}\Big(2+\sqrt{\tfrac{2\Lz\Rb}{\Lo\eps}}\Big)+S_\Lambda\Big(1+12\sqrt{\tfrac{\Lz\Rb^2}{\eps}}\Big)
+K_{\mathrm{cert}}\Big(1+5\sqrt{\tfrac{\Lz\Rb}{\Lo\eps}}\Big)\Big]=\tO\Big(\Lo R_0+\kbar^{1/2}\sqrt{\tfrac{\Lz\Rb^2}{\eps}}\Big).
\end{split}
\end{equation}

\paragraph{Statistical part.} Since $\delta_j=\min\{\lambda_js_j^2/2,\eps/8\}$, we have $\tfrac{1}{\lambda_j\delta_j}\le\max\{\tfrac{2}{(\lambda_js_j)^2},\tfrac{8}{\lambda_j\eps}\}$, and we bound $\lambda_js_j$ per class:
\emph{Gradient-driven levels:} the two branches of $s_j$ give $\lambda_js_j\ge4\Lo\Gest_j\min\{\tfrac{R_0}{4J_{\mathrm{tot}}},\tfrac{\iota}{24\Lz}\}\ge\tfrac{\Gest_j}{48J_{\mathrm{tot}}}$ (using $\Lo R_0\ge\tfrac1{12}$ and $\iota/(24\Lz)=\tfrac{1}{192\Lo J_{\mathrm{tot}}}$). Hence for \emph{cold} gradient-driven levels ($\Gest_j\le\Gbar$): $\tfrac{2}{(\lambda_js_j)^2}\le\tfrac{2\cdot96^2J_{\mathrm{tot}}^2\Rb^2}{\eps^2}$ and $\tfrac{8}{\lambda_j\eps}\le\tfrac{4\Rb}{\Lo\eps^2}\le\tfrac{16\Rb^2}{\eps^2}$; for \emph{hot} gradient-driven levels ($\Gest_j\ge\Gbar$): $\tfrac{2}{(\lambda_js_j)^2}\le\tfrac{4608\Lo^2J_{\mathrm{tot}}^2}{\Lz^2}$ and $\tfrac{8}{\lambda_j\eps}\le\tfrac{2}{\Lz\eps}\le\tfrac{4608\Lo^2J_{\mathrm{tot}}^2}{\Lz^2}$ (the last step uses $2304(\Lo\Rb)^2J_{\mathrm{tot}}^2\ge1$ when $\eps\ge\Lz\Rb^2$, and $\tfrac2{\Lz\eps}\le\tfrac{2\Rb^2}{\eps^2}$ otherwise).
\emph{Floor levels:} $\lambda_js_j\ge\tfrac{\eps}{4\Rb^2}\min\{\tfrac{R_0}{8S_\Lambda},\tfrac{1}{1152\Lo S_\Lambda}\}\ge\tfrac{\eps}{4608\Lo\Rb^2S_\Lambda}$, so $\tfrac{2}{(\lambda_js_j)^2}\le\tfrac{2\cdot4608^2(\Lo\Rb)^2S_\Lambda^2\Rb^2}{\eps^2}$ and $\tfrac{8}{\lambda_j\eps}\le\tfrac{32\Rb^2}{\eps^2}$.
\emph{Finisher levels:} $\lambda_{\mathrm{cert}}s_j\ge\tfrac{\Lo\eps}{\Rb}\cdot\tfrac{1}{192\Lo J_{\mathrm{tot}}}=\tfrac{\eps}{192J_{\mathrm{tot}}\Rb}$, so 
\begin{equation}
\label{eq:lam-delta}
\tfrac{1}{\lambda\delta}\le\max\{\tfrac{2\cdot192^2J_{\mathrm{tot}}^2\Rb^2}{\eps^2},\tfrac{64\Lo\Rb}{\lambda_{\mathrm{cert}}\eps}=\tfrac{64\Rb^2}{\eps^2}\}.
\end{equation}

\emph{Measurements:} $\tau_j\ge\Gest_j/16\ge\tfrac{\eps}{32\Rb}$ always, and $\tau_j\ge\Gbar/16$ at hot levels, so $B^{\mathrm{est}}_j\le1+\tfrac{1024c_g^2\sigma^2\Rb^2\ell}{\eps^2}$, improved to $1+\tfrac{256c_g^2\sigma^2\Lo^2\ell}{\Lz^2}$ at hot levels.
Summing over the classes ($\le J^{\mathrm{sch}}$ cold and hot gradient-driven levels, $\le S_\Lambda$ floor levels, $\le K_{\mathrm{cert}}$ finisher levels),
\begin{equation}
\label{eq:stat-total}
\begin{split}
N_{\mathrm{stat}}
&\le96C_b\sigma^2\ell\,\frac{\Rb^2}{\eps^2}\Big[2{\cdot}10^4J_{\mathrm{tot}}^3+5{\cdot}10^7\kbar^2S_\Lambda^3
+8{\cdot}10^4J_{\mathrm{tot}}^2K_{\mathrm{cert}}\Big]+5{\cdot}10^5C_b\sigma^2\ell\,\frac{\Lo^2J_{\mathrm{tot}}^3}{\Lz^2}+J_{\mathrm{tot}}\Big(1+\frac{1024c_g^2\sigma^2\Rb^2\ell}{\eps^2}\Big).
\end{split}
\end{equation}

\begin{theorem}[Theorem~\ref{app:thm:convex}, full version]
\label{thm:convex-full}
Let Assumptions~\ref{app:ass:convex}--\ref{app:ass:noise} hold. Fix
$x^\star\in X^\star$, $R_0\ge\norm{x^0-x^\star}$, and
$\Dcert\ge f(x^0)-f^\star$. Let $\eps>0$, $\alpha\in(0,1)$,
and adopt \eqref{eq:wlog}. With probability at least $1-\alpha$,
\ARCSG{} with the schedule of Appendix~\ref{app:params} returns
$\hat x$ with $f(\hat x)-f^\star\le\eps$ using at most
\begin{equation*}
N\ \le\ \underbrace{T_1B_1}_{\text{\PhaseI, Prop.~\ref{app:prop:phase1}}}\ +\ N_{\mathrm{det}}\ +\ N_{\mathrm{stat}}
\end{equation*}
oracle calls, with $N_{\mathrm{det}}$, $N_{\mathrm{stat}}$ given by \eqref{eq:det-total}--\eqref{eq:stat-total} and $T_1,B_1$ by \eqref{eq:phase1-params}. Substituting $J_{\mathrm{tot}}\le114\kbar S_G(S_\Lambda+4)$, $K_{\mathrm{cert}}\le3\kbar(S_G+4)$ and the definitions of $S_G,S_\Lambda,\bar K,\ell$ yields Theorem~\ref{app:thm:convex}.
\end{theorem}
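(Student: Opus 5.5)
The plan is to assemble Theorem~\ref{thm:convex-full} from the pieces already proved, working throughout on the good event $\mathcal E$. This event is the intersection of three families: the \PhaseI{} estimate event $\mathcal E_1$, all \RSTM{} successes, and all cap measurements. By the union bound and the budget in Table~\ref{tab:events}, its failure probability is at most $3\alpha/4<\alpha$. If $\Dcert\le\eps$ the output $x^0$ is trivially correct and costs nothing, so I assume $\eps<\Dcert$ and adopt \eqref{eq:wlog}.

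\emph{Correctness.} On $\mathcal E_1$, Proposition~\ref{app:prop:phase1} delivers $x^{\mathrm I}$ with $\norm{\grad f(x^{\mathrm I})}\le\tfrac{25}{8}\Gbar\le4\Gbar$ and $\norm{x^{\mathrm I}-x^\star}\le\tfrac32R_0$. These are exactly the base case of Lemma~\ref{lem:inv}. Hence (I1)--(I5) hold at every executed level, and Lemma~\ref{lem:drift} gives the cap ceiling $\Gest_j\le5\Gbar$, which is the entry hypothesis of Lemma~\ref{lem:cert}. Proposition~\ref{prop:phase2-terminates} then gives termination within $J_{\mathrm{tot}}$ levels. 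Each of the three exits is covered: Lemma~\ref{lem:exit} for exits (a) and (b), and Lemma~\ref{lem:cert} for the finisher. So the output satisfies $f(\hat x)-f^\star\le\eps$.

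\emph{Cost.} The total is the \PhaseI{} cost $T_1B_1$ from \eqref{eq:phase1-cost} plus, summed over levels, the per-level \RSTM{} bound of Proposition~\ref{app:prop:rstm} and the measurement batches $B^{\mathrm{est}}_j$.

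The deterministic part needs two ingredients. First, a uniform epoch bound $K_j\le\bar K$, which follows from $H_{0,j}\le\Gest_j^2/\lambda_j$ (or $\Gest_j/(2\Lo)$ at the finisher) together with the lower bounds on $\lambda_js_j$. Second, the condition-number estimates for each level class: gradient-driven, floor, and finisher. Multiplying these by the class counts $J^{\mathrm{sch}}$, $S_\Lambda$, and $K_{\mathrm{cert}}$ yields \eqref{eq:det-total}. Here $J^{\mathrm{sch}},K_{\mathrm{cert}}=\tO(\kbar)$ and $\kbar\sqrt{\Lz\Rb/(\Lo\eps)}\lesssim\kbar^{1/2}\sqrt{\Lz\Rb^2/\eps}$, using $\kbar\le5\Lo\Rb$.

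The statistical part uses $1/(\lambda_j\delta_j)\le\max\{2/(\lambda_js_j)^2,\,8/(\lambda_j\eps)\}$, with case-wise lower bounds on $\lambda_js_j$ for cold, hot, floor, and finisher levels. Summing gives \eqref{eq:stat-total}. Each cold or finisher level contributes $\tO(J_{\mathrm{tot}}^2\sigma^2\Rb^2/\eps^2)$, and there are $\tO(\kbar)$ such levels. Hot levels contribute $\tO(J_{\mathrm{tot}}^2\sigma^2\Lo^2/\Lz^2)$, which gives the $\kbar^3$ burn-in term.

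\emph{Final step.} I substitute $J_{\mathrm{tot}}\le114\kbar S_G(S_\Lambda+4)$ and $K_{\mathrm{cert}}\le3\kbar(S_G+4)$, and absorb $S_G$, $S_\Lambda$, $\bar K$, and $\ell$ into the polylogarithmic factors in $(\Lo R_0,\Lz\Rb^2/\eps,1/\alpha)$. With $\Lo R_0$ and $\kbar$ comparable up to constants, the bound collapses to the displayed form of Theorem~\ref{app:thm:convex}.

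The main obstacle I expect is the bookkeeping in the statistical sum. The $\kbar^3$ factor must come out exactly: $J_{\mathrm{tot}}^2$ from $s_j\propto1/J_{\mathrm{tot}}$, times $\tO(\kbar)$ levels. A second delicate point is that the hot-level contributions must be charged to the $\eps$-independent burn-in rather than to $\sigma^2\Rb^2/\eps^2$. This requires checking the case split $\eps\gtrless\Lz\Rb^2$ used in the hot-level bound.
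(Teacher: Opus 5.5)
Your proposal is correct and follows the paper's own proof essentially step for step. Correctness comes from the \PhaseI{} output feeding the invariants of Lemma~\ref{lem:inv}, with termination and exit correctness via Proposition~\ref{prop:phase2-terminates} and Lemmas~\ref{lem:exit} and~\ref{lem:cert}; the cost is $T_1B_1$ plus the per-level \RSTM{} and measurement bounds summed class by class into \eqref{eq:det-total}--\eqref{eq:stat-total}, including the $\eps\gtrless\Lz\Rb^2$ split at hot levels; and the probability comes from the $3\alpha/4$ confidence budget. One small precision point: in the final substitution, the \PhaseI{} factors $\logp(2\Lo^2\Dcert/\Lz)$ must be hidden as polylogarithms in $\Lo^2\Dcert/\Lz$, listed alongside $(\Lo R_0,\Lz\Rb^2/\eps,1/\alpha)$, since under the default certificate this logarithm is of order $1+\Lo R_0$ rather than polylogarithmic in it.
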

\begin{proof}
\emph{Correctness and termination.} These are exactly Proposition~\ref{prop:phase2-terminates}: on $\mathcal E$ the run halts within $J_{\mathrm{tot}}$ levels and its output satisfies $f(\hat x)-f^\star\le\eps$ (the three exits give gaps $\tfrac\eps2$, $\tfrac58\eps$ and $\tfrac{7}{16}\eps$ respectively, by Lemmas~\ref{lem:exit} and \ref{lem:cert}). \emph{Cost.} \PhaseI{} costs $T_1B_1$ (Proposition~\ref{app:prop:phase1}); each of the at most $J_{\mathrm{tot}}$ levels costs at most $25\bar K\sqrt{1+L_j/\lambda_j}+96C_b\sigma^2\ell/(\lambda_j\delta_j)+B_j^{\mathrm{est}}$ calls, and the class-by-class bounds above give \eqref{eq:det-total}--\eqref{eq:stat-total}. \emph{Probability.} $\Prob[\mathcal E]\ge1-\alpha$ by the confidence budget of Appendix~\ref{app:params}.

\end{proof}

\begin{proof}[Proof of Theorem~\ref{app:thm:convex}]
Substitute the schedule constants: $J_{\mathrm{tot}}=\tO(\kbar)$, $K_{\mathrm{cert}}=\tO(\kbar)$, $S_G,S_\Lambda,\bar K,\ell=\tO(1)$ (polylogarithmic). Then $N_{\mathrm{det}}=\tO(\Lo R_0+\kbar^{1/2}\sqrt{\Lz\Rb^2/\eps})$ by \eqref{eq:det-total}; the first bracket of \eqref{eq:stat-total} is $\tO(\kbar^3\sigma^2\Rb^2/\eps^2)$, the second term is $\tO(\kbar^3\sigma^2\Lo^2/\Lz^2)$, the third is $\tO(\kbar\sigma^2\Rb^2/\eps^2)$; and Proposition~\ref{app:prop:phase1} gives $T_1B_1=\tO\big(1+\Lo R_0+(1+\Lo R_0)\sigma^2\Lo^2/\Lz^2\big)$. Since $1+\Lo R_0\le\kbar$ and $\kbar^3\sigma^2\Lo^2/\Lz^2$ absorbs the last Phase-I term, collecting the groups gives exactly the display of Theorem~\ref{app:thm:convex}.

\end{proof}

\section{Strongly convex restarts: proof of Theorem~\ref{app:thm:strongly}}
\label{app:strongly}

We restate the strongly convex main theorem of the main paper and prove it in full.

\begin{theorem}[Strongly convex case]
\label{app:thm:strongly}
Under the assumptions and initialization of
Theorem~\ref{app:thm:convex}, suppose additionally that $f$ is
$\mu$-strongly convex. Then \ARCSG{} with distance-halving outer
restarts (Algorithm~\ref{alg:sc}) returns $\xhat$ such that
$\Prob[f(\xhat)-f^\star\le\eps]\ge1-\alpha$ using
\begin{align*}
N=\tO\Big(&(1+\Lo R_0)\Big(1+\sqrt{\tfrac{\Lz}{\mu}}\Big)
+\sqrt{\tfrac{\Lz}{\mu}}\,\logp\!\left(\tfrac{\mu R_0^2}{\eps}\right)+\Big(1+\Lo\sqrt{\tfrac{\eps}{\mu}}\Big)^{3}
\tfrac{\sigma^2}{\mu\eps}
+\tfrac{\sigma^2\Lo^3R_0}{\mu^2}
+\kbar^{3}\tfrac{\sigma^2\Lo^2}{\Lz^2}\Big)
\end{align*}
oracle calls. In particular, for $\eps\le\mu/(36\Lo^2)$ the
statistical term is $\tO(\sigma^2/(\mu\eps))$, whose
$\eps$-exponent is optimal on the smooth subclass.
\end{theorem}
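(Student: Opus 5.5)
The plan is to run the convex routine of Theorem~\ref{app:thm:convex} (in its full form, Theorem~\ref{thm:convex-full}) inside distance-halving restarts and to sum the per-round costs. Round $k=0,1,\dots$ starts at $x_k$ with distance certificate $R_k:=2^{-k}R_0$, so $\norm{x_k-x^\star}\le R_k$, where $x^\star$ is unique by strong convexity. It calls \ARCSG{} with target $\eps_k:=\mu R_k^2/8$ and confidence $\alpha_k:=\alpha/(2(k+1)^2)$. On success, $f(x_{k+1})-f^\star\le\eps_k$, and $\mu$-strong convexity gives $\norm{x_{k+1}-x^\star}\le\sqrt{2\eps_k/\mu}=R_k/2=R_{k+1}$, so the invariant propagates. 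A union bound over rounds spends at most $\alpha\sum_k 1/(2(k+1)^2)<\alpha$. We stop at the first round $K$ with $\eps_K\le\eps$, returning the output of that round, or run a final round with target exactly $\eps$. Hence $K=\lceil\tfrac12\log_2(\mu R_0^2/(8\eps))\rceil_+ +1=O(\logp(\mu R_0^2/\eps))$. For the gap certificate of round $k\ge1$ I would use $\Dcert^{(k)}:=\eps_{k-1}=\mu R_{k-1}^2/8$, which is valid on the good event, so that the Phase~I logarithm $\logp(2\Lo^2\Dcert^{(k)}/\Lz)$ stays polylogarithmic. I would also check that $\eps_{k-1}\ge\eps_k$ triggers the trivial-regime test only harmlessly.

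Next I would plug the per-round parameters $R_k,\eps_k$ into Theorem~\ref{thm:convex-full}. Write $\Rb_k=3R_k$ and $\kbar_k=1+\Lo\Rb_k\le\kbar$. The optimization term becomes $\kbar_k^{1/2}\sqrt{\Lz\Rb_k^2/\eps_k}=\kbar_k^{1/2}\cdot O(\sqrt{\Lz/\mu})$. Summing $\kbar_k^{1/2}\le 1+\sqrt{3\Lo R_0}\,2^{-k/2}$ over $k$ gives a geometric series plus $K$. This yields $(1+\sqrt{\Lo R_0})\sqrt{\Lz/\mu}+\sqrt{\Lz/\mu}\,\logp(\mu R_0^2/\eps)$, which lies within the stated $(1+\Lo R_0)(1+\sqrt{\Lz/\mu})+\sqrt{\Lz/\mu}\logp(\cdot)$. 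Note that $\mu\le\Lz$ by Remark~\ref{rem:mu-le-lz}. The Phase~I and level terms $1+\Lo R_k$ sum to $O(K+\Lo R_0)$. The statistical term is $\kbar_k^3\sigma^2\Rb_k^2/\eps_k^2=O(\kbar_k^3\sigma^2/(\mu\eps_k))$, which grows geometrically in $k$ and is dominated by the last round. There $\eps_K\asymp\eps$ and $R_K\asymp\sqrt{\eps/\mu}$, so $\kbar_K=O(1+\Lo\sqrt{\eps/\mu})$, giving $(1+\Lo\sqrt{\eps/\mu})^3\sigma^2/(\mu\eps)$.

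The main obstacle is the remaining bookkeeping: the mixed terms in $\kbar_k^3\sigma^2/(\mu\eps_k)$ and the burn-ins. I would expand $\kbar_k^3\le 4(1+(\Lo\Rb_k)^3)$. The part $(\Lo\Rb_k)^3\sigma^2/(\mu\eps_k)\asymp\Lo^3R_k\sigma^2/\mu^2$ decreases geometrically in $k$, so it sums to $O(\sigma^2\Lo^3R_0/\mu^2)$, which is exactly the stated extra term. The part $\sigma^2/(\mu\eps_k)$ sums geometrically to the final-round value. The intermediate binomial terms $(\Lo R_k)^i\sigma^2/(\mu\eps_k)$ for $i=1,2$ lie between the two extremes, and I would bound them by AM--GM. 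For the burn-ins, the Phase~I term $(1+\Lo R_k)\sigma^2\Lo^2/\Lz^2$ and the Phase~II term $\kbar_k^3\sigma^2\Lo^2/\Lz^2$ are each at most $\kbar^3\sigma^2\Lo^2/\Lz^2$, and the repetition over $K$ rounds is absorbed in $\tO$. One point is delicate: once $\Lo R_k<1/12$, the $L_{1,\mathrm{eff}}$ substitution inflates $\Lo$ to $1/(4\Rb_k)$, making the burn-in $\sigma^2/(\Lz^2R_k^2)$. I would check that this is at most $\sigma^2/(\mu\eps_k)$, using $\mu\le\Lz$ and $\eps_k\asymp\mu R_k^2$, so it folds into the statistical term. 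Finally, for $\eps\le\mu/(36\Lo^2)$ we have $\Lo\sqrt{\eps/\mu}\le1/6$, so the cubic prefactor is $O(1)$ and the statistical term is $\tO(\sigma^2/(\mu\eps))$.
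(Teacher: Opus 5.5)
Your proposal is essentially the paper's proof. It uses the same distance-halving restarts (your $K$ and final-round target coincide with Algorithm~\ref{alg:sc}) and applies Theorem~\ref{thm:convex-full} per round with $\sqrt{\Lz\Rb_k^2/\eps_k}=O(\sqrt{\Lz/\mu})$. It then sums the optimization, statistical and burn-in groups geometrically, exactly as the paper does.

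The deviations are harmless: confidence $\alpha/(2(k+1)^2)$ instead of $\alpha/K$, and certificate $\eps_{k-1}$ without the minimum with $\Delta_{\mathrm{def},k}$. Likewise, your split $\kbar_k^3\le 4(1+(\Lo\Rb_k)^3)$ replaces the paper's exact binomial expansion and in fact gives the slightly sharper statistical bound $\sigma^2/(\mu\eps)+\sigma^2\Lo^3R_0/\mu^2$.

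One small hole concerns the floor-active burn-in $\sigma^2/(\Lz^2R_k^2)$. You absorb it into $\sigma^2/(\mu\eps_k)$ using $\mu\le\Lz$ and $\eps_k\lesssim\mu R_k^2$. That fails in the single-round case $K=1$, where the target $\eps\ge\mu R_0^2/8$ may be far larger than $\mu R_0^2$. There you must invoke the nontrivial regime $\eps<\Dcert\le\Ddef(L_{1,\mathrm{eff}})<0.52\,\Lz R_0^2$, which with Remark~\ref{rem:mu-le-lz} gives $\mu\eps\lesssim\Lz^2R_0^2$. This requires keeping $\Ddef$ in the certificate minimum, and it is exactly how Lemma~\ref{lem:eff-sums} treats every round uniformly.
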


\begin{remark}[Default certificate]
\label{rem:strongly-default}
If every restart round runs on the default certificate (the default gap-certificate mode of Algorithm~\ref{app:alg:arc}), the round-$k$ \PhaseI{} overhead of Corollary~\ref{cor:phase1-default} has $q_k:=L_{1,\mathrm{eff},k}R_k=\max\{q\,2^{-k},\,1/12\}$ with $q:=\Lo R_0$, which still sums geometrically up to an additive $O(K)$: $\sum_k q_k^2=O(q^2+K)$ and $\sum_k q_k^4=O(q^4+K)$; the floor-active part $(1+q_k)^4\sigma^2L_{1,\mathrm{eff},k}^2/\Lz^2=O\big(\sigma^2/(\Lz^2R_k^2)\big)$ of the round-$k$ batch is absorbed into the round's own statistical term by Lemma~\ref{lem:eff-sums}. With a user-supplied $\Dcert$ the statement of Theorem~\ref{app:thm:strongly} stands as printed; under the default certificate it reads
\begin{align*}
N=\tO\Big(&(1+q)^2+(1+\Lo R_0)\Big(1+\sqrt{\tfrac{\Lz}{\mu}}\Big)
  +\sqrt{\tfrac{\Lz}{\mu}}\,\logp\!\left(\tfrac{\mu R_0^2}{\eps}\right)\\
&+\Big(1+\Lo\sqrt{\tfrac{\eps}{\mu}}\Big)^{3}\tfrac{\sigma^2}{\mu\eps}
  +\tfrac{\sigma^2\Lo^3R_0}{\mu^2}+\kbar^{3}\tfrac{\sigma^2\Lo^2}{\Lz^2}
  +(1+q)^{4}\tfrac{\sigma^2\Lo^2}{\Lz^2}\Big),
\end{align*}
with $\tO$ hiding only absolute constants and polylogarithmic factors in $(q,\tfrac{\mu R_0^2}{\eps},\tfrac{\Lz}{\mu},\tfrac1\alpha)$. The two added terms are $\eps$-independent, so the clean regime survives: for $\eps\le\mu/(36\Lo^2)$ the $\eps$-dependent statistical term remains $\tO(\sigma^2/(\mu\eps))$. (Passing the previously certified gap $\eps_{k-1}$ as the round-$k$ certificate would make the rounds $k\ge1$ linear in $q_k$, but the display, dominated by round $0$, would not improve.)
\end{remark}

\begin{algorithm}[t]
\caption{\ARCSG-sc{} (strongly convex): distance-halving restarts}
\label{alg:sc}
\begin{algorithmic}[1]
\REQUIRE $x_0:=x^0$, $R_0$, $\mu$, $\eps$, $\alpha$;\quad $K:=1+\big\lceil\log_4\tfrac{\mu R_0^2}{8\eps}\big\rceil_+$
\REQUIRE for round $k$, let
$\Delta_{\mathrm{def},k}:=\tfrac{\Lz}{L_{1,\mathrm{eff},k}^2}
\psi\big(L_{1,\mathrm{eff},k}R_k\big)$ and use the certificate
$\Delta_{\mathrm{cert},0}:=\Delta_{\mathrm{def},0}$,
$\Delta_{\mathrm{cert},k}:=\min\{\eps_{k-1},\Delta_{\mathrm{def},k}\}$ for $k\ge1$; if $\Duser$ is supplied, include it in each minimum
\FOR{$k=0,\dots,K-1$}
\STATE $x_{k+1}\leftarrow\ARCSG\big(x_k;\ R_k:=R_0\,2^{-k},\ \eps_k:=\max\big\{\tfrac{\mu R_k^2}{8},\,\eps\big\},\ \alpha_k:=\tfrac\alpha K\big)$
\ENDFOR
\RETURN $x_K$
\end{algorithmic}
\end{algorithm}

\begin{lemma}[Effective-parameter absorption]
\label{lem:eff-sums}
Let $R_k=R_0\,2^{-k}$, $\Rb_k:=3R_k$, $L_{1,\mathrm{eff},k}:=\max\big\{\Lo,\,1/(12R_k)\big\}$, $\kbar_{\mathrm{eff},k}:=1+3L_{1,\mathrm{eff},k}R_k$, and let round $k$ run in the nontrivial regime $\eps_k<\Delta_{\mathrm{cert},k}\le\Delta_{\mathrm{def},k}:=\tfrac{\Lz}{L_{1,\mathrm{eff},k}^2}\psi\big(L_{1,\mathrm{eff},k}R_k\big)$. Then on every floor-active round $k\in\mathcal K_{\mathrm{fl}}:=\{k:L_{1,\mathrm{eff},k}=1/(12R_k)\}$:
\begin{equation}
\label{eq:eff-sums}
\Delta_{\mathrm{def},k}=144\Lz R_k^2\,\psi\big(\tfrac1{12}\big)<0.52\,\Lz R_k^2,\quad\text{hence}\quad
\eps_k^2<0.27\,\Lz^2R_k^4\quad\text{and}\quad
\frac{\sigma^2}{\Lz^2R_k^2}<0.03\,\frac{\sigma^2\Rb_k^2}{\eps_k^2};
\end{equation}
consequently each of $\sigma^2L_{1,\mathrm{eff},k}^3R_k/\Lz^2$, $\kbar_{\mathrm{eff},k}^3\sigma^2L_{1,\mathrm{eff},k}^2/\Lz^2$, and $\sigma^2L_{1,\mathrm{eff},k}^2J_{\mathrm{tot},k}^3/\Lz^2$ is at most an absolute-constant (resp.\ polylogarithmic) multiple of the round's own statistical term $\kbar_{\mathrm{eff},k}^3\sigma^2\Rb_k^2/\eps_k^2$.
\end{lemma}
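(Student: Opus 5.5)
On a floor-active round the plan is to compute everything explicitly from the definitions: substitute $L_{1,\mathrm{eff},k}=1/(12R_k)$, derive the three displayed inequalities in \eqref{eq:eff-sums} in order, and then read off the absorption claims as ratio bounds.

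\emph{First inequality.} Substituting $L_{1,\mathrm{eff},k}=1/(12R_k)$ into $\Delta_{\mathrm{def},k}$ gives
\[
\Delta_{\mathrm{def},k}=144\Lz R_k^2\,\psi\big(\tfrac1{12}\big),
\qquad \psi(t)=e^t-1-t .
\]
Numerically $\psi(1/12)=e^{1/12}-1-1/12\approx0.003592$, so $144\,\psi(1/12)\approx0.517<0.52$. To keep this rigorous without a calculator, I will use the series bound $\psi(t)\le\tfrac{t^2}{2}e^t$ from Lemma~\ref{lem:gap0}. It gives $144\,\psi(1/12)\le \tfrac12 e^{1/12}\le 0.5436$, which is slightly too weak. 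I therefore keep the cubic term: $\psi(t)\le \tfrac{t^2}{2}+\tfrac{t^3}{6}e^t$, so $144\,\psi(1/12)\le 0.5+\tfrac{1}{72}e^{1/12}<0.52$.

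\emph{Second inequality.} The nontrivial-regime hypothesis gives $\eps_k<\Delta_{\mathrm{def},k}<0.52\,\Lz R_k^2$. Squaring yields $\eps_k^2<0.2704\,\Lz^2R_k^4<0.27\,\Lz^2R_k^4$. The margin here is tight, so the exact decimal must be checked: $0.52^2=0.2704$ slightly exceeds $0.27$. If needed I will use the sharper value $0.5175^2<0.268$, obtained from the bound $0.5+e^{1/12}/72\le0.51506$. This squared bound is the only delicate numerical point.

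\emph{Third inequality.} With $\Rb_k=3R_k$ we have
\[
\frac{\sigma^2\Rb_k^2}{\eps_k^2}=\frac{9\sigma^2R_k^2}{\eps_k^2}>\frac{9\sigma^2R_k^2}{0.27\,\Lz^2R_k^4}=\frac{33.3\,\sigma^2}{\Lz^2R_k^2},
\]
so
\[
\frac{\sigma^2}{\Lz^2R_k^2}<0.03\,\frac{\sigma^2\Rb_k^2}{\eps_k^2}.
\]

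\emph{Absorption.} Each listed term reduces to a multiple of $\sigma^2/(\Lz^2R_k^2)$:
\begin{itemize}
\item $\sigma^2L_{1,\mathrm{eff},k}^3R_k/\Lz^2=\sigma^2/(1728\,\Lz^2R_k^2)$;
\item $\kbar_{\mathrm{eff},k}=1+3/12=5/4$, so $\kbar_{\mathrm{eff},k}^3\sigma^2L_{1,\mathrm{eff},k}^2/\Lz^2=(125/64)\,\sigma^2/(144\,\Lz^2R_k^2)$;
\item $J_{\mathrm{tot},k}=\tO(\kbar_{\mathrm{eff},k})=\tO(1)$ by \eqref{eq:schedule-global}, since $S_G$ and $S_\Lambda$ are logarithmic. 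Hence $\sigma^2L_{1,\mathrm{eff},k}^2J_{\mathrm{tot},k}^3/\Lz^2$ is a polylogarithmic multiple of $\sigma^2/(\Lz^2R_k^2)$.
\end{itemize}
The third inequality then bounds each term by the round's statistical term $\kbar_{\mathrm{eff},k}^3\sigma^2\Rb_k^2/\eps_k^2$, using $\kbar_{\mathrm{eff},k}\ge1$. This completes the argument; besides the numerical check in the second inequality, no step is expected to cause difficulty.
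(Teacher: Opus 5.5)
Your proposal is correct and is essentially the paper's proof: substitute $L_{1,\mathrm{eff},k}R_k=\tfrac1{12}$, bound $144\,\psi(\tfrac1{12})$ numerically, square, use $\Rb_k=3R_k$, and reduce the three burn-in terms to multiples of $\sigma^2/(\Lz^2R_k^2)$ via $\kbar_{\mathrm{eff},k}=\tfrac54$ and $J_{\mathrm{tot},k}\le114\,\kbar_{\mathrm{eff},k}S_G(S_\Lambda+4)$. The one point to tidy is the one you flagged yourself: since $0.52^2=0.2704>0.27$, the sharper bound $144\,\psi(\tfrac1{12})\le\tfrac12+e^{1/12}/72<0.5151$ is required, not optional (the paper instead uses the value $\approx0.51418$, i.e.\ $\psi(\tfrac1{12})\approx0.0035707$ rather than your $0.003592$), and it gives $\eps_k^2<0.2654\,\Lz^2R_k^4$, after which the rest of your argument goes through unchanged.
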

\begin{proof}
The hypothesis $\Delta_{\mathrm{cert},k}\le\Delta_{\mathrm{def},k}$ holds on every round by the $\min\{\eps_{k-1},\Delta_{\mathrm{def},k}\}$ construction of Algorithm~\ref{alg:sc} (in user mode, $\min\{\Duser,\eps_{k-1},\Delta_{\mathrm{def},k}\}$), and the regime is genuine: on a nontrivial round running on the pass-through certificate one has $\eps_k<\eps_{k-1}$, which forces $\eps_{k-1}=\tfrac{\mu R_{k-1}^2}{8}=\tfrac{\mu R_k^2}{2}\le\tfrac{\Lz}{2}R_k^2<0.52\,\Lz R_k^2$, since $\mu\le\Lz$ (Remark~\ref{rem:mu-le-lz}). On $k\in\mathcal K_{\mathrm{fl}}$ one has $L_{1,\mathrm{eff},k}R_k=\tfrac1{12}$, so
$\Delta_{\mathrm{def},k}=\tfrac{\Lz}{L_{1,\mathrm{eff},k}^2}\psi\big(\tfrac1{12}\big)=144\Lz R_k^2\psi\big(\tfrac1{12}\big)$, and $\psi\big(\tfrac1{12}\big)=e^{1/12}-1-\tfrac1{12}\approx0.0035707$ gives $144\,\psi\big(\tfrac1{12}\big)\approx0.51418<0.52$. The nontrivial regime then yields $\eps_k<\Delta_{\mathrm{def},k}$, hence $\eps_k^2<0.5142^2\,\Lz^2R_k^4\le0.27\,\Lz^2R_k^4$, and
\begin{equation*}
\frac{\sigma^2}{\Lz^2R_k^2}<0.2644\,\frac{\sigma^2R_k^2}{\eps_k^2}=\frac{0.2644}{9}\,\frac{\sigma^2\Rb_k^2}{\eps_k^2}<0.03\,\frac{\sigma^2\Rb_k^2}{\eps_k^2},
\end{equation*}
using $\Rb_k=3R_k$. On floor-active rounds $\kbar_{\mathrm{eff},k}=1+\tfrac14=\tfrac54$ and $L_{1,\mathrm{eff},k}^3R_k=\tfrac{1}{1728R_k^2}$ (since $1728=12^3$), so $\sigma^2L_{1,\mathrm{eff},k}^3R_k/\Lz^2=\sigma^2/(1728\Lz^2R_k^2)$ and $\kbar_{\mathrm{eff},k}^3\sigma^2L_{1,\mathrm{eff},k}^2/\Lz^2=\big(\tfrac54\big)^3\sigma^2/(144\Lz^2R_k^2)$ are absolute-constant multiples of $\sigma^2/(\Lz^2R_k^2)$, hence of the round's statistical term by \eqref{eq:eff-sums}; and $J_{\mathrm{tot},k}\le114\kbar_{\mathrm{eff},k}S_G(S_\Lambda+4)$ is polylogarithmic there, so the hot-level term is a polylogarithmic multiple of the same.
\end{proof}

\begin{proof}[Proof of Theorem~\ref{app:thm:strongly}]
Work on the intersection of the $K$ good events (probability $\ge1-\alpha$). \emph{Correctness.} By induction, $\norm{x_k-x^\star}\le R_k$: the base is Assumption~\ref{app:ass:convex}. For every $k\le K-2$ one has $\tfrac{\mu R_k^2}{8}>\eps$ (since $k\le\big\lceil\log_4\tfrac{\mu R_0^2}{8\eps}\big\rceil-1<\log_4\tfrac{\mu R_0^2}{8\eps}$ by the choice of $K$), so round $k$ targets $\eps_k=\tfrac{\mu R_k^2}{8}$ and, given $\norm{x_k-x^\star}\le R_k$, returns $f(x_{k+1})-f^\star\le\tfrac{\mu R_k^2}{8}$; by $\mu$-strong convexity $\norm{x_{k+1}-x^\star}^2\le\tfrac{2\eps_k}{\mu}=\tfrac{R_k^2}{4}=R_{k+1}^2$, closing the induction up to $k=K-1$ and validating the last round's inputs. By the choice of $K$, $\tfrac{\mu R_{K-1}^2}{8}\le\eps$, so the last round targets $\eps_{K-1}=\eps$ and the output satisfies $f(x_K)-f^\star\le\eps$.

\emph{Cost.} Apply Theorem~\ref{thm:convex-full} per round with $(R_k,\eps_k,\alpha/K)$, $\Rb_k=3R_k$, $\kbar_k=1+3\Lo R_k$, and sum. Round $k$ re-derives its own effective parameter $L_{1,\mathrm{eff},k}:=\max\{\Lo,\,1/(4\Rb_k)\}=\max\{\Lo,\,1/(12R_k)\}$ (Algorithm~\ref{app:alg:arc}); write $\kbar_{\mathrm{eff},k}:=1+3L_{1,\mathrm{eff},k}R_k$ and split the rounds into $\mathcal K_0:=\{k:L_{1,\mathrm{eff},k}=\Lo\}$ and $\mathcal K_{\mathrm{fl}}:=\{k:L_{1,\mathrm{eff},k}=1/(12R_k)\}$. Since $L_{1,\mathrm{eff},k}R_k\le\Lo R_k+\tfrac1{12}$ and $\kbar_{\mathrm{eff},k}\le\tfrac54\kbar_k$, every term depending on $L_{1,\mathrm{eff},k}$ only through $\kbar_{\mathrm{eff},k}$ or through the linear product $L_{1,\mathrm{eff},k}R_k$ changes by at most the constant $\big(\tfrac54\big)^3$ or the additive $\sum_kL_{1,\mathrm{eff},k}R_k\le2\Lo R_0+\tfrac{K}{12}$. The remaining families---the \PhaseI{} burn-in $\sigma^2L_{1,\mathrm{eff},k}^3R_k/\Lz^2$, the burn-in $\kbar_{\mathrm{eff},k}^3\sigma^2L_{1,\mathrm{eff},k}^2/\Lz^2$, and the hot-level term $\sigma^2L_{1,\mathrm{eff},k}^2J_{\mathrm{tot},k}^3/\Lz^2$ of \eqref{eq:stat-total}---coincide on $\mathcal K_0$ with the $\Lo$-versions summed below, while on $\mathcal K_{\mathrm{fl}}$ each is $O\big(\sigma^2/(\Lz^2R_k^2)\big)$ up to polylogarithmic factors, since $\kbar_{\mathrm{eff},k}=\tfrac54$ and $L_{1,\mathrm{eff},k}^3R_k=1/(1728R_k^2)$ there; by Lemma~\ref{lem:eff-sums} each such term is absorbed into the statistical term of its own round (a floor-active round in the trivial regime $\eps_k\ge\Delta_{\mathrm{cert},k}$ makes no oracle calls at all, so there is nothing to absorb). We therefore carry out the sums with $\Lo$.
\emph{Deterministic part:} $\sqrt{\Lz\Rb_k^2/\eps_k}\le\sqrt{72\Lz/\mu}$ for every $k$ (with equality on the $\mu R_k^2/8$-rounds; on the last round $\Rb_{K-1}^2\le9\cdot\tfrac{8\eps}{\mu}$ and $\eps_{K-1}=\eps$), so
\begin{equation}
\label{eq:sc-opt-sum}
\sum_k\kbar_k^{1/2}\sqrt{\tfrac{\Lz\Rb_k^2}{\eps_k}}
\le\sqrt{\tfrac{72\Lz}{\mu}}\Big(K+\sqrt{3\Lo R_0}\sum_k2^{-k/2}\Big)\le\sqrt{\tfrac{72\Lz}{\mu}}\big(K+4\sqrt{3\Lo R_0}\big),
\end{equation}
and $\sqrt{\Lo R_0}\le1+\Lo R_0$, while $\sum_k\Lo R_k\le2\Lo R_0$; using $K=\tO\big(\logp(\mu R_0^2/\eps)\big)$, this gives the group $\tO\big((1+\Lo R_0)(1+\sqrt{\Lz/\mu})+\sqrt{\Lz/\mu}\,\logp(\mu R_0^2/\eps)\big)$.
\emph{Statistical part:} with $a_k:=3\Lo R_k$, the $\mu R_k^2/8$-rounds $k\le K-2$ (present only when $K\ge2$) have $\Rb_k^2/\eps_k^2=576/(\mu^2R_k^2)$, so
\begin{equation*}
\begin{split}
&\sum_{k\le K-2}\kbar_k^3\frac{\sigma^2\Rb_k^2}{\eps_k^2}
=\frac{576\sigma^2}{\mu^2}\sum_{k\le K-2}\frac{(1+a_k)^3}{R_k^2}\le\frac{576\sigma^2}{\mu^2}\Big[\underbrace{\sum_{k\le K-2}\tfrac1{R_k^2}}_{\le\frac43/R_{K-2}^2\le\mu/(6\eps)}+\underbrace{9\Lo\sum_{k\le K-2}\tfrac1{R_k}}_{\le13\Lo\sqrt{\mu/\eps}}+27\Lo^2K+\underbrace{27\Lo^3\sum_{k\le K-2}R_k}_{\le54\Lo^3R_0}\Big],
\end{split}
\end{equation*}
where the geometric sums are dominated by their last term and $R_{K-2}^2>\tfrac{8\eps}{\mu}$ (since $K-2<\log_4\tfrac{\mu R_0^2}{8\eps}$); the last round costs
\begin{equation*}
\kbar_{K-1}^3\frac{\sigma^2\Rb_{K-1}^2}{\eps^2}\le\frac{9R_{K-1}^2\kbar_{K-1}^3\sigma^2}{\eps^2}\le\frac{72\,\kbar_{K-1}^3\sigma^2}{\mu\eps}
\end{equation*}
since $R_{K-1}^2\le\tfrac{8\eps}{\mu}$. The total is $\tO\big((1+\Lo\sqrt{\eps/\mu})^3\tfrac{\sigma^2}{\mu\eps}+\tfrac{\sigma^2\Lo^3R_0}{\mu^2}\big)$ after matching the summands with the expansion of $(1+\Lo\sqrt{\eps/\mu})^3/(\mu\eps)$ (the $27\Lo^2K$ summand contributes $\tO(\sigma^2\Lo^2/\mu^2)$, absorbed with a logarithmic factor; the $\Lo^3$ summand is the $\eps$-independent restart burn-in $\sigma^2\Lo^3R_0/\mu^2$ stated in the theorem; and $\kbar_{K-1}\le1+3\Lo\sqrt{8\eps/\mu}$, so the last-round term enters the first group). \emph{Burn-ins:} $\sum_k\sigma^2\Lo^3R_k/\Lz^2\le2\sigma^2\Lo^3R_0/\Lz^2\le2\sigma^2\Lo^3R_0/\mu^2$ (using $\mu\le\Lz$, Remark~\ref{rem:mu-le-lz}) and, using $(1+a)^3\le4(1+a^3)$,
\begin{equation}
\label{eq:sc-burn-sum}
\sum_k\kbar_k^3\tfrac{\sigma^2\Lo^2}{\Lz^2}\le\big(4K+124(\Lo R_0)^3\big)\tfrac{\sigma^2\Lo^2}{\Lz^2}=\tO\big(\kbar^3\tfrac{\sigma^2\Lo^2}{\Lz^2}\big).
\end{equation}
Collecting the groups yields the display of Theorem~\ref{app:thm:strongly}. Finally, when $\eps\le\tfrac{\mu}{36\Lo^2}$ we have $\Lo\sqrt{\eps/\mu}\le\tfrac16$, so the leading statistical term is at most $1.6\cdot\tfrac{\sigma^2}{\mu\eps}$ up to the logarithmic factor $\ell$, matching the classical $\Omega(\sigma^2/(\mu\eps))$ information-theoretic lower bound.
\end{proof}

\section{Main-results remarks and comparisons}
\label{app:sec:main}

\begin{remark}[Default certificate]
\label{app:rem:convex-default}
If \ARCSG{} runs in its default gap-certificate mode, i.e.\ on the default certificate $\Dcert=\tfrac{\Lz}{\Lo^2}\psi(\Lo R_0)$ of Lemma~\ref{lem:gap0}, then with $q:=\Lo R_0$ the \PhaseI{} contributions of Corollary~\ref{cor:phase1-default} replace the corresponding printed terms, and the bound reads
\begin{equation*}
N=\tO\Big((1+q)^2+\kbar^{1/2}\sqrt{\tfrac{\Lz\Rb^2}{\eps}}+\kbar^{3}\,\tfrac{\sigma^2\Rb^2}{\eps^2}+(1+q)^{4}\,\tfrac{\sigma^2\Lo^2}{\Lz^2}+\kbar^{3}\,\tfrac{\sigma^2\Lo^2}{\Lz^2}\Big),
\end{equation*}
with $\tO$ hiding only absolute constants and polylogarithmic factors in $(q,\tfrac{\Lz\Rb^2}{\eps},\tfrac1\alpha)$: the $1+\Lo R_0$ entry term becomes $(1+q)^2$, and the Phase-I burn-in becomes $(1+q)^4\sigma^2\Lo^2/\Lz^2$. With a user-supplied $\Dcert$ the statement of Theorem~\ref{app:thm:convex} stands as printed.
\end{remark}

Three remarks are in order.

\begin{remark}[Recovery of the smooth case]
\label{rem:smooth}
For $\Lo=0$ the substitution convention of Appendix~\ref{sec:prelim} applies ($L_{1,\mathrm{eff}}=1/(4\Rb)$, so $\kbar\le\tfrac54$), \PhaseI{} terminates immediately, and Theorems~\ref{app:thm:convex}--\ref{app:thm:strongly} reduce to $\tO(1+\sqrt{\Lz R_0^2/\eps}+\sigma^2R_0^2/\eps^2)$ and $\tO(\sqrt{\Lz/\mu}\,\logp(\mu R_0^2/\eps)+\sigma^2/(\mu\eps))$---the complexities of accelerated stochastic approximation \citep{lan2012optimal,ghadimi2013optimal} up to logarithms (in either certificate regime: under the substitution $L_{1,\mathrm{eff}}R_0=\tfrac1{12}$, so in the nontrivial regime $\eps<\Dcert$ one has $\Dcert\le\Ddef(L_{1,\mathrm{eff}})=16\Lz\Rb^2\psi(\tfrac1{12})\approx0.057\Lz\Rb^2$---note that $\Ddef=O(\Lz R_0^2)$ holds only under the $L_{1,\mathrm{eff}}$-substitution, or trivially at $\Lo=0$---hence both effective-$\Lo$ burn-ins are absorbed by the statistical term $\sigma^2\Rb^2/\eps^2$ with absolute constants $\le0.03$: from $\eps^2<0.0572^2\Lz^2\Rb^4\le0.003264\,\Lz^2\Rb^4$ one gets $\sigma^2/(\Lz^2R_0^2)=9\sigma^2/(\Lz^2\Rb^2)\le9\cdot0.0033\,\sigma^2\Rb^2/\eps^2\le0.03\,\sigma^2\Rb^2/\eps^2$ and $\kbar_{\mathrm{eff}}^3\sigma^2L_{1,\mathrm{eff}}^2/\Lz^2=\tfrac{125}{1024}\sigma^2/(\Lz^2\Rb^2)\le4\cdot10^{-4}\,\sigma^2\Rb^2/\eps^2$ with $\kbar_{\mathrm{eff}}:=1+L_{1,\mathrm{eff}}\Rb=\tfrac54$; likewise the \PhaseI{} logarithm is $O(1)$ since $\logp\big(2L_{1,\mathrm{eff}}^2\Dcert/\Lz\big)\le\logp\big(2\psi(\tfrac1{12})\big)=1$). The statistical terms are unimprovable in their $\eps$-dependence for any $\Lo\ge0$ by the classical lower bounds \citep{nemirovsky1983problem}, since $(\Lz,\Lo)$-smooth functions include $\Lz$-smooth ones; we are not aware of lower bounds quantifying the necessary dependence on $\kbar=1+\Lo\Rb$ in the stochastic setting, and our $\kbar^{1/2}$- and $\kbar^3$-factors should be read as upper-bound artifacts whose necessity is open (Section~\ref{sec:discussion-kbar}).
\end{remark}

\begin{remark}[Deterministic oracle]
For $\sigma=0$ the bound of Theorem~\ref{app:thm:convex} becomes $\tO(1+\Lo R_0+\kbar^{1/2}\sqrt{\Lz\Rb^2/\eps})$ (with $\Dcert$ as an external certificate; under the default certificate the $1+\Lo R_0$ term is $\Theta((1+q)^2)$, Corollary~\ref{cor:phase1-default}). This has the same ``accelerated term $+$ $\Lo R$-burn-in'' structure as the deterministic accelerated methods of \citet{vankov2024optimizing,gorbunov2024methods}, but is \emph{not} state of the art in that setting: it carries an extra $\kbar^{1/2}$-factor on the accelerated term relative to the $(\nu{+}1)\big[\sqrt{\Lz R^2/\eps}+(\Lo R)^{2/3}\log(2F_0/\eps)\big]$ bound of \citet{vankov2024optimizing} (where $\nu$ is the per-iteration line-search cost) and to the $\sqrt{\Lz(1+\Lo Re^{\Lo R})R^2/\eps}$ bound of \citet{gorbunov2024methods}, whose exponential factor affects only the regime $\eps\gtrsim\Lz R^2$. Our contribution is the stochastic setting; specializing it to $\sigma=0$ is not the intended use.
\end{remark}

\begin{remark}[Criterion $\norm{\grad f}\le\epsg$]
\label{rem:gradnorm}
Run with the gradient-based stopping rule only, \PhaseII{} outputs $\xhat$ with $\norm{\grad f(\xhat)}\le\epsg$ using $\tO\big(1+\Lo R_0+\Lo\Rb\sqrt{1+\Lz/(\Lo\epsg)}+\kbar^3\sigma^2/\epsg^2\cdot\Lo^2\Rb^2\big)$ calls (with $\Dcert$ as an external certificate; under the default certificate the $1+\Lo R_0$ entry term becomes $\Theta((1+q)^2)$, Corollary~\ref{cor:phase1-default}); for $\Lo=0$ (via the $L_{1,\mathrm{eff}}$ substitution) and moderate accuracies this is within logarithmic factors of the (smooth-case optimal) gradient-norm complexity of \citet{foster2019complexity}, whose recursive-regularization architecture our levels generalize.
\end{remark}

\begin{remark}[Scope relative to stochastic acceleration and ball methods]
\label{rem:novelty-scope}
The convex accuracy exponents of Theorem~\ref{app:thm:convex} are not
claimed as the first such exponents under generalized smoothness. The
public ICLR 2026 submission \citep{anonymous2026nesterov} gives a
high-probability rate
$\tO(T^{-2}+\sqrt{(A+B+C)/T})$ for stochastic Nesterov acceleration
under a broader $(L_0,L_1,L_2)$-type condition and relaxed
affine-variance noise. Taking their gradient exponent $p=1$, $L_2=0$,
and $A=B=0,C=\sigma^2$ specializes their displayed assumptions to the
gradient-dependent smoothness and additive norm-sub-Gaussian regime of
this paper, and gives the same $\eps^{-1/2}$ optimization and
$\eps^{-2}$ statistical exponents. The earlier RSAG analysis of
\citet{yu2025stochastic} has a $\tO(T^{-1/2})$ general-noise gap rate
and an accelerated dependence in its low-noise regime; our additive
oracle is likewise a specialization of that work's affine-variance
family, not an incomparable model.

Ball localization is also an established ingredient: see the ball
optimization oracle and its acceleration in \citet{carmon2020balloracle},
the stochastic ReSQue construction of \citet{carmon2023resque}, and
the Broximal method of \citet{gruntkowska2025broximal}. The concurrent
2026 TRPPM preprint---subsequent to those earlier ball frameworks---by
\citet{li2026trustregion} studies the same formal ball-constrained
proximal objective, but in a deterministic trust-region regime that
seeks an active constraint. Here the regularization floor instead
keeps the exact prox in the interior, and the ball contains every
query of the approximate stochastic solver. The scoped contribution
of ARC-SG is this query-by-query certified geometry, explicit
$(L_0,L_1)$ parameter accounting, and the separate strongly convex
restart guarantee, rather than the ball-proximal form or the convex
accuracy exponents in isolation.
\end{remark}

\subsection{On the $\kbar$-factors, and what is known to be optimal}
\label{sec:discussion-kbar}

\paragraph{Where the factors come from.} The factors $\kbar^{1/2}$ and $\kbar^{3}$ in Theorem~\ref{app:thm:convex} have a single, well-identified origin. All queries of level $j$ live in a ball of radius $O(1/\Lo)$; consequently, information must ``hop'' between balls, and the momentum of the inner accelerated method does not carry across hops. The $\kbar^{1/2}$-factor on the optimization term is the price of restarting acceleration $\tO(\Lo\Rb)$ times at the finest floor scale; the $\kbar^{3}$-factor on the statistical term is the price of maintaining, across $\tO(\Lo\Rb)$ noisy levels, the additive precision budget $\sum_j s_j\lesssim 1/\Lo$ that protects the gradient-cap invariant against noise-induced drift (Lemma~\ref{lem:drift}), at a statistical cost $\propto s_j^{-2}$ per level. It is worth being explicit that the second mechanism makes the gradient \emph{measurements} the dominant cost in the worst case: the per-level anchors are individually cheap relative to a level's optimization work, but their total is what carries the $\kbar^{3}$.

\paragraph{Dimensional analysis.} Write $q:=\Lo\Rb$ (dimensionless), $A:=\Lz\Rb^2/\eps$ and $V:=\sigma^2\Rb^2/\eps^2$. Theorem~\ref{app:thm:convex} then reads $\tO(1+q+\kbar^{1/2}\sqrt A+\kbar^{3}V)$ with $\kbar=1+q$---the additive entry price $1+q$ under an externally supplied gap certificate; under the default gap certificate the proved entry price is $\Theta((1+q)^2)$ (Corollary~\ref{cor:phase1-default})---whereas the target of Open Problem~\ref{op:kbar} is $\tO(\Phi(q)+\sqrt A+V)$ for some additive entry price $\Phi$. The distinction between an \emph{additive} $\Phi(q)$ and a \emph{multiplicative} $\mathrm{poly}(q)$ in front of $\sqrt A$ and $V$ is the substantive one, and it is achievable in neighbouring settings: the deterministic method of \citet{vankov2024optimizing} has the additive structure $\sqrt A+q^{2/3}\log(F_0/\eps)$, \citet{tyurin2026near} obtain, at small $\eps$, a leading accelerated term free of nonconstant multiplicative $q$-dependence, and in the non-accelerated stochastic regime \citet{gaash2025clipped} already achieve smooth-optimal leading terms $\Lz R^2/\eps+\sigma^2R^2/\eps^2$ with $\Lo$ entering only additively. Separation is therefore possible in principle, and our bound does not achieve it.

\paragraph{What is and is not proved optimal.} To delineate precisely. \emph{Optimal:} the $\eps$-exponents of both the optimization and the statistical terms, since $(\Lz,\Lo)$-smooth functions contain the $\Lz$-smooth ones and the classical lower bounds \citep{nemirovsky1983problem} apply on that subclass. \emph{Not proved optimal, and probably not optimal:} the multiplicative factors $\kbar^{1/2}$ and $\kbar^{3}$; the form $\Lo R_0$ of the burn-in; the threshold $\eps\le\mu/(36\Lo^2)$ in Theorem~\ref{app:thm:strongly}; and, in the overparameterized regime, the dependence on $\rho$ and the cube $(1+\Lz/\mu)^3$. We are not aware of \emph{any} stochastic lower bound that certifies growth in $q$ of the leading constants, so all $q$-dependence beyond the smooth-case bounds should be read as an upper-bound artifact. We also correct a comparison made in the previous version of this paper: our burn-in $\Lo R_0$ does \emph{not} match the deterministic burn-in of \citet{vankov2024optimizing} ``up to logarithms''---theirs is $(\Lo R)^{2/3}\log(2F_0/\eps)$, a genuinely smaller polynomial---and after \citet{tyurin2026near} the deterministic state of the art is further away still. Our deterministic specialization ($\sigma=0$) is therefore strictly weaker than the best deterministic methods; the contribution is the stochastic setting.

\paragraph{A caveat on parameterization.} Lower-bound statements in this model must also fix \emph{which} pair $(\Lz,\Lo)$ is meant, because the pair is not unique: if $f$ is $(\Lz,\Lo)$-smooth it is also $(\Lz',\Lo')$-smooth for every $\Lz'\ge\Lz$, $\Lo'\ge\Lo$, and the admissible set typically contains a nontrivial Pareto frontier. A meaningful lower bound with explicit $q=\Lo R$ dependence should either be stated for a canonical (e.g.\ Pareto-minimal) pair, or with the complexity defined as the infimum of the bound over all admissible pairs. Our upper bounds hold for every admissible pair and hence for the best one; but a matching lower bound requires committing to a canonical parameterization, and we regard the formulation of that class as part of the open problem.

\begin{remark}[Scaling invariance of $(q,A,V)$]
\label{rem:scaling}
The groups $q,A,V$ are the natural dimensionless coordinates of the problem: they are invariant under the two-parameter rescaling that acts on \eqref{app:eq:problem} by a domain dilation $x\mapsto\alpha x$ and a range scaling $f\mapsto\beta f$ ($\alpha,\beta>0$). Indeed, writing $g(x'):=\beta f(\alpha x')$ gives $\grad g(x')=\alpha\beta\,\grad f(\alpha x')$ and $\grad^2 g(x')=\alpha^2\beta\,\grad^2 f(\alpha x')$, so $g$ is $(\alpha^2\beta\Lz,\ \alpha\Lo)$-generalized smooth, with initial distance $\Rb/\alpha$, target gap $\beta\eps$, and---taking the $g$-oracle to be $\alpha\beta$ times the $f$-oracle---sub-Gaussian scale $\alpha\beta\sigma$; convexity and unbiasedness are preserved. Substituting these into $q=\Lo\Rb$, $A=\Lz\Rb^2/\eps$, $V=\sigma^2\Rb^2/\eps^2$ leaves all three unchanged: $q^g=q$, $A^g=A$, $V^g=V$. Consequently both our upper bounds and any matching lower bound can---and, for a canonical statement, should---be written purely in $(q,A,V)$; this is the form in which Open Problem~\ref{op:kbar} is posed.
\end{remark}

\begin{openproblem}
\label{op:kbar}
Under Assumptions~\ref{app:ass:convex}--\ref{app:ass:noise}, with $q=\Lo\Rb$, $A=\Lz\Rb^2/\eps$, $V=\sigma^2\Rb^2/\eps^2$:
(i) is there a high-probability accelerated stochastic method with complexity $\tO(\Phi(q)+\sqrt A+V)$, i.e.\ with \emph{no} multiplicative $q$-factor on the leading terms?
(ii) what is the minimal additive entry price $\Phi(q)$---is it $q$, $q^{2/3}$, $\log q$, or something else?
(iii) can one prove a stochastic lower bound with explicit dependence on $q>0$ for a canonically parameterized class of $(\Lz,\Lo)$-smooth functions?
A positive answer to (i) would likely require carrying stochastic momentum across smoothness cells, e.g.\ via a high-probability analysis of clipped SSTM \citep{gorbunov2020stochastic,sadiev2023high} with $(\Lz,\Lo)$-adaptive clipping levels, or a redistribution of the drift budget that avoids spending precision uniformly across levels.
\end{openproblem}

Table~\ref{tab:assumptions} collects the assumptions and oracle models required by each theorem.

\begin{table}[t]
\centering
\footnotesize
\setlength{\tabcolsep}{3.5pt}
\begin{tabular}{@{}lllll@{}}
\toprule
Result & Convexity & Smooth. & Oracle & Known params \\
\midrule
Thm.~\ref{app:thm:convex} & convex & A\ref{app:ass:gs} & A\ref{app:ass:noise} & $\Lz,\Lo,\sigma,R_0$ \\
Thm.~\ref{app:thm:strongly} & $\mu$-str.\ cvx & A\ref{app:ass:gs} & A\ref{app:ass:noise} & $+\ \mu$ \\
Thm.~\ref{thm:interp-strong}$^\dagger$ & $\mu$-str.\ cvx & A\ref{app:ass:gs} ind. & A\ref{ass:sgc} a.s. & $\Lz,\Lo,\rho,R_0,\mu$ \\
Thm.~\ref{thm:interp-convex}$^\dagger$ & convex & A\ref{app:ass:gs} ind. & A\ref{ass:sgc} a.s. & $\Lz,\Lo,\rho,R_0$ \\
\bottomrule
\end{tabular}
\caption{Assumptions per theorem. ``A\ref{app:ass:gs} ind.'' means every $f_\xi$ satisfies Assumption~\ref{app:ass:gs} individually; ``A\ref{ass:sgc} a.s.'' is the almost-sure strong-growth variant \eqref{eq:sgc-as} (a.s.\ = almost surely, used throughout). Assumption~\ref{app:ass:convex} and a valid upper bound $\Dcert$ (default: Lemma~\ref{lem:gap0}) are required throughout; all guarantees are high-probability. Rows marked $\dagger$ concern the overparameterized regime and are stated and proved in Appendix~\ref{app:interp}.}
\label{tab:assumptions}
\end{table}

\begin{table*}[h]
\centering
\footnotesize
\setlength{\tabcolsep}{3pt}
\resizebox{\textwidth}{!}{%
\begin{tabular}{@{}llllc@{}}
\toprule
Method / reference & Setting & Smoothness & Noise & Oracle complexity (leading terms) \\
\midrule
\multicolumn{5}{@{}l}{\emph{(A) Deterministic oracle (exact gradients)}}\\
\midrule
AGMsDR, line search \citep{vankov2024optimizing} & convex & $(\Lz,\Lo)$ & --- & $(\nu{+}1)\big[\sqrt{\frac{\Lz R^2}{\eps}}+(\Lo R)^{2/3}\log\frac{2F_0}{\eps}\big]$ \\[2pt]
Clipped AGD \citep{gorbunov2024methods} & convex & $(\Lz,\Lo)$ & --- & $\sqrt{\frac{\Lz(1+\Lo R\,e^{\Lo R})R^2}{\eps}}$ \\[2pt]
Near-optimal AGM \citep{tyurin2026near} & convex & $(\Lz,\Lo)$ & --- & $\sqrt{\frac{\Lz R^2}{\eps}}$ + additive $\Lo$-terms \\[2pt]
\ARCSG{} specialization ($\sigma=0$) & convex & $(\Lz,\Lo)$ & --- & $1+\Lo R_0+\kbar^{1/2}\sqrt{\frac{\Lz\Rb^2}{\eps}}$ \\[3pt]
\midrule
\multicolumn{5}{@{}l}{\emph{(B) Stochastic first-order oracle}}\\
\midrule
SGD \citep{nemirovski2009robust} & convex, in exp. & $L$ & $\sigma^2$ & $\frac{LR^2}{\eps} + \frac{\sigma^2R^2}{\eps^2}$ \\[2pt]
AC-SA \citep{lan2012optimal,ghadimi2012optimal} & convex, in exp. & $L$ & $\sigma^2$ & $\sqrt{\frac{LR^2}{\eps}} + \frac{\sigma^2R^2}{\eps^2}$ \\[2pt]
ClipSSTM \citep{gorbunov2020stochastic} & convex, high prob. & $L$ & heavy tails & $\sqrt{\frac{LR^2}{\eps}} + \frac{\sigma^2R^2}{\eps^2}$ \\[2pt]
Recursive reg. \citep{foster2019complexity} & convex, $\norm{\grad f}\le\epsg$ & $L$ & $\sigma^2$ & $\sqrt{\frac{L F_0}{\epsg^2}} + \frac{\sigma^2}{\epsg^2}$ \\[2pt]
SGD, $(\Lz,\Lo)$-steps \citep{sgd2025generalized} & convex, in exp. & $(\Lz,\Lo)$ & $\sigma^2$ & $\frac{\Lz \Rb^2}{\eps} + \frac{\sigma^2\Rb^2}{\eps^2}$ + burn-in$(\Lo)$ \\[2pt]
Clipped SGD \citep{gaash2025clipped} & convex, high prob. & $(\Lz,\Lo)$ & norm-sub-Gaussian & $\frac{\Lz R^2}{\eps} + \frac{\sigma^2R^2}{\eps^2}$ + additive terms \\[2pt]
Acc.\ (RSAG) \citep{yu2025stochastic} & convex, high prob. & generalized & affine var. & statistical $\eps^{-2}$ (gen.\ noise); accel.\ opt.\ $\eps^{-1/2}$ (low noise)$^{\ddagger}$ \\[2pt]
Stoch.\ NAG \citep{anonymous2026nesterov} & convex, high prob. & $(L_0,L_1,L_2)$-type & affine var. & $\eps^{-1/2}+\tfrac{A+B+C}{\eps^2}$ (parameter factors suppressed)$^{\S}$ \\[2pt]
\textbf{\ARCSG{} (Thm.~\ref{app:thm:convex})} & convex, high prob. & $(\Lz,\Lo)$ & norm-sub-Gaussian & $1+\Lo R_0 + \kbar^{\frac12}\sqrt{\frac{\Lz\Rb^2}{\eps}} + \kbar^{3}\frac{\sigma^2\Rb^2}{\eps^2}$ \\[3pt]
\textbf{\ARCSG{} (Thm.~\ref{app:thm:strongly})} & $\mu$-str.\ cvx, high prob. & $(\Lz,\Lo)$ & norm-sub-Gaussian & $(1+\Lo R_0)\left(1+\sqrt{\frac{\Lz}{\mu}}\right) + \sqrt{\frac{\Lz}{\mu}}\logp\!\left(\frac{\mu R_0^2}{\eps}\right) + \frac{\sigma^2}{\mu\eps}$\; ($\eps\le\frac{\mu}{36\Lo^2}$) \\
\bottomrule
\end{tabular}%
}
\caption{Complexity comparison, up to absolute constants and polylogarithmic factors ($\tO$-notation); $R$ denotes the respective initial-distance quantity of each work, $F_0=f(x^0)-f^\star$, $\kbar=1+\Lo\Rb$, $\Rb=3R_0$. The table is split by \emph{oracle model}: panel (A) assumes exact gradients, panel (B) a stochastic first-order oracle; entries \emph{across} panels are not comparable, and within panel (B) the guarantee type (in expectation versus high probability) and parameter knowledge differ line by line. In the AGMsDR row, $\nu$ bounds the number of additional oracle calls of the one-dimensional line search per iteration; arithmetic costs of line searches and projections are not counted anywhere in the table. The additive terms of \citet{gaash2025clipped}, the burn-in of \citet{sgd2025generalized}, and the additive $\Lo$-terms of \citet{tyurin2026near} depend on their respective problem parameters and initial gaps, and we refer to the cited works for their exact form. The entry marked $\ddagger$ is the convex RSAG guarantee of \citet{yu2025stochastic}: $\tO(T^{-1/2})$ under general relaxed affine-variance noise, improving to an accelerated optimization dependence in its low-noise regime. Our additive exponential-moment condition is the specialization $A=B=0,C=\sigma^2$ of that noise family. The entry marked $\S$ is the public ICLR 2026 submission \citep{anonymous2026nesterov}; its rate is $\tO(T^{-2}+\sqrt{(A+B+C)/T})$, and at $p=1,L_2=0,A=B=0,C=\sigma^2$ its convex accuracy exponents coincide with ours. The rows are therefore not separated by those exponents; ARC-SG differs in certified-query geometry, explicit $(\Lz,\Lo)$ accounting, and its strongly convex restart theorem. For $\Lo=0$ our bounds recover the smooth-case complexities of the first two rows of panel (B), whose $\eps$-exponents are optimal. Our deterministic specialization is included only for orientation: it is \emph{not} state of the art in panel (A) (see the remarks after Theorem~\ref{app:thm:strongly}). The \ARCSG{} rows are printed without the $\eps$-independent burn-in $\tO((1+\Lo R_0)\sigma^2\Lo^2/\Lz^2+\kbar^{3}\sigma^2\Lo^2/\Lz^2)$ of Theorem~\ref{app:thm:convex} (for Theorem~\ref{app:thm:strongly}, $\tO(\sigma^2\Lo^3R_0/\mu^2+\kbar^{3}\sigma^2\Lo^2/\Lz^2)$), which is not dominated by the statistical term over the full accuracy range; and their $\tO$-forms treat the initial-gap bound $\Dcert$ as an externally supplied certificate---under the default gap certificate of Lemma~\ref{lem:gap0}, with $q:=\Lo R_0$, the \PhaseI{} entry terms scale as $(1+q)^2$ in the deterministic part and as $(1+q)^4\sigma^2\Lo^2/\Lz^2$ in the statistical burn-in (Corollary~\ref{cor:phase1-default} and Remarks~\ref{app:rem:convex-default}--\ref{rem:strongly-default}). Method abbreviations: AGMsDR is the Accelerated Gradient Method with Small-Dimensional Relaxation of \citet{vankov2024optimizing}; AGD is accelerated gradient descent; AGM is accelerated gradient method; AC-SA is the Accelerated Stochastic Approximation; ClipSSTM is the clipped stochastic similar-triangles method (SSTM) of \citet{gorbunov2020stochastic}. Cell shorthands: ``in exp.''\ = in expectation; ``high prob.''\ = high probability; ``$\mu$-str.\ cvx''\ = $\mu$-strongly convex; ``sub-Gauss.''\ = sub-Gaussian; ``gen.\ noise''\ = general noise; ``accel.\ opt.''\ = accelerated optimization; ``affine var.''\ = affine variance; ``Recursive reg.''\ = recursive regularization.}
\label{app:tab:comparison}
\end{table*}

\section{Strong-growth and interpolation extensions}
\label{app:interp}

This appendix contains the complete treatment of the overparameterized regime: the setting and its relation to exact interpolation (Appendix~\ref{app:interp-setting}), the main guarantees (Appendix~\ref{app:interp-results}), the hop-and-solve algorithm and its analysis (Appendices~\ref{app:interp-tools}--\ref{app:interp-proofs}), the obstruction behind Open Problem~\ref{op:interp} (Appendix~\ref{app:interp-obstruction}), and the corresponding numerical illustration (Appendix~\ref{app:interp-experiments}). The material is self-contained given Sections~\ref{sec:prelim}--\ref{app:sec:main}. This appendix assumes $\Lo>0$ (equivalently, it applies the $L_{1,\mathrm{eff}}$ convention of Algorithm~\ref{app:alg:arc}).

\subsection{Setting: strong growth under generalized smoothness}
\label{app:interp-setting}

Modern overparameterized models often \emph{interpolate}: a single $x^{\mathrm{int}}$ minimizes every $f_\xi$ simultaneously. In the smooth case this makes SGD, and with momentum also accelerated methods, converge at deterministic-like rates \citep{schmidt2013fast,ma2018power,vaswani2019fast,liu2020accelerating}. The generalized-smooth analogue that our analysis actually uses is the following strong growth condition.

\begin{assumption}[SGC-GS (strong growth condition under generalized smoothness)]
\label{ass:sgc}
Each $f_\xi$ is convex, differentiable and $(\Lz,\Lo)$-smooth in the sense of \eqref{eq:gs}; the oracle returns $g(x,\xi)=\grad f_\xi(x)$; and there exists $\rho\ge1$ such that for all $x\in\R^d$,
\begin{equation}
\label{eq:sgc}
\E_\xi\norm{\grad f_\xi(x)}^2 \;\le\; \rho\,\norm{\grad f(x)}^2 .
\end{equation}
For the theorems below we use the almost-sure variant
\begin{equation}
\label{eq:sgc-as}
\norm{\grad f_\xi(x)}\le\sqrt{\rho}\,\norm{\grad f(x)}\qquad\text{for a.e.\ }\xi\text{ and all }x\in\R^d,
\end{equation}
which implies \eqref{eq:sgc}; Remark~\ref{rem:mom} discusses the obstructions to working under \eqref{eq:sgc} alone.
\end{assumption}

Exact interpolation is a \emph{motivating special case} of Assumption~\ref{ass:sgc}, not an equivalent condition; the next lemma makes the logical relation precise. To avoid the ambiguity of writing ``$f_\xi(x^{\mathrm{int}})=\min f_\xi$'', we state the hypothesis explicitly: the minimum is over the decision variable $x\in\R^d$, for a fixed realization $\xi$, and the minimizing point is required to be the \emph{same} $x^{\mathrm{int}}$ for almost every $\xi$.

\begin{definition}[Exact interpolation]
\label{def:interp}
The family $\{f_\xi\}$ \emph{interpolates} at $x^{\mathrm{int}}\in\R^d$ if
\begin{equation}
\label{eq:interp-def}
f_\xi(x^{\mathrm{int}})=\inf_{x\in\R^d}f_\xi(x)\qquad\text{for a.e.\ }\xi ,
\end{equation}
i.e., the common point $x^{\mathrm{int}}$ minimizes every individual loss $f_\xi$ over the decision space. Equivalently, since each $f_\xi$ is convex and differentiable, $\grad f_\xi(x^{\mathrm{int}})=0$ for a.e.\ $\xi$. Note that \eqref{eq:interp-def} forces $x^{\mathrm{int}}\in X^\star$ and $f^\star=\E_\xi f_\xi(x^{\mathrm{int}})$.
\end{definition}

\begin{lemma}[Interpolation implies generalized weak growth]
\label{lem:interp-implies-sgc}
Let each $f_\xi$ be convex, differentiable and $(\Lz,\Lo)$-smooth, and let the family interpolate at $x^{\mathrm{int}}$ in the sense of Definition~\ref{def:interp}. Write
\begin{equation}
\label{eq:gaps}
\Delta_x:=f(x)-f^\star,\qquad \Delta_\xi(x):=f_\xi(x)-f_\xi(x^{\mathrm{int}})\ \ge0 .
\end{equation}
Then for every $x\in\R^d$,
\begin{equation}
\label{eq:wgc}
\E_\xi\norm{\grad f_\xi(x)}^2\;\le\;4\Lz\,\Delta_x+8\Lo^2\,\E_\xi\big[\Delta_\xi(x)^2\big].
\end{equation}
Assume in addition the \emph{bounded dissimilarity} condition
\begin{equation}
\label{eq:dissim}
\Delta_\xi(x)\le\beta\,\Delta_x\qquad\text{for a.e.\ }\xi\text{ and all }x\in\R^d,
\end{equation}
for some $\beta\ge1$. Then, for every $\Delta>0$, the strong growth condition \eqref{eq:sgc} holds on the sublevel set $\mathcal S_\Delta:=\{x:\Delta_x\le\Delta\}$ with
\begin{equation}
\label{eq:rho-delta}
\begin{split}
\rho(\Delta)&\;\le\;\frac{4\Lz+8\Lo^2\beta^2\Delta}{\gamma(\Delta)},\\
\gamma(\Delta)&:=\inf\Big\{\tfrac{\norm{\grad f(x)}^2}{\Delta_x}\ :\ x\in\mathcal S_\Delta,\ \Delta_x>0\Big\} .
\end{split}
\end{equation}
At points with $\Delta_x=0$ one has $\grad f_\xi(x)=0$ for a.e.\ $\xi$, so \eqref{eq:sgc} holds there trivially for any $\rho\ge1$. If $f$ is $\mu$-strongly convex then $\gamma(\Delta)\ge2\mu$ and
\begin{equation}
\label{eq:rho-delta-sc}
\rho(\Delta)\le\frac{2\Lz+4\Lo^2\beta^2\Delta}{\mu}\ \le\ 2+\frac{2\beta\big(\Lz+2\Lo^2\beta\Delta\big)}{\mu} .
\end{equation}
\end{lemma}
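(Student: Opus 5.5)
The plan is to apply the localization inequality of Lemma~\ref{app:lem:localization} to each individual loss $f_\xi$ rather than to $f$. Fix $\xi$ outside the exceptional null set. By Definition~\ref{def:interp}, $f_\xi$ is convex, differentiable and $(\Lz,\Lo)$-smooth, and it attains its minimum at $x^{\mathrm{int}}$. The proof of Lemma~\ref{app:lem:localization} uses only these facts; in particular, the step $y=x-s\grad f_\xi(x)$ together with Lemma~\ref{app:lem:descent} only needs $f_\xi(y)\ge\inf f_\xi=f_\xi(x^{\mathrm{int}})$. It therefore gives
\[
\norm{\grad f_\xi(x)}\le 2\Lo\Delta_\xi(x)+\sqrt{2\Lz\Delta_\xi(x)} .
\]
Squaring with $(a+b)^2\le2a^2+2b^2$ yields
\[
\norm{\grad f_\xi(x)}^2\le 8\Lo^2\Delta_\xi(x)^2+4\Lz\Delta_\xi(x).
\]
Taking expectations and using $\E_\xi\Delta_\xi(x)=f(x)-\E_\xi f_\xi(x^{\mathrm{int}})=\Delta_x$, since $f^\star=\E_\xi f_\xi(x^{\mathrm{int}})$ by Definition~\ref{def:interp}, gives \eqref{eq:wgc}. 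A minor measurability point: the expectation $\E f_\xi(x^{\mathrm{int}})$ must be finite and equal to $f(x^{\mathrm{int}})$. This follows because $f=\E f_\xi$ is finite everywhere.

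For \eqref{eq:rho-delta}, insert the dissimilarity bound \eqref{eq:dissim} into \eqref{eq:wgc}:
\[
\E\norm{\grad f_\xi}^2\le(4\Lz+8\Lo^2\beta^2\Delta_x)\Delta_x\le(4\Lz+8\Lo^2\beta^2\Delta)\Delta_x
\]
on $\mathcal S_\Delta$. Where $\Delta_x>0$, the definition of $\gamma(\Delta)$ gives $\Delta_x\le\norm{\grad f(x)}^2/\gamma(\Delta)$, which is the claim. If $\gamma(\Delta)=0$ the bound is vacuous, and I will read it that way. At points with $\Delta_x=0$, the nonnegative quantities $\Delta_\xi(x)$ have mean $\Delta_x=0$, so they vanish a.s. The per-$\xi$ localization bound then forces $\grad f_\xi(x)=0$ a.s., and \eqref{eq:sgc} holds trivially.

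For the strongly convex case, $\mu$-strong convexity gives the Polyak--Łojasiewicz inequality $\norm{\grad f(x)}^2\ge2\mu\Delta_x$, obtained by minimizing the strong-convexity lower bound. Hence $\gamma(\Delta)\ge2\mu$, which gives the first inequality in \eqref{eq:rho-delta-sc}. The second inequality is elementary but is the one place requiring care. It needs
\[
\frac{2\Lz}{\mu}\le 2+\frac{2\beta\Lz}{\mu}, \qquad \frac{4\Lo^2\beta^2\Delta}{\mu}\le\frac{4\Lo^2\beta^2\Delta}{\mu}.
\]
The first holds since $\beta\ge1$, and the second is an identity. So the added constant $2$ is just slack, and the bound follows term by term. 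I expect no real obstacle. The main thing to check is the justification that the localization lemma applies to each $f_\xi$ using only $\inf f_\xi=f_\xi(x^{\mathrm{int}})$, without needing $X^\star\neq\varnothing$ for $f$ separately.
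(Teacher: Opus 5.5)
Your proof is correct and takes essentially the same route as the paper: you apply the localization inequality to each $f_\xi$ relative to its minimizer $x^{\mathrm{int}}$, square, take expectations using $\E_\xi\Delta_\xi(x)=\Delta_x$, and then insert the dissimilarity bound, the definition of $\gamma(\Delta)$, and the Polyak--\L{}ojasiewicz inequality. The differences are cosmetic. You square the relaxed bound $2\Lo\Delta_\xi+\sqrt{2\Lz\Delta_\xi}$ and land directly on $8\Lo^2$, whereas the paper squares the exact quadratic root to get $4\Lo^2$ and then relaxes; and at $\Delta_x=0$ you use the localization bound where the paper uses minimality of $x$ for each $f_\xi$.
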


\begin{proof}
Fix $x$ and a realization $\xi$ for which \eqref{eq:interp-def} holds, and abbreviate $G_\xi:=\norm{\grad f_\xi(x)}$, $\Delta_\xi:=\Delta_\xi(x)$. Since $f_\xi$ is convex, $(\Lz,\Lo)$-smooth and attains its infimum at $x^{\mathrm{int}}$, Lemma~\ref{app:lem:localization} applies to $f_\xi$ and gives
\begin{equation}
\label{eq:loc-xi}
G_\xi^2\;\le\;2\big(\Lz+\Lo G_\xi\big)\,\Delta_\xi .
\end{equation}
Viewing \eqref{eq:loc-xi} as the quadratic inequality $G_\xi^2-2\Lo\Delta_\xi G_\xi-2\Lz\Delta_\xi\le0$ in $G_\xi\ge0$ and taking the positive root,
\begin{equation}
\label{eq:loc-root}
G_\xi\;\le\;\Lo\Delta_\xi+\sqrt{\Lo^2\Delta_\xi^2+2\Lz\Delta_\xi} .
\end{equation}
Squaring \eqref{eq:loc-root} and using $(a+b)^2\le2a^2+2b^2$,
\begin{equation}
\label{eq:loc-sq}
G_\xi^2\;\le\;2\Lo^2\Delta_\xi^2+2\big(\Lo^2\Delta_\xi^2+2\Lz\Delta_\xi\big)\;=\;4\Lo^2\Delta_\xi^2+4\Lz\Delta_\xi .
\end{equation}
Taking expectations in \eqref{eq:loc-sq} and using
\begin{equation}
\label{eq:mean-gap}
\E_\xi\big[\Delta_\xi(x)\big]=f(x)-\E_\xi f_\xi(x^{\mathrm{int}})=f(x)-f^\star=\Delta_x
\end{equation}
(the middle equality is Definition~\ref{def:interp}) yields \eqref{eq:wgc}, since $4\Lo^2\le8\Lo^2$.

Now assume \eqref{eq:dissim} and let $x\in\mathcal S_\Delta$ with $\Delta_x>0$. Then $\E_\xi[\Delta_\xi^2]\le\beta^2\Delta_x^2\le\beta^2\Delta\,\Delta_x$, so \eqref{eq:wgc} gives
\begin{equation}
\label{eq:wgc-sublevel}
\E_\xi\norm{\grad f_\xi(x)}^2\;\le\;\big(4\Lz+8\Lo^2\beta^2\Delta\big)\,\Delta_x .
\end{equation}
Dividing \eqref{eq:wgc-sublevel} by $\norm{\grad f(x)}^2>0$ (positive because $\Delta_x>0$) and bounding $\Delta_x/\norm{\grad f(x)}^2\le1/\gamma(\Delta)$ by the definition of $\gamma(\Delta)$ in \eqref{eq:rho-delta} proves \eqref{eq:sgc} with the constant \eqref{eq:rho-delta}. If $\Delta_x=0$ then $x\in X^\star$, hence $f_\xi(x)=f_\xi(x^{\mathrm{int}})$ for a.e.\ $\xi$ by \eqref{eq:interp-def} and \eqref{eq:mean-gap}, and minimality gives $\grad f_\xi(x)=0$ a.s., so both sides of \eqref{eq:sgc} vanish. Finally, if $f$ is $\mu$-strongly convex, the standard Polyak--\L{}ojasiewicz inequality $\norm{\grad f(x)}^2\ge2\mu\Delta_x$ gives $\gamma(\Delta)\ge2\mu$, and substituting into \eqref{eq:rho-delta} yields the first bound in \eqref{eq:rho-delta-sc}; the second follows from $\beta\ge1$ and $\Lz\ge\mu$.
\end{proof}

Lemma~\ref{lem:interp-implies-sgc} shows that interpolation alone yields only the weak-growth inequality \eqref{eq:wgc}, whose second term is \emph{quadratic} in the individual gaps; upgrading it to strong growth genuinely requires extra structure, namely bounded dissimilarity \eqref{eq:dissim} together with a quadratic-growth-type lower bound $\gamma(\Delta)>0$. Assumption~\ref{ass:sgc} is therefore the primitive hypothesis of this appendix, with interpolation as a motivating special case rather than an equivalent one.

\paragraph{Why multiplicative noise is harmless here.} The essential consequence of \eqref{eq:sgc} for algorithm design is that the oracle noise is \emph{multiplicative}: at any query $x$,
\begin{equation}
\label{eq:mult-noise}
\E\norm{g(x,\xi)-\grad f(x)}^2\le\E_\xi\norm{\grad f_\xi(x)}^2\le\rho\,\norm{\grad f(x)}^2 .
\end{equation}
Two structural facts of our framework then render this noise harmless. First, gradient scales along the outer loop are certified: Lemma~\ref{app:lem:prox-grad} and the cap bookkeeping keep $\norm{\grad f}$ bounded by the running cap, so mini-batches of size $\tO(\rho)$ make every certificate (cap updates, terminations) reliable. Second, and less obviously, the noise \emph{floor} of a proximal subproblem with parameter $\lambda$ is proportional to
\begin{equation}
\label{eq:floor-scaling}
\rho\,\norm{\grad f(\xhat)}^2=\rho\,(\lambda+\eta)^2\norm{\xhat-c}^2 ,
\end{equation}
where $\eta\ge0$ is the multiplier of the ball constraint (see \eqref{eq:kkt}); our $\lambda$-schedules tie this quantity to the current \emph{certified} optimality gap rather than to the target accuracy, so it contracts together with the iterates.

These two facts make a three-stage \emph{hop-and-solve} variant of \ARCSG{} (Algorithm~\ref{alg:interp}) converge linearly. After \PhaseI{}, well-conditioned proximal levels with $\lambda=4\Lo\Gest$ drain large gradients at cost $\tO(\rho)$ per level (the \emph{hot} stage); then ball-constrained proximal steps of radius $\tfrac{1}{2\Lo}$ with a gap-proportional $\lambda$-schedule ``hop'' towards $x^\star$, strong convexity converting the certified gap into the distance bound $\norm{c-x^\star}\le\sqrt{2\,\mathrm{gap}/\mu}$ that accelerates the hops as the gap shrinks (the \emph{cold} stage); once $x^\star$ is provably inside the current ball, a single run of projected batched SGD with the $(\Lz,\Lo)$-stepsize, which tolerates multiplicative noise natively, finishes at a linear rate (the \emph{interior} stage). The price of walking the wide, flat part of the valley is the $\eps$-independent burn-in $\rho\kbar+\rho(1+\Lz/\mu)^3$; we do not believe the cube is tight.

\subsection{Main results in the overparameterized regime}
\label{app:interp-results}

\begin{theorem}[Strong growth, strongly convex]
\label{thm:interp-strong}
Let Assumptions~\ref{app:ass:convex}, \ref{app:ass:gs} and \ref{ass:sgc} hold with the a.s.\ variant \eqref{eq:sgc-as}, and let $f$ be $\mu$-strongly convex. Then the hop-and-solve variant of \ARCSG{} (Algorithm~\ref{alg:interp}) with mini-batches of size $\tO(\rho)$ at all measurement and certificate steps (Stage-2 cold-stage inner iterations use batches of size $\tO\big(\rho(1+\Lz/\mu)^{3/2}\big)$ per Lemma~\ref{lem:cold}(c)) returns $\xhat$ with $f(\xhat)-f^\star\le\eps$ with probability at least $1-\alpha$ using
\begin{equation}
\label{eq:thm-interp-strong}
N=\tO\Big(\rho\Big(1+\tfrac{\Lz}{\mu}\Big)\logp\!\left(\tfrac{\Dcert}{\eps}\right)\;+\;\rho\,\kbar\;+\;\rho\Big(1+\tfrac{\Lz}{\mu}\Big)^{3}\Big)
\end{equation}
oracle calls; the last two terms are independent of $\eps$, and no $\sigma^2$-driven term or noise floor is present. Here $\tO$ treats the certificate as an externally supplied input and is monotone in it; under the default $\Dcert=\Ddef$ of Lemma~\ref{lem:gap0} the Stage-0 term is $\tO\big(\rho(1+q)^{4}\big)$ with $q:=\Lo R_0$ (Proposition~\ref{prop:phase1-interp}).
\end{theorem}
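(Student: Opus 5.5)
The plan is to analyze the hop-and-solve variant stage by stage, following the three-stage description preceding the theorem (Stage~0 is \PhaseI{}, then the hot, cold and interior stages). Throughout we work on a single good event obtained by a union bound over a deterministically bounded number of measurements and inner runs, as in Table~\ref{tab:events}. The basic replacement for Lemma~\ref{lem:batch} is a multiplicative one. Under \eqref{eq:sgc-as}, the noise at a query $x$ satisfies $\norm{\zeta}\le(1+\sqrt\rho)\norm{\grad f(x)}$ almost surely. Hence a batch of size $\tO(\rho)$ gives $\norm{\gest-\grad f(x)}\le\tfrac1{16}\norm{\grad f(x)}$ with high probability, by vector Hoeffding with the bounded-noise scale $\sqrt\rho\,G_x$. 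This relative-accuracy estimate replaces every additive $\sigma$-bound. In particular, the \PhaseI{} argument of Proposition~\ref{app:prop:phase1} goes through verbatim with $\eps_1$ replaced by a relative error, since the proof only used $\eps_1/\norm{\gest_t}\le1/96$. This gives the Stage~0 cost $\tO(\rho(1+\Lo R_0)\logp(\Lo^2\Dcert/\Lz))$, which is dominated by the stated terms (and equals $\tO(\rho(1+q)^4)$ under $\Ddef$ by Corollary~\ref{cor:phase1-default}).

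\emph{Hot stage.} We reuse Phase~II gradient-driven levels with $\lambda_j=4\Lo\Gest_j$ while $\Gest_j\ge\Gbar$. The certified-ball machinery (Lemmas~\ref{app:lem:ball}, \ref{app:lem:prox-grad}, \ref{lem:inv}) is oracle-agnostic. The only change is inside \RSTM{}: the noise variance at queries in $\ball{c_j}{2r_j}$ is at most $\rho e^2(\Gest_j+\Gbar)^2$, so the per-query scale is proportional to the current cap rather than to $\sigma$. Because $\delta_j$ and $s_j$ scale with $\Gest_j$, the ratio $\sigma_{\rm eff}^2/(\lambda_j\delta_j)$ becomes $\tO(\rho)$ times a polylog. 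The condition number is $\kappa_j\le3$. The travel--halving count of Lemma~\ref{lem:level-count} then gives $\tO(\kbar)$ levels, each costing $\tO(\rho)$, for a total of $\tO(\rho\kbar)$.

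\emph{Cold stage.} Here we use strong convexity. We run finisher-type ball-constrained prox steps of radius $\rsafe=1/(2\Lo)$, with $\lambda_k$ proportional to the current certified gap $\mathrm{gap}_k$ divided by $\norm{c_k-x^\star}^2\le 2\,\mathrm{gap}_k/\mu$. The contraction argument of Lemma~\ref{lem:cert}, with $\theta'=\min\{1,\rsafe/\norm{c-x^\star}\}$, gives $\mathrm{gap}_{k+1}\le(1-\theta_k)\mathrm{gap}_k+c\,\theta_k\mathrm{gap}_k$. This is a constant-factor decrease once the ball contains $x^\star$, and before that the number of hops is bounded by summing $1/\theta_k\propto\Lo\sqrt{\mathrm{gap}_k/\mu}$ over a geometric sequence, i.e.\ $\tO(\kbar)$-type. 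Each hop's subproblem has condition number $\le 1+L/\lambda\lesssim 1+\Lz/\mu$. Its noise floor \eqref{eq:floor-scaling} is proportional to $\rho\,\mathrm{gap}\cdot(\Lz/\mu)$, so choosing the inner batch $\tO(\rho(1+\Lz/\mu)^{3/2})$ makes the statistical error a fixed fraction of $\mathrm{gap}_k$. This yields the $\eps$-independent $\rho(1+\Lz/\mu)^3$ burn-in and Lemma~\ref{lem:cold}(c).

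\emph{Interior stage.} Once $\sqrt{2\,\mathrm{gap}/\mu}\le\rsafe/2$, so that $x^\star$ is inside the ball, we run projected batched SGD with the $(\Lz,\Lo)$-stepsize on $\ball{c}{\rsafe}$. Here $f$ is $\tO(\Lz)$-smooth and $\mu$-strongly convex, and the noise is multiplicative, $\E\norm{\zeta}^2\le\rho G_x^2\le 2\rho L\Delta_x$ by Lemma~\ref{app:lem:localization}. The standard SGC analysis gives contraction $1-\Theta(\mu/(\rho L))$ per step, with high-probability concentration via Lemma~\ref{lem:azuma} using the a.s.\ bound. This yields $\tO(\rho(1+\Lz/\mu)\logp(\Dcert/\eps))$. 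I expect the main obstacle to be the cold stage: making the gap-proportional $\lambda$-schedule simultaneously certify containment, keep the noise floor a constant fraction of the shrinking gap, and control cap drift without an additive $\sigma$ budget. I would first try to prove a one-hop lemma that holds uniformly in the gap, and then sum it geometrically.
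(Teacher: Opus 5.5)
The overall four-stage architecture matches the paper's (Proposition~\ref{prop:phase1-interp}, Lemmas~\ref{lem:hot}--\ref{lem:interior}), but the hot stage as you propose it does not go through. You reuse Phase~II gradient-driven levels, count them with Lemma~\ref{lem:level-count}, and assert $\tO(\rho)$ cost per level. These claims require incompatible schedules.

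Lemma~\ref{lem:level-count} rests on the drift budget of Lemma~\ref{lem:drift}, i.e.\ $s_j\le\iota_{\mathrm{abs}}/L_j$ with $\iota_{\mathrm{abs}}=\Lz/(8\Lo J_{\mathrm{tot}})$. At a hot level this gives $\lambda_js_j\lesssim\iota_{\mathrm{abs}}$ and $\lambda_j\delta_j\lesssim\iota_{\mathrm{abs}}^2$. Against multiplicative noise of variance $\asymp\rho\Gest_j^2$ on the ball, the batch rule of Remark~\ref{rem:rstm-noise} then charges of order $\rho(\Gest_j/\Gbar)^2J_{\mathrm{tot}}^2=\tO(\rho\kbar^2)$ per level, not $\tO(\rho)$.

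If instead you take $s_j\asymp1/\Lo$ so that $\lambda_j\delta_j\asymp\Gest_j^2$, then $L_js_j$ is a constant fraction of $\Gest_j$. The cap can rise by that much at every level, and Lemmas~\ref{lem:drift} and \ref{lem:level-count} no longer apply. In any case Lemma~\ref{lem:level-count} bounds levels only per excursion through a dyadic band; the paper stresses it does not bound the number of excursions, and Phase~II's unconditional count comes from the hard cap plus the finisher. So it cannot certify that your hot stage exits after $\tO(\kbar)$ levels.

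The missing idea is Lemma~\ref{lem:hot}. Because the noise is multiplicative, a relative re-anchor with batch $B_{\mathrm{rel}}=\tO(\rho)$ after every level gives $\norm{\grad f(c)}\le\Gest\le\tfrac97\norm{\grad f(c)}$. No drift budget is then needed, so one can use the fixed transfer step $s=1/(400\Lo)$ and $\delta=\lambda s^2/2$, which gives $\lambda\delta\asymp\Gest^2$ and $\tO(\rho)$ per level. Termination comes from the combined potential $\Psi=f(c)-f^\star+\Gest/(100\Lo)$, which drops by at least $\Deltabar/300$ on productive and collapsing levels alike. Strong convexity, $f(c)-f^\star\le2\norm{\grad f(c)}^2/\mu$, gives $\Psi_0=O(\Deltabar\,\Lz/\mu)$ and hence $O(\Lz/\mu)$ hot levels unconditionally.

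Three other steps also need repair.

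\emph{Interior stage.} Lemma~\ref{lem:azuma} needs a deterministic weight bound. Bounding $\norm{x_t-x^\star}$ and the noise scale by their worst case on $\ball{c}{\rsafe}$ yields a deviation term that does not shrink with the gap, which is exactly the noise floor the theorem excludes. The paper instead proves only $\E\norm{x_T-x^\star}^2\le(1-\mu/(4L'))^T\norm{c-x^\star}^2$, using fixed step $1/(4L')$, batch $\lceil24\rho\rceil$ and co-coercivity on the ball. It then applies Markov's inequality, runs $m=\lceil\log_4(8/\alpha)\rceil$ independent copies, and selects by relative gradient measurements converted into a gap through $f-f^\star\le\norm{\grad f}^2/(2\mu)$.

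\emph{Cold stage.} The hop count is $O(\Lz/\mu)$, not $\kbar$-type. The entry certificate is $\Delta_1=2\Gest^2/\mu\le8\Gbar^2/\mu$, so $\sum_j1/\theta_j\lesssim\Lo\sqrt{\Delta_1/\mu}\lesssim\Lz/\mu$. The choice $\lambda=2\Lo^2\theta\Delta$, which is $\asymp\Lo\sqrt{\mu\Delta}$ for $\theta<\tfrac12$ and makes $\tfrac\lambda2\rsafe^2=\theta\Delta/4$, keeps $\kappa_j=O(1+\Lz/\mu)$ at every hop. Only then does hops$\times\sqrt{\kappa}\times$batch give $(1+\Lz/\mu)^3$.

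\emph{\PhaseI{}.} Your claim that the proof only uses $\eps_1/\norm{\gest_t}\le1/96$ overlooks the distance recursion \eqref{eq:phase1-dist}. That recursion accumulates the estimation error over $\Theta(\Lambda_{\mathrm{lg}}\Lo R_0)$ steps, so it needs relative accuracy $O(1/\Lambda_{\mathrm{lg}})$, i.e.\ a batch $\propto\rho\Lambda_{\mathrm{lg}}^2$ rather than a $1/16$-relative batch. That factor is what produces $(1+q)^4$ under $\Ddef$.
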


\begin{theorem}[Strong growth, convex --- drift-aware variant]
\label{thm:interp-convex}
Let Assumptions~\ref{app:ass:convex}, \ref{app:ass:gs} and \ref{ass:sgc} hold with the a.s.\ variant \eqref{eq:sgc-as}. Consider the following \emph{drift-aware convex variant} of the hop-and-solve method: Stages~1--2 of Algorithm~\ref{alg:interp} are run on the regularized objective $f_{\mathrm{reg}}=f+\tfrac{\mu_{\mathrm{reg}}}{2}\norm{\cdot-c^{\mathrm{I}}}^2$, with $\mu_{\mathrm{reg}}=\eps/\Rb^2$, centered at the Stage-0 output $c^{\mathrm{I}}$ (a choice essential both for the entry certificate and for the moderate size of the certified smoothness constant below); the oracle is drift-corrected to $\gest_{\mathrm{reg}}(x)=\gest(x)+\mu_{\mathrm{reg}}(x-c^{\mathrm{I}})$; the stages use the strong-convexity constant $\mu_{\mathrm{reg}}$ and the critical scale $\bar G'':=\Lz'/\Lo$ in place of $\Gbar$, where $\Lz':=\Lz+\mu_{\mathrm{reg}}+\tfrac{25}{8}\Lo\Gbar=\tfrac{33}{8}\Lz+\mu_{\mathrm{reg}}$: by the convexity comparison $\norm{\grad f(x)}^2\le\norm{\grad f_{\mathrm{reg}}(x)}^2+\norm{\grad f(c^{\mathrm{I}})}^2$ and the entry bound $\norm{\grad f(c^{\mathrm{I}})}\le\tfrac{25}{8}\Gbar$, the objective $f_{\mathrm{reg}}$ is \emph{globally} $(\Lz',\Lo)$-smooth in the sense of Assumption~\ref{app:ass:gs} (Lemma~\ref{lem:cert-reg}); the Stage-1 transfer step and level target are those of Lemma~\ref{lem:hot}, $s:=\tfrac{1}{400\Lo}$ and $\delta_{\mathrm{lvl}}:=\lambda s^2/2$; the cap anchors use the batch $B''_{\mathrm{rel}}:=\lceil1.6\cdot10^5c_g^2\rho\ell_\star\rceil$ and the update $\Gest^{+}:=\tfrac87\norm{\gest}+\tfrac{\bar G''}{28}$ of \eqref{eq:anchor-reg}; the Stage-2 exit threshold is $\Delta\le\min\{\mu_{\mathrm{reg}}/(32\Lo^2),\eps/2\}$; and Stage~3 is skipped (it is unnecessary, since the fixed target $\eps$ is already met at the Stage-2 exit). This variant returns an $\eps$-solution of \eqref{app:eq:problem} with probability at least $1-\alpha$ using
\begin{equation}
\label{eq:thm-interp-convex}
N=\tO\Big(\rho\,\kbar\;+\;\rho\Big(1+\tfrac{\Lz\Rb^2}{\eps}\Big)^{3}\Big)
\end{equation}
oracle calls. This is \emph{not} a black-box application of Theorem~\ref{thm:interp-strong}: the almost-sure strong growth of $f$ need not transfer to $f_{\mathrm{reg}}$, because $\grad f(x)$ and the regularization drift $\mu_{\mathrm{reg}}(x-c^{\mathrm{I}})$ may partly cancel; what survives regularization is the convexity-based comparison $\norm{\grad f(x)}^2\le\norm{\grad f_{\mathrm{reg}}(x)}^2+\norm{\grad f(c^{\mathrm{I}})}^2$, which yields a \emph{global} smoothness certificate whose first constant is inflated by the center-dependent term $\Lo\norm{\grad f(c^{\mathrm{I}})}$, and the centering at $c^{\mathrm{I}}$ rather than $x^0$ is what keeps that term moderate, $\norm{\grad f(c^{\mathrm{I}})}\le\tfrac{25}{8}\Gbar$ (Appendix~\ref{app:interp-proofs}). With $A:=1+\Lz\Rb^2/\eps$ the bound reads $\tO(\rho\kbar+\rho A^3)$: the only surviving $\kbar$-factor is the Stage-0 entry burn-in, which the variant shares with Theorem~\ref{thm:interp-strong}.
\end{theorem}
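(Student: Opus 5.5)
The proof reduces Theorem~\ref{thm:interp-convex} to a strongly convex, generalized-smooth, multiplicative-noise problem with modulus $\mu_{\mathrm{reg}}=\eps/\Rb^2$. We then reuse the Stage-1/Stage-2 machinery behind Theorem~\ref{thm:interp-strong}, with three modifications: the regularized smoothness certificate, the modified anchor rule, and the replacement of exact strong growth by a drift-aware noise bound.

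\emph{Entry and reduction.} Stage~0 is \PhaseI{} run on $f$ itself, where the a.s.\ strong growth gives multiplicative noise. With batches $\tO(\rho)$, a Proposition~\ref{app:prop:phase1}-type argument yields $c^{\mathrm I}$ satisfying $\norm{\grad f(c^{\mathrm I})}\le\tfrac{25}{8}\Gbar$, $\norm{c^{\mathrm I}-x^\star}\le\tfrac32R_0$ and $f(c^{\mathrm I})\le f(x^0)$. This costs $\tO(\rho\kbar)$, which is the first term of \eqref{eq:thm-interp-convex}.

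Next define $f_{\mathrm{reg}}$ and its minimizer $x_{\mathrm{reg}}$. Optimality of $x_{\mathrm{reg}}$ for $f_{\mathrm{reg}}$, compared against $x^\star$, gives two facts. First, $\norm{x_{\mathrm{reg}}-c^{\mathrm I}}\le\norm{x^\star-c^{\mathrm I}}\le\Rb$, using a Fejér-type argument as in Lemma~\ref{app:lem:prox-grad}. Second, for every $x$,
\[
f(x)-f^\star\le f_{\mathrm{reg}}(x)-f_{\mathrm{reg}}(x_{\mathrm{reg}})+\tfrac{\mu_{\mathrm{reg}}}{2}\Rb^2 .
\]
Hence an $\eps/2$-solution of $f_{\mathrm{reg}}$ is an $\eps$-solution of $f$, which is exactly why the Stage-2 exit threshold is capped by $\eps/2$.

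\emph{Certificates.} I would invoke Lemma~\ref{lem:cert-reg}, which I expect to prove as follows.
\begin{itemize}
\item Monotonicity of $\grad f$ gives $\inner{\grad f(x)-\grad f(c^{\mathrm I})}{x-c^{\mathrm I}}\ge0$. Expanding $\norm{\grad f_{\mathrm{reg}}}^2$ then yields the comparison $\norm{\grad f}^2\le\norm{\grad f_{\mathrm{reg}}}^2+\norm{\grad f(c^{\mathrm I})}^2$.
\item From this comparison, $\norm{\grad f(x)}\le\norm{\grad f_{\mathrm{reg}}(x)}+\tfrac{25}{8}\Gbar$.
\item Substituting into \eqref{eq:gs} and adding the exact $\mu_{\mathrm{reg}}$-Lipschitz drift gives global $(\Lz',\Lo)$-smoothness of $f_{\mathrm{reg}}$.
\end{itemize}
For the noise, the drift correction is deterministic, so the noise of $\gest_{\mathrm{reg}}$ equals that of $\gest$. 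That noise is bounded by $\sqrt\rho\,\norm{\grad f}\le\sqrt\rho\bigl(\norm{\grad f_{\mathrm{reg}}}+\tfrac{25}{8}\Gbar\bigr)$. This is a multiplicative term plus an additive floor proportional to $\sqrt\rho\,\bar G''$.

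The anchor update $\Gest^+=\tfrac87\norm{\gest}+\tfrac{\bar G''}{28}$ with batch $B''_{\mathrm{rel}}$ is designed to absorb that additive floor. A relative-accuracy concentration bound (Lemma~\ref{lem:batch} with a union bound) should make caps valid and within a constant factor of the truth, provided they stay above order $\bar G''$.

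\emph{Stage costs.} With these certificates, I would rerun Lemma~\ref{lem:hot}, where hot levels cost $\tO(\rho)$ each and there are $\tO(\kbar)$ of them, and Lemma~\ref{lem:cold}, the cold hops, with $\mu\to\mu_{\mathrm{reg}}$ and $\Lz\to\Lz'$. Since $\Lz'/\mu_{\mathrm{reg}}=O(1+\Lz\Rb^2/\eps)=O(A)$, the burn-in $\rho(1+\Lz/\mu)^3$ becomes $\rho A^3$. The linear-rate term $\rho(1+\Lz'/\mu_{\mathrm{reg}})\logp$ is dominated by it up to logarithms. Stage~3 is not needed because the exit already meets the fixed target.

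\emph{Main obstacle.} The hard step is the cold stage near the exit. There the true gradient of $f_{\mathrm{reg}}$ falls below $\bar G''$, so the additive noise floor $\rho(\bar G'')^2$ no longer contracts with the gap, unlike in \eqref{eq:floor-scaling}. I would try to show that the cold-stage analysis only needs the noise relative to the hop length $1/(2\Lo)$ and the gap certificate. Concretely, I would check that batches of size $\tO(\rho(1+\Lz'/\mu_{\mathrm{reg}})^{3/2})$ control a floor of size $\rho(\Lz')^2/\Lo^2$ down to the exit threshold $\min\{\mu_{\mathrm{reg}}/(32\Lo^2),\eps/2\}$. That is exactly where the cubic power of $A$ should come from.
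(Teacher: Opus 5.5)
Your proposal is correct and follows essentially the paper's route. It centers the regularizer at $c^{\mathrm I}$ and uses the monotonicity-based comparison of Lemma~\ref{lem:cert-reg}. It splits the regularized noise into a multiplicative part plus a level-independent additive scale $O(\sqrt\rho\,\bar G'')$ absorbed by the anchor $\Gest^{+}=\tfrac87\norm{\gest}+\tfrac{\bar G''}{28}$, reruns Lemmas~\ref{lem:hot}--\ref{lem:cold} under $(\Lz,\mu,\Gbar)\mapsto(\Lz',\mu_{\mathrm{reg}},\bar G'')$, and transfers an $\eps/2$-solution of $f_{\mathrm{reg}}$ via $f_{\mathrm{reg}}(x_{\mathrm{reg}})\le f^\star+\tfrac{\mu_{\mathrm{reg}}}{2}\norm{x^\star-c^{\mathrm I}}^2$.

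One correction concerns the obstacle you flag. The paper sums the additive-floor cost $O\bigl(\rho\,\bar G''^2\ell_\star/(\lambda_j\delta_j)\bigr)$ over the dyadic bands of Lemma~\ref{lem:cold}(b); the sum is dominated by the bottom band $\Delta_{\mathrm{stop}}=\mu_{\mathrm{reg}}/(32\Lo^2)$ and totals only $\tO(\rho A^2)$. The cube therefore comes from the early top-band hops, namely the $\Lo^2\Delta_j$ part of the $\Gest_j^2$-piece and the $\kappa_j^2$-piece, not from the floor. Your cruder bound still lands inside $\tO(\rho A^3)$: the worst per-hop floor cost $O(\rho A^2)$ times $O(A)$ hops gives exactly that order.
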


Table~\ref{tab:interp} positions these bounds against the smooth-case strong-growth literature and the generalized-smooth analysis of \citet{interp2026gs}.

\begin{table*}[t]
\centering
\footnotesize
\setlength{\tabcolsep}{4pt}
\begin{tabular}{@{}lll@{}}
\toprule
Reference & Smoothness & Rate ($\mu$-strongly convex) \\
\midrule
\citet{vaswani2019fast} & $L$, SGC($\rho$) & $\rho\frac{L}{\mu}$;\quad accelerated: $\sqrt{\rho\frac{L}{\mu}}$ \\[2pt]
\citet{liu2020accelerating} & $L$, SGC($\rho$) & $\sqrt{\rho\frac{L}{\mu}}$ \\[2pt]
Clipped SGD \citep{interp2026gs} & $(\Lz,\Lo)$, interpolation & $\rho\frac{\Lz}{\mu}$ + burn-in \\[2pt]
\textbf{Theorem~\ref{thm:interp-strong}} & $(\Lz,\Lo)$, SGC-GS & $\rho\tfrac{\Lz}{\mu}+\rho\kbar+\rho\tfrac{\Lz^3}{\mu^3}$ \\
\bottomrule
\end{tabular}
\caption{Strong-growth/interpolation regime: oracle calls in $\tO$-notation, with the common $\logp(\Dcert/\eps)$ factor of the leading terms omitted from all entries. SGC($\rho$) is the classical strong growth condition; SGC-GS is its generalized-smooth analogue (Assumption~\ref{ass:sgc}; our theorems use the a.s.\ variant \eqref{eq:sgc-as}); $\kbar=1+3\Lo R_0$, and the $\rho\kbar+\rho\Lz^3/\mu^3$ terms are an $\eps$-independent burn-in. Batched implementations of our method use batch size $\tO(\rho)$ at measurement and certificate steps (Stage-2 cold-stage inner iterations use $\tO\big(\rho(1+\Lz/\mu)^{3/2}\big)$, Lemma~\ref{lem:cold}(c)); single-sample variants of smooth-case accelerated methods achieve the $\sqrt{\rho}$-type dependence shown. Entries reflect the assumptions of the cited works (in-expectation versus high-probability, additive versus multiplicative noise) and are not directly comparable line by line; the printed $\tO$-forms treat the certificate $\Dcert$ as an externally supplied input (Proposition~\ref{prop:phase1-interp}).}
\label{tab:interp}
\end{table*}

Theorem~\ref{thm:interp-strong} transfers the interpolation phenomenon of \citet{schmidt2013fast,ma2018power,vaswani2019fast} to generalized smoothness: the $\eps$-dependent term matches the classical smooth-case strong-growth rate $\rho(L/\mu)\logp(\Dcert/\eps)$ with $L$ replaced by $\Lz$; all $\sigma$-driven terms vanish; the guarantee is high-probability rather than in-expectation; and arbitrarily large initial gradients cost only an $\eps$-independent burn-in, relative to the generalized-smooth analysis of \citet{interp2026gs}. What our framework does \emph{not} deliver is the accelerated $\sqrt{\Lz/\mu}$-dependence available in the smooth setting \citep{vaswani2019fast,liu2020accelerating}: as shown in Appendix~\ref{app:interp-obstruction}, inside any epoch-restarted momentum scheme the strong-growth noise at the wandering extrapolation sequence produces a noise floor $\Theta(\kappa^{3/2})$ times the epoch target, forcing batch sizes that erase the $\sqrt{\kappa}$-gain. We believe this obstruction is an artifact of proof technology rather than of the problem, and record:

\begin{openproblem}
\label{op:interp}
Under Assumptions~\ref{app:ass:convex}, \ref{app:ass:gs} and \ref{ass:sgc}, is there an algorithm with oracle complexity $\tO\big(\mathrm{poly}(\rho)\sqrt{\Lz/\mu}\,\logp(\Dcert/\eps)\big)$ plus an $\eps$-independent burn-in for $\mu$-strongly convex $f$? Equivalently: can the accelerated stochastic strong-growth analyses of \citet{vaswani2019fast,liu2020accelerating} be localized to balls of radius $O(1/\Lo)$?
\end{openproblem}

\subsection{Standing conventions, relative noise, and \PhaseI}
\label{app:interp-tools}

Throughout the rest of this appendix Assumptions~\ref{app:ass:convex}, \ref{app:ass:gs}, \ref{ass:sgc} hold with the a.s.\ variant \eqref{eq:sgc-as}, the convention \eqref{eq:wlog} is in force, and $f$ is $\mu$-strongly convex where stated. Note that $\mu$-strong convexity and \eqref{eq:gs} force $\mu\le\Lz$ (Remark~\ref{rem:mu-le-lz}). We write
\begin{equation}
\label{eq:nu-ellstar}
\begin{split}
\nu(x)&:=2\sqrt\rho\,\norm{\grad f(x)},\\
\ell_\star&:=\log\Big(\tfrac{2^{30}(\bar K_{\mathrm{hot}}+\bar K_{\mathrm{cold}}+m)^3(2+\kbar)^4(2+\Lz/\mu)^4}{\alpha}\Big),
\end{split}
\end{equation}
where $\ell_\star$ is an umbrella that dominates (a routine check) every logarithm requested from Proposition~\ref{app:prop:rstm}, Lemma~\ref{lem:batch} and the measurements below at the confidence levels used; the counters $\bar K_{\mathrm{hot}},\bar K_{\mathrm{cold}},m$ are defined in Algorithm~\ref{alg:interp}.

\subsubsection{Relative noise and measurements}
\begin{lemma}[Relative noise]
\label{lem:relnoise}
Under the a.s.\ variant, $\zeta(x,\xi):=g(x,\xi)-\grad f(x)$ satisfies $\norm{\zeta}\le(1+\sqrt\rho)\norm{\grad f(x)}\le\nu(x)$ a.s.; in particular $\zeta$ is norm-sub-Gaussian with parameter $\nu(x)$, and the average $\bar\zeta$ of a batch of size $b$ satisfies $\Prob[\norm{\bar\zeta}\ge c_g\nu(x)\sqrt{t/b}]\le2e^{-t}$ for all $t\ge1$ (Lemma~\ref{lem:batch}). Consequently a batch of size $B_{\mathrm{rel}}:=\lceil256c_g^2\rho\,\ell_\star\rceil$ returns $\gest$ with $\norm{\gest-\grad f(x)}\le\tfrac18\norm{\grad f(x)}$, so $\norm{\grad f(x)}\le\tfrac87\norm{\gest}$ and $\norm{\gest}\le\tfrac98\norm{\grad f(x)}$, with probability $1-2e^{-\ell_\star}$.
\end{lemma}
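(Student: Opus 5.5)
The argument has three steps: an almost-sure bound on the noise, the batch concentration of Lemma~\ref{lem:batch} fed with that bound, and a choice of batch size that turns the deviation into a constant fraction of the gradient.

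\emph{Step 1: a.s.\ bound.} The constant $\rho$ is fixed, and $\grad f(x)=\E_\xi\grad f_\xi(x)$ is deterministic given the query. Write $\zeta=\grad f_\xi(x)-\grad f(x)$ and apply the triangle inequality. The a.s.\ variant \eqref{eq:sgc-as} gives $\norm{\grad f_\xi(x)}\le\sqrt\rho\,\norm{\grad f(x)}$, so
\[
\norm{\zeta}\le(1+\sqrt\rho)\norm{\grad f(x)}\le 2\sqrt\rho\,\norm{\grad f(x)}=\nu(x),
\]
where the last step uses $\rho\ge1$. Any random vector with $\norm{\zeta}\le\nu$ a.s.\ satisfies $\E\exp(\norm{\zeta}^2/\nu^2)\le e$. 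Hence $\zeta$ is $\nu(x)$-norm-sub-Gaussian conditionally on the history. The degenerate case $\grad f(x)=0$ forces $\zeta=0$ a.s., which is the $\sigma=0$ convention. Unbiasedness also holds, since $\E_\xi g=\grad f(x)$. This case is also trivial for the final claim.

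\emph{Step 2: batch tail.} Draw $b$ fresh samples at an $\mathcal F$-measurable $x$. By Assumption~\ref{ass:sgc} they are conditionally i.i.d., and each is $\nu(x)$-norm-sub-Gaussian with $\nu(x)$ being $\mathcal F$-measurable. Lemma~\ref{lem:batch}(i) with $\beta=2e^{-t}$ then gives $\norm{\bar\zeta}\le c_g\nu(x)\sqrt{\log(2/\beta)/b}=c_g\nu(x)\sqrt{t/b}$ with probability at least $1-2e^{-t}$. The restriction $t\ge1$ keeps $\beta<1$.

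\emph{Step 3: the measurement.} Take $t=\ell_\star$ and $b=B_{\mathrm{rel}}\ge256c_g^2\rho\ell_\star$. Then
\[
c_g\nu(x)\sqrt{\ell_\star/b}\le c_g\cdot2\sqrt\rho\,\norm{\grad f(x)}\cdot\frac{1}{16c_g\sqrt\rho}=\tfrac18\norm{\grad f(x)}.
\]
The two-sided inequalities follow from the triangle inequality:
\[
\norm{\gest}\le\tfrac98\norm{\grad f(x)},\qquad \norm{\gest}\ge\tfrac78\norm{\grad f(x)}.
\]
The second is the same as $\norm{\grad f(x)}\le\tfrac87\norm{\gest}$.

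The only delicate point is Step 2. The sub-Gaussian scale $\nu(x)$ depends on the random query point, so the concentration must be applied conditionally on $\mathcal F$, with $\nu(x)$ treated as a constant. Fresh sampling makes this legitimate. No union bound is needed within the lemma itself.
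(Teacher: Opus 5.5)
Your proposal is correct and follows the paper's route; the paper has no separate proof, and its justification is the one embedded in the statement: the triangle inequality with \eqref{eq:sgc-as} and $\rho\ge1$, a.s.\ boundedness implying $\nu(x)$-norm-sub-Gaussianity, Lemma~\ref{lem:batch}(i) applied conditionally on the history with $\beta=2e^{-t}$, and the arithmetic $c_g\cdot 2\sqrt\rho\,\norm{\grad f(x)}\sqrt{\ell_\star/B_{\mathrm{rel}}}\le\tfrac18\norm{\grad f(x)}$. The one check you leave implicit is that $t=\ell_\star$ is admissible, i.e.\ $\ell_\star\ge1$; this is immediate from \eqref{eq:nu-ellstar}, since the argument of the logarithm is at least $2^{30}/\alpha>e$.
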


\subsubsection{\PhaseI{} under interpolation}
\begin{proposition}
\label{prop:phase1-interp}
Run \PhaseI{} (Algorithm~\ref{app:alg:phase1}) with $B_1:=\lceil2500c_g^2\rho\Lambda_{\mathrm{lg}}^2\ell_1\rceil$, $\ell_1:=\log(16T_1^{\max}/\alpha)$, all other parameters as in \eqref{eq:phase1-params}. Under the a.s.\ variant all conclusions of Proposition~\ref{app:prop:phase1} hold with probability at least $1-\alpha/4$, at total cost $T_1B_1=\tO\big(\rho(1+\Lo R_0)\big)$. Here $\tO$ treats the certificate as an externally supplied input and is monotone in it; under the default $\Dcert=\Ddef$ of Lemma~\ref{lem:gap0} one has $\Lambda_{\mathrm{lg}}=\Theta(1+q)$ with $q:=\Lo R_0$, hence $B_1=\tO\big(\rho(1+q)^2\big)$ and the true cost is $\tO\big(\rho(1+q)^4\big)$ with only polylogarithmic factors in $(q,1/\alpha)$ hidden---the same two-regime distinction as in Corollary~\ref{cor:phase1-default}, and the $\tO$-forms of Theorems~\ref{thm:interp-strong}--\ref{thm:interp-convex} inherit it.
\end{proposition}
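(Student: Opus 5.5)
The plan is to rerun the proof of Proposition~\ref{app:prop:phase1} essentially verbatim. The only component I change is the good event $\mathcal E_1$. That proof touched the oracle in exactly one place: on $\mathcal E_1$ every estimate obeys $\norm{\gest_t-\grad f(x_t)}\le\eps_1=\Gbar/(8\Lambda_{\lg})$ for all $t\le T_1^{\max}$. Everything downstream is deterministic given that bound: the super-critical comparison \eqref{eq:supercrit}, the per-step decrease \eqref{eq:phase1-decrease}--\eqref{eq:phase1-lin}, the distance recursion \eqref{eq:phase1-dist} with its case analysis, and the linear contraction with the stopping argument. So it suffices to establish $\mathcal E_1$ with probability at least $1-\alpha/4$ under the a.s.\ strong-growth variant and the new batch size $B_1$, and then redo the cost count.

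\emph{Establishing $\mathcal E_1$.} Relative noise alone cannot give an absolute accuracy $\eps_1$ when $\norm{\grad f(x_t)}$ is large, so I replace $\mathcal E_1$ by a relative-or-absolute event:
\[
\norm{\gest_t-\grad f(x_t)}\le\eta_t:=\tfrac{1}{50\Lambda_{\lg}}\max\{\norm{\grad f(x_t)},\Gbar\}.
\]
To get this event, I apply Lemma~\ref{lem:relnoise} at the $\mathcal F_t$-measurable point $x_t$. The batch mean of $B_1$ samples deviates by at most $c_g\nu(x_t)\sqrt{\ell_1/B_1}$ with conditional probability at least $1-2e^{-\ell_1}$. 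With $B_1=\lceil2500c_g^2\rho\Lambda_{\lg}^2\ell_1\rceil$, this deviation is at most $2\sqrt\rho\,G_t/(50\sqrt\rho\Lambda_{\lg})=G_t/(25\Lambda_{\lg})$. I expect the factor of $2$ here to be absorbed by adjusting the constant $1/50$. A union bound over the at most $T_1^{\max}+1$ calls, using $2e^{-\ell_1}=\alpha/(8T_1^{\max})$, gives total failure probability at most $\alpha/4$.

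\emph{Checking the case analysis.} I verify the constants where $\eps_1$ entered. In the super-critical regime $G_t\ge\tfrac{23}{8}\Gbar$ the relative bound gives $\eps_1/G_t\le1/(25\Lambda_{\lg})\le1/92$, so \eqref{eq:supercrit} and the decrease bound \eqref{eq:phase1-decrease} go through with the same numbers. For the distance slack I need $2\gamma_t\eta_t\lesssim1/(24\Lambda_{\lg}\Lo)$. This holds because $\gamma_t\eta_t\le\eta_t/(2\Lo\norm{\gest_t})$ and $\eta_t/\norm{\gest_t}=O(1/\Lambda_{\lg})$, so the summation against $T_1^{\max}$ keeps $r_t\le\tfrac32R_0$. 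At the stopping check, the gap is below $\bar\Delta_{\mathrm{stop}}$, so $G_t\le2\Gbar$ and $\eta_t\le\Gbar/(25\Lambda_{\lg})$. The output bound $\norm{\grad f(x^{\mathrm I})}\le\norm{\gest}+\eta\le\tfrac{25}{8}\Gbar$ follows, as does the degenerate case $\Lo R_0\le1/24$.

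\emph{Cost.} The cost is $T_1B_1\le T_1^{\max}B_1$. We have $T_1^{\max}=O(1+\Lo R_0\Lambda_{\lg})$ and $B_1=O(\rho\Lambda_{\lg}^2\ell_1)$. With $\Dcert$ external, $\Lambda_{\lg}$ is polylogarithmic, which gives $\tO(\rho(1+\Lo R_0))$. Under the default certificate, Corollary~\ref{cor:phase1-default} gives $\Lambda_{\lg}=\Theta(1+q)$, hence $B_1=\tO(\rho(1+q)^2)$. The product is then $\tO(\rho(1+q)(1+q)\cdot(1+q)^2)=\tO(\rho(1+q)^4)$, since $T_1^{\max}=O((1+q)^2)$.

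\emph{Main obstacle.} The delicate step is the distance recursion. The original proof bounded $2\gamma_t\eps_1 r_t$ using an absolute $\eps_1$. With relative error that grows with $G_t$, I must check that $\gamma_t\eta_t$ stays $O(1/(\Lambda_{\lg}\Lo))$ uniformly. This holds because $\gamma_t\le1/(2\Lo\norm{\gest_t})$ cancels the $G_t$ scaling, and I would verify this first.
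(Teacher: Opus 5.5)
Your proposal is correct and follows essentially the paper's proof: the paper likewise replaces the absolute accuracy $\eps_1$ by the purely relative bound $\norm{\bar\zeta_t}\le G_t/(25\Lambda_{\lg})$, obtained from Lemma~\ref{lem:relnoise} with this $B_1$ and a union bound, and then checks that the Phase~I constants survive (your extra $\max\{\cdot,\Gbar\}$ is harmless but unnecessary). One small fix: the run can stop at any check with $\norm{\gest_t}\le 3\Gbar$, not only after the gap falls below $\bar\Delta_{\mathrm{stop}}$, so derive the output bound from $G\le\norm{\gest}/(1-\eta)\le 3\Gbar\cdot\tfrac{25}{24}=\tfrac{25}{8}\Gbar$ rather than from $G_t\le 2\Gbar$.
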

\begin{proof}
With $\eta:=1/(25\Lambda_{\mathrm{lg}})$, Lemma~\ref{lem:relnoise} and a union bound over the $\le T_1^{\max}+1$ steps give $\norm{\bar\zeta_t}\le\eta G_t$ at every step. The proof of Proposition~\ref{app:prop:phase1} interacts with the estimation accuracy only through two inequalities: at every non-terminal step it uses $\norm{\bar\zeta_t}\le G_t/(23\Lambda_{\mathrm{lg}})$, valid there since $\eta\le1/(23\Lambda_{\mathrm{lg}})$; at the terminal step it uses the output bound, which now reads $G\le\norm{\gest}/(1-\eta)\le3\Gbar\cdot\tfrac{25}{24}\le\tfrac{25}8\Gbar$ (and, conversely, the run cannot stop while $G>\tfrac{25}8\Gbar$, since then $\norm{\gest}\ge(1-\eta)G>3\Gbar$; while it runs, $\norm{\gest_t}>3\Gbar$ gives $G_t\ge3\Gbar/(1+\eta)\ge\tfrac{23}8\Gbar$, so the supercritical chain is intact). The degenerate branch ($\Lo R_0\le\tfrac1{24}$) uses only the $t=0$ measurement and is unaffected.

\end{proof}

\subsubsection{\RSTM{} with state-dependent noise}
\begin{remark}[Noise interface]
\label{rem:rstm-noise}
The proof of Proposition~\ref{app:prop:rstm} interacts with randomness only through the per-query sub-Gaussian parameter $\varsigma_k=c_b\sigma/\sqrt{b_k}$ of the batched estimates at points of $Q_k$. Hence, if the single-sample noise is norm-sub-Gaussian with parameter $\nu_k$ \emph{uniformly over $x\in Q_k$}, replacing \eqref{eq:bk} by
\begin{equation}
\label{eq:bk-state}
b_k:=\max\Big\{1,\ \Big\lceil\frac{C_b\nu_k^2\log(8KN_{\mathrm{ep}}/\alpha')}{\sqrt{\lambda L_F}\,\bar\Delta_k}\Big\rceil\Big\}
\end{equation}
(the lower bound $b_k\ge1$ matters when the state-dependent scale $\nu_k$ vanishes, e.g.\ near the optimum under strong growth) leaves every step of the proof unchanged, and the total statistical cost becomes $\sum_kN_{\mathrm{ep}}b_k\le KN_{\mathrm{ep}}+24C_b\ell'\sum_k\nu_k^2/(\lambda\bar\Delta_k)$ with $\ell':=\log(8KN_{\mathrm{ep}}/\alpha')$.
\end{remark}

\begin{lemma}[Gradient scale on epoch balls]
\label{lem:varsigma}
Let a level have center $c$, cap $\Gest\ge\norm{\grad f(c)}$, parameter $\lambda>0$ and ball radius $r_c\le\tfrac1{2\Lo}$, and let $L_F:=L_c+\lambda$ with $L_c:=4(\Lz+\Lo\Gest)$ (Lemma~\ref{app:lem:ball}). During epoch $k$ of \RSTM{} on $F:=f+\tfrac\lambda2\norm{\cdot-c}^2$ over $\ball{c}{r_c}$, every $x\in Q_k$ satisfies $\norm{\grad f(x)}\le3L_F\sqrt{2\bar\Delta_k/\lambda_F}+\Gest+2\lambda r_c$, where $\lambda_F\ge\lambda$ is the strong convexity modulus used; hence the a.s.\ noise scale on $Q_k$ obeys
$\nu_k^2\le12\rho\big(18L_F^2\bar\Delta_k/\lambda_F+\Gest^2+4\lambda^2r_c^2\big)$.
\end{lemma}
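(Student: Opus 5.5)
We bound $\norm{\grad f(x)}$ for $x\in Q_k$ by going through the subproblem minimizer. Let $x_F^\star$ denote the minimizer of $F$ over $\ball{c}{r_c}$. The plan has three steps, followed by the noise conversion.

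\emph{Step 1: $Q_k$ is close to $x_F^\star$.} Use the epoch invariant from the proof of Proposition~\ref{app:prop:rstm}: on the good event, $F(y^{(k)})-F(x_F^\star)\le\bar\Delta_k$. Strong convexity with modulus $\lambda_F$ then gives $\norm{y^{(k)}-x_F^\star}\le\sqrt{2\bar\Delta_k/\lambda_F}$. Since $Q_k\subseteq\ball{y^{(k)}}{r_k}$ with $r_k=2\sqrt{2\bar\Delta_k/\lambda_F}$, every $x\in Q_k$ satisfies
\[
\norm{x-x_F^\star}\le 3\sqrt{2\bar\Delta_k/\lambda_F}.
\]
This is the diameter bound $D_k$ already used in that proof, now written with the modulus actually used.

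\emph{Step 2: transfer to $\grad F$ and then to $\grad f$.} Both $x$ and $x_F^\star$ lie in $\ball{c}{r_c}\subseteq\ball{c}{2r_c}$. Lemma~\ref{app:lem:ball}, applied with center $c$, cap $\Gest$ and $r_c\le\tfrac{1}{2\Lo}$, makes $f$ $L_c$-smooth there, so $F$ is $L_F$-smooth along the segment $[x,x_F^\star]$. Hence
\[
\norm{\grad F(x)}\le\norm{\grad F(x_F^\star)}+3L_F\sqrt{2\bar\Delta_k/\lambda_F}.
\]
It remains to bound $\norm{\grad F(x_F^\star)}$. Write $\grad F(x_F^\star)=\grad f(x_F^\star)+\lambda(x_F^\star-c)$. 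Lemma~\ref{app:lem:prox-grad}, which covers the ball-constrained prox, gives $\norm{\grad f(x_F^\star)}\le\norm{\grad f(c)}\le\Gest$, and $\norm{x_F^\star-c}\le r_c$. Thus $\norm{\grad F(x_F^\star)}\le\Gest+\lambda r_c$. Finally $\grad f(x)=\grad F(x)-\lambda(x-c)$ with $\norm{x-c}\le r_c$, which adds one more $\lambda r_c$. Altogether
\[
\norm{\grad f(x)}\le 3L_F\sqrt{2\bar\Delta_k/\lambda_F}+\Gest+2\lambda r_c.
\]

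\emph{Step 3: noise scale.} Lemma~\ref{lem:relnoise} gives the a.s.\ bound $\nu(x)=2\sqrt\rho\,\norm{\grad f(x)}$, so $\nu_k^2\le 4\rho\sup_{x\in Q_k}\norm{\grad f(x)}^2$. Applying $(a+b+d)^2\le3(a^2+b^2+d^2)$ to the bound from Step 2 gives
\[
\nu_k^2\le 12\rho\bigl(18L_F^2\bar\Delta_k/\lambda_F+\Gest^2+4\lambda^2r_c^2\bigr),
\]
which is exactly the stated constant.

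\emph{Expected obstacle.} The one delicate point is the circularity in Step 1: the bound on $Q_k$ relies on the epoch invariant, and that invariant is itself derived using the noise bound. I would resolve this by including the statement inside the induction over epochs in Proposition~\ref{app:prop:rstm}, with the invariant at epoch $k$ supplying $\nu_k$ for that epoch. The second point to check is that the modulus $\lambda_F$ defining $r_k$ is the same one used in the strong-convexity step, and that $\lambda_F\ge\lambda$. Everything else is bookkeeping.
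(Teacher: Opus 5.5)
Your proposal is correct and follows the paper's proof essentially step for step. It uses the same triangle-inequality chain through the ball-constrained minimizer $x_F^\star$, the bound $\norm{\grad f(x_F^\star)}\le\norm{\grad f(c)}\le\Gest$ from Lemma~\ref{app:lem:prox-grad}, $L_F$-smoothness along segments of $\ball{c}{r_c}$ together with the diameter bound $D_k=3\sqrt{2\bar\Delta_k/\lambda_F}$, and the conversion $(a+b+d)^2\le3(a^2+b^2+d^2)$ with $\nu=2\sqrt\rho\,\norm{\grad f}$.

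You also point out that the diameter bound depends on the epoch invariant, so the lemma must be read inside the epoch induction of Proposition~\ref{app:prop:rstm}. The paper leaves this implicit, and your way of handling it is the right one.
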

\begin{proof}
For $x\in Q_k$: $\norm{\grad f(x)}\le\norm{\grad F(x)}+\lambda r_c\le\norm{\grad F(x)-\grad F(x_Q)}+\norm{\grad F(x_Q)}+\lambda r_c$. By Lemma~\ref{app:lem:prox-grad} (which covers the ball-constrained case), $\norm{\grad f(x_Q)}\le\norm{\grad f(c)}\le\Gest$, so $\norm{\grad F(x_Q)}\le\Gest+\lambda r_c$; and $\norm{\grad F(x)-\grad F(x_Q)}\le L_F\norm{x-x_Q}\le L_FD_k=3L_F\sqrt{2\bar\Delta_k/\lambda_F}$ along segments of $\ball{c}{r_c}$. Square with $(a+b+c)^2\le3(a^2+b^2+c^2)$ and multiply by $(2\sqrt\rho)^2$.
\end{proof}

\subsubsection{The hop-and-solve algorithm}
\begin{algorithm}[t]
\caption{\ARCSG-interp{} (interpolation/strong growth; hop-and-solve). Constants: $\rsafe=\tfrac1{2\Lo}$, $s=\tfrac1{400\Lo}$, $B_{\mathrm{rel}}=\lceil256c_g^2\rho\ell_\star\rceil$, $\bar K_{\mathrm{hot}}:=\lceil5900\Lz/\mu\rceil+14$, $\bar K_{\mathrm{cold}}:=\lceil108\Lz/\mu\rceil+\lceil2\log_2(16(1+\Lz/\mu))\rceil+24$, $m:=\lceil\log_4(8/\alpha)\rceil$.}
\label{alg:interp}
\begin{algorithmic}[1]
\REQUIRE $x^0$, $R_0$, $\mu$, $\eps$, $\alpha$, $\rho$
\STATE \textbf{Stage 0:} $c\leftarrow\PhaseI(x^0)$ with $B_1$ of Proposition~\ref{prop:phase1-interp}; $\Gest\leftarrow4\Gbar$
\WHILE{$\Gest>2\Gbar$ \textbf{and} fewer than $\bar K_{\mathrm{hot}}$ hot levels used}
\STATE \textbf{Stage 1 (hot):} $\lambda\leftarrow4\Lo\Gest$;\STATE $c\leftarrow\RSTM\big(f+\tfrac{\lambda}2\norm{\cdot-c}^2\ \text{over}\ \ball{c}{\rsafe};\ H_0=\tfrac{\Gest}{4\Lo},\ \delta_{\mathrm{lvl}}=\tfrac{\Gest}{80000\Lo},\ \tfrac{\alpha}{16\bar K_{\mathrm{hot}}}\big)$ \STATE$\Gest\leftarrow\tfrac87\norm{\GradEst(c,B_{\mathrm{rel}})}$
\ENDWHILE
\STATE \textbf{Stage 2 (cold):} $\Delta\leftarrow2\Gest^2/\mu$
\FOR{$j=1,\dots,\bar K_{\mathrm{cold}}$ \textbf{while} $\Delta>\tfrac{\mu}{32\Lo^2}$}
\STATE $\theta\leftarrow\min\{\tfrac12,\tfrac1{2\Lo}\sqrt{\tfrac{\mu}{2\Delta}}\}$;
\STATE $\lambda\leftarrow2\Lo^2\theta\Delta$;
\STATE $c\leftarrow\RSTM\big(f+\tfrac\lambda2\norm{\cdot-c}^2\ \text{over}\ \ball{c}{\rsafe};\ H_0=\Gest\rsafe,\ \delta_{\mathrm{lvl}}=\tfrac{\theta\Delta}{256},\ \tfrac{\alpha}{16\bar K_{\mathrm{cold}}}\big)$
\STATE $\Gest\leftarrow\tfrac87\norm{\GradEst(c,B_{\mathrm{rel}})}$;\STATE $\Delta\leftarrow(1-\tfrac\theta4)\Delta$
\ENDFOR
\STATE \textbf{Stage 3 (interior):} conditional on the Stage-0--2 history, for $i=1,\dots,m$ run mutually independent copies (disjoint fresh sample streams, all started from the fixed common point $x_0^{(i)}:=c$): $T$ steps of projected batched SGD on $f$ over $\ball{c}{\rsafe}$, $x_{t+1}=\Pi(x_t-\tfrac1{4L'}\gest_t)$, batch $\lceil24\rho\rceil$, $L':=12(\Lz+\Lo\Gest)$, $T:=\max\big\{0,\,\lceil\tfrac{4L'}\mu\ln\tfrac{2L'^2}{\Lo^2\eps\mu}\rceil\big\}$
\STATE \emph{Two-step selection:} measure each output with $B_{\mathrm{rel}}$; discard those with $\norm{\gest_i}>2\sqrt{2L'\eps'}$, $\eps':=\tfrac{\eps\mu}{16L'}$; re-measure survivors with $B_{\mathrm{sel}}:=\lceil95c_g^2\rho\ell_\star\rceil$; \RETURN the survivor with smallest re-measured norm
\end{algorithmic}
\end{algorithm}

\subsubsection{The hot stage}
\begin{lemma}[Hot stage]
\label{lem:hot}
On the good event, Stage 1 maintains $\norm{\grad f(c)}\le\Gest\le\tfrac97\norm{\grad f(c)}$ after each anchor (the initial $\Gest=4\Gbar$ is valid by Proposition~\ref{prop:phase1-interp}), and:
(a) each level has $\lambda=4\Lo\Gest$, $L_F\le10\Lo\Gest$, condition number $\le\tfrac52$, unconstrained prox ($\norm{\xhat-c}\le\tfrac1{4\Lo}$), and certified $H_0=\Gest^2/\lambda$;
(b) with $\Psi:=f(c)-f^\star+\tfrac{\Gest}{100\Lo}$, every executed level decreases $\Psi$ by at least $\tfrac{\Gest}{600\Lo}\ge\tfrac{\bar\Delta}{300}$, where $\bar\Delta=\Lz/\Lo^2$;
(c) Stage 1 executes at most $5900\Lz/\mu+14$ levels, exits with $\Gest\le2\Gbar$, and never triggers the $\bar K_{\mathrm{hot}}$ guard;
(d) each level costs at most $10^9C_b\,\rho\,\ell_\star$ oracle calls, so the stage costs $\tO(\rho(1+\Lz/\mu))$.
\end{lemma}
\begin{proof}
\emph{(a)} While $\Gest>2\Gbar$ we have $\Lz\le\Lo\Gest/2$, so $L_c=4(\Lz+\Lo\Gest)\le6\Lo\Gest$ and $L_F=L_c+\lambda\le10\Lo\Gest$, giving $1+L_F/\lambda\le\tfrac72$; a sharper count, $\kappa=L_F/\lambda\le\tfrac{10}4$, is what we use. Containment: $\norm{\xhat-c}\le\norm{\grad f(c)}/\lambda\le\tfrac1{4\Lo}<\rsafe$ by Lemma~\ref{app:lem:prox-grad}, so Remark~\ref{rem:proj} certifies $H_0$, and $\delta_{\mathrm{lvl}}=\lambda s^2/2$ gives $\norm{c^+-\xhat}\le s$.

\emph{(b)} Two cases. \emph{Productive} ($\norm{\xhat-c}\ge\tfrac1{16\Lo}$): $f(\xhat)\le f(c)-\tfrac\lambda2\norm{\xhat-c}^2\le f(c)-\tfrac{\Gest}{128\Lo}$; the transfer along $[\xhat,c^+]\subset\ball{c}{2\rsafe}$ costs $f(c^+)\le f(\xhat)+s\norm{\grad f(\xhat)}+\tfrac{L_c}2s^2\le f(\xhat)+\tfrac{\Gest}{400\Lo}+\tfrac{\Gest}{53000\Lo}$, so the net drop is $\ge\tfrac{\Gest}{200\Lo}$. The anchor can raise the cap: $\norm{\grad f(c^+)}\le\norm{\grad f(\xhat)}+L_cs\le\Gest(1+0.015)$, so $\Gest^+\le\tfrac97\cdot1.015\,\Gest\le1.31\Gest$, and $\Delta\Psi\le-\tfrac{\Gest}{200\Lo}+\tfrac{0.31\Gest}{100\Lo}\le-\tfrac{\Gest}{600\Lo}$. \emph{Collapsing} ($\norm{\xhat-c}<\tfrac1{16\Lo}$): then $\norm{\grad f(\xhat)}=\lambda\norm{\xhat-c}<\tfrac{\Gest}4$, the objective can increase by at most $s\norm{\grad f(\xhat)}+\tfrac{L_c}2s^2\le\tfrac{\Gest}{1550\Lo}\le\tfrac{\Gest}{360\Lo}$, while $\norm{\grad f(c^+)}\le\tfrac{\Gest}4+0.015\Gest$, hence $\Gest^+\le\tfrac97\cdot0.265\,\Gest\le0.35\Gest$ and $\Delta\Psi\le\tfrac{\Gest}{360\Lo}-\tfrac{0.65\Gest}{100\Lo}\le-\tfrac{\Gest}{300\Lo}$. In both cases $\Delta\Psi\le-\tfrac{\Gest}{600\Lo}\le-\tfrac{2\Gbar}{600\Lo}=-\tfrac{\bar\Delta}{300}$.

\emph{(c)} By $\mu$-strong convexity, $f(c)-f^\star\le\norm{\grad f(c)}\norm{c-x^\star}\le\norm{\grad f(c)}\sqrt{2(f(c)-f^\star)/\mu}$, so $f(c)-f^\star\le2\norm{\grad f(c)}^2/\mu$; at entry $\norm{\grad f(c_1)}\le\tfrac{25}8\Gbar$, hence $\Psi_0\le\tfrac{2\cdot625}{64}\tfrac{\Gbar^2}\mu+\tfrac{4\Gbar}{100\Lo}\le19.6\tfrac{\Lz\bar\Delta}\mu+\tfrac{\bar\Delta}{25}$. Since $\Psi\ge0$ and each level removes $\ge\bar\Delta/300$, at most $300\Psi_0/\bar\Delta\le5880\Lz/\mu+12$ levels execute before the loop condition fails, i.e.\ $\Gest\le2\Gbar$. (This also bounds the caps: $\Gest\le100\Lo\Psi_0$, which the umbrella $\ell_\star$ dominates.)

\emph{(d)} By (a), $H_0/\delta_{\mathrm{lvl}}=20000$, so $K\le15$ epochs of $N_{\mathrm{ep}}\le24\sqrt{5/2}+1\le39$ iterations. Remark~\ref{rem:rstm-noise} with Lemma~\ref{lem:varsigma} ($r_c=\rsafe$, $\lambda_F=\lambda$) gives per-level statistical cost at most $24C_b\ell_\star\big[216\rho(\tfrac{L_F}\lambda)^{3/2}KN_{\mathrm{ep}}/N_{\mathrm{ep}}+\dots\big]$; carrying out the three pieces as in Appendix~\ref{app:cost}: the $L_F^2\bar\Delta_k/\lambda$-piece contributes $\le5184C_b\rho\ell_\star\kappa^2K\le5\cdot10^5C_b\rho\ell_\star$; the $\Gest^2$-piece contributes $\le1152C_b\rho\ell_\star\Gest^2/(\lambda\delta_{\mathrm{lvl}})=1152\cdot2\cdot10^4C_b\rho\ell_\star$ (since $\lambda\delta_{\mathrm{lvl}}=\Gest^2/20000$); the $\lambda^2\rsafe^2$-piece contributes $\le4608C_b\rho\ell_\star\lambda\rsafe^2/\delta_{\mathrm{lvl}}=4608\cdot80000\,C_b\rho\ell_\star$ (since $\lambda\rsafe^2/\delta_{\mathrm{lvl}}=2\rsafe^2/s^2=80000$). Adding $25K\sqrt{5/2}$ deterministic-equivalent iterations and $B_{\mathrm{rel}}$ gives the claim.

\end{proof}

\subsubsection{The cold stage}
\begin{lemma}[Cold hops]
\label{lem:cold}
On the good event, Stage 2 satisfies, with $\Delta_j$ the running schedule value and $\theta_j,\lambda_j$ as in Algorithm~\ref{alg:interp}:
(a) $f(c_j)-f^\star\le\Delta_j$ for every hop $j$, and $\norm{c_j-x^\star}\le\sqrt{2\Delta_j/\mu}$;
(b) the stage exits within $108\Lz/\mu+2\log_2(16(1+\Lz/\mu))+22$ hops with $\Delta\le\tfrac\mu{32\Lo^2}$, hence $\norm{c-x^\star}\le\tfrac{\rsafe}2$; the $\bar K_{\mathrm{cold}}$ guard never triggers;
(c) the total cost of the stage is $\tO\big(\rho(1+\Lz/\mu)^3\big)$.
\end{lemma}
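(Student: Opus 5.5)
The plan is to prove (a) by induction over hops, using a one-hop contraction inequality for the certified gap that mirrors the finisher argument of Lemma~\ref{lem:cert}. Then (b) follows from the schedule arithmetic, and (c) from Remark~\ref{rem:rstm-noise} combined with Lemma~\ref{lem:varsigma}.

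\emph{Base case for (a).} At entry, Lemma~\ref{lem:hot} gives $\norm{\grad f(c)}\le\Gest$. Strong convexity then yields $f(c)-f^\star\le 2\norm{\grad f(c)}^2/\mu\le 2\Gest^2/\mu=\Delta_1$, which is exactly the initialization in Algorithm~\ref{alg:interp}. The distance bound $\norm{c_j-x^\star}\le\sqrt{2\Delta_j/\mu}$ always follows from the gap bound by strong convexity, so only the gap needs to be propagated.

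\emph{Inductive step for (a).} Let $\tilde x_j$ be the exact ball-constrained prox over $\ball{c_j}{\rsafe}$. Set $u:=c_j+\theta'(x^\star-c_j)$ with $\theta':=\min\{1,\rsafe/\norm{c_j-x^\star}\}$. Since $\norm{c_j-x^\star}\le\sqrt{2\Delta_j/\mu}$ and $\theta_j\le\tfrac1{2\Lo}\sqrt{\mu/(2\Delta_j)}$, we get $\theta'\ge\theta_j$. By convexity,
\[
f(\tilde x_j)-f^\star\le(1-\theta_j)\Delta_j+\tfrac{\lambda_j}{2}\theta_j^2\norm{c_j-x^\star}^2\le(1-\theta_j)\Delta_j+\tfrac{\lambda_j\theta_j^2\Delta_j}{\mu}.
\]
I would check that the choice $\lambda_j=2\Lo^2\theta_j\Delta_j$ makes the last term at most $\theta_j\Delta_j/2$ or so, presumably using $\theta_j^2\Delta_j\le\mu/(8\Lo^2)$. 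To reach the iterate $c_{j+1}$, I would apply the transfer inequality \eqref{eq:transfer} at $\tilde x_j$: $f(c_{j+1})\le f(\tilde x_j)+\delta_{\mathrm{lvl}}+\lambda_j\rsafe\sqrt{2\delta_{\mathrm{lvl}}/\lambda_j}$. With $\delta_{\mathrm{lvl}}=\theta_j\Delta_j/256$, the middle term is $\theta_j\Delta_j/256$. The last term equals $\rsafe\sqrt{2\lambda_j\delta_{\mathrm{lvl}}}=\tfrac{1}{2\Lo}\cdot\Lo\theta_j\Delta_j/8=\theta_j\Delta_j/16$. Summing gives $f(c_{j+1})-f^\star\le(1-\theta_j/4)\Delta_j=\Delta_{j+1}$ once the constants are fixed. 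This is the main check: the constant in the $\lambda\theta^2$ term must leave room for $\tfrac34\theta$ of slack. If it does not, I would adjust which of $\theta'$ and $\theta_j$ is used in the convex combination.

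\emph{Part (b).} While $\Delta>\mu/(8\Lo^2)$ we have $\theta_j=\tfrac1{2\Lo}\sqrt{\mu/(2\Delta_j)}$, so $\Delta_{j+1}=\Delta_j-\tfrac{\sqrt{\mu\Delta_j}}{8\sqrt2\Lo}$. Then $\sqrt{\Delta}$ decreases by roughly $\sqrt\mu/(16\sqrt2\Lo)$ per hop. Starting from $\Delta_1\le 2(2\Gbar)^2/\mu=8\Lz\bar\Delta/\mu$, this takes $O(\Lo\sqrt{\Delta_1/\mu})=O(\Lz/\mu)$ hops. Afterwards $\theta=\tfrac12$, giving geometric decay $\Delta\leftarrow\tfrac78\Delta$ down to $\mu/(32\Lo^2)$ in $O(1)$ hops. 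Matching these two counts to $108\Lz/\mu+2\log_2(\cdot)+22$ is routine bookkeeping. At exit, $\norm{c-x^\star}\le\sqrt{2\Delta/\mu}\le\tfrac1{4\Lo}=\rsafe/2$.

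\emph{Part (c).} Per hop, the deterministic part is $\tO(\sqrt{L_F/\lambda})$. Here $L_F\le 4(\Lz+\Lo\Gest)+\lambda=O(\Lz)$ since $\Gest\le2\Gbar$, and $\lambda\ge\Lo\sqrt{\mu\Delta}/\sqrt2\gtrsim\mu$ near exit. For the statistical part, Lemma~\ref{lem:varsigma} with $\lambda_F=\lambda$ bounds $\nu_k^2$ by $\rho(L_F^2\bar\Delta_k/\lambda+\Gest^2+\lambda^2\rsafe^2)$, and Remark~\ref{rem:rstm-noise} sums this over epochs against $\lambda\bar\Delta_k$. This produces terms of order $\rho\kappa^2$, $\rho\Gest^2/(\lambda\delta_{\mathrm{lvl}})$, and $\rho\lambda\rsafe^2/\delta_{\mathrm{lvl}}$. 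The $\Gest^2$ piece needs the measured $\Gest$ tied to the gap: via $\Gest\le\tfrac97\norm{\grad f(c)}$ and $\norm{\grad f(c)}^2\le 2(\Lz+\Lo G)\Delta$ (Lemma~\ref{app:lem:localization}), it is $O(\Lz\Delta)$. Each piece is then $O(\rho(\Lz/\mu)^{c})$ with $c\le 2$ per hop. The cube arises from multiplying by the $O(\Lz/\mu)$ hops, with the $\kappa^{3/2}$ per-iteration batch absorbing the remaining factor. The delicate point is the $\Gest^2/(\lambda\theta\Delta)$ ratio in the $\theta=\tfrac12$ regime with small $\lambda$, where I would use $\lambda\ge\mu/16$.
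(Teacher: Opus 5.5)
Your parts (a) and (b) are essentially right, with small slips.

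\emph{Part (a).} You define $u$ through $\theta'$ but write the bound with $\theta_j$. Simply take $u:=c_j+\theta_j(x^\star-c_j)$. It is feasible because $\theta_j\norm{c_j-x^\star}\le\theta_j\sqrt{2\Delta_j/\mu}\le\rsafe$. Then
$\tfrac{\lambda_j}{2}\theta_j^2\norm{c_j-x^\star}^2\le\theta_j\Delta_j\cdot 2\Lo^2\theta_j^2\Delta_j/\mu\le\tfrac14\theta_j\Delta_j$.
This avoids the paper's case split on $\norm{c_j-x^\star}$ versus $\rsafe$, and closes the induction with the same constants $\tfrac14+\tfrac1{16}+\tfrac1{256}<\tfrac34$.

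\emph{Part (b).} The regime boundary is $\Delta=\mu/(2\Lo^2)$, not $\mu/(8\Lo^2)$. In the $\theta=\tfrac12$ regime the floor is $\lambda>\mu/32$, not $\mu/16$. Your $\sqrt\Delta$-decrement count (at least $\sqrt\mu/(16\sqrt2\,\Lo)$ per hop) is a valid alternative to the paper's dyadic-band count. It gives roughly $64\Lz/\mu+22$ hops, inside the stated bound and below $\bar K_{\mathrm{cold}}$.

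\emph{The genuine gap is in (c).} You use $\Gest\le2\Gbar$ throughout Stage~2 to get $L_F=O(\Lz)$ and $\Gest^2=O(\Lz\Delta)$, but that bound holds only at entry to the stage.

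The cap is re-anchored after every hop and is not monotone. The exact ball-prox does not increase $\norm{\grad f}$, but the solver error $d\le\tfrac1{16\Lo}$ can add $(\Gbar+\Gest_j)/4$, i.e.\ a constant factor per hop. The only uniform control is the gap invariant of (a) combined with Lemma~\ref{app:lem:localization}:
$\Gest_j\le\tfrac97\bigl(2\Lo\Delta_j+\sqrt{2\Lz\Delta_j}\bigr)$.
On the early hops, where $\Delta_j\approx\Delta_1\le8\Gbar^2/\mu$, this can be as large as order $\Gbar\Lz/\mu$, not $\Gbar$.

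Once you substitute this, your decoupled estimate (largest $L_F$ divided by smallest $\lambda\gtrsim\mu$) only gives $\kappa\lesssim(\Lz/\mu)^2$. That yields a total of order $\rho(\Lz/\mu)^5$, so the argument as written does not reach the cube.

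The missing step is to couple everything at the same $\Delta_j$:
\begin{itemize}
\item Compare $L_{F,j}\lesssim\Lz+\Lo^2\Delta_j$ against $\lambda_j=\Lo\sqrt{\mu\Delta_j/2}$. This gives $\kappa_j\lesssim\Lz/(\Lo\sqrt{\mu\Delta_j})+\Lo\sqrt{\Delta_j/\mu}=O(1+\Lz/\mu)$ on the admissible range. When $\theta_j=\tfrac12$, one has $\kappa_j\lesssim1+\Lz/(\Lo^2\Delta_j)=O(1+\Lz/\mu)$.
\item Use $\lambda_j\delta_j=\mu\Delta_j/1024$ in the $\theta_j<\tfrac12$ regime. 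This gives $\Gest_j^2/(\lambda_j\delta_j)\lesssim(\Lo^2\Delta_j+\Lz)/\mu=O\bigl((1+\Lz/\mu)^2\bigr)$.
\end{itemize}
Only then does the per-hop cost $O\bigl(\rho\ell_\star(1+\Lz/\mu)^2\bigr)$, multiplied by the $O(1+\Lz/\mu)$ hops of (b), give $\tO\bigl(\rho(1+\Lz/\mu)^3\bigr)$. The cube comes from these wide-valley hops with large certified gradient, not from a $\kappa^{3/2}$ batch absorbing a leftover factor.
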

\begin{proof}
\emph{(a)} Induction; the base is $f(c_1)-f^\star\le2\norm{\grad f(c_1)}^2/\mu\le2\Gest^2/\mu=\Delta_1$ as in Lemma~\ref{lem:hot}(c). Given the invariant, $D_j:=\norm{c_j-x^\star}\le\sqrt{2\Delta_j/\mu}$. Let $\tilde x$ be the ball-constrained prox. If $D_j\le\rsafe$ then $f(\tilde x)-f^\star\le F(x^\star)-f^\star=\tfrac{\lambda_j}2D_j^2\le\tfrac{\lambda_j\rsafe^2}2=\tfrac{\theta_j\Delta_j}4$ (the last identity is the design $\lambda_j=2\Lo^2\theta_j\Delta_j$). Otherwise set $\theta':=\rsafe/D_j\in[\theta_j,1]$ (using $D_j\le\sqrt{2\Delta_j/\mu}$) and $u:=c_j+\theta'(x^\star-c_j)$: $f(\tilde x)+\tfrac{\lambda_j}2\norm{\tilde x-c_j}^2\le f(u)+\tfrac{\lambda_j}2\rsafe^2$ and convexity give
$f(\tilde x)-f^\star\le(1-\theta')\big(f(c_j)-f^\star\big)+\tfrac{\theta_j\Delta_j}4\le(1-\theta_j)\Delta_j+\tfrac{\theta_j\Delta_j}4$.
\RSTM{} (Remark~\ref{rem:rstm-noise}) returns $c_{j+1}$ with $F(c_{j+1})-F(\tilde x)\le\delta_j=\tfrac{\theta_j\Delta_j}{256}$ and $d:=\norm{c_{j+1}-\tilde x}\le\sqrt{2\delta_j/\lambda_j}=\tfrac1{16\Lo}$. Then, by the two-case argument of \eqref{eq:transfer} with $a:=\norm{\tilde x-c_j}\le\rsafe$,
$f(c_{j+1})\le F(c_{j+1})-\tfrac{\lambda_j}2\norm{c_{j+1}-c_j}^2\le f(\tilde x)+\delta_j+\lambda_j\,a\,d\le f(\tilde x)+\delta_j+\lambda_j\rsafe\,d$,
and $\lambda_j\rsafe\,d=2\Lo^2\theta_j\Delta_j\cdot\tfrac1{2\Lo}\cdot\tfrac1{16\Lo}=\tfrac{\theta_j\Delta_j}{16}$. Collecting, $f(c_{j+1})-f^\star\le(1-\theta_j)\Delta_j+\theta_j\Delta_j(\tfrac14+\tfrac1{16}+\tfrac1{256})\le(1-\tfrac{\theta_j}4)\Delta_j=\Delta_{j+1}$.

\emph{(b)} While $\theta_j=\tfrac12$ (i.e.\ $\Delta_j\le\tfrac\mu{2\Lo^2}$) the schedule contracts by $\tfrac78$ per hop, so at most $\lceil\log_{8/7}16\rceil=21$ such hops occur before $\Delta\le\tfrac\mu{32\Lo^2}$. In the regime $\theta_j<\tfrac12$, on a dyadic band $\Delta\in(\Delta',2\Delta']$ one has $\theta_j\ge\tfrac1{2\Lo}\sqrt{\mu/(4\Delta')}$, so at most $\lceil4\ln2/\theta\rceil\le(16\ln2)\,\Lo\sqrt{\Delta'/\mu}+1$ hops halve the schedule. Summing the geometric series over bands below $\Delta_1\le8\Gbar^2/\mu$ gives at most $38\Lo\sqrt{\Delta_1/\mu}+\log_2\tfrac{\Delta_1}{\Delta_{\mathrm{stop}}}\le108\tfrac\Lz\mu+\log_2\big(256\tfrac{\Lz^2}{\mu^2}\big)$ hops. At exit $\Delta\le\tfrac{\mu\rsafe^2}8$, so $\norm{c-x^\star}\le\sqrt{2\Delta/\mu}\le\tfrac{\rsafe}2$ by (a).

\emph{(c)} By (a) and Lemma~\ref{app:lem:localization}, $\norm{\grad f(c_j)}\le2\Lo\Delta_j+\sqrt{2\Lz\Delta_j}$ regardless of how $c_j$ was produced, so the anchored cap obeys $\Gest_j\le\tfrac97\norm{\grad f(c_j)}\le3\Lo\Delta_j+2\sqrt{\Lz\Delta_j}$ and $L_c\le4\Lz+4\Lo\Gest_j\le8\Lz+16\Lo^2\Delta_j$ (AM--GM), $L_F\le8\Lz+17\Lo^2\Delta_j$. In the regime $\theta_j<\tfrac12$, $\lambda_j=\Lo\sqrt{\mu\Delta_j/2}$ and
$\kappa_j:=L_F/\lambda_j\le\tfrac{12\Lz}{\Lo\sqrt{\mu\Delta_j}}+25\Lo\sqrt{\Delta_j/\mu}$;
over the admissible range $\Delta_j\in(\tfrac\mu{32\Lo^2},\,8\Gbar^2/\mu]$ both terms are at most $25\sqrt8\,\tfrac\Lz\mu$ each, and in the regime $\theta_j=\tfrac12$ ($\lambda_j=\Lo^2\Delta_j$), $\kappa_j\le17+\tfrac{8\Lz}{\Lo^2\Delta_j}\le17+256\tfrac\Lz\mu$; in all cases $\kappa_j\le300(1+\tfrac\Lz\mu)$.
Per hop, Remark~\ref{rem:rstm-noise} with Lemma~\ref{lem:varsigma} gives cost
$25K_j\sqrt{\kappa_j}\;+\;5184C_b\rho\ell_\star\kappa_j^2K_j\;+\;1152C_b\rho\ell_\star\tfrac{\Gest_j^2}{\lambda_j\delta_j}\;+\;4608C_b\rho\ell_\star\tfrac{\lambda_j\rsafe^2}{\delta_j}\;+\;B_{\mathrm{rel}}$,
with $K_j\le\ell_\star$. The last ratio equals $\tfrac{\lambda_j\rsafe^2}{\delta_j}=\tfrac{\theta_j\Delta_j/2}{\theta_j\Delta_j/256}=128$, an absolute constant. For the $\Gest^2$-piece, in the regime $\theta_j<\tfrac12$ one has $\lambda_j\delta_j=\tfrac{\mu\Delta_j}{1024}$ and $\Gest_j^2\le18\Lo^2\Delta_j^2+8\Lz\Delta_j$, so it is at most $1.2\cdot10^6C_b\rho\ell_\star(18\Lo^2\Delta_j+8\Lz)/\mu$; summed over hops with the band counts of (b), the $\Lz$-part gives $\tO(\rho(\Lz/\mu)^2)$ and the $\Lo^2\Delta_j$-part, dominated by the top band $\Delta_1\le8\Gbar^2/\mu$ through $\sum_j\Delta_j\le34\Delta_1^{3/2}\Lo/\sqrt\mu$, gives $\tO(\rho(\Lz/\mu)^3)$; in the regime $\theta_j=\tfrac12$ the piece is $\tO(\rho(1+\Lz/\mu))$ per hop. For the $\kappa_j^2$-piece, the same band accounting yields $\sum_j\kappa_j^2=\tO\big((\Lz/\mu)^3+(1+\Lz/\mu)^2\big)$: the square of the second term of $\kappa_j$, $625\Lo^2\Delta_j/\mu$, summed against band counts $(16\ln2)\,\Lo\sqrt{\Delta_j/\mu}$ is again dominated by $\Delta_1$ and gives the cube; the square of the first term, $144\Lz^2/(\Lo^2\mu\Delta_j)$, summed against the same counts is dominated by $\Delta_{\mathrm{stop}}$ and gives $\tO((\Lz/\mu)^2)$. Multiplying by $5184C_b\rho\ell_\star K_j$ and adding the deterministic part $\sum_j25\ell_\star\sqrt{\kappa_j}=\tO((1+\Lz/\mu)^{3/2})$ proves (c). (The cubes originate in the early, wide-valley hops, where the certified gradient scale can be as large as $\Lo\Delta_j$; we do not believe they are tight, cf.\ Remark~\ref{rem:obstruction}.)
\end{proof}

\subsubsection{The interior stage}
\begin{remark}[Extended ball smoothness and co-coercivity]
\label{rem:cococo}
The proof of Lemma~\ref{app:lem:ball} applies on $\ball{c}{4r}$ for $r\le\tfrac1{2\Lo}$ with the constant $L':=12(\Lz+\Lo\Gest)$, using the sharp form of Lemma~\ref{lem:growth} ($G_z\le e^{\Lo s}G_c+\tfrac{\Lz}{\Lo}(e^{\Lo s}-1)$ with $s=\norm{z-c}\le4r$): the Gr\"onwall factor becomes $e^{4\Lo r}\le e^2$, so $\Lz+\Lo G_z\le e^2(\Lz+\Lo\Gest)$ on the ball, and the two-point bound sums to $\norm{\grad f(x)-\grad f(y)}\le e^2(\Lz+\Lo\Gest)\norm{x-y}\le L'\norm{x-y}$, since $e^2\le12$. Consequently, if $x^\star\in\ball{c}{\rsafe}$, then for every $x\in\ball{c}{\rsafe}$ the virtual point $y:=x-\grad f(x)/L'$ stays in the certified region: $\norm{\grad f(x)}\le\Gest+L'\rsafe$ gives $\norm{y-x}\le\tfrac{\Gest}{L'}+\rsafe\le\tfrac1{12\Lo}+\tfrac1{2\Lo}$, so $y\in\ball{c}{\tfrac{13}{12\Lo}}\subset\ball{c}{4\rsafe}$, and the descent lemma along $[x,y]$ yields the co-coercivity bound $\norm{\grad f(x)}^2\le2L'\big(f(x)-f^\star\big)$ on $\ball{c}{\rsafe}$.
\end{remark}

\begin{lemma}[Interior SGD and selection]
\label{lem:interior}
Suppose Stage 3 starts with $\norm{c-x^\star}\le\tfrac{\rsafe}2$ and a valid cap $\Gest\ge\norm{\grad f(c)}$. Then, conditionally on the Stage-0--2 history: (a) each SGD run satisfies $\E[\norm{x_T-x^\star}^2\mid\mathcal F_2]\le(1-\tfrac\mu{4L'})^T\norm{c-x^\star}^2$ and, with the stated $T$, $\E[f(x_T)-f^\star\mid\mathcal F_2]\le\eps'/4$; (b) with conditional probability at least $1-\alpha/4$, the two-step selection returns $\hat x$ with $f(\hat x)-f^\star\le\tfrac\eps4$. Consequently the same probability bound holds unconditionally. (c) The stage costs $\tO\big(\rho(1+\tfrac\Lz\mu)\logp(\tfrac{\Lz^2}{\Lo^2\eps\mu})\cdot\logp(1/\alpha)\big)$ oracle calls. Moreover at entry $\Gest\le\Gbar$, so $L'\le24\Lz$, and $\mathcal F_2$ denotes the history through Stage 2.
\end{lemma}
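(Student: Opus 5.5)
The argument runs throughout on the good event of Stages 0--2, conditionally on \(\mathcal F_2\), and treats the \(m\) runs as independent given \(\mathcal F_2\).

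\emph{(a) One SGD run.} First I check that the certified region is never left. At entry \(\norm{c-x^\star}\le\rsafe/2\) by Lemma~\ref{lem:cold}(b). Every projected iterate stays in \(\ball{c}{\rsafe}\), and by Remark~\ref{rem:cococo} the function is \(L'\)-smooth and co-coercive there: \(\norm{\grad f(x)}^2\le2L'(f(x)-f^\star)\).

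The batch of size \(\lceil24\rho\rceil\) and the a.s.\ strong growth bound (Lemma~\ref{lem:relnoise}) give
\[
\E\norm{\gest_t}^2\le\Big(1+\tfrac{(1+\sqrt\rho)^2}{24\rho}\Big)\norm{\grad f(x_t)}^2\le\tfrac43\cdot2L'(f(x_t)-f^\star).
\]
Projection onto a ball containing \(x^\star\) is nonexpansive. Expanding \(\norm{x_{t+1}-x^\star}^2\) with step \(1/(4L')\) and using strong convexity in the form \(\inner{\grad f(x)}{x-x^\star}\ge f(x)-f^\star+\tfrac\mu2\norm{x-x^\star}^2\), the variance term is absorbed by half of the gap term. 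This yields the contraction \(1-\mu/(4L')\) in conditional expectation.

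For the function value I apply co-coercivity at \(x_T\); the quadratic upper bound \(f(x_T)-f^\star\le\tfrac{L'}2\norm{x_T-x^\star}^2\) on the ball also works. With \(\norm{c-x^\star}\le\rsafe/2=1/(4\Lo)\), choosing \(T\) as stated gives \(\E[f(x_T)-f^\star]\le\eps'/4\); the \(\ln\) in \(T\) is exactly what this requires. I also record \(\Gest\le\Gbar\) at entry, from Lemma~\ref{app:lem:localization} with \(\Delta\le\mu/(32\Lo^2)\le\Lz/(32\Lo^2)\) and the \(\tfrac87\) anchor, so \(L'\le24\Lz\).

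\emph{(b) Selection.} By Markov, each run satisfies \(f(x_T^{(i)})-f^\star\le\eps'\) with probability at least \(3/4\). Over \(m=\lceil\log_4(8/\alpha)\rceil\) independent runs, all fail with probability at most \(\alpha/8\). On a good run, co-coercivity gives \(\norm{\grad f}\le\sqrt{2L'\eps'}\), so its measured norm is below \(2\sqrt{2L'\eps'}\) on the measurement event, and the survivor set is nonempty.

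Every survivor has true gradient at most \(\tfrac87\cdot2\sqrt{2L'\eps'}\). The re-measurement with \(B_{\mathrm{sel}}\) is relatively accurate, so the returned point has true gradient within a constant factor of the minimal survivor gradient, hence \(\norm{\grad f(\hat x)}\le C\sqrt{L'\eps'}\). Then \(f(\hat x)-f^\star\le\norm{\grad f(\hat x)}^2/(2\mu)\le C^2L'\eps'/(2\mu)=C^2\eps/32\), which should come out \(\le\eps/4\) with the stated constants. I expect this constant chase to be the main obstacle. The key point is to pass through gradient norms rather than function values, since the measurement certifies \(\norm{\grad f}\) only relatively and strong convexity converts the gradient back to a gap. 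Union-bounding the \(2m\) measurement events with the Markov event gives total failure at most \(\alpha/4\). Integrating over \(\mathcal F_2\) removes the conditioning.

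\emph{(c) Cost.} The oracle cost is \(m\,T\lceil24\rho\rceil+m(B_{\mathrm{rel}}+B_{\mathrm{sel}})\). Substituting \(L'\le24\Lz\) into \(T\) gives \(\tO\big(\rho(1+\Lz/\mu)\logp(\Lz^2/(\Lo^2\eps\mu))\logp(1/\alpha)\big)\).
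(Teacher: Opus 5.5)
Your parts (a) and (c), and the entry bound $\Gest\le\Gbar$, $L'\le24\Lz$, follow the paper's proof essentially step by step. One correction in (a). Convert distance to gap only through the descent-lemma bound $f(x_T)-f^\star\le\tfrac{L'}{2}\norm{x_T-x^\star}^2$. It is valid because $[x^\star,x_T]\subset\ball{c}{\rsafe}$ and $\grad f(x^\star)=0$, and it is exactly the bound the stated $T$ is calibrated for. Co-coercivity alone gives no gap-versus-distance bound. Combined with convexity it yields only $2L'\norm{x_T-x^\star}^2$, and with the stated $T$ that gives $\E[f(x_T)-f^\star]\le\eps'$ instead of $\eps'/4$, which makes the Markov step in (b) vacuous. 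So drop that alternative.

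In (b) you take a different route from the paper. The paper uses the first filter to bound every survivor's true gradient by $\tfrac{16}{7}\sqrt{2L'\eps'}$, and hence its noise scale. This turns $B_{\mathrm{sel}}$ into a uniform absolute error $\sqrt{\mu\eps}/6$, so $\norm{\grad f(\hat x)}\le\sqrt{2L'\eps'}+2\sqrt{\mu\eps}/6$ and the gap is about $0.24\,\eps$.

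You use the relative accuracy of the re-measurement directly. That works, but you left the constant open; here it is. Since $\nu(x)=2\sqrt\rho\,\norm{\grad f(x)}$ and $B_{\mathrm{sel}}\ge95c_g^2\rho\ell_\star$, each re-measurement has relative error $\eta\le2/\sqrt{95}<0.21$. For the good run $i_0$, which survives the filter,
\[
(1-\eta)\norm{\grad f(\hat x)}\le\norm{\hat g(\hat x)}\le\norm{\hat g(x_T^{(i_0)})}\le(1+\eta)\sqrt{2L'\eps'} .
\]
Hence $C^2=2\bigl(\tfrac{1+\eta}{1-\eta}\bigr)^2<4.7$ and $f(\hat x)-f^\star\le C^2\eps/32<0.15\,\eps$.

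The comparison with the good run is essential. The bound ``every survivor has gradient at most $\tfrac{16}{7}\sqrt{2L'\eps'}$'' on its own gives $C^2=2(16/7)^2\approx10.4$, i.e.\ only about $0.33\,\eps$, which misses $\eps/4$.

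What each route buys: yours makes the first filter irrelevant to the accuracy argument and gives a better constant, but it relies on the oracle noise being purely multiplicative. The paper's version uses the filter to make the error uniform across survivors and states it in absolute terms, which is the form that would still work if the measurement noise had an additive component.
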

\begin{proof}
Fix the Stage-0--2 history throughout; all expectations and probabilities below are conditional on $\mathcal F_2$ until the final unconditioning step.
At entry, Lemma~\ref{app:lem:localization} with $f(c)-f^\star\le\tfrac\mu{32\Lo^2}$ and $\mu\le\Lz$ gives $\norm{\grad f(c)}\le\tfrac\mu{16\Lo}+\sqrt{\tfrac{\Lz\mu}{16\Lo^2}}\le\tfrac{\Gbar}{16}+\tfrac{\Gbar}4$, so the anchored cap is $\le\tfrac97\cdot\tfrac5{16}\Gbar\le\Gbar$ and $L'\le24\Lz$.
\emph{(a)} Projection onto $\ball{c}{\rsafe}$ is nonexpansive and $x^\star$ is feasible, so with $r_t:=\norm{x_t-x^\star}$, $\E r_{t+1}^2\le r_t^2-2\gamma\inner{\grad f(x_t)}{x_t-x^\star}+\gamma^2\E\norm{\gest_t}^2$. Strong convexity gives $\inner{\grad f}{x-x^\star}\ge f(x)-f^\star+\tfrac\mu2r^2$; the batch of size $\ge24\rho$ gives $\E\norm{\bar\zeta}^2\le\tfrac{4\rho}{24\rho}\norm{\grad f}^2\le\tfrac16\norm{\grad f}^2$, so $\E\norm{\gest}^2\le\tfrac76\norm{\grad f}^2\le\tfrac73L'(f-f^\star)$ by Remark~\ref{rem:cococo}. With $\gamma=\tfrac1{4L'}$, $\E r_{t+1}^2\le(1-\gamma\mu)r_t^2-2\gamma(1-\tfrac7{24})(f-f^\star)\le(1-\tfrac\mu{4L'})r_t^2$. Since $[x^\star,x_T]\subset\ball{c}{\rsafe}$ and $f$ is $L'$-smooth there, $\E[f(x_T)-f^\star]\le\tfrac{L'}2\E r_T^2\le\tfrac{L'}2e^{-T\mu/(4L')}\tfrac1{16\Lo^2}\le\tfrac{\eps'}4$ by the choice of $T$ (if the logarithm is non-positive then $T=0$, and the entry certificate $f(c)-f^\star\le\tfrac{\mu}{32\Lo^2}\le\tfrac{\eps'}4$ already delivers the target).
\emph{(b)} By Markov and (a), each run fails ($f(x_T^{(i)})-f^\star>\eps'$) with probability $\le\tfrac14$.
The $m$ runs use disjoint, mutually independent sample batches and share only the history-measurable starting point $c$ and the parameters $(L',T)$ computed before the stage. Conditionally on the Stage-0--2 history these quantities are fixed and the failure events are independent, so the probability that all runs fail simultaneously is $\le4^{-m}\le\tfrac\alpha8$ by the choice $m=\lceil\log_4(8/\alpha)\rceil$.
A good run has $G_i:=\norm{\grad f(x_T^{(i)})}\le\sqrt{2L'\eps'}$ (Remark~\ref{rem:cococo}), so its relative measurement is $\le\tfrac98\sqrt{2L'\eps'}<2\sqrt{2L'\eps'}$ and it survives; every survivor has $G_i\le\tfrac87\cdot2\sqrt{2L'\eps'}=\tfrac{16}7\sqrt{2L'\eps'}$, hence noise scale $\nu_i\le\tfrac{32}7\sqrt{2\rho L'\eps'}$, and the batch $B_{\mathrm{sel}}=\lceil95c_g^2\rho\ell_\star\rceil$ yields absolute accuracy $c_g\nu_i\sqrt{\ell_\star/B_{\mathrm{sel}}}\le\tfrac{\sqrt{\mu\eps}}6$ (using $L'\eps'=\tfrac{\eps\mu}{16}$).
If $\hat x$ minimizes the re-measured norm among survivors, then $\norm{\grad f(\hat x)}\le\sqrt{2L'\eps'}+2\cdot\tfrac{\sqrt{\mu\eps}}6=\sqrt{\tfrac{\mu\eps}8}+\tfrac{\sqrt{\mu\eps}}3$, and $f(\hat x)-f^\star\le\tfrac{\norm{\grad f(\hat x)}^2}{2\mu}\le\tfrac{(0.354+0.334)^2}{2}\eps\le\tfrac\eps4$.
A union bound over the $\le2m$ measurements, each failing with probability at most $2e^{-\ell_\star}$, adds at most $4me^{-\ell_\star}\le\alpha/2^{28}$ (by the definition \eqref{eq:nu-ellstar} of $\ell_\star$). Hence the total Stage-3 failure probability is at most $\alpha/8+\alpha/2^{28}<\alpha/4$, exactly as stated.
\emph{(c)} Immediate: $m\big(T\lceil24\rho\rceil+B_{\mathrm{rel}}+B_{\mathrm{sel}}\big)$.
\end{proof}

\subsection{Proofs of Theorems~\ref{thm:interp-strong} and \ref{thm:interp-convex}}
\label{app:interp-proofs}
\subsubsection{Proof of Theorem~\ref{thm:interp-strong}}
On the intersection of the good events of Proposition~\ref{prop:phase1-interp} and Lemmas~\ref{lem:hot}--\ref{lem:interior}, the output satisfies $f(\hat x)-f^\star\le\tfrac\eps4$. Here is the explicit confidence accounting: Stage 0 consumes at most $\alpha/4$; the hot and cold \RSTM{} calls consume at most $\alpha/16$ each; all hot/cold cap-anchor measurements together consume less than $\alpha/16$ by \eqref{eq:nu-ellstar}; and Lemma~\ref{lem:interior} consumes less than $\alpha/4$. Thus the total failure probability is at most $11\alpha/16<\alpha$. The cost is the sum of: Stage 0, $\tO(\rho(1+\Lo R_0))=\tO(\rho\kbar)$; Stage 1, $\tO(\rho(1+\Lz/\mu))$; Stage 2, $\tO(\rho(1+\Lz/\mu)^3)$; Stage 3, $\tO(\rho(1+\Lz/\mu)\logp(\Dcert/\eps))$ after absorbing $\logp(\Lz/\mu)$ and $\logp(1/\alpha)$ factors into $\tO$.

\hfill$\square$

\subsubsection{Proof of Theorem~\ref{thm:interp-convex}}

\paragraph{The regularization must be centered at the \PhaseI{} output.} A remark on the construction is in order, because the naive choice is not merely suboptimal but leaves a genuine gap. Regularizing around the \emph{initial} point, $f+\tfrac{\mu_{\mathrm{reg}}}{2}\norm{\cdot-x^0}^2$, breaks the entry certificate of the hot stage: Stage~0 is run on $f$ and Proposition~\ref{prop:phase1-interp} certifies a cap for $\norm{\grad f(c)}$ only, whereas the hot stage of Algorithm~\ref{alg:interp} must be started from a valid cap for
\begin{equation}
\label{eq:grad-reg}
\norm{\grad f_{\mathrm{reg}}(c)}=\norm{\grad f(c)+\mu_{\mathrm{reg}}(c-x^0)},
\end{equation}
and the drift term $\mu_{\mathrm{reg}}\norm{c-x^0}$ in \eqref{eq:grad-reg} is not controlled by any \emph{a priori} bound of order $\bar G''$; bounding it through the hot-stage potential would be circular, since that potential is itself established only after the entry cap is known to be valid. The same choice would also defeat the smoothness certificate of Lemma~\ref{lem:cert-reg} below, whose first constant is inflated by $\Lo\norm{\grad f(c)}$ at the center $c$: at $c=x^0$ this is the arbitrarily large $\Lo\norm{\grad f(x^0)}$, while at $c=c^{\mathrm{I}}$ it is at most $\tfrac{25}{8}\Lo\Gbar=\tfrac{25}{8}\Lz$. We therefore center the regularizer at the \PhaseI{} output.

\paragraph{Construction.} Let $c^{\mathrm{I}}$ be the point returned by Stage~0 (i.e., \PhaseI{} run on $f$; Proposition~\ref{prop:phase1-interp}), and set
\begin{equation}
\label{eq:freg}
\begin{split}
f_{\mathrm{reg}}&:=f+\tfrac{\mu_{\mathrm{reg}}}{2}\norm{\cdot-c^{\mathrm{I}}}^2,
\qquad
\mu_{\mathrm{reg}}:=\frac{\eps}{\Rb^2},\\
\Lz'&:=\Lz+\mu_{\mathrm{reg}}+\tfrac{25}{8}\Lo\Gbar
\;=\;\tfrac{33}{8}\Lz+\mu_{\mathrm{reg}},
\qquad
\bar G'':=\frac{\Lz'}{\Lo},
\end{split}
\end{equation}
and write $x^\star_{\mathrm{reg}}:=\argmin f_{\mathrm{reg}}$, $\Deltabar':=\Lz'/\Lo^2$, $A:=1+\Lz\Rb^2/\eps$ and $A':=\Lz'/\mu_{\mathrm{reg}}=1+\tfrac{33}{8}\Lz\Rb^2/\eps$, so that $A\le A'\le\tfrac{33}{8}A$. Then $f_{\mathrm{reg}}$ is $\mu_{\mathrm{reg}}$-strongly convex. The \emph{naive} global pair $(\Lz+\mu_{\mathrm{reg}},\Lo)$ for $f_{\mathrm{reg}}$, however, need not satisfy Assumption~\ref{app:ass:gs}: $\grad f(x)$ and the drift $\mu_{\mathrm{reg}}(x-c^{\mathrm{I}})$ may partly cancel, in which case $\norm{\grad f_{\mathrm{reg}}(x)}$ is small while the local Lipschitz constant of $\grad f_{\mathrm{reg}}$ retains the scale $\Lo\norm{\grad f(x)}+\mu_{\mathrm{reg}}$, of order $\Lo\mu_{\mathrm{reg}}\norm{x-c^{\mathrm{I}}}$, which the naive first constant does not dominate. Concretely, $f(x)=\cosh x$ satisfies $|f''|\le1+|f'|$ (hence Assumption~\ref{app:ass:gs} with the pair $(e,e)$ via Remark~\ref{rem:hessian}), but $f_{\mathrm{reg}}(x)=\cosh x+\tfrac12(x-100)^2$ has, at its own minimizer $\bar x$, $\grad f_{\mathrm{reg}}(\bar x)=0$ and $f_{\mathrm{reg}}''(\bar x)=\cosh\bar x+1\approx95.76>e+1$; \eqref{eq:gs} for $f_{\mathrm{reg}}$ with the naive pair $(e+1,e)$, applied at $x=\bar x$, would force $|f_{\mathrm{reg}}'(\bar x+t)-f_{\mathrm{reg}}'(\bar x)|\le(e+1)\,|t|$ for $|t|\le1/e$, i.e.\ $f_{\mathrm{reg}}''(\bar x)\le e+1$---a contradiction. What fails is only the naive first constant: Lemma~\ref{lem:cert-reg} below shows that $f_{\mathrm{reg}}$ always admits a \emph{global} pair with the same second constant $\Lo$, whose first constant is inflated by $\Lo\norm{\grad f(c^{\mathrm{I}})}$ (in the example the center is $c=100$ and the inflation is the enormous $\Lo\sinh 100$, which illustrates precisely why the center's gradient must be certified small); at the actual center, Proposition~\ref{prop:phase1-interp} gives $\norm{\grad f(c^{\mathrm{I}})}\le\tfrac{25}{8}\Gbar$, producing $\Lz'$ of \eqref{eq:freg}. Concretely, Stages~1--2 of Algorithm~\ref{alg:interp} are run on $f_{\mathrm{reg}}$, using the corrected oracle
\begin{equation}
\label{eq:oracle-reg}
\gest_{\mathrm{reg}}(x):=\gest(x)+\mu_{\mathrm{reg}}(x-c^{\mathrm{I}})
\end{equation}
(the correction is deterministic and free of oracle calls), with $\Gbar$ replaced by $\bar G''$ in all stage thresholds, with the Stage-1 transfer step and level target as in Lemma~\ref{lem:hot}, $s:=\tfrac{1}{400\Lo}$ and $\delta_{\mathrm{lvl}}:=\lambda s^2/2$, with cap anchors of batch $B''_{\mathrm{rel}}:=\lceil1.6\cdot10^5c_g^2\rho\ell_\star\rceil$ and update \eqref{eq:anchor-reg}, with the guards $\bar K_{\mathrm{hot}}:=\lceil2950A'\rceil+8$ and $\bar K_{\mathrm{cold}}:=\lceil108A'\rceil+\lceil2\log_2(16(1+A'))\rceil+24$ in place of the corresponding counters of Algorithm~\ref{alg:interp} (the cold counter is that of Algorithm~\ref{alg:interp} with $\mu\mapsto\mu_{\mathrm{reg}}$, $\Lz/\mu\mapsto A'$; the hot one reflects the sharper anchor tracking of Step~3), with the logarithmic umbrella $\ell_\star$ of \eqref{eq:nu-ellstar} evaluated at the variant's counters ($\mu\mapsto\mu_{\mathrm{reg}}$, $\Lz\mapsto\Lz'$), and with the Stage-2 exit threshold replaced by $\Delta\le\min\{\tfrac{\mu_{\mathrm{reg}}}{32\Lo^2},\tfrac\eps2\}$; Stage~3 is skipped.

\begin{lemma}[Global regularized certificate]
\label{lem:cert-reg}
Let $f$ be convex, differentiable and $(\Lz,\Lo)$-smooth in the sense of Assumption~\ref{app:ass:gs}, let $c\in\R^d$ and $\mu>0$, and set $F:=f+\tfrac\mu2\norm{\cdot-c}^2$ and $G_c:=\norm{\grad f(c)}$. Then, for every $x\in\R^d$,
\begin{equation}
\label{eq:cert-reg}
\norm{\grad f(x)}^2\;\le\;\norm{\grad F(x)}^2+G_c^2,
\qquad
\norm{\grad f(x)}\;\le\;\norm{\grad F(x)}+G_c ,
\end{equation}
and $F$ satisfies Assumption~\ref{app:ass:gs} \emph{globally} with the pair $(\Lz+\mu+\Lo\,G_c,\ \Lo)$, on the same admissible range $\norm{y-x}\le1/\Lo$. The squared comparison is sharp: equality holds for affine $f$ at the minimizer of $F$.
\end{lemma}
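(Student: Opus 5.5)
The proof has three parts: the gradient comparison \eqref{eq:cert-reg} from monotonicity of $\grad f$, then the global generalized-smoothness pair for $F$ by a triangle inequality, and finally the sharpness example.

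\emph{Comparison.} Write $\grad F(x)=\grad f(x)+\mu(x-c)$. Monotonicity of $\grad f$ (Assumption~\ref{app:ass:convex}) gives
\[
\inner{\grad f(x)-\grad f(c)}{x-c}\ge0 .
\]
I would expand
\[
\norm{\grad F(x)}^2=\norm{\grad f(x)}^2+2\mu\inner{\grad f(x)}{x-c}+\mu^2\norm{x-c}^2 .
\]
Inserting the monotonicity bound yields
\[
\norm{\grad F(x)}^2\ge\norm{\grad f(x)}^2+2\mu\inner{\grad f(c)}{x-c}+\mu^2\norm{x-c}^2 .
\]
Completing the square, $\norm{\mu(x-c)+\grad f(c)}^2\ge0$ gives
\[
2\mu\inner{\grad f(c)}{x-c}+\mu^2\norm{x-c}^2\ge-G_c^2 .
\]
This is the squared inequality; the unsquared form follows from $\sqrt{a^2+b^2}\le a+b$ for $a,b\ge0$.

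\emph{Global smoothness pair.} Fix $x,y$ with $\norm{y-x}\le1/\Lo$. The triangle inequality and \eqref{eq:gs} give
\[
\norm{\grad F(y)-\grad F(x)}\le\big(\Lz+\Lo\norm{\grad f(x)}+\mu\big)\norm{y-x}.
\]
Substituting $\norm{\grad f(x)}\le\norm{\grad F(x)}+G_c$ from the comparison bounds the right side by
\[
\big(\Lz+\mu+\Lo G_c+\Lo\norm{\grad F(x)}\big)\norm{y-x},
\]
which is exactly Assumption~\ref{app:ass:gs} for $F$ with the pair $(\Lz+\mu+\Lo G_c,\Lo)$, on the same range.

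\emph{Sharpness.} Take $f(x)=\inner{a}{x}$. Then $G_c=\norm a$, and the minimizer of $F$ is $x=c-a/\mu$, where $\grad F=0$ and $\norm{\grad f}=\norm a$. Hence equality holds in the squared comparison.

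The only step needing any idea is the comparison, specifically spotting that the cross term must be completed into $\norm{\mu(x-c)+\grad f(c)}^2$. That expression is $\norm{\grad F_{\mathrm{lin}}(x)}^2$ for the linearization of $f$ at $c$, and I expect finding this completion to be the main (mild) obstacle.
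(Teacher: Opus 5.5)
Your proof is correct and follows the paper's argument. Both use the same expansion of $\norm{\grad F(x)}^2$ with monotonicity applied to the cross term, the same triangle-inequality substitution for the pair $(\Lz+\mu+\Lo G_c,\Lo)$, and the same affine sharpness example. The only cosmetic difference is that you complete the square $\norm{\mu(x-c)+\grad f(c)}^2\ge0$ directly. The paper instead applies Cauchy--Schwarz to get $\mu r(\mu r-2G_c)$ with $r=\norm{x-c}$ and then bounds this scalar quadratic by a case split, so your version reaches the same bound without the case analysis.
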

\begin{proof}
Write $r:=\norm{x-c}$. By gradient monotonicity, $\inner{\grad f(x)}{x-c}\ge\inner{\grad f(c)}{x-c}\ge-G_c\,r$, so expanding $\norm{\grad F(x)}^2=\norm{\grad f(x)}^2+2\mu\inner{\grad f(x)}{x-c}+\mu^2r^2$ gives
\begin{equation*}
\norm{\grad F(x)}^2\;\ge\;\norm{\grad f(x)}^2+\mu r\,(\mu r-2G_c) .
\end{equation*}
If $\mu r\ge2G_c$ the last term is nonnegative and $\norm{\grad f(x)}\le\norm{\grad F(x)}$; if $\mu r<2G_c$ then $\mu r\,(2G_c-\mu r)\le G_c^2$ (the left side is maximized at $\mu r=G_c$), so $\norm{\grad f(x)}^2\le\norm{\grad F(x)}^2+G_c^2$. This proves the first inequality of \eqref{eq:cert-reg}, and the second follows from $\sqrt{a^2+b^2}\le a+b$; for affine $f(x)=\inner{a}{x}$ the minimizer $x=c-a/\mu$ of $F$ satisfies $\grad F(x)=0$ and $\norm{\grad f(x)}^2=G_c^2$, so the squared comparison holds with equality. (The plain triangle inequality $\norm{\grad f(x)}\le\norm{\grad F(x)}+\mu r$, with the worst uncompensated case $\mu r\approx2G_c$ of the case split, yields only $\norm{\grad f(x)}\le\norm{\grad F(x)}+2G_c$ and hence the looser first constant $\Lz+\mu+2\Lo\,G_c$; instantiated at $c=c^{\mathrm{I}}$ this reads $\tfrac{29}{4}\Lz+\mu_{\mathrm{reg}}$ in place of $\tfrac{33}{8}\Lz+\mu_{\mathrm{reg}}$. We use the sharp comparison.) Finally, for $\norm{y-x}\le1/\Lo$, \eqref{eq:gs} applied to $f$ together with \eqref{eq:cert-reg} gives
\begin{equation*}
\begin{split}
\norm{\grad F(y)-\grad F(x)}
&\le\big(\Lz+\Lo\norm{\grad f(x)}\big)\norm{y-x}+\mu\norm{y-x}\le\big(\Lz+\mu+\Lo\,G_c+\Lo\norm{\grad F(x)}\big)\norm{y-x},
\end{split}
\end{equation*}
which is \eqref{eq:gs} for $F$ with the stated pair.
\end{proof}

\paragraph{Step 1: the entry cap is exact.} Since $\grad f_{\mathrm{reg}}(c^{\mathrm{I}})=\grad f(c^{\mathrm{I}})$, the cap produced by Stage~0 transfers verbatim:
\begin{equation}
\label{eq:entry-cap}
\norm{\grad f_{\mathrm{reg}}(c^{\mathrm{I}})}=\norm{\grad f(c^{\mathrm{I}})}\le\tfrac{25}{8}\Gbar\le\tfrac{25}{8}\bar G''\le4\bar G'',
\end{equation}
so $\Gest_1:=4\bar G''$ is valid, exactly as required by Lemma~\ref{lem:hot}, with no measurement and no appeal to any potential. This is the step that fails for the $x^0$-centered regularizer.

\paragraph{Step 2: global smoothness and the regularized noise.} Instantiating Lemma~\ref{lem:cert-reg} at $(c,\mu)=(c^{\mathrm{I}},\mu_{\mathrm{reg}})$, where $G_{c^{\mathrm{I}}}=\norm{\grad f(c^{\mathrm{I}})}\le\tfrac{25}{8}\Gbar$ by Proposition~\ref{prop:phase1-interp}, certifies $f_{\mathrm{reg}}$ \emph{globally} with the pair $(\Lz',\Lo)$ of \eqref{eq:freg}. Two consequences are the only smoothness inputs that Stages~1--2 use: (i) Lemma~\ref{app:lem:ball} applies to $f_{\mathrm{reg}}$ on every ball $\ball{c}{2r}$ with $r\le\tfrac{1}{2\Lo}$, giving two-point $L_c$-smoothness with $L_c:=4(\Lz'+\Lo\Gest)$ whenever $\Gest\ge\norm{\grad f_{\mathrm{reg}}(c)}$, and the descent inequality with modulus $L_c$ along every segment of such a ball---exactly the hypothesis of Proposition~\ref{app:prop:rstm} and Lemma~\ref{lem:varsigma}; and (ii) Lemma~\ref{app:lem:localization} applies to $f_{\mathrm{reg}}$ at every point, with $\Lz\mapsto\Lz'$. Hence Lemmas~\ref{lem:hot} and \ref{lem:cold} apply to $f_{\mathrm{reg}}$ with the substitution $(\Lz,\mu,\Gbar)\mapsto(\Lz',\mu_{\mathrm{reg}},\bar G'')$, and no level-dependent constant, displacement bound, or induction over levels is needed.

The oracle noise of $f_{\mathrm{reg}}$ is that of $f$: the drift correction in \eqref{eq:oracle-reg} is deterministic and cancels in $\zeta_{\mathrm{reg}}(x,\xi):=\gest_{\mathrm{reg}}(x,\xi)-\grad f_{\mathrm{reg}}(x)=\gest(x,\xi)-\grad f(x)$, so, by Lemma~\ref{lem:relnoise} and \eqref{eq:cert-reg},
\begin{equation}
\label{eq:noise-reg}
\norm{\zeta_{\mathrm{reg}}}\le2\sqrt\rho\,\norm{\grad f(x)}
\le2\sqrt\rho\,\norm{\grad f_{\mathrm{reg}}(x)}+2\sqrt\rho\,G_{c^{\mathrm{I}}}
\le2\sqrt\rho\,\norm{\grad f_{\mathrm{reg}}(x)}+\sigma_{\mathrm{add}},
\qquad
\sigma_{\mathrm{add}}:=\tfrac{25}{4}\sqrt\rho\,\Gbar\le\tfrac{25}{4}\sqrt\rho\,\bar G'' .
\end{equation}
Every query of Stages~1--2 therefore sees norm-sub-Gaussian noise that is \emph{multiplicative} in $\norm{\grad f_{\mathrm{reg}}}$---the structure that Lemma~\ref{lem:varsigma} and Remark~\ref{rem:rstm-noise} already accommodate---plus one additive scale $\sigma_{\mathrm{add}}$ that is constant across levels; its cost is accounted for in Steps~3 and~5.

\paragraph{Step 3: consequences for the analysis.}

\emph{Cap anchors.} By \eqref{eq:noise-reg}, the noise at the anchor point $c_{j+1}$ has scale $\nu(c_{j+1})\le2\sqrt\rho\,\norm{\grad f_{\mathrm{reg}}(c_{j+1})}+\sigma_{\mathrm{add}}$ at every level, hot or cold, regardless of how large $\Gest_j$ is relative to $\bar G''$. With batch $B''_{\mathrm{rel}}=\lceil1.6\cdot10^5c_g^2\rho\ell_\star\rceil$, for which $c_g\sqrt{\rho\ell_\star/B''_{\mathrm{rel}}}\le1/400$, the estimate therefore satisfies
\begin{equation*}
\begin{split}
\norm{\gest-\grad f_{\mathrm{reg}}(c_{j+1})}
&\le c_g\,\nu(c_{j+1})\sqrt{\tfrac{\ell_\star}{B''_{\mathrm{rel}}}}
\le\frac{2\,\norm{\grad f_{\mathrm{reg}}(c_{j+1})}}{400}+\frac{25\,\bar G''}{4\cdot400}\\
&=\frac{\norm{\grad f_{\mathrm{reg}}(c_{j+1})}}{200}+\frac{\bar G''}{64}
\le\tfrac18\norm{\grad f_{\mathrm{reg}}(c_{j+1})}+\tfrac{1}{32}\bar G'',
\end{split}
\end{equation*}
so the update
\begin{equation}
\label{eq:anchor-reg}
\Gest^{+}:=\tfrac87\norm{\gest}+\tfrac{\bar G''}{28}
\end{equation}
is valid and tracks the true gradient: on the one hand
$\Gest^+\ge\tfrac87\big(\tfrac{199}{200}\norm{\grad f_{\mathrm{reg}}}-\tfrac{\bar G''}{64}\big)+\tfrac{\bar G''}{28}\ge\norm{\grad f_{\mathrm{reg}}}$,
since $\tfrac87\cdot\tfrac{199}{200}\ge1$ and $\tfrac{\bar G''}{28}\ge\tfrac87\cdot\tfrac{\bar G''}{64}=\tfrac{\bar G''}{56}$; on the other hand
$\Gest^{+}\le\tfrac87\big(\tfrac{201}{200}\norm{\grad f_{\mathrm{reg}}}+\tfrac{\bar G''}{64}\big)+\tfrac{\bar G''}{28}\le\tfrac97\norm{\grad f_{\mathrm{reg}}}+\tfrac{\bar G''}{14}$.

\emph{Hot stage.} Lemma~\ref{lem:hot} applies to $f_{\mathrm{reg}}$ with $(\Lz,\mu,\Gbar)\mapsto(\Lz',\mu_{\mathrm{reg}},\bar G'')$: while $\Gest>2\bar G''$ one has $\Lz'=\Lo\bar G''\le\Lo\Gest/2$, so $L_c=4(\Lz'+\Lo\Gest)\le6\Lo\Gest$, $L_F=L_c+\lambda\le10\Lo\Gest$ and $\kappa=L_F/\lambda\le\tfrac52$---exactly Lemma~\ref{lem:hot}(a). The prox remains unconstrained: $\norm{\xhat-c}=\norm{\grad f_{\mathrm{reg}}(\xhat)}/\lambda\le\Gest/\lambda=\tfrac{1}{4\Lo}<\rsafe$ by Lemma~\ref{app:lem:prox-grad}, so $H_0=\Gest^2/\lambda$ is certified as in Remark~\ref{rem:proj}, and $\delta_{\mathrm{lvl}}=\lambda s^2/2$ gives $\norm{c^+-\xhat}\le s$. The naive cap control of Lemma~\ref{lem:hot}(b) suffices as stated: $\norm{\grad f_{\mathrm{reg}}(c^+)}\le\norm{\grad f_{\mathrm{reg}}(\xhat)}+L_cs$ with $L_cs\le6\Lo\Gest\cdot\tfrac{1}{400\Lo}=0.015\,\Gest$. The only change to the arithmetic of Lemma~\ref{lem:hot}(b) is the anchor slack $\tfrac{3}{56}\bar G''\le\tfrac{3}{112}\Gest$ in the tracking bound of \eqref{eq:anchor-reg}: with $\Psi:=f_{\mathrm{reg}}(c)-f^\star_{\mathrm{reg}}+\tfrac{\Gest}{100\Lo}$, a productive level ($\norm{\xhat-c}\ge\tfrac{1}{16\Lo}$) drops the objective by $\tfrac{\Gest}{128\Lo}-\tfrac{\Gest}{400\Lo}-\tfrac{\Gest}{53000\Lo}\ge\tfrac{\Gest}{189\Lo}$ (prox decrease minus transfer along $[\xhat,c^+]$, as in Lemma~\ref{lem:hot}(b)) and raises the cap to $\Gest^+\le\tfrac87\cdot\tfrac{201}{200}\cdot1.015\,\Gest+\tfrac{3}{112}\Gest\le1.193\,\Gest$, so $\Delta\Psi\le-\tfrac{\Gest}{189\Lo}+\tfrac{0.193\,\Gest}{100\Lo}\le-\tfrac{\Gest}{300\Lo}$; a collapsing level raises the objective by at most $\tfrac{\Gest}{1600\Lo}+\tfrac{\Gest}{53000\Lo}\le\tfrac{\Gest}{1550\Lo}$ while $\Gest^+\le\tfrac87\cdot\tfrac{201}{200}\cdot0.265\,\Gest+\tfrac{3}{112}\Gest\le0.332\,\Gest$, so $\Delta\Psi\le\tfrac{\Gest}{1550\Lo}-\tfrac{0.668\,\Gest}{100\Lo}\le-\tfrac{\Gest}{166\Lo}$. In both cases
\begin{equation}
\label{eq:psi-drop-reg}
\begin{split}
\Delta\Psi&\;\le\;-\frac{\Gest}{300\Lo}\;\le\;-\frac{2\bar G''}{300\Lo}\;=\;-\frac{\Deltabar'}{150},
\qquad\text{while}\\
\Psi_0&\le\frac{2\,\norm{\grad f_{\mathrm{reg}}(c_1)}^2}{\mu_{\mathrm{reg}}}+\frac{4\bar G''}{100\Lo}
\le\frac{625}{32}\,\frac{\bar G''^2}{\mu_{\mathrm{reg}}}+\frac{\Deltabar'}{25}\le19.6\,\Deltabar'A'+\frac{\Deltabar'}{25}
\end{split}
\end{equation}
(using $\bar G''^2/\mu_{\mathrm{reg}}=\Deltabar'\cdot\Lz'/\mu_{\mathrm{reg}}=\Deltabar'A'$ and \eqref{eq:entry-cap}), so Stage~1 executes at most $150\,\Psi_0/\Deltabar'\le2940A'+6$ levels, exits with $\Gest\le2\bar G''$, and the guard $\bar K_{\mathrm{hot}}$ never triggers. Per level, the cost is that of Lemma~\ref{lem:hot}(d), $\tO(\rho)$, plus the $\sigma_{\mathrm{add}}$-piece $O\big(C_b\ell_\star\sigma_{\mathrm{add}}^2/(\lambda\delta_{\mathrm{lvl}})\big)=O(\rho)$, since $\lambda\delta_{\mathrm{lvl}}=\lambda^2s^2/2=\Gest^2/20000$ and $\sigma_{\mathrm{add}}^2=O(\rho\bar G''^2)$ with $\Gest>2\bar G''$; the anchor batch $B''_{\mathrm{rel}}=O(\rho\ell_\star)$ is of the same order. Hence each hot level costs $\tO(\rho)$, and the hot stage costs $\tO(\rho A')$ in total.

\emph{Cold stage.} Lemma~\ref{lem:cold}(a)--(b) goes through for $f_{\mathrm{reg}}$ verbatim with $(\Lz,\mu,\Gbar)\mapsto(\Lz',\mu_{\mathrm{reg}},\bar G'')$: the argument uses only convexity, $\mu_{\mathrm{reg}}$-strong convexity, the (smoothness-free) transfer inequality \eqref{eq:transfer}, the \RSTM{} guarantee, and the schedule, which is unchanged; the hop count of Lemma~\ref{lem:cold}(b), a schedule fact, becomes $108A'+2\log_2(16(1+A'))+22$ via $\Delta_1\le8\bar G''^2/\mu_{\mathrm{reg}}$ and $\Delta_{\mathrm{stop}}=\mu_{\mathrm{reg}}/(32\Lo^2)$, and the guard $\bar K_{\mathrm{cold}}$ never triggers. The gradient-dependent constants follow from the now \emph{global} localization of Step~2, $\norm{\grad f_{\mathrm{reg}}(c_j)}\le2\Lo\Delta_j+\sqrt{2\Lz'\Delta_j}$, and the anchor tracking of \eqref{eq:anchor-reg}:
\begin{equation}
\label{eq:cold-cap-reg}
\Gest_j\;\le\;\tfrac97\,\norm{\grad f_{\mathrm{reg}}(c_j)}+\tfrac{1}{14}\bar G''
\;\le\;\tfrac97\big(2\Lo\Delta_j+\sqrt{2\Lz'\Delta_j}\big)+\tfrac{1}{14}\bar G''
\;=\;O\big(\Lo\Delta_j+\sqrt{\Lz'\Delta_j}+\bar G''\big),
\end{equation}
hence $L_{c,j}\le4\Lz'+4\Lo\Gest_j=O\big(\Lz'+\Lo^2\Delta_j+\Lo\sqrt{\Lz'\Delta_j}\,\big)$ (using $\Lo\bar G''=\Lz'$), and the condition-number computation of Lemma~\ref{lem:cold}(c) with $(\Lz,\mu)\mapsto(\Lz',\mu_{\mathrm{reg}})$ gives $\kappa_j:=L_{F,j}/\lambda_j=O(A')$ in both cold regimes.

\paragraph{Step 4: exit threshold and output accuracy.} The cold loop exits with $\Delta\le\min\{\tfrac{\mu_{\mathrm{reg}}}{32\Lo^2},\tfrac\eps2\}$, and by \eqref{eq:wlog}, $32\Lo^2\Rb^2\ge2$, hence $\tfrac{\mu_{\mathrm{reg}}}{32\Lo^2}=\tfrac{\eps}{32\Lo^2\Rb^2}\le\tfrac\eps2$: the two thresholds coincide, the loop is exactly that of Algorithm~\ref{alg:interp} with the substitution of Step~2, and no boundary accounting of the hop count is needed. At the exit the certified gap is $f_{\mathrm{reg}}(\hat x)-f^\star_{\mathrm{reg}}\le\tfrac\eps2$. To transfer it to $f$, note that optimality of $x^\star_{\mathrm{reg}}$ for $f_{\mathrm{reg}}$ against the feasible point $x^\star$ gives $\tfrac{\mu_{\mathrm{reg}}}{2}\norm{x^\star_{\mathrm{reg}}-c^{\mathrm{I}}}^2\le f(x^\star)-f(x^\star_{\mathrm{reg}})+\tfrac{\mu_{\mathrm{reg}}}{2}\norm{x^\star-c^{\mathrm{I}}}^2\le\tfrac{\mu_{\mathrm{reg}}}{2}\norm{x^\star-c^{\mathrm{I}}}^2$, so, with Proposition~\ref{app:prop:phase1}(iii),
\begin{equation}
\label{eq:xstar-reg}
\norm{x^\star_{\mathrm{reg}}-c^{\mathrm{I}}}\;\le\;\norm{x^\star-c^{\mathrm{I}}}\;\le\;\tfrac32R_0=\tfrac{\Rb}{2} .
\end{equation}
Therefore
\begin{equation}
\label{eq:reg-transfer}
\begin{split}
f(\hat x)-f^\star&\;\le\;\big(f_{\mathrm{reg}}(\hat x)-f_{\mathrm{reg}}(x^\star_{\mathrm{reg}})\big)+\tfrac{\mu_{\mathrm{reg}}}{2}\norm{x^\star-c^{\mathrm{I}}}^2\le\;\tfrac\eps2+\tfrac{\eps}{2\Rb^2}\cdot\tfrac{\Rb^2}{4}\;=\;\tfrac{5\eps}{8}\;\le\;\eps .
\end{split}
\end{equation}

\paragraph{Step 5: cost.} Stage~0 costs $\tO(\rho(1+\Lo R_0))=\tO(\rho\kbar)$ (Proposition~\ref{prop:phase1-interp}), and the hot stage costs $\tO(\rho A')$ (Step~3). For the cold stage, the multiplicative pieces are those of Lemma~\ref{lem:cold}(c) with $(\Lz,\mu)\mapsto(\Lz',\mu_{\mathrm{reg}})$ and sum to $\tO(\rho A'^3)$, with two modifications. First, the cap \eqref{eq:cold-cap-reg} carries the anchor slack: $\Gest_j^2=O(\Lo^2\Delta_j^2+\Lz'\Delta_j+\bar G''^2)$ against $\lambda_j\delta_j=\tfrac{\mu_{\mathrm{reg}}\Delta_j}{1024}$ (regime $\theta_j<\tfrac12$); the $\Lo^2\Delta_j$-part is dominated by the top band $\Delta_1\le8\bar G''^2/\mu_{\mathrm{reg}}$ and gives $\tO(\rho A'^3)$, the $\Lz'$-part is $O(\rho A')$ per hop and totals $\tO(\rho A'^2)$ over the $O(A')$ hops, and the $\bar G''^2/\Delta_j$-part is dominated by the bottom band. The bottom band is $\Delta_{\mathrm{stop}}=\tfrac{\mu_{\mathrm{reg}}}{32\Lo^2}$---not $\tfrac\eps2$: under \eqref{eq:wlog} one has $\Delta_{\mathrm{stop}}\le\tfrac\eps2$ (Step~4), and the distinction affects only absolute constants, since $\Lo\bar G''=\Lz'$ collapses it to the factor $\sqrt{32}$:
\begin{equation}
\label{eq:cold-sum}
\begin{split}
&\tO\Big(\rho\,\bar G''^2\,\frac{\Lo}{\mu_{\mathrm{reg}}^{3/2}\sqrt{\Delta_{\mathrm{stop}}}}\Big)
\;=\;\tO\Big(\rho\,\bar G''^2\,\frac{\sqrt{32}\,\Lo^2}{\mu_{\mathrm{reg}}^{2}}\Big)=\;\tO\Big(\rho\,\Big(\frac{\Lo\bar G''}{\mu_{\mathrm{reg}}}\Big)^{2}\Big)
\;=\;\tO\big(\rho A'^{2}\big),
\end{split}
\end{equation}
where the first expression is the per-hop cost $O\big(\rho\bar G''^2\ell_\star/(\mu_{\mathrm{reg}}\Delta_j)\big)$ multiplied by the per-band hop count $(16\ln2)\,\Lo\sqrt{\Delta'/\mu_{\mathrm{reg}}}+1$ of Lemma~\ref{lem:cold}(b) and summed over the dyadic bands (the sum is dominated by the bottom band; the $+1$ of the band count contributes a geometric series over the bands, dominated by the bottom band---at most twice the largest per-hop term---and is equally absorbed by $\tO$). Second, the additive-noise piece: per hop it costs $O\big(C_b\ell_\star\sigma_{\mathrm{add}}^2/(\lambda_j\delta_j)\big)$, which has exactly the $\bar G''^2$-form just summed, since $\sigma_{\mathrm{add}}^2=O(\rho\bar G''^2)$ is constant across levels; it therefore contributes the same $\tO(\rho A'^2)$ in the regime $\theta_j<\tfrac12$, and in the regime $\theta_j=\tfrac12$, where $\lambda_j\delta_j=\Lo^2\Delta_j^2/512$, the geometric sum of $1/\Delta_j^2$ over $\Delta_j\ge\Delta_{\mathrm{stop}}$ is dominated by $\Delta_j=\Delta_{\mathrm{stop}}$, and $\bar G''^2/(\Lo^2\Delta_{\mathrm{stop}}^2)=1024\,(\Lo\bar G''/\mu_{\mathrm{reg}})^2=O(A'^2)$ over $O(1)$ such hops. The deterministic part $\sum_j25K_j\sqrt{\kappa_j}=\tO(A'^{3/2})$ and the anchor batches $O(A')\cdot B''_{\mathrm{rel}}=\tO(\rho A')$ are dominated by the terms above. The cold stage therefore costs $\tO\big(\rho A'^3+\rho A'^2\big)$. Adding the three groups and using $A\le A'\le\tfrac{33}{8}A$ gives
\begin{equation}
\label{eq:interp-convex-total}
\begin{split}
N&=\tO\big(\rho\,\kbar+\rho\,A'+\rho\,A'^{2}+\rho\,A'^{3}\big)
\;=\;\tO\big(\rho\,\kbar+\rho\,A'^{3}\big)=\tO\Big(\rho\,\kbar+\rho\Big(1+\tfrac{\Lz\Rb^2}{\eps}\Big)^{3}\Big),
\end{split}
\end{equation}
which is the statement of Theorem~\ref{thm:interp-convex}: the only surviving $\kbar$-factor is the Stage-0 entry burn-in, shared with Theorem~\ref{thm:interp-strong}, and under the default certificate of Lemma~\ref{lem:gap0} that term is $\tO\big(\rho(1+q)^{4}\big)$ with $q:=\Lo R_0$, exactly as in Theorem~\ref{thm:interp-strong} (Proposition~\ref{prop:phase1-interp}).

\hfill
$\square$

\subsubsection{Towards in-expectation strong growth}
\begin{remark}[Towards in-expectation strong growth]
\label{rem:mom}
If only \eqref{eq:sgc} holds (no a.s.\ bound), single-sample noise need not have sub-Gaussian tails, and our proofs do \emph{not} extend by a drop-in replacement of batch means. Two distinct uses of estimates must be separated. (i) \emph{Measurements} (cap anchors, the two-step selection): these only require a high-probability tail bound on a point estimate of a fixed vector, and a geometric median-of-means (MoM) estimator \citep{minsker2015geometric} over $O(\log(1/\alpha'))$ groups delivers sub-Gaussian-type confidence from the second-moment bound $\E\norm{g-\grad f}^2\le\rho\norm{\grad f}^2$; all measurement steps of Algorithm~\ref{alg:interp} therefore survive under \eqref{eq:sgc} alone, at the cost of absolute constants. (ii) \emph{Inner-solver gradients}: here the obstruction is real. Median-of-means is a \emph{biased} estimator of the gradient, while the analysis of Proposition~\ref{prop:stm} decomposes the error as a martingale-difference term (Lemma~\ref{lem:azuma}) plus a squared-norm term, and the martingale part relies on conditional unbiasedness of $\gest_t$ given the past; a per-iteration MoM estimate breaks this centering, and controlling the resulting bias along an accelerated trajectory would require exactly the kind of high-relative-accuracy estimates whose avoidance is the point of the design. Plain batch means, on the other hand, have only the second-moment bound under \eqref{eq:sgc}, which yields in-expectation but not high-probability control of \eqref{eq:stm-master}. We therefore state our results under the a.s.\ variant and leave the in-expectation extension of Theorems~\ref{thm:interp-strong}--\ref{thm:interp-convex} under \eqref{eq:sgc} alone as an open technical question; we expect a restart-and-select outer wrapper around in-expectation inner runs to work, but a complete argument must handle the bias--variance interplay at the extrapolation sequence and we do not claim it here.
\end{remark}

\subsection{The $\kappa^{3/2}$ obstruction behind Open Problem~\ref{op:interp}}
\label{app:interp-obstruction}
\begin{remark}[Noise floor inside momentum epochs]
\label{rem:obstruction}
Suppose one replaces the interior SGD by an accelerated inner solver, e.g.\ \RSTM{} on $f$ itself over $\ball{c}{\rsafe}$ with $\lambda\asymp\mu$ and $\kappa:=L'/\mu$. By Lemma~\ref{lem:varsigma} with $\Gest\asymp0$ at the optimum, the noise scale on epoch $k$ is $\nu_k^2\asymp\rho L'^2\bar\Delta_k/\mu$, so the batch rule of Remark~\ref{rem:rstm-noise} forces $b_k\asymp C_b\rho\ell\,\tfrac{L'^2/\mu}{\sqrt{\mu L'}}=C_b\rho\ell\,\kappa^{3/2}$ \emph{per iteration}: inside a momentum epoch the extrapolation sequence wanders a distance $\Theta(\sqrt{\bar\Delta_k/\mu})$ from the optimum, where the strong-growth noise is $\Theta(\sqrt\rho\,L'\sqrt{\bar\Delta_k/\mu})$---a factor $\Theta(\sqrt\kappa)$ larger than the noise SGD sees at comparable suboptimality---and killing it inside condition \eqref{eq:bk} costs the $\kappa^{3/2}$ batch. The resulting complexity, $\sqrt\kappa\,\logp(\Dcert/\eps)\cdot\rho\kappa^{3/2}=\rho\kappa^2\logp(\Dcert/\eps)$, is worse than the $\rho\kappa\logp(\Dcert/\eps)$ of plain SGD: the $\sqrt\kappa$ gain of momentum is exactly erased. Any resolution must either certify the extrapolation sequence at $o(\sqrt\kappa)$ relative noise or avoid epoch-uniform batching; we leave this as Open Problem~\ref{op:interp}.
\end{remark}

\section{Complete experimental details}
\label{app:experiments}
\label{app:sec:experiments}

This section collects the complete experimental protocol of the main paper: the certified instances and their constants, the oracle and noise calibration, the baselines and tuning protocol, the practical implementation with its component-by-component deviations from the analyzed algorithms, all diagnostic figures, the matched ablations, the sensitivity table, the projection and wall-clock accounting, and the exact-theory cost estimate. It closes with a schedule-faithful sanity experiment on certified small instances; additive-Gaussian batch means are exact in distribution, while the strong-growth check explicitly discloses its Gaussian approximation for very large categorical batches. Figure~\ref{app:fig:experiments} of the main paper is reproduced here for self-containedness.

\begin{figure}[t]
\centering
\begin{subfigure}[t]{0.49\columnwidth}
\includegraphics[width=\textwidth]{figures/exp_convex.pdf}
\caption{Convex}
\end{subfigure}\hfill
\begin{subfigure}[t]{0.49\columnwidth}
\includegraphics[width=\textwidth]{figures/exp_strongly.pdf}
\caption{$\mu$-strongly convex}
\end{subfigure}
\caption{Objective gap versus stochastic oracle calls (log--log; geometric mean over 10 independently regenerated instance--noise runs; the shaded multiplicative $\pm$ one standard deviation band depicts \emph{variability across the 10 runs}, not uncertainty of the mean; gaps floored at $10^{-16}$). (a)~Convex family with quartic flat directions: \textsf{Practical ARC-SG} reaches $5.2\times10^{-5}$, about $6.5\times$ below fully tuned \textsf{Clip-SGD} ($3.4\times10^{-4}$) and $400\times$ below \textsf{SGD-GS} ($2.1\times10^{-2}$), and is still improving at the budget end; \textsf{PJ-SGD} (the same $(\Lz,\Lo)$-stepsize with Polyak--Juditsky averaging) reaches $2.6\times10^{-5}$, the best endpoint accuracy; the \textsf{ARC-SG (levels)} ablation shows the accelerated levels alone reach $8.0\times10^{-3}$, so most of the late-stage gain comes from the interior averaging stage, which Theorems~\ref{app:thm:convex}--\ref{app:thm:strongly} do not cover; the matched ablations \textsf{PJ-SGD (matched)} and \textsf{Phase I + PJ (matched)} decompose the method further (Section~\ref{app:sec:experiments}). (b)~Strongly convex ($\mu=0.245$ exact): the same ordering with almost identical numbers ($5.3\times10^{-5}$ versus $3.5\times10^{-4}$ and $2.6\times10^{-2}$; \textsf{PJ-SGD} $2.4\times10^{-5}$).}
\label{app:fig:experiments}
\end{figure}

\paragraph{Instances.} To obtain \emph{global} $(\Lz,\Lo)$-smoothness with known constants we use objectives that are separable over rotated coordinates $y=Q^\top x$ for an orthogonal $Q=[q_1,\dots,q_d]$, with a random root vector $r$; then the Hessian is diagonal in the rotated basis and it suffices to certify each coordinate term $g_k$ against its own derivative, since $|g_k'(y_k)|\le\norm{\grad f(x)}$. Two term types are used. \emph{Cosh terms} $\sum_s\cosh(C_{k,s}(y_k-r_k))$: within a coordinate all residuals share a sign, so, using $\cosh t\le1+|\sinh t|$,
\[
\begin{split}
g_k''(y_k)
&=\sum_sC_{k,s}^2\cosh(\cdot) \le\ \sum_sC_{k,s}^2+\big(\max_sC_{k,s}\big)\Big|\sum_sC_{k,s}\sinh(\cdot)\Big| =\ \sum_sC_{k,s}^2+\big(\max_sC_{k,s}\big)\,|g_k'(y_k)| .
\end{split}
\]
\emph{Quartic terms} $g_k(u)=(c\,u)^4$: from $g_k''=12c^4u^2$ and $g_k'=4c^4u^3$, splitting at $|cu|=1$ gives $g_k''\le12c^2+3c\,|g_k'|$, i.e.\ each quartic coordinate is exactly $(12c^2,3c)$-generalized smooth; its Hessian \emph{vanishes} at the root. These are \emph{Hessian-form} constants only. Assumption~\ref{app:ass:gs}, however, is the finite-difference condition \eqref{eq:gs} on the range $\norm{y-x}\le1/\Lo$, and the Hessian-form pairs do \emph{not} satisfy it: on the $\cosh$ champion coordinate ($C_s\in\{0.6,0.8,1.0\}$, pair $(2,1)$, admissible range $|y-x|\le1$),
\[
\begin{split}
|g'(1)-g'(0)|&=0.6\sinh(0.6)+0.8\sinh(0.8)+\sinh(1)=2.2677>2=\big(\Lz+\Lo|g'(0)|\big)\cdot|1-0|,
\end{split}
\]
and on an individual term $20\cosh(u)$ of the overparameterized family of Appendix~\ref{app:interp-experiments} (pair $(20,1)$), $|g'(1)-g'(0)|=20\sinh(1)=23.504>20$. The exact finite-difference frontier is as follows.

\begin{lemma}[Certified pairs for the experimental families]
\label{lem:exp-cert}
(i) Let the finite, nonempty family of scales satisfy $C_s\ge0$ and set $B:=\max_sC_s$. If $B=0$, the $1$-D sum $g(u)=\sum_s\cosh(C_su)$ is constant and satisfies \eqref{eq:gs} for every $\Lz>0$, $\Lo\ge0$. If $B>0$, no pair with $\Lo=0$ is admissible; for $\Lo>0$, the sum satisfies \eqref{eq:gs} if and only if $\Lo\ge B/\ln2$ and $\Lz\ge\Lo\sum_sC_s\sinh(C_s/\Lo)$. (Equivalently, arbitrary real scales may be replaced by $|C_s|$, since $\cosh(C_su)=\cosh(|C_s|u)$.) (ii) $g(u)=(cu)^4$ satisfies \eqref{eq:gs} if and only if $\Lz\Lo^2\ge(24+16\sqrt2)\,c^4$. (iii) If every coordinate term of $f(x)=\sum_kg_k((Q^\top x)_k)$ with orthogonal $Q$ satisfies \eqref{eq:gs} with the same pair $(\Lz,\Lo)$, then so does $f$.
\end{lemma}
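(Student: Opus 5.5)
The three parts are independent. Parts (i) and (ii) are one-dimensional computations in which I write the finite-difference condition \eqref{eq:gs} explicitly and locate the extremal pair $(x,y)$. Part (iii) is an orthogonal-decomposition argument.

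\emph{Part (i).} The case $B=0$ is immediate. For $B>0$, write $g'(u)=\sum_s\varphi_s(u)$ with $\varphi_s(u):=C_s\sinh(C_su)$.

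\emph{Necessity.} I take $x=0$, where $g'(0)=0$, and $y=1/\Lo$. This forces $\sum_sC_s\sinh(C_s/\Lo)\le\Lz/\Lo$, which is the stated bound on $\Lz$. Next I let $x\to+\infty$ with $h:=y-x=1/\Lo$ fixed. Then only the scale(s) with $C_s=B$ dominate, and $g'(x+h)-g'(x)\sim|g'(x)|\,(e^{Bh}-1)$. Dividing by $|g'(x)|\to\infty$ forces $e^{B/\Lo}-1\le1$, i.e.\ $\Lo\ge B/\ln2$. The same asymptotics show that $\Lo=0$ is impossible, since $g'$ grows exponentially while \eqref{eq:gs} with $\Lo=0$ would make $g'$ globally Lipschitz.

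\emph{Sufficiency.} I bound each term using the addition formula
\[
\sinh(Cx+Ch)-\sinh(Cx)=\sinh(Cx)(\cosh(Ch)-1)+\cosh(Cx)\sinh(Ch)
\]
together with $\cosh t\le1+|\sinh t|$. This gives
\[
|\varphi_s(x+h)-\varphi_s(x)|\le|\varphi_s(x)|\,(e^{C_s|h|}-1)+C_s\sinh(C_s|h|).
\]
Both $(e^{Ct}-1)/t$ and $\sinh(Ct)/t$ are increasing in $t$. Hence for $|h|\le1/\Lo$ the first factor is at most $\Lo(e^{C_s/\Lo}-1)|h|\le\Lo|h|$, using $C_s\le B\le\Lo\ln2$. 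The second term is at most $\Lo\sinh(C_s/\Lo)\,C_s|h|$. I then sum over $s$. The key point for summing is that all $\varphi_s(x)$ share the sign of $x$, so $\sum_s|\varphi_s(x)|=|g'(x)|$. This yields \eqref{eq:gs} with exactly the threshold $\Lz\ge\Lo\sum_sC_s\sinh(C_s/\Lo)$.

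\emph{Part (ii).} Here $g'(u)=4c^4u^3$. For $y\neq x$, the condition \eqref{eq:gs} is equivalent to
\[
4c^4|x^2+xy+y^2|\le\Lz+4\Lo c^4|x|^3 \quad\text{for all } |y-x|\le1/\Lo.
\]
The left side is maximized at $y=x+\operatorname{sign}(x)/\Lo$, where it becomes $4c^4(3t^2+3t/\Lo+1/\Lo^2)$ with $t=|x|$. Substituting $t=s/\Lo$ reduces the condition to
\[
\frac{4c^4}{\Lo^2}\,\min_{s\ge0}p(s)\ge-\Lz,\qquad p(s):=s^3-3s^2-3s-1.
\]
The critical point is $s=1+\sqrt2$, where $p=-6-4\sqrt2$. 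This gives the iff condition $\Lz\Lo^2\ge(24+16\sqrt2)c^4$.

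\emph{Part (iii).} Write $\grad f(x)=\sum_kg_k'(y_k)q_k$ with $y=Q^\top x$. For $\Delta:=x'-x$, set $\Delta_k=q_k^\top\Delta$. Orthonormality gives
\[
\norm{\grad f(x')-\grad f(x)}^2=\sum_k\bigl(g_k'(y_k+\Delta_k)-g_k'(y_k)\bigr)^2.
\]
Each $|\Delta_k|\le\norm{\Delta}\le1/\Lo$, so the one-dimensional certificate bounds each summand by $(\Lz+\Lo|g_k'(y_k)|)^2\Delta_k^2$. Using $|g_k'(y_k)|\le\norm{\grad f(x)}$ and $\sum_k\Delta_k^2=\norm{\Delta}^2$ gives the claim.

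\emph{Main obstacle.} I expect the sufficiency direction of (i) to be the hardest step. The delicate point is the sign-sharing argument, which is what allows per-term bounds to combine into a bound in $|g'(x)|$ rather than $\sum_s|\varphi_s(x)|$. A second subtlety is that the constraint $B\le\Lo\ln2$ must enter exactly through $e^{B/\Lo}-1\le1$.
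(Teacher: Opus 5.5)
Your proposal is correct and follows the paper's proof essentially step for step: the same necessity witnesses ($x=0,\ y=1/\Lo$ and $x\to+\infty$), sign-sharing plus monotonicity of $(e^{Ct}-1)/t$ and $\sinh(Ct)/t$ for sufficiency, endpoint maximization of the convex quadratic with critical point $1+\sqrt2$ in (ii), and the orthogonal decomposition in (iii). The one real difference is that you get the per-term bound $|\sinh(X+\tau)-\sinh X|\le\sinh|\tau|+(e^{|\tau|}-1)|\sinh X|$ in one line from the addition formula, the triangle inequality and $\cosh X\le1+|\sinh X|$, which is cleaner than the paper's three-case check; in (ii) you should still dispose of the trivial degenerate cases $c=0$ and $\Lo=0$ separately, since your endpoint argument presumes $\Lo>0$.
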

\begin{proof}
(i) If $B=0$, then $g$ is constant and the claim is immediate, so assume $B>0$. Any admissible pair must then have $\Lo>0$, because $\Lo=0$ would require the nonconstant cosh sum to have a globally Lipschitz derivative, whereas its second derivative is unbounded. \emph{Necessity.} Taking $x=0$, $y=1/\Lo$ in \eqref{eq:gs} and using $g'(0)=0$ gives $|g'(y)-g'(x)|=g'(1/\Lo)=\sum_sC_s\sinh(C_s/\Lo)$, while $(\Lz+\Lo|g'(x)|)\,|y-x|=\Lz/\Lo$, so $\Lz\ge\Lo\sum_sC_s\sinh(C_s/\Lo)$. For the second condition take $y=x+1/\Lo$ and let $x\to+\infty$: by the addition formula, $\sinh(C_sy)-\sinh(C_sx)=\tfrac12e^{C_sx}\big(e^{C_s/\Lo}-1\big)+O(e^{-C_sx})$. Writing $M\ge1$ for the multiplicity of the maximal scale $C_s=B$, the maximal-scale terms dominate both displays and give
\[
|g'(y)-g'(x)|=\tfrac12MBe^{Bx}\big(e^{B/\Lo}-1\big)+o(e^{Bx}),\qquad
\Lo|g'(x)|=\tfrac12\Lo MBe^{Bx}+o(e^{Bx}),
\]
while $|y-x|=1/\Lo$ and $\Lz$ is negligible against $e^{Bx}$. The ratio $|g'(y)-g'(x)|/[(\Lz+\Lo|g'(x)|)\,|y-x|]$ therefore tends to $e^{B/\Lo}-1$, forcing $e^{B/\Lo}-1\le1$, i.e.\ $\Lo\ge B/\ln2$.

\emph{Sufficiency.} \emph{Step 1 (sign sharing).} Every term $C_s\sinh(C_su)$ has the sign of $u$, so $|g'(u)|=\sum_sC_s|\sinh(C_su)|$.
\emph{Step 2 (a scalar inequality).} For all real $X,\tau$,
\begin{equation}
\label{eq:sinh-increment}
|\sinh(X+\tau)-\sinh X|\ \le\ \sinh|\tau|+\big(e^{|\tau|}-1\big)\,|\sinh X|.
\end{equation}
Both sides are invariant under $(X,\tau)\mapsto(-X,-\tau)$, so assume $X\ge0$. \emph{Case $\tau\ge0$:} expanding $\sinh(X+\tau)=\sinh X\cosh\tau+\cosh X\sinh\tau$ and using $e^\tau-\cosh\tau=\sinh\tau$ and $\cosh X-\sinh X=e^{-X}$,
\[
\mathrm{RHS}-\mathrm{LHS}\ =\ \sinh\tau\,(1-\cosh X)+\sinh X\,(e^\tau-\cosh\tau)\ =\ \sinh\tau\,\big(1-e^{-X}\big)\ \ge\ 0 .
\]
\emph{Case $\tau=-t$, $0\le t\le X$:} the left side is $\sinh X-\sinh(X-t)=2\cosh(X-\tfrac t2)\sinh\tfrac t2\le\cosh X\sinh t$ (since $0\le X-\tfrac t2\le X$ gives $\cosh(X-\tfrac t2)\le\cosh X$, and $2\sinh\tfrac t2\le2\sinh\tfrac t2\cosh\tfrac t2=\sinh t$), and $\cosh X\sinh t\le\sinh t+(e^t-1)\sinh X$ because $\cosh X-1\le\sinh X$ (equivalently $e^{-X}\le1$) and $\sinh t\le e^t-1$ (equivalently $\cosh t\ge1$). \emph{Case $\tau=-t$, $t>X$:} then $|\sinh(X-t)-\sinh X|=\sinh(t-X)+\sinh X\le\sinh\big((t-X)+X\big)=\sinh t$ by the superadditivity of $\sinh$ on $[0,\infty)$ ($\sinh$ is convex with $\sinh0=0$, so $\sinh(a+b)\ge\sinh a+\sinh b$ for $a,b\ge0$), and $\sinh t$ is absorbed by the right side of \eqref{eq:sinh-increment}.
\emph{Step 3 (sub-bounds for $|h|\le1/\Lo$, $C\le B$).} Since $s\mapsto\sinh s/s$ is increasing on $s>0$ and $C|h|\le C/\Lo$,
$\sinh(C|h|)\le\Lo|h|\sinh(C/\Lo)$;
and since $t\mapsto e^t-1$ is convex with value $0$ at the origin, $e^{C|h|}-1\le\Lo|h|\,\big(e^{C/\Lo}-1\big)\le\Lo|h|$, using $e^{C/\Lo}\le e^{B/\Lo}\le2$.
\emph{Step 4 (assembly).} Applying \eqref{eq:sinh-increment} termwise with $X=C_su$, $\tau=C_sh$, summing, and using Steps 1 and 3,
\[
\begin{split}
|g'(u+h)-g'(u)|
&\ \le\ \sum_sC_s\Big[\sinh(C_s|h|)+\big(e^{C_s|h|}-1\big)\,|\sinh(C_su)|\Big]\\
&\ \le\ \Lo|h|\sum_sC_s\Big[\sinh(C_s/\Lo)+|\sinh(C_su)|\Big]\\
&\ =\ \Lo|h|\Big[\sum_sC_s\sinh(C_s/\Lo)+|g'(u)|\Big]
\ \le\ \big(\Lz+\Lo|g'(u)|\big)\,|h|,
\end{split}
\]
by the choice of $\Lz$; this is \eqref{eq:gs} at the pair $(x,y)=(u,u+h)$, with $|h|\le1/\Lo$ arbitrary.

(ii) If $c=0$, then $g$ is constant and the claim is immediate. Assume $c\ne0$. At $\Lo=0$ the derivative of this quartic is not globally Lipschitz, and the displayed algebraic condition also fails, so it remains to consider $\Lo>0$. Write $y=x+h$. Since $(x+h)^3-x^3=h\,q(h)$ with $q(h):=3x^2+3xh+h^2$, and $q(h)\ge0$ for all real $h$ (its discriminant is $9x^2-12x^2=-3x^2\le0$), one has $|g'(y)-g'(x)|=4c^4|h|\,q(h)$, while $|g'(x)|=4c^4|x|^3$. The quadratic $q$ is convex in $h$, so its maximum over $|h|\le1/\Lo$ is attained at an endpoint; for $x\ge0$ the maximizer is $h=+1/\Lo$, and by the symmetry $(x,h)\mapsto(-x,-h)$ the binding requirement is
\[
\Lz\ \ge\ 4c^4\max_{x\ge0}\Phi(x),\qquad
\Phi(x):=3x^2+\tfrac{3x}{\Lo}+\tfrac{1}{\Lo^2}-\Lo x^3 .
\]
Now $\Phi'(x)=6x+\tfrac3\Lo-3\Lo x^2$ vanishes at $x^\star=\tfrac{1+\sqrt2}{\Lo}$ (the positive root of $\Lo x^2-2x-\tfrac1\Lo=0$), which is the global maximizer on $[0,\infty)$: $\Phi''(x^\star)=-6\sqrt2<0$, the value $\Phi(0)=\tfrac{1}{\Lo^2}$ is smaller, and $\Phi(x)\to-\infty$ as $x\to\infty$. With $t:=\Lo x^\star=1+\sqrt2$ (so $t^2=3+2\sqrt2$ and $t^3=7+5\sqrt2$),
\[
\Lo^2\Phi(x^\star)\ =\ 3t^2+3t+1-t^3\ =\ (9+6\sqrt2)+(3+3\sqrt2)+1-(7+5\sqrt2)\ =\ 6+4\sqrt2,
\]
giving the requirement $\Lz\Lo^2\ge4(6+4\sqrt2)\,c^4=(24+16\sqrt2)\,c^4$. Necessity is attained by the witness $(x,y)=(x^\star,\,x^\star+1/\Lo)$, for which equality holds in every step above.

(iii) Let $u=Q^\top x$, $v=Q^\top y$; since $Q$ is orthogonal, $|v_k-u_k|\le\norm{v-u}=\norm{y-x}\le1/\Lo$, so the per-coordinate assumption applies to each pair $(u_k,v_k)$, and
\[
\begin{split}
\norm{\grad f(y)-\grad f(x)}^2
&\ =\ \sum_k|g_k'(v_k)-g_k'(u_k)|^2
\ \le\ \sum_k\big(\Lz+\Lo|g_k'(u_k)|\big)^2|v_k-u_k|^2\\
&\ \le\ \max_k\big(\Lz+\Lo|g_k'(u_k)|\big)^2\norm{v-u}^2
\ \le\ \big(\Lz+\Lo\norm{\grad f(x)}\big)^2\norm{y-x}^2,
\end{split}
\]
using $|g_k'(u_k)|\le\norm{\grad f(x)}$ (each coordinate of $g'(u)$ is, up to the orthogonal rotation, a coordinate of $\grad f(x)$) and the monotonicity of $t\mapsto(\Lz+\Lo t)^2$ on $t\ge0$.
\end{proof}

Applied to the champion scales $\{0.6,0.8,1.0\}$ ($B=1$), part (i) gives the tight pair
\[
\begin{split}
(\Lz,\Lo)=\Big(&\tfrac{1}{\ln2}\big[0.6\sinh(0.6\ln2)+0.8\sinh(0.8\ln2)+\sinh(\ln2)\big],\ \tfrac{1}{\ln2}\Big)=(2.1258,\ 1.4427),
\end{split}
\]
with admissible radius $1/\Lo=\ln2\approx0.693$; the required $\Lo$ scales as $c_k/\ln2$ and the required $\Lz$ is increasing in the base scale $c_k\le1$, so the same pair certifies every $\cosh$ coordinate, and for the quartic coordinates $\Lz\Lo^2\approx4.42\ge(24+16\sqrt2)/81\approx0.5756$, so part (ii) is non-binding (even their Hessian-form pair $(\tfrac43,1)$ clears the frontier). By part (iii) the pair lifts to the full objectives. The generic conversion of Remark~\ref{rem:hessian} would certify only the looser pairs $(2e,e)$ or $(2/\ln2,1/\ln2)$; the tight pairs above are used in all reported runs. (a) \emph{Convex:} $d=40$; coordinates $1$--$30$ are cosh with base scales $c_k$ geometrically spaced in $[0.35,1]$ times inner scales $\{0.6,0.8,1.0\}$ ($90$ components, the frontier driven by the $c_k=1$ coordinate), coordinates $31$--$40$ are quartic with $c=\tfrac13$. Hence $\Lz=2.1258$, $\Lo=1/\ln2\approx1.4427$, both tight for the champion coordinate; $x^\star=Qr$ and $f^\star=90$ (the number of cosh components) exactly; $f$ is convex with a unique minimizer but not strongly convex, globally or locally at $x^\star$. The start is $x^0=x^\star+t\,q_{k_\star}$ along a quartic coordinate, with $t$ found by bisection so that $\norm{\grad f(x^0)}=150\,\Gbar$ ($R_0\approx16.5$). (b) \emph{Strongly convex:} $d=40$, all coordinates cosh with three scales ($120$ components); since $g_k''\ge\sum_sC_{k,s}^2$ pointwise, $f$ is globally $\mu$-strongly convex with the exact constant $\mu=\min_k\sum_sC_{k,s}^2=2\cdot0.35^2=0.245$, with the same certified pair $(\Lz,\Lo)=(2.1258,1/\ln2)$ as in (a); no quadratic term is added, so $\norm{\grad^2f(x^\star)}\le\Lz$ and all constants supplied to the methods are simultaneously valid; $f^\star=120$ exactly. The interpolating family used in Appendix~\ref{app:interp-experiments} is built the same way and described there.

\paragraph{Oracle, noise calibration, budgets, statistics.} In (a)--(b) the oracle returns the full gradient corrupted by additive Gaussian noise, $\grad f(x)+\tfrac{s_g}{\sqrt b}\,\xi$ with $\xi\sim\mathcal N(0,I_d)$ for a batch of size $b$ and generation scale $s_g=0.25$. Assumption~\ref{app:ass:noise} requires $\E\exp(\norm\zeta^2/\sigma^2)\le e$; for $\zeta\sim\mathcal N(0,s_g^2I_d)$ this holds iff $\sigma^2\ge2s_g^2/(1-e^{-2/d})$, so the certified parameter is $\sigma=s_g\sqrt{2/(1-e^{-2/d})}\approx1.601$ at $d=40$, and \emph{this calibrated value}---not the generation scale---is what the batch rule of \textsf{Practical ARC-SG} consumes. Every method is charged one call per sampled component per iteration; the nominal budget is $2\cdot10^5$ calls in both families (the overparameterized family of Appendix~\ref{app:interp-experiments} uses $4\cdot10^5$). The historical driver starts a whole mini-batch whenever its counter is below the nominal budget and, for a proximal level, completes the subsequent adaptive cap measurement; hence its final counter can exceed the nominal budget. All reported $25\%/50\%/100\%$ endpoints and plotted grid values are obtained by linear interpolation of $\log_{10}$ gap at the nominal call count, rather than by substituting the post-budget point.

All curves report the geometric mean over evaluation seeds $0,\ldots,9$. For each seed the code independently regenerates the orthogonal rotation, root, and oracle-noise stream; methods at the same seed use the same regenerated instance, so the final top-two comparison is paired by seed (although different algorithms naturally consume their random streams differently). Baseline selection uses separate validation seeds $101,102,103$. The band is mean $\pm$ one standard deviation of $\log_{10}$ gap across the 10 runs; gaps are floored at $10^{-16}$ before averaging, so ``reaching the floor'' means reaching this numerical limit, not an exact zero. In addition to the mean curves, the final-gap standard deviation of the $\log_{10}$ gap over the 10 seeds is reported (it lies between $\approx0.07$ and $\approx0.23$ for the main methods), endpoint gaps at $25\%$, $50\%$ and $100\%$ of the budget are recorded, and for the two best methods (\textsf{PJ-SGD} and \textsf{Practical ARC-SG}) the paired per-seed difference of the final $\log_{10}$ gaps is computed: $-0.31\pm0.23$ in (a) and $-0.35\pm0.14$ in (b) (\textsf{PJ-SGD} minus \textsf{Practical ARC-SG}, negative on all 10 seeds in both families). A guard clips $\cosh$-arguments at $500$ in all oracles and function evaluations; an activation counter confirmed it never fired in any reported run (it exists to keep diverging configurations, encountered only during baseline tuning, finite).

\paragraph{Baselines, grids, and the tuning protocol.} \textsf{SGD-GS}: $\gamma_t=\tfrac12(\Lz+\Lo\norm{\gest_t})^{-1}$, batch 4 (batch $\lceil\rho\rceil=400$ in the overparameterized family, where $\rho=N^2$ is the almost-sure strong-growth constant of Appendix~\ref{app:interp-experiments}). \textsf{PJ-SGD}: the same recursion with Polyak--Juditsky averaging over the second half of the iterates, no additional free parameters. \textsf{DS-SGD}: $x_{t+1}=x_t-\gamma_0\gest_t/\sqrt{t+1}$ with $\gamma_0$ tuned, batch 4 in the additive-noise families and batch $\lceil\rho\rceil=400$ in the overparameterized family. \textsf{Clip-SGD}: $x_{t+1}=x_t-\gamma\,\mathrm{clip}(\gest_t,c)$ with $\gamma\in\{\tfrac{10^{-3}}4,\tfrac{10^{-2}}4,\tfrac{10^{-1}}4,\tfrac14\}$ and $c\in\{2,20,200\}$, batch 4. \textsf{Acc-blind}: the Nesterov-type recursion $y_t=x_t+\tfrac{t}{t+3}(x_t-x_{t-1})$, $x_{t+1}=y_t-\mathrm{scale}\cdot\gest_t/L_{\mathrm{blind}}$ with $L_{\mathrm{blind}}=\Lz+\Lo\norm{\grad f(x^0)}$ and $\mathrm{scale}\in\{1,4,16\}$ tuned, batch 4 (we deliberately do not call this AC-SA: it is a generic accelerated template, which is the point of the comparison---a generalized-smoothness-blind accelerated method would naturally use a heuristic constant of order $L_{\mathrm{blind}}$, which is \emph{not} a certified sublevel bound; see Section~\ref{app:sec:experiments}). \textsf{ClipSSTM}: the accelerated similar-triangles template with clipped gradients, $L\in\{4,40,400,4000\}$ and clipping level in $\{2,20,200\}$, no restarts, batch 4. \textsf{M-SGD-SGC} (momentum SGD under the strong growth condition; used only in Appendix~\ref{app:interp-experiments}): heavy-ball SGD with $\gamma\in\{10^{-2},10^{-1},1\}/(4\rho)$ and $\beta\in\{0.5,0.9,0.95\}$ tuned, batch $\lceil\rho\rceil$. Selection uses 3 validation seeds and the anytime objective (mean $\log_{10}$ gap over calls $\ge5000$), evaluated \emph{over the full budget of the corresponding problem}: tuning on a shorter horizon systematically selected configurations of \textsf{ClipSSTM} and \textsf{M-SGD-SGC} that looked best early and destabilized or diverged later, which would have made the comparison unfairly bad for the baselines. We note the protocol asymmetry explicitly: the baselines receive an oracle-tuned grid search, while \textsf{Practical ARC-SG} is run once with fixed constants; this favors the baselines.

\paragraph{\textsf{Practical ARC-SG} and its deviations from the theory.} We run the two-phase architecture of Algorithms~\ref{app:alg:phase1}--\ref{app:alg:arc} with moderate practical constants and no per-instance tuning; Algorithm~\ref{alg:practical} is an exact summary of the implementation and Table~\ref{tab:deviations} lists its departures from the analyzed schedules. (1) In the additive-noise families \PhaseI{} uses $B_1=4$; in the overparameterized family it uses $B_1=\lceil\rho\rceil=400$. It hands off when an adaptive measurement $\gest$ (not the inaccessible exact gradient) satisfies $\norm{\gest}\le\tfrac32\Gbar$, instead of the theoretical $3\Gbar$ test. (2) Additive-family proximal levels use $\lambda_j=\max\{4\Lo\Gest_j,\Lambda_j,\mu_{\mathrm{floor}}\}$ with $\Lambda_j=\Lz2^{1-j}$, halved \emph{every} level (the analyzed schedule starts at $32\Lz$ and halves only at binding floor levels), $\mu_{\mathrm{floor}}=\mu$ in the strongly convex family and $0$ in the convex family; their ball radius is fixed at $1/(2\Lo)$ rather than $2\Gest_j/\lambda_j$. (3) The inner restart runs $K=\min\{12,\max\{5,\lceil\log_2(H_{0,j}/\delta_j)\rceil\}\}$ epochs of $N_{\mathrm{ep}}=\lceil7\sqrt{1+L_j/\lambda_j}\rceil+3$ iterations. Its similar-triangles weight is $a_{t+1}=(t+2)/(2L_F)$, whereas analyzed \STM{} uses $(t+1)/(4L_F)$. (4) The transfer target is $\delta_j=\max\{\lambda_js_j^2/2,H_{0,j}/d_j\}$, with $d_j=1024$ when $\Gest_j\le\tfrac12$ and $\sigma>0$, and $d_j=64$ otherwise; $s_j=\min\{R_0/128,\iota/L_j\}$ and $\iota=\Gbar/256$. Thus $J_{\mathrm{prac}}=32$ is only a scale in the $s$- and $\iota$-formulas, not the number of levels. The $\max$ deliberately makes $\delta_j$ larger than the analyzed $\min\{\lambda_js_j^2/2,\eps/8\}$. (5) The exact implemented batch heuristic is $b_k=\min\{32,1+\lceil4\sigma^2/(\lambda_j\bar\Delta_k)\rceil\}$ with calibrated $\sigma\approx1.601$; it omits the logarithmic and $\sqrt{\lambda_jL_F}$ factors and changes the constant of \eqref{eq:bk}, so it is not merely \eqref{eq:bk} with a cap. (6) The cap update is $\Gest_{j+1}=\max\{10^{-9},\min[\Gest_j+L_j\max\{\norm{c_{j+1}-c_j},s_j\},\tfrac87\norm{\gest_{j+1}}+\tau_j]\}$, with adaptive measurement batches $8,32,128,256$ (a factor-four increase up to 256). (7) After at least one additive-family level, $\Gest_j\le2\Gbar$ or $40\%$ of the nominal budget triggers the interior driver: full-step $(\Lz,\Lo)$-SGD, batch $8+\lfloor t/40\rfloor$, with a running average that includes its starting anchor. This driver is outside Algorithms~\ref{app:alg:phase1}--\ref{alg:finisher} and the theorems; \textsf{ARC-SG (levels)} disables it. (8) Consequently, the finisher never fires in the main runs. (9) The overparameterized branch is different: immediately after \PhaseI{}, with no proximal levels or hop stages, it runs fixed-batch-400 half-step $(\Lz,\Lo)$-SGD and returns the last iterate without averaging. (10) The code has an unreachable-in-these-runs 2000-level safety guard. It also completes a batch started below the budget and the post-level adaptive measurement; the reported endpoints use interpolation at the nominal budget as described above. Runtime for the full study is a few minutes on one CPU core (NumPy, double precision).

\begin{algorithm}[t]
\caption{\textsf{Practical ARC-SG} (complete practical implementation run in Section~\ref{app:sec:experiments} and Appendix~\ref{app:interp-experiments})}
\label{alg:practical}
\begin{algorithmic}[1]
\REQUIRE $x^0$, $R_0$, certified $(\Lz,\Lo)$, calibrated $\sigma$, $\mu_{\mathrm{floor}}$ ($=\mu$ for the strongly convex additive family and $0$ for the convex family), oracle budget; fixed constants, no per-instance tuning
\STATE \textbf{Phase I:} $x\leftarrow x^0$; $B_1\leftarrow4$ for additive noise and $B_1\leftarrow\lceil\rho\rceil=400$ for the overparameterized family
\WHILE{the call counter is below the nominal budget}
    \STATE $\gest\leftarrow\GradEst(x,B_1)$
    \IF{$\norm{\gest}\le\tfrac32\Gbar$} \STATE $(\gest_m,\tau)\leftarrow$ adaptive measurement at $x$ (batches $8,32,128,256$)
        \IF{$\norm{\gest_m}\le\tfrac32\Gbar$} \STATE \textbf{break} \ENDIF
        \STATE $\gest\leftarrow\gest_m$
    \ENDIF
    \STATE $x\leftarrow x-\gest/[2(\Lz+\Lo\norm{\gest})]$
\ENDWHILE
\STATE If no measurement was made before budget exhaustion, make one now; $c_1\leftarrow x$, $\Gest_1\leftarrow\max\{\tfrac87\norm{\gest_m}+\tau,\tfrac14\Gbar\}$, $\Lambda_1\leftarrow\Lz$
\IF{the family is overparameterized}
    \WHILE{the call counter is below the nominal budget}
        \STATE $\gest\leftarrow\GradEst(c,400)$;\quad $c\leftarrow c-\gest/[2(\Lz+\Lo\norm{\gest})]$
    \ENDWHILE
    \RETURN $c$ \hfill (last iterate; no levels, hops, or averaging)
\ENDIF
\STATE \textbf{Additive-family Phase II:} execute at least one level while the counter is below budget (driver safety guard: at most 2000 levels)
\STATE $\lambda_j\leftarrow\max\{4\Lo\Gest_j,\Lambda_j,\mu_{\mathrm{floor}}\}$;\quad $L_j\leftarrow4(\Lz+\Lo\Gest_j)$;\quad $\Lambda_{j+1}\leftarrow\Lambda_j/2$
\STATE $s_j\leftarrow\min\{R_0/128,\iota/L_j\}$, $\iota=\Gbar/256$;\quad $H_{0,j}\leftarrow\Gest_j^2/\lambda_j$;\quad $\delta_j\leftarrow\max\{\lambda_js_j^2/2,H_{0,j}/d_j\}$, where $d_j=1024$ if $\Gest_j\le\tfrac12,\sigma>0$, and $64$ otherwise
\STATE Run $K=\min\{12,\max\{5,\lceil\log_2(H_{0,j}/\delta_j)\rceil\}\}$ epochs, each with $N_{\mathrm{ep}}=\lceil7\sqrt{1+L_j/\lambda_j}\rceil+3$ iterations on $\ball{c_j}{1/(2\Lo)}$
\STATE In epoch $k$, $\bar\Delta_k=H_{0,j}2^{-k}$ and $b_k=\min\{32,1+\lceil4\sigma^2/(\lambda_j\bar\Delta_k)\rceil\}$; inside \STM{}, $a_{t+1}=(t+2)/(2L_F)$ and every $z$-update is projected onto the single ball
\STATE Set $c_{j+1}$ to the final epoch output; make the adaptive measurement $(\gest_{j+1},\tau_j)$
\STATE $\Gest_{j+1}\leftarrow\max\{10^{-9},\min[\Gest_j+L_j\max\{\norm{c_{j+1}-c_j},s_j\},\tfrac87\norm{\gest_{j+1}}+\tau_j]\}$
\STATE Repeat levels unless $\Gest_{j+1}\le2\Gbar$ or at least $40\%$ of the nominal budget has been spent
\STATE \textbf{Additive-family interior stage (only after that trigger):} $y_0\leftarrow c_{j+1}$, $\bar y_0\leftarrow y_0$
\FOR{$t=0,1,\dots$ while the call counter is below the nominal budget}
    \STATE $b_t\leftarrow8+\lfloor t/40\rfloor$;\quad $\gest_t\leftarrow\GradEst(y_t,b_t)$;\quad $y_{t+1}\leftarrow y_t-\dfrac{\gest_t}{\Lz+\Lo\norm{\gest_t}}$
    \STATE $\bar y_{t+1}\leftarrow\big((t+1)\bar y_t+y_{t+1}\big)/(t+2)$
\ENDFOR
\RETURN $\bar y_t$ \hfill (the average includes the starting anchor)
\end{algorithmic}
\end{algorithm}

\begin{table*}[t]
\centering
\footnotesize
\setlength{\tabcolsep}{4pt}
\begin{tabularx}{\textwidth}{@{}>{\raggedright\arraybackslash}p{0.17\textwidth}>{\raggedright\arraybackslash}p{0.28\textwidth}>{\raggedright\arraybackslash}X@{}}
\toprule
Component & Theoretical \ARCSG{} & \textsf{Practical ARC-SG} \\
\midrule
\PhaseI{} batch & $B_1$ of \eqref{eq:phase1-params} & $4$ (additive); $400$ (overparameterized) \\
\PhaseI{} threshold & $3\Gbar$ & $1.5\Gbar$ \\
Floor schedule $\Lambda_j$ & $32\Lz$, halved only at binding floor levels & $\Lz\cdot2^{1-j}$, halved every level \\
Ball radius & $2\Gest_j/\lambda_j$ & $1/(2\Lo)$, fixed \\
Epochs $K$ & $\lceil\log_2(H_0/\delta_{\mathrm{lvl}})\rceil_{+}$ & $\min\{12,\max\{5,\lceil\log_2(H_{0,j}/\delta_j)\rceil\}\}$ \\
Iterations per epoch & $N_{\mathrm{ep}}=\lceil24\sqrt{L_F/\lambda}\rceil$ & $N_{\mathrm{ep}}=\lceil7\sqrt{\kappa_j}\,\rceil+3$ \\
\STM{} weight & $a_{t+1}=(t+1)/(4L_F)$ & $a_{t+1}=(t+2)/(2L_F)$ \\
Transfer target $\delta_j$ & $\min\{\lambda_js_j^2/2,\ \eps/8\}$ & $\max\{\lambda_js_j^2/2,\ H_{0,j}/d_j\}$ \\
Batches & $b_k$ of \eqref{eq:bk} & $\min\{32,1+\lceil4\sigma^2/(\lambda_j\bar\Delta_k)\rceil\}$ \\
Cap update & certified bookkeeping (Lemma~\ref{lem:inv}) & heuristic observed displacement/measurement, floor $10^{-9}$ \\
Additive interior stage & absent & full-step SGD, $b_t=8+\lfloor t/40\rfloor$, average includes anchor \\
Overparameterized branch & hop stages of Algorithm~\ref{alg:interp} & post-Phase-I half-step SGD, fixed batch $400$, last iterate \\
Budget boundary & schedules stop within declared call bound & started batch and post-level measurement complete; interpolate at nominal budget \\
Driver level guard & schedule cap $J^{\mathrm{sch}}$ then certified finisher & safety guard at 2000 levels (not reached) \\
Finisher & Algorithm~\ref{alg:finisher} & never fires \\
Projections & exact two-ball (Remark~\ref{rem:proj}) & single-ball (computational-model deviation) \\
\bottomrule
\end{tabularx}
\caption{Component-by-component deviations of \textsf{Practical ARC-SG} from the analyzed Algorithms~\ref{app:alg:phase1}--\ref{alg:finisher}. None of the practical choices in the third column is covered by Theorems~\ref{app:thm:convex}--\ref{app:thm:strongly}; the single-ball projection is a computational-model deviation rather than a change to a quantity entering a theorem (projection calls are not part of the oracle complexity, Remark~\ref{rem:proj}).}
\label{tab:deviations}
\end{table*}

\begin{figure}[t]
\centering
\includegraphics[width=0.98\columnwidth]{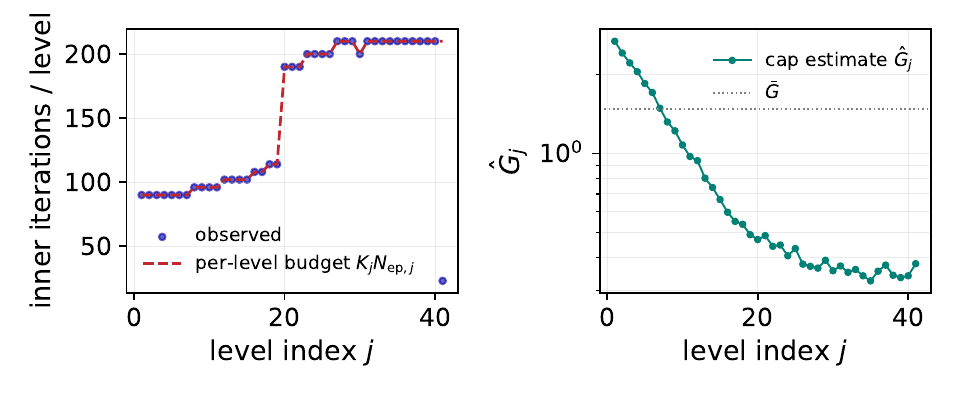}
\caption{Diagnostics of the proximal levels (convex setting, pure accelerated levels with the interior stage disabled; the diagnostic instance starts at $\norm{\grad f(x^0)}=4050\,\Gbar=5967.6$, the value logged by the run). Left: inner iterations per executed level (dots) together with the scheduled per-level budget $K_jN_{\mathrm{ep},j}$ (dashed). The implementation runs each level for exactly its scheduled budget---there is no early stopping inside a level---so the two curves agree by construction, except at the last level, truncated by the global oracle budget; the plot therefore displays the \emph{schedule} (in particular the jump caused by the switch of the transfer-target divisor $d_j$ from $64$ to $1024$ in the deep-endgame regime), not adaptive savings. Right: the cap estimate $\Gest_j$ contracts geometrically across levels, crossing $\Gbar$ (dotted); the observed level count ($\approx41$) is far below the schedule cap $J^{\mathrm{sch}}$, which is evidence about these well-conditioned instances and not about the necessity of the worst-case $\kbar$-factors.}
\label{fig:levels}
\end{figure}

\begin{figure}[t]
\centering
\includegraphics[width=0.7\columnwidth]{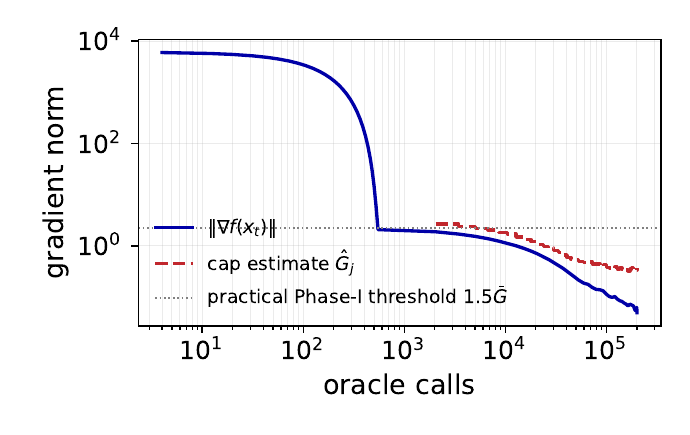}
\caption{Gradient norm along the same pure-levels diagnostic run. The diagnostic instance scales the initial displacement along a quartic coordinate by a factor of three; since the quartic gradient is cubic in the displacement, the run starts at $\norm{\grad f(x^0)}=5967.6=4050\,\Gbar$ (logged by the run, not estimated). The two-phase structure is visible: super-critical gradients are tamed at a linear rate by \PhaseI{} (practical threshold $1.5\Gbar$, dotted), after which proximal levels take over. The dashed curve is the \emph{practical} cap estimate $\Gest_j$: on this run it lay above a high-accuracy estimate of $\norm{\grad f(c_j)}$ after every anchor and within a small constant factor of it. This is an a-posteriori observation about the implementation, not an instance of the proved invariant of Lemma~\ref{lem:inv}: the practical cap update (item (6) above) replaces the certified distance-to-prox bound by the observed center displacement and is a heuristic.}
\label{fig:gradnorm}
\end{figure}

\paragraph{Additional observations.} On these instances \PhaseI{} already brings the gradient near the hand-off threshold before more than one or two proximal levels fire in the full \textsf{Practical ARC-SG}, so the late-stage gains visible in Figure~\ref{app:fig:experiments} are produced by the interior averaging stage; the pure-levels ablation (final gap $8.0\times10^{-3}$ in the convex family, versus $5.2\times10^{-5}$ for the full method) quantifies this attribution, and the \textsf{PJ-SGD} baseline isolates it directly: averaging alone reaches $2.6\times10^{-5}$, so the levels contribute speed rather than endpoint accuracy on these instances. In the diagnostic ablation runs of Figures~\ref{fig:levels}--\ref{fig:gradnorm}, \PhaseI{} tames the super-critical gradient ($\norm{\grad f(x^0)}=5967.6=4050\,\Gbar$ with the certified $\Gbar=\Lz/\Lo\approx1.4735$) at a visibly linear rate within a few hundred calls, and the subsequent $\approx41$ levels contract the cap estimate geometrically. We do not claim a parametric statistical floor in the convex family: $\grad^2f(x^\star)$ is singular along the quartic directions, $\sigma^2\operatorname{tr}(\grad^2f(x^\star)^{-1})$ is undefined, and the interior stage is simply still descending at the budget end; in the strongly convex family the curve is likewise still decreasing, so the budget, not a floor, determines the final numbers. Finally, the centralized guard counter---incremented inside the stochastic oracle \emph{and} inside every deterministic function/gradient evaluation used for logging---reported zero $\cosh$-argument clips across all reported runs, so no reported trajectory was affected by the numerical safeguard.

\paragraph{Matched ablations.} Two further curves decompose \textsf{Practical ARC-SG} into its components under the same 10 independently regenerated instance--noise seeds (geometric-mean gaps at $25\%/50\%/100\%$ of the budget): \textsf{PJ-SGD (matched)}, the interior PJ driver run from $x^0$ with the same stepsize $1/(\Lz+\Lo\norm{\gest})$, growing batch $8+\lfloor t/40\rfloor$, an average that includes the starting anchor, and the same budget; and \textsf{Phase I + PJ (matched)}, the same driver started from the \PhaseI{} output. Convex family: \textsf{PJ-SGD (matched)} reaches $2.5\times10^{-4}/1.2\times10^{-4}/5.8\times10^{-5}$, \textsf{Phase I + PJ (matched)} reaches $1.8\times10^{-4}/9.3\times10^{-5}/5.5\times10^{-5}$, and \textsf{Practical ARC-SG} (reference) reaches $1.9\times10^{-4}/8.0\times10^{-5}/5.2\times10^{-5}$. Strongly convex family: $5.4\times10^{-3}/2.5\times10^{-3}/1.2\times10^{-3}$ for \textsf{PJ-SGD (matched)}, $2.4\times10^{-4}/1.1\times10^{-4}/5.5\times10^{-5}$ for \textsf{Phase I + PJ (matched)}, and $2.3\times10^{-4}/1.1\times10^{-4}/5.3\times10^{-5}$ for \textsf{Practical ARC-SG}. The reading is two-sided: on the convex family the raw PJ driver already captures nearly all of the practical method's accuracy, and \PhaseI{} and the levels buy only small early-budget gains; on the strongly convex family the raw driver stalls at $1.2\times10^{-3}$, \PhaseI{} is essential ($2.4\times10^{-4}$ already at $25\%$ of the budget), and the proximal levels add little over \PhaseI{} plus the matched driver ($5.3\times10^{-5}$ versus $5.5\times10^{-5}$ at the budget end).

\paragraph{Sensitivity.} Table~\ref{tab:sensitivity} reports final geometric-mean gaps (evaluation seeds $0,1,2$, convex family) under perturbations of the Gaussian generation scale $s_g$ and of the initial-distance scale about the default configuration. At $d=40$ the corresponding certified Assumption~\ref{app:ass:noise} parameter is $\sigma=6.4037749\,s_g$: the three noise settings below therefore use calibrated $\sigma\in\{0.4002359,1.6009437,6.4037749\}$, not $\{0.0625,0.25,1\}$. \textsf{Clip-SGD} ranks third throughout, but the top two methods swap at high noise: \textsf{PJ-SGD} is best at the default and low-noise settings, whereas \textsf{Practical ARC-SG} is best at $s_g=1$. The distance-scale rows agree only at the displayed two-significant-digit precision: for scales $0.3$ and $3.0$, the unrounded \textsf{Practical ARC-SG} gaps are respectively $3.8273\times10^{-5}$ and $3.7853\times10^{-5}$, and the \textsf{Clip-SGD} gaps are $3.8834\times10^{-4}$ and $3.8892\times10^{-4}$; the underlying objective at a fixed seed is unchanged, but the initial points and trajectories are distinct.

\begin{table}[t]
\centering
\footnotesize
\setlength{\tabcolsep}{4pt}
\begin{tabular}{@{}lccc@{}}
\toprule
Configuration & \textsf{Practical ARC-SG} & \textsf{PJ-SGD} & \textsf{Clip-SGD} \\
\midrule
$s_g=0.25$ ($\sigma=1.6009$; default) & $3.8\times10^{-5}$ & $2.5\times10^{-5}$ & $3.9\times10^{-4}$ \\
$s_g=0.0625$ ($\sigma=0.4002$) & $1.2\times10^{-5}$ & $4.9\times10^{-6}$ & $9.6\times10^{-5}$ \\
$s_g=1.0$ ($\sigma=6.4038$) & $2.3\times10^{-4}$ & $3.0\times10^{-4}$ & $5.7\times10^{-3}$ \\
distance scale $0.3$ & $3.8\times10^{-5}$ & $2.5\times10^{-5}$ & $3.9\times10^{-4}$ \\
distance scale $3.0$ & $3.8\times10^{-5}$ & $2.5\times10^{-5}$ & $3.9\times10^{-4}$ \\
\bottomrule
\end{tabular}
\caption{Sensitivity on the convex family: final geometric-mean gaps over 3 independently regenerated instance--noise seeds. Noise rows give both the Gaussian generation scale $s_g$ and its calibrated Assumption~\ref{app:ass:noise} parameter $\sigma$.}
\label{tab:sensitivity}
\end{table}

\paragraph{Projection, wall-clock, and retained artifacts.} \textsf{Practical ARC-SG} uses single-ball projections only: the two-ball intersection of Remark~\ref{rem:proj} never arises because the practical ball radius is fixed at $1/(2\Lo)$. The run counters report $90$ projection calls in the convex and strongly convex runs and $0$ in the overparameterized run (no levels fire there); the \textsf{ARC-SG (levels)} ablation reports $6191$. Coarse single-seed wall-clock timings (NumPy, a single CPU (central processing unit) core): \textsf{Practical ARC-SG} $0.2$\,s versus $1.3$--$2.3$\,s for the baselines in the convex and strongly convex families; in the overparameterized family \textsf{Clip-SGD} takes $4.1$\,s and \textsf{ClipSSTM} $5.0$\,s. The historical timing stdout was not retained, so these machine-dependent timings are not independently recoverable from the bundled artifacts; the supplied \texttt{timing} stage now writes \texttt{experiments/timing.json} on rerun. Likewise, the bundled \texttt{res\_*.pkl} files retain aggregate log-gap curves, endpoint statistics, and all ten final per-seed log-gaps, but not the full raw per-seed trajectories. Fresh seeded runs reconstruct the trajectories, while the exact historical intermediate arrays and the maximum historical budget overshoot cannot be recovered from those aggregate files.

\textbf{What these experiments do not test.} We state this explicitly rather than leaving it to the appendix. (1) The late-stage advantage in Figure~\ref{app:fig:experiments} is produced mainly by the interior Polyak--Juditsky averaging stage, which is \emph{not} part of Algorithms~\ref{app:alg:phase1}--\ref{alg:finisher}; the theory-shaped component is the levels-only ablation, which ends between \textsf{SGD-GS} and tuned \textsf{Clip-SGD}. (2) The finisher (Algorithm~\ref{alg:finisher}) never fires on these instances, although it is exactly what makes the worst-case guarantee unconditional; targeted small instances where the finisher and the strong-growth hop stages execute are reported separately in the analyzed-schedule sanity experiment (Appendix~\ref{app:exact-sanity}). (3) The runs use the practical schedule of Appendix~\ref{app:experiments}, not the analyzed constants, so the observed level counts and batch sizes carry no information about the tightness of the $\kbar$-factors. An exact-constants run of Algorithms~\ref{app:alg:phase1}--\ref{alg:finisher} is computationally prohibitive: evaluated analytically from the paper's own schedule (Appendix~\ref{app:params}) on the convex instance at the run's accuracy, the analyzed schedule admits $J_{\mathrm{tot}}\approx4.0\times10^{6}$ levels, and already the representative first level ($\lambda_1\approx68$, $L_F\approx110$, $K=24$ epochs, $N_{\mathrm{ep}}=31$) has a last-epoch batch of $\approx6.2\times10^{9}$ per iteration even with the best-case constant $C_b=263$ of \eqref{eq:bk}, for a total of $\approx3.2\times10^{18}$ oracle calls ($\approx1.6\times10^{13}\times$ the $2\times10^{5}$ budget; $\approx1.2\times10^{20}$ with a realistic $C_b\sim10^{4}$)---this is why the theory-shaped curve in Figure~\ref{app:fig:experiments} is the levels-only ablation with moderate practical constants rather than the analyzed algorithm itself. (4) The empirical statement that the cap tracks the true gradient from above is a \emph{posteriori} observation against a high-accuracy gradient estimate on the shown runs---the practical cap update is a heuristic, and the proved bookkeeping invariant (Lemma~\ref{lem:inv}) applies to the analyzed method, not to this implementation. (5) Theorems~\ref{thm:interp-strong}--\ref{thm:interp-convex} are not tested here; see Appendix~\ref{app:interp-experiments} for what the overparameterized experiment does and does not exercise. (6) Under the certified constants the overparameterized family no longer shows \textsf{Practical ARC-SG} competitive with tuned \textsf{Clip-SGD}: with the batch $\lceil\rho\rceil=400$ that the almost-sure strong-growth condition \eqref{eq:sgc-as} imposes, the $(\Lz,\Lo)$-stepsize methods reach only $\approx10^{-2}$ at the budget end, while tuned \textsf{Clip-SGD} (batch 4) reaches the numerical floor within the first quarter of the budget; the experiment now instantiates the strong-growth assumption the theorems actually use, but it still does not exercise the hop stages of Algorithm~\ref{alg:interp}.

\subsection{Analyzed-schedule sanity experiment}
\label{app:exact-sanity}

We report a control run of the analyzed update rules---\PhaseI{} (Algorithm~\ref{app:alg:phase1}), \PhaseII{} (Algorithm~\ref{app:alg:phase2}) with the full schedule of Appendix~\ref{app:params}, and the finisher (Algorithm~\ref{alg:finisher})---with the labelled best-case concentration constants $c_g=c_{\mathrm{Az}}=c_b=1$, $c_1=2c_{\mathrm{Az}}$, $C_b=263$ of \eqref{eq:bk}, and $\alpha=0.05$. The additive-Gaussian runs below simulate every batch mean exactly in distribution; the separately identified strong-growth run uses a disclosed covariance-matched approximation for very large categorical batches. A self-check verified the two-ball projection of Remark~\ref{rem:proj} against Dykstra's algorithm on 50 random cases. The test instance is the certified 1-D $\cosh$-sum of Lemma~\ref{lem:exp-cert}(i) with scales $\{0.6,0.8,1.0\}$: certified pair $(\Lz,\Lo)=(2.1258,1/\ln2)$, $x^\star=0$, $f^\star=3$, started at $x^0=2.5$ with $R_0=2.5$; the finite-difference condition \eqref{eq:gs} was verified numerically on a grid, with maximal ratio $1.00000000$ (the frontier of Lemma~\ref{lem:exp-cert} is tight).

\paragraph{Deterministic oracle.} With $\sigma=0$ (a deterministic oracle, a valid degenerate case of Assumption~\ref{app:ass:noise}) and $\eps=0.3$, the run executes 15 levels and exits via the gradient termination at the head of level 16 with true gap $f(\xhat)-f^\star=4.45\times10^{-6}\le\eps$ at 22{,}237 oracle calls (the exit-certified bound is $0.15$). On \emph{every} executed level the certified invariants of Lemma~\ref{lem:inv} hold: $\norm{\grad f(c_j)}\le\Gest_j$, $\norm{c_j-x^\star}\le\Rb$, the certified prox lies inside the ball ($\norm{\xhat_j-c_j}\le r_j/2$), and the \RSTM{} output satisfies $\norm{c_{j+1}-\xhat_j}\le\sqrt{2\delta_j/\lambda_j}\le s_j$ (each inequality is flagged individually in the log). A strongly convex companion instance (the same $\cosh$-sum plus $0.25\,u^2$) has exact modulus $\mu=0.6^2+0.8^2+1^2+0.5=2.5$ and certified pair $(2.6258,1/\ln2)$; the quadratic contribution $0.5$ is merely a conservative lower bound. It executes 16 levels and exits at the head of level 17, at 23{,}297 calls with true gap $2.6\times10^{-5}$.

\paragraph{Noisy oracle.} With $\sigma=0.01$ the identical trajectory (same levels, same exit (a), true gap $4.7\times10^{-6}$) requires $2.65\times10^{15}$ oracle calls: the batch rule \eqref{eq:bk} climbs to $\approx1.3\times10^{12}$ samples per iteration on the gradient-driven levels. This quantifies, on a certified instance on which every invariant is checked level by level, why the printed worst-case constants are not what is run at scale (cf.\ the practical implementation of Algorithm~\ref{alg:practical} and the deviations of Table~\ref{tab:deviations}).

\paragraph{Targeted finisher instances.} The finisher (Algorithm~\ref{alg:finisher})---the component that never fires on the families of Figure~\ref{app:fig:experiments}---is exercised in two regimes on the same certified instance with $\sigma=0$. (i) Under an artificial schedule cap $J^{\mathrm{sch}}=4$ ($\eps=0.3$), control passes to the finisher, which exits correctly via exit (a) inside the finisher at its fifth level, at $5.7\times10^{4}$ oracle calls with true gap $2.4\times10^{-6}$. (ii) In the organic trigger attempt at $\eps=10^{-6}$, the cap falls below the drift resolution ($\Gest_j=5.66\times10^{-6}<32\,\iota_{\mathrm{abs}}$) and one literal finisher level is executed. The driver then aborts at its one-level iteration guard. This is \emph{not} an exit of the paper's algorithm and carries no exit certificate; it only records that the true gap at the aborted point is $0$ to machine precision.

\paragraph{Strong-growth algorithm.} On a 3-component orthogonal interpolating $\cosh$ family (scales $\{0.08,0.13,0.2\}$, shared root $x^\star=0$, $\mu=\min_kC_k^2=0.0064$, $\rho=N^2=9$, certified individual pair $(0.1298,0.2885)$, valley half-width $\approx70\gg1/\Lo$), the hop-and-solve update rules and literal batch \emph{counts} of Algorithm~\ref{alg:interp} (Stages 0--2) fire the hot stage for 4 levels and the cold stage for 1{,}218 hops, exiting with certified $\Delta\le\mu/(32\Lo^2)$ and true gap $2.8\times10^{-11}$; the hop certificate $f(c_j)-f^\star\le\Delta_j$ of Lemma~\ref{lem:cold}(a) held at every hop (worst ratio $0.008$). To make these enormous batches executable, batches up to $5\cdot10^6$ use exact categorical index draws, whereas 9{,}521 larger batch means are drawn from a Gaussian with the same mean and exact covariance divided by the literal batch size. The call counter is still charged the literal number of samples. Thus this is a covariance-matched Gaussian approximation and a computational certificate sanity check, not an exact finite-sample reproduction of the categorical oracle.

Per-level tables of all runs are in the supplementary log file \texttt{experiments/exact\_sanity.log}. The experiment code and logs accompany the submission.

\section{Additional experiments and limitations}
\label{app:additional}

\subsection{Experiments in the overparameterized regime}
\label{app:interp-experiments}

This subsection reports the experiment corresponding to Appendix~\ref{app:interp}; it is placed here, with the theory it illustrates, rather than in Section~\ref{app:sec:experiments}.

\paragraph{Instance and constants.} $d=20$, one $\cosh$ component per rotated coordinate with scales $C_k$ geometrically spaced in $[0.35,1]$ ($N=20$ components), a shared root $x^{\mathrm{int}}=Qr$ so that the family interpolates in the sense of Definition~\ref{def:interp}, $f^\star=20$ exactly, and local strong convexity $\mu_{\mathrm{loc}}=\min_kC_k^2=0.1225$ exactly (the Hessian is diagonal in the rotated basis). The oracle draws one index uniformly and returns $N\grad f_i(x)$, so the noise is multiplicative and vanishes at $x^{\mathrm{int}}$. Each scaled sample $Nf_i$ has the \emph{Hessian-form} pair $(NC_i^2,C_i)$, but Assumption~\ref{ass:sgc} requires the finite-difference condition \eqref{eq:gs} for each individual loss, and the Hessian-form pair fails it: for $C_i=1$, at $x=0$, $y=1$ (within the admissible range $|y-x|\le1$), $|g_i'(1)-g_i'(0)|=20\sinh(1)=23.504>20$. By the frontier of Lemma~\ref{lem:exp-cert}(i) applied to $N\cosh(C_iu)$ ($B=C_i$), the valid pairs for the $i$-th individual loss are $\Lo\ge C_i/\ln2$, $\Lz\ge\Lo NC_i\sinh(C_i/\Lo)$, both right-hand sides being maximized at $C_i=1$, so the tight \emph{uniform individual} constants are
\begin{equation}
\label{eq:ind-constants}
\begin{split}
(\Lz^{\mathrm{ind}},\Lo^{\mathrm{ind}})&=\Big(\frac{N\sinh(\ln2)}{\ln2},\ \frac{1}{\ln2}\Big)=\big(\tfrac{15}{\ln2},\ \tfrac{1}{\ln2}\big)=(21.6404,\ 1.4427),
\end{split}
\end{equation}
using $\sinh(\ln2)=\tfrac34$; necessity is attained at $C_i=1$ (at $x=0$, $y=\ln2$ for $\Lz$, and by the $x\to+\infty$ asymptotics for $\Lo$). These---not the Hessian-form constants $(1,1)$ of the average $f$---are what every $(\Lz,\Lo)$-aware method receives, exactly as Assumption~\ref{ass:sgc} requires. The strong-growth constants are exact, not estimated. Because the component supports are orthogonal, $\norm{\grad f(x)}^2=\sum_i\norm{\grad f_i(x)}^2$, hence $\E_i\norm{N\grad f_i(x)}^2=\tfrac1N\sum_iN^2\norm{\grad f_i(x)}^2=N\norm{\grad f(x)}^2$: the smallest constant in the expected variant \eqref{eq:sgc} is exactly $\rho=N=20$ (an identity, so no pilot-run measurement of it is involved). The theorems, however, use the almost-sure variant \eqref{eq:sgc-as}, whose smallest uniform constant is exactly $\rho=N^2=400$: at single-active-coordinate points $x=x^{\mathrm{int}}+t\,q_i$ one has $\norm{N\grad f_i(x)}=N\norm{\grad f(x)}$, forcing $\rho\ge N^2$, and $\norm{N\grad f_i(x)}=N\norm{\grad f_i(x)}\le N\norm{\grad f(x)}$ for all $x$ gives the matching upper bound. Everywhere the theory calls for a batch of size $\tO(\rho)$ the runs therefore use $\lceil\rho\rceil=400$. The budget is $4\cdot10^5$ oracle calls; baselines are tuned on 3 validation seeds over that full budget, and \textsf{M-SGD-SGC} \citep{vaswani2019fast}---momentum SGD tuned for strong growth, with batch $\lceil\rho\rceil=400$---is added to the comparison.

\begin{figure}[h]
\centering
\includegraphics[width=0.6\columnwidth]{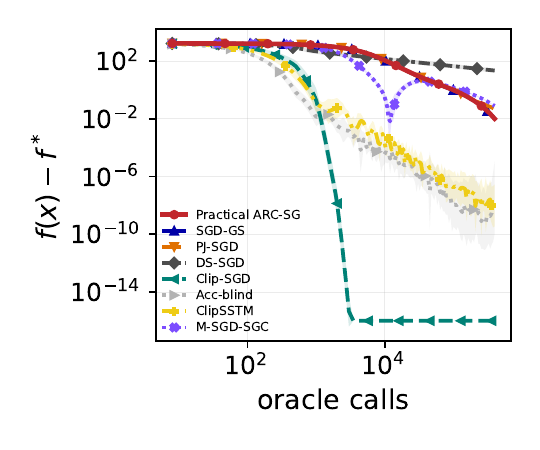}
\caption{Overparameterized regime (interpolating $\cosh$-family, single-sample oracle, uniform individual constants \eqref{eq:ind-constants}; \textsf{Practical ARC-SG}, \textsf{SGD-GS}, \textsf{PJ-SGD}, \textsf{DS-SGD}, and \textsf{M-SGD-SGC} use the strong-growth batch $\lceil\rho\rceil=400$). Geometric mean over 10 independently regenerated instance seeds; gaps floored at $10^{-16}$. No method exhibits a noise floor, as Theorem~\ref{thm:interp-strong} predicts qualitatively: tuned \textsf{Clip-SGD} (batch 4) reaches the numerical floor before $25\%$ of the budget, and \textsf{Acc-blind} and \textsf{ClipSSTM} also converge linearly, only far more slowly ($3.5\times10^{-8}$ and $1.0\times10^{-8}$ at the budget end); the untuned $(\Lz,\Lo)$-stepsize methods \textsf{Practical ARC-SG} and \textsf{SGD-GS} (overlapping curves) descend linearly to $\approx1.0\times10^{-2}$ (from $\approx1.8\times10^{-1}$ at half the budget), \textsf{M-SGD-SGC} reaches $7.9\times10^{-2}$, \textsf{PJ-SGD} $5.0\times10^{-2}$, and \textsf{DS-SGD} ends at $2.2\times10^{1}$.}
\label{fig:interp}
\end{figure}

\paragraph{What the run shows, and what it does not.} With the certified individual constants \eqref{eq:ind-constants} and the almost-sure strong-growth batch $\lceil\rho\rceil=400$, the untuned $(\Lz,\Lo)$-stepsize methods do not reach high accuracy on this family within the budget: \textsf{Practical ARC-SG} and \textsf{SGD-GS} stand at $\approx1.0\times10^{-2}$ at the budget end ($\approx1.8\times10^{-1}$ at half the budget, so the linear descent is still in progress), tuned \textsf{M-SGD-SGC} at $7.9\times10^{-2}$, and \textsf{PJ-SGD} at $5.0\times10^{-2}$; tuned \textsf{Clip-SGD} (batch 4) reaches the $10^{-16}$ floor within the first quarter of the budget, while \textsf{Acc-blind} ($3.5\times10^{-8}$) and \textsf{ClipSSTM} ($1.0\times10^{-8}$) converge linearly but far more slowly, and \textsf{DS-SGD} ends at $2.2\times10^{1}$. Three honest qualifications are needed.

First, \emph{the hop stages of Algorithm~\ref{alg:interp} never fire on these instances}: \PhaseI{} already lands inside the locally strongly convex basin, after which \textsf{Practical ARC-SG} proceeds directly to its interior driver---plain $(\Lz,\Lo)$-SGD with batch $\lceil\rho\rceil=400$ and $\sigma=0$---which is linearly convergent under strong growth. Consequently Figure~\ref{fig:interp} evaluates that driver, which coincides with \textsf{SGD-GS} up to the \PhaseI{} threshold logic (hence the two curves overlap), and \emph{not} the hop-and-solve method of Theorems~\ref{thm:interp-strong}--\ref{thm:interp-convex}. Small targeted instances on which Stages~1--2 are genuinely exercised are reported in the analyzed-schedule sanity experiment (Appendix~\ref{app:exact-sanity}).

Second, tuned \textsf{Clip-SGD} reaches the numerical floor within the first quarter of the budget, while the untuned $(\Lz,\Lo)$-stepsize methods reach only $\approx10^{-2}$ at the budget end. This is not evidence against the theory but a direct consequence of \eqref{eq:ind-constants}: the uniform individual constants that the strong-growth theory must use are $N$-fold pessimistic relative to the smoothness of the average on this instance, and tuning buys back exactly that factor---and the tuned baseline (batch 4) additionally avoids the almost-sure batch $\lceil\rho\rceil=400$ that the theory-shaped runs pay. The honest reading is that $(\Lz,\Lo)$-aware stepsizes purchase tuning-freeness and worst-case validity, not instance-optimal constants.

Third, the qualitative prediction that \emph{is} tested---the disappearance of the noise floor under multiplicative noise---is confirmed for every method that uses a stepsize compatible with the individual constants, and fails for none; it is now illustrated under the certified individual constants and the almost-sure strong-growth batch that Theorems~\ref{thm:interp-strong}--\ref{thm:interp-convex} actually use.

\subsection{Limitations (full list)}
\label{sec:limitations}

We collect the limitations of this work in one place; each item cross-references the statement where the issue is discussed rather than re-arguing it.

\emph{(i) $\kbar$-factors on the leading terms.} The leading terms of Theorems~\ref{app:thm:convex}--\ref{app:thm:strongly} carry the multiplicative factors $\kbar^{1/2}$ and $\kbar^{3}$ with $\kbar=1+\Lo\Rb$; these are not proved optimal and are probably not optimal (Section~\ref{sec:discussion-kbar}). Whether they can be removed, and what the minimal additive entry price is, is Open Problem~\ref{op:kbar}.

\emph{(ii) External versus default initial-gap certificate.} The printed $\tO$-forms treat $\Dcert$ as an externally supplied certificate. Under the default gap certificate of Lemma~\ref{lem:gap0} the \PhaseI{} entry terms scale as $(1+q)^2$ in the deterministic part and as $(1+q)^4\sigma^2\Lo^2/\Lz^2$ in the statistical burn-in, where $q=\Lo R_0$ (Corollary~\ref{cor:phase1-default}; Remarks~\ref{app:rem:convex-default}--\ref{rem:strongly-default}); whether the quadratic large-$q$ dependence is sharp is open.

\emph{(iii) Exact-projection model.} All theorems assume an \emph{exact} Euclidean projection oracle onto balls and, in \RSTM{}, onto intersections of two balls; projection calls are counted separately from stochastic-gradient oracle calls, and no projection-error (inexact-projection) analysis is provided (Remark~\ref{rem:proj}, and the projection-model paragraph after Proposition~\ref{prop:stm}).

\emph{(iv) Parameter knowledge.} The analyzed method requires knowledge of $(\Lz,\Lo,\sigma,R_0)$, of $\mu$ in the strongly convex case, of $\rho$ in the strong-growth regime, and of a gap certificate $\Dcert$ for the printed $\tO$-forms (the default certificate of Lemma~\ref{lem:gap0} is available at the price of item~(ii)); the full schedule is in Appendix~\ref{app:params}.

\emph{(v) Exact versus practical algorithms.} The analyzed Algorithms~\ref{app:alg:phase1}--\ref{alg:finisher} are \emph{not} what the experiments run: an exact-constants run is computationally prohibitive because of the worst-case schedule constants (on the convex experimental instance, $J_{\mathrm{tot}}\approx4\times10^{6}$ levels and $\gtrsim3\times10^{18}$ oracle calls at the run's accuracy; Appendix~\ref{app:experiments}). The practical implementation is a distinct, clearly labelled heuristic (Appendix~\ref{app:experiments}) whose late-stage gain comes from a Polyak--Juditsky averaging stage that lies outside Theorems~\ref{app:thm:convex}--\ref{app:thm:strongly} (Section~\ref{app:sec:experiments}).

\emph{(vi) Components not exercised by the main experiments.} The finisher (Algorithm~\ref{alg:finisher}) and the hot/cold hop stages of Algorithm~\ref{alg:interp} never fire on the families of Section~\ref{app:sec:experiments} and Appendix~\ref{app:interp-experiments}. In the targeted sanity runs (Appendix~\ref{app:exact-sanity}), an artificially capped schedule enters the finisher and terminates correctly; a separate organic-trigger attempt enters it but is aborted by a driver guard after one level and is not certified. Both hop stages fire in the strong-growth check, whose 9{,}521 very large categorical batch means use the disclosed covariance-matched Gaussian approximation.

\emph{(vii) Almost-sure strong growth.} The strong-growth theory assumes the almost-sure variant \eqref{eq:sgc-as} of Assumption~\ref{ass:sgc}, which is strong; the obstruction to an in-expectation extension is discussed in Remark~\ref{rem:mom}.

\emph{(viii) Theorem~\ref{thm:interp-convex} is polynomial in $1/\eps$.} The convex strong-growth rate $\tO\big(\rho\kbar+\rho(1+\Lz\Rb^2/\eps)^{3}\big)$ has no linear (i.e.\ $\log(1/\eps)$) dependence, in contrast to the strongly convex Theorem~\ref{thm:interp-strong}. The intermediate $\kbar^{3}A+\kbar^{2}A^{2}$ form of the previous version is superseded by the global regularized certificate (Lemma~\ref{lem:cert-reg}); the price is the $\tfrac{33}{8}$ constant in $\Lz'$ and the entry-only $\kbar$-factor.

\emph{(ix) No matching stochastic lower bounds.} The $\eps$-exponents are optimal on the smooth subclass, but we are not aware of any stochastic lower bound certifying the necessary $q$-dependence of the leading constants for $\Lo>0$; all $q$-dependence beyond the smooth-case bounds should be read as an upper-bound artifact (Section~\ref{sec:discussion-kbar}, Open Problem~\ref{op:kbar}).

\emph{(x) Synthetic experiments.} The experiments use synthetic $\cosh$/quartic families with constants certified for Assumption~\ref{app:ass:gs}; they are illustrative of the \emph{mechanism} of the method, not of its worst-case regimes or constants (Section~\ref{app:sec:experiments}).

\clearpage
\ifdefined\ARCIncludedSupplement
\let\ARCFinishSupplement\relax
\else
\bibliography{refs}
\def\ARCFinishSupplement{\end{document}}
\fi
\ARCFinishSupplement

\bibliography{refs}

@inproceedings{zhang2020gradient,
  author    = {Zhang, Jingzhao and He, Tianxing and Sra, Suvrit and Jadbabaie, Ali},
  title     = {Why Gradient Clipping Accelerates Training: A Theoretical Justification for Adaptivity},
  booktitle = {International Conference on Learning Representations (ICLR)},
  year      = {2020}
}

@inproceedings{zhang2020improved,
  author    = {Zhang, Bohang and Jin, Jikai and Fang, Cong and Wang, Liwei},
  title     = {Improved Analysis of Clipping Algorithms for Non-convex Optimization},
  booktitle = {Advances in Neural Information Processing Systems (NeurIPS)},
  volume    = {33},
  year      = {2020}
}

@inproceedings{koloskova2023revisiting,
  author    = {Koloskova, Anastasia and Hendrikx, Hadrien and Stich, Sebastian U.},
  title     = {Revisiting Gradient Clipping: Stochastic Bias and Tight Convergence Guarantees},
  booktitle = {Proceedings of the 40th International Conference on Machine Learning (ICML)},
  series    = {Proceedings of Machine Learning Research},
  volume    = {202},
  pages     = {17343--17363},
  year      = {2023}
}

@inproceedings{li2023convex,
  author    = {Li, Haochuan and Qian, Jian and Tian, Yi and Rakhlin, Alexander and Jadbabaie, Ali},
  title     = {Convex and Non-convex Optimization Under Generalized Smoothness},
  booktitle = {Advances in Neural Information Processing Systems (NeurIPS)},
  volume    = {36},
  year      = {2023}
}

@inproceedings{chen2023generalized,
  author    = {Chen, Ziyi and Zhou, Yi and Liang, Yingbin and Lu, Zhaosong},
  title     = {Generalized-Smooth Nonconvex Optimization is As Efficient As Smooth Nonconvex Optimization},
  booktitle = {Proceedings of the 40th International Conference on Machine Learning (ICML)},
  series    = {Proceedings of Machine Learning Research},
  volume    = {202},
  pages     = {5396--5427},
  year      = {2023}
}

@inproceedings{crawshaw2022robustness,
  author    = {Crawshaw, Michael and Liu, Mingrui and Orabona, Francesco and Zhang, Wei and Zhuang, Zhenxun},
  title     = {Robustness to Unbounded Smoothness of Generalized Sign{SGD}},
  booktitle = {Advances in Neural Information Processing Systems (NeurIPS)},
  volume    = {35},
  year      = {2022}
}

@inproceedings{huebler2024parameter,
  author    = {H{\"u}bler, Florian and Yang, Junchi and Li, Xiang and He, Niao},
  title     = {Parameter-Agnostic Optimization under Relaxed Smoothness},
  booktitle = {Proceedings of the 27th International Conference on Artificial Intelligence and Statistics (AISTATS)},
  series    = {Proceedings of Machine Learning Research},
  volume    = {238},
  pages     = {4861--4869},
  year      = {2024}
}

@inproceedings{wang2023convergence,
  author    = {Wang, Bohan and Zhang, Huishuai and Ma, Zhiming and Chen, Wei},
  title     = {Convergence of {AdaGrad} for Non-convex Objectives: Simple Proofs and Relaxed Assumptions},
  booktitle = {Proceedings of the 36th Annual Conference on Learning Theory (COLT)},
  series    = {Proceedings of Machine Learning Research},
  volume    = {195},
  pages     = {161--190},
  year      = {2023}
}

@inproceedings{vankov2024optimizing,
  author    = {Vankov, Daniil and Rodomanov, Anton and Nedi{\'c}, Angelia and Sankar, Lalitha and Stich, Sebastian U.},
  title     = {Optimizing $(L_0,L_1)$-Smooth Functions by Gradient Methods},
  booktitle = {International Conference on Learning Representations (ICLR)},
  year      = {2025},
  url       = {https://proceedings.iclr.cc/paper_files/paper/2025/file/28a9255e2a0e7062e8e0af715f2e5b86-Paper-Conference.pdf}
}

@inproceedings{gorbunov2024methods,
  author    = {Gorbunov, Eduard and Tupitsa, Nazarii and Choudhury, Sayantan and Aliev, Alen and Richt{\'a}rik, Peter and Horv{\'a}th, Samuel and Tak{\'a}{\v{c}}, Martin},
  title     = {Methods for Convex $(L_0,L_1)$-Smooth Optimization: Clipping, Acceleration, and Adaptivity},
  booktitle = {International Conference on Learning Representations (ICLR)},
  year      = {2025},
  eprint    = {2409.14989},
  archivePrefix = {arXiv},
  url       = {https://arxiv.org/abs/2409.14989}
}

@article{lobanov2024linear,
  author    = {Lobanov, Aleksandr and Gasnikov, Alexander and Gorbunov, Eduard and Tak{\'a}{\v{c}}, Martin},
  title     = {Linear Convergence Rate in Convex Setup is Possible! {G}radient Descent Method Variants under $(L_0,L_1)$-Smoothness},
  journal   = {arXiv preprint arXiv:2412.17050},
  year      = {2024},
  eprint    = {2412.17050},
  archivePrefix = {arXiv},
  url       = {https://arxiv.org/abs/2412.17050}
}

@article{sgd2025generalized,
  author    = {Lobanov, Aleksandr and Gasnikov, Alexander},
  title     = {Power of Generalized Smoothness in Stochastic Convex Optimization: First- and Zero-Order Algorithms},
  journal   = {arXiv preprint arXiv:2501.18198},
  year      = {2025},
  eprint    = {2501.18198},
  archivePrefix = {arXiv},
  url       = {https://arxiv.org/abs/2501.18198}
}

@article{nemirovski2009robust,
  author    = {Nemirovski, Arkadi and Juditsky, Anatoli and Lan, Guanghui and Shapiro, Alexander},
  title     = {Robust Stochastic Approximation Approach to Stochastic Programming},
  journal   = {SIAM Journal on Optimization},
  volume    = {19},
  number    = {4},
  pages     = {1574--1609},
  year      = {2009}
}

@article{lan2012optimal,
  author    = {Lan, Guanghui},
  title     = {An Optimal Method for Stochastic Composite Optimization},
  journal   = {Mathematical Programming},
  volume    = {133},
  number    = {1},
  pages     = {365--397},
  year      = {2012}
}

@article{ghadimi2012optimal,
  author    = {Ghadimi, Saeed and Lan, Guanghui},
  title     = {Optimal Stochastic Approximation Algorithms for Strongly Convex Stochastic Composite Optimization {I}: A Generic Algorithmic Framework},
  journal   = {SIAM Journal on Optimization},
  volume    = {22},
  number    = {4},
  pages     = {1469--1492},
  year      = {2012}
}

@article{ghadimi2013optimal,
  author    = {Ghadimi, Saeed and Lan, Guanghui},
  title     = {Optimal Stochastic Approximation Algorithms for Strongly Convex Stochastic Composite Optimization {II}: Shrinking Procedures and Optimal Algorithms},
  journal   = {SIAM Journal on Optimization},
  volume    = {23},
  number    = {4},
  pages     = {2061--2089},
  year      = {2013}
}

@book{nesterov2018lectures,
  author    = {Nesterov, Yurii},
  title     = {Lectures on Convex Optimization},
  edition   = {2nd},
  publisher = {Springer},
  year      = {2018}
}

@article{nesterov1983method,
  author    = {Nesterov, Yurii},
  title     = {A Method for Solving the Convex Programming Problem with Convergence Rate {$O(1/k^2)$}},
  journal   = {Doklady Akademii Nauk SSSR},
  volume    = {269},
  number    = {3},
  pages     = {543--547},
  year      = {1983}
}

@inproceedings{foster2019complexity,
  author    = {Foster, Dylan J. and Sekhari, Ayush and Shamir, Ohad and Srebro, Nathan and Sridharan, Karthik and Woodworth, Blake},
  title     = {The Complexity of Making the Gradient Small in Stochastic Convex Optimization},
  booktitle = {Proceedings of the 32nd Annual Conference on Learning Theory (COLT)},
  series    = {Proceedings of Machine Learning Research},
  volume    = {99},
  pages     = {1319--1345},
  year      = {2019}
}

@article{nesterov2012make,
  author    = {Nesterov, Yurii},
  title     = {How to Make the Gradients Small},
  journal   = {Optima. Mathematical Optimization Society Newsletter},
  volume    = {88},
  pages     = {10--11},
  year      = {2012}
}

@inproceedings{allenzhu2018make,
  author    = {Allen-Zhu, Zeyuan},
  title     = {How To Make the Gradients Small Stochastically: Even Faster Convex and Nonconvex {SGD}},
  booktitle = {Advances in Neural Information Processing Systems (NeurIPS)},
  volume    = {31},
  year      = {2018}
}

@inproceedings{gorbunov2020stochastic,
  author    = {Gorbunov, Eduard and Danilova, Marina and Gasnikov, Alexander},
  title     = {Stochastic Optimization with Heavy-Tailed Noise via Accelerated Gradient Clipping},
  booktitle = {Advances in Neural Information Processing Systems (NeurIPS)},
  volume    = {33},
  year      = {2020}
}

@inproceedings{sadiev2023high,
  author    = {Sadiev, Abdurakhmon and Danilova, Marina and Gorbunov, Eduard and Horv{\'a}th, Samuel and Gidel, Gauthier and Dvurechensky, Pavel and Gasnikov, Alexander and Richt{\'a}rik, Peter},
  title     = {High-Probability Bounds for Stochastic Optimization and Variational Inequalities: the Case of Unbounded Variance},
  booktitle = {Proceedings of the 40th International Conference on Machine Learning (ICML)},
  series    = {Proceedings of Machine Learning Research},
  volume    = {202},
  pages     = {29563--29648},
  year      = {2023}
}

@article{jin2019short,
  author    = {Jin, Chi and Netrapalli, Praneeth and Ge, Rong and Kakade, Sham M. and Jordan, Michael I.},
  title     = {A Short Note on Concentration Inequalities for Random Vectors with Sub{G}aussian Norm},
  journal   = {arXiv preprint arXiv:1902.03736},
  year      = {2019}
}

@inproceedings{vaswani2019fast,
  author    = {Vaswani, Sharan and Bach, Francis and Schmidt, Mark},
  title     = {Fast and Faster Convergence of {SGD} for Over-Parameterized Models and an Accelerated Perceptron},
  booktitle = {Proceedings of the 22nd International Conference on Artificial Intelligence and Statistics (AISTATS)},
  series    = {Proceedings of Machine Learning Research},
  volume    = {89},
  pages     = {1195--1204},
  year      = {2019}
}

@inproceedings{liu2020accelerating,
  author    = {Liu, Chaoyue and Belkin, Mikhail},
  title     = {Accelerating {SGD} with Momentum for Over-Parameterized Learning},
  booktitle = {International Conference on Learning Representations (ICLR)},
  year      = {2020}
}

@article{schmidt2013fast,
  author    = {Schmidt, Mark and Le Roux, Nicolas},
  title     = {Fast Convergence of Stochastic Gradient Descent under a Strong Growth Condition},
  journal   = {arXiv preprint arXiv:1308.6370},
  year      = {2013}
}

@inproceedings{ma2018power,
  author    = {Ma, Siyuan and Bassily, Raef and Belkin, Mikhail},
  title     = {The Power of Interpolation: Understanding the Effectiveness of {SGD} in Modern Over-Parametrized Learning},
  booktitle = {Proceedings of the 35th International Conference on Machine Learning (ICML)},
  series    = {Proceedings of Machine Learning Research},
  volume    = {80},
  pages     = {3325--3334},
  year      = {2018}
}

@article{interp2026gs,
  author    = {Lobanov, Aleksandr and Koloskova, Anastasia},
  title     = {Avoiding Bias in Clipped {SGD} for Overparameterized Models under Generalized Smoothness},
  journal   = {arXiv preprint arXiv:2605.14800},
  year      = {2026},
  eprint    = {2605.14800},
  archivePrefix = {arXiv},
  url       = {https://arxiv.org/abs/2605.14800}
}

@inproceedings{gaash2025clipped,
  author    = {Gaash, Ofir and Levy, Kfir Yehuda and Carmon, Yair},
  title     = {Convergence of Clipped {SGD} on Convex $(L_0,L_1)$-Smooth Functions},
  booktitle = {Advances in Neural Information Processing Systems (NeurIPS)},
  volume    = {38},
  year      = {2025},
  eprint    = {2502.16492},
  archivePrefix = {arXiv},
  url       = {https://papers.nips.cc/paper_files/paper/2025/hash/ae6c3f14de6f59ff9a44425a792e5bd1-Abstract-Conference.html}
}

@article{tovmasyan2025proximal,
  author    = {Tovmasyan, Zhirayr and Malinovsky, Grigory and Condat, Laurent and Richt{\'a}rik, Peter},
  title     = {Revisiting Stochastic Proximal Point Methods: Generalized Smoothness and Similarity},
  journal   = {Journal of Nonlinear and Variational Analysis},
  volume    = {10},
  number    = {3},
  pages     = {471--505},
  year      = {2026},
  doi       = {10.23952/JNVA.10.2026.3.01},
  url       = {https://doi.org/10.23952/JNVA.10.2026.3.01},
  eprint    = {2502.03401},
  archivePrefix = {arXiv}
}

@inproceedings{malitsky2020adaptive,
  author    = {Malitsky, Yura and Mishchenko, Konstantin},
  title     = {Adaptive Gradient Descent without Descent},
  booktitle = {Proceedings of the 37th International Conference on Machine Learning (ICML)},
  series    = {Proceedings of Machine Learning Research},
  volume    = {119},
  pages     = {6702--6712},
  year      = {2020}
}

@article{malitsky2020golden,
  author    = {Malitsky, Yura},
  title     = {Golden Ratio Algorithms for Variational Inequalities},
  journal   = {Mathematical Programming},
  volume    = {184},
  number    = {1--2},
  pages     = {383--425},
  year      = {2020}
}

@book{nemirovsky1983problem,
  author    = {Nemirovsky, Arkadi S. and Yudin, David B.},
  title     = {Problem Complexity and Method Efficiency in Optimization},
  publisher = {Wiley-Interscience},
  year      = {1983}
}

@article{minsker2015geometric,
  author  = {Minsker, Stanislav},
  title   = {Geometric median and robust estimation in {B}anach spaces},
  journal = {Bernoulli},
  volume  = {21},
  number  = {4},
  pages   = {2308--2335},
  year    = {2015}
}

@inproceedings{tyurin2026near,
  author    = {Tyurin, Alexander},
  title     = {Near-Optimal Convergence of Accelerated Gradient Methods under Generalized and $(L_0,L_1)$-Smoothness},
  booktitle = {Proceedings of the 43rd International Conference on Machine Learning (ICML)},
  year      = {2026},
  eprint    = {2508.06884},
  archivePrefix = {arXiv},
  url       = {https://arxiv.org/abs/2508.06884}
}

@article{yu2025stochastic,
  author    = {Yu, Chenhao and Hong, Yusu and Lin, Junhong},
  title     = {Convergence Analysis of Stochastic Accelerated Gradient Methods for Generalized Smooth Optimizations},
  journal   = {arXiv preprint arXiv:2502.11125},
  year      = {2025},
  eprint    = {2502.11125},
  archivePrefix = {arXiv},
  url       = {https://arxiv.org/abs/2502.11125}
}

@misc{anonymous2026nesterov,
  author       = {{Anonymous}},
  title        = {Convergence Analysis of {N}esterov's Accelerated Gradient Descent under Relaxed Assumptions},
  howpublished = {OpenReview submission to the International Conference on Learning Representations},
  year         = {2026},
  url          = {https://openreview.net/forum?id=AlYT0ZD51A},
  note         = {Public ICLR 2026 submission}
}

@inproceedings{carmon2020balloracle,
  author    = {Carmon, Yair and Jambulapati, Arun and Jiang, Qijia and Jin, Yujia and Lee, Yin Tat and Sidford, Aaron and Tian, Kevin},
  title     = {Acceleration with a Ball Optimization Oracle},
  booktitle = {Advances in Neural Information Processing Systems (NeurIPS)},
  volume    = {33},
  year      = {2020},
  eprint    = {2003.08078},
  archivePrefix = {arXiv},
  url       = {https://proceedings.neurips.cc/paper/2020/hash/dba4c1a117472f6aca95211285d0587e-Abstract.html}
}

@article{carmon2023resque,
  author    = {Carmon, Yair and Jambulapati, Arun and Jin, Yujia and Lee, Yin Tat and Liu, Daogao and Sidford, Aaron and Tian, Kevin},
  title     = {{ReSQue}ing Parallel and Private Stochastic Convex Optimization},
  journal   = {arXiv preprint arXiv:2301.00457},
  year      = {2023},
  eprint    = {2301.00457},
  archivePrefix = {arXiv},
  url       = {https://arxiv.org/abs/2301.00457}
}

@article{gruntkowska2025broximal,
  author    = {Gruntkowska, Kaja and Li, Hanmin and Rane, Aadi and Richt{\'a}rik, Peter},
  title     = {The Ball-Proximal (``Broximal'') Point Method: A New Algorithm, Convergence Theory, and Applications},
  journal   = {arXiv preprint arXiv:2502.02002},
  year      = {2025},
  eprint    = {2502.02002},
  archivePrefix = {arXiv},
  url       = {https://arxiv.org/abs/2502.02002}
}

@article{li2026trustregion,
  author    = {Li, Hanmin and Gruntkowska, Kaja and Richt{\'a}rik, Peter},
  title     = {Stabilized Proximal Point Method via Trust Region Control},
  journal   = {arXiv preprint arXiv:2604.02943},
  year      = {2026},
  eprint    = {2604.02943},
  archivePrefix = {arXiv},
  url       = {https://arxiv.org/abs/2604.02943}
}

\end{document}